\theoremstyle{plain}
\newtheorem{corollary}{Corollary}
\newtheorem{definition}{Definition}
\newtheorem{lemma}{Lemma}
\newtheorem{remark}{Remark}
\newtheorem{theorem}{Theorem}
\numberwithin{equation}{section}
\begin{document}
\title[]{A determination of the blowup solutions to the focusing, quintic NLS with mass equal to the mass of the soliton}

\author{Benjamin Dodson}

\begin{abstract}
In this paper we prove that the only blowup solutions to the focusing, quintic nonlinear Schr{\"o}dinger equation with mass equal to the mass of the soliton are rescaled solitons or the pseudoconformal transformation of those solitons.
\end{abstract}
\maketitle

\section{Introduction}
The one dimensional, focusing, mass-critical nonlinear Schr{\"o}dinger equation is given by
\begin{equation}\label{1.1}
i u_{t} + u_{xx} + |u|^{4} u = 0, \qquad u(0,x) = u_{0}(x) \in L^{2}(\mathbb{R}).
\end{equation}
This equation is a special case of the Hamiltonian equation
\begin{equation}\label{1.1.1}
i u_{t} + u_{xx} + |u|^{p - 1} u = 0, \qquad u(0,x) = u_{0}(x), \qquad p > 1.
\end{equation}
If $u(t,x)$ is a solution to $(\ref{1.1.1})$, then
\begin{equation}\label{1.2}
v(t,x) = \lambda^{\frac{2}{p - 1}} u(\lambda^{2} t, \lambda x)
\end{equation}
is a solution to $(\ref{1.1.1})$ with appropriately rescaled initial data. Furthermore,
\begin{equation}\label{1.2.1}
\| \lambda^{\frac{2}{p - 1}} u(0, \lambda x) \|_{\dot{H}^{s}(\mathbb{R})} = \lambda^{\frac{2}{p - 1} + s - \frac{1}{2}} \| u_{0} \|_{\dot{H}^{s}(\mathbb{R})},
\end{equation}
so for $s_{p} = \frac{1}{2} - \frac{2}{p - 1}$, the $\dot{H}^{s_{p}}(\mathbb{R})$ norm of the initial data is invariant under the scaling symmetry $(\ref{1.2})$.\medskip

The scaling symmetry in $(\ref{1.2})$ controls the local well-posedness theory of $(\ref{1.1})$. In that case, $p = 5$ and $s_{p} = 0$.
\begin{theorem}\label{t1.1}
The initial value problem $(\ref{1.1})$ is locally well-posed for any $u_{0} \in L^{2}$.
\begin{enumerate}
\item For any $u_{0} \in L^{2}$ there exists $T(u_{0}) > 0$ such that $(\ref{1.1})$ is locally well-posed on the interval $(-T, T)$.

\item If $\| u_{0} \|_{L^{2}}$ is small then $(\ref{1.1})$ is globally well-posed, and the solution scatters both forward and backward in time. That is, there exist $u_{-}$, $u_{+} \in L^{2}(\mathbb{R})$ such that
\begin{equation}\label{1.5}
\lim_{t \nearrow +\infty} \| u(t) - e^{it \Delta} u_{+}  \|_{L^{2}} = 0, \qquad \text{and} \qquad \lim_{t \searrow -\infty} \| u(t) - e^{it \Delta} u_{-} \|_{L^{2}} = 0.
\end{equation}

\item If $I$ is the maximal interval of existence for a solution to $(\ref{1.1})$ with initial data $u_{0}$, $u$ is said to blow up forward in time if
\begin{equation}\label{1.6}
\lim_{T \nearrow \sup(I)} \| u \|_{L_{t,x}^{6}([0, T] \times \mathbb{R})} = +\infty.
\end{equation}
If $u$ does not blow up forward in time, then $\sup(I) = +\infty$ and $u$ scatters forward in time.

\item If $\sup(I) < \infty$ then for any $s > 0$,
\begin{equation}\label{1.7}
\lim_{t \nearrow \sup(I)} \| u(t) \|_{H^{s}} = +\infty.
\end{equation}

\item Time reversal symmetry implies that the results corresponding to $(3)$ and $(4)$ also hold going backward in time.
\end{enumerate}
\end{theorem}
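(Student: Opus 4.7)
The plan is to prove all five parts by a standard Strichartz/contraction mapping argument, exploiting the fact that the scaling-critical Strichartz pair in this setting is $(q,r)=(6,6)$, so the $L^6_{t,x}$ norm is precisely the controlling norm.

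First I would record the Strichartz estimates in one dimension: for an admissible pair with $\tfrac{2}{q}+\tfrac{1}{r}=\tfrac{1}{2}$,
\[
\|e^{it\Delta}f\|_{L^{q}_{t}L^{r}_{x}(\mathbb{R}\times\mathbb{R})}\lesssim \|f\|_{L^{2}},\qquad
\Bigl\|\int_{0}^{t}e^{i(t-s)\Delta}F(s)\,ds\Bigr\|_{L^{6}_{t,x}\cap C_{t}L^{2}_{x}}\lesssim \|F\|_{L^{6/5}_{t,x}}.
\]
Setting up the Duhamel map $\Phi(u)(t)=e^{it\Delta}u_{0}+i\int_{0}^{t}e^{i(t-s)\Delta}(|u|^{4}u)(s)\,ds$ on a ball in $L^{6}_{t,x}(I\times\mathbb{R})\cap C_{t}L^{2}_{x}$, the elementary bound $\||u|^{4}u\|_{L^{6/5}_{t,x}}= \|u\|_{L^{6}_{t,x}}^{5}$ and an analogous difference estimate turn the fixed point problem into a question about the smallness of $\|e^{it\Delta}u_{0}\|_{L^{6}_{t,x}(I\times\mathbb{R})}$.

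For part (1), given $u_{0}\in L^{2}$, the monotone convergence theorem (or density of Schwartz functions plus Strichartz) gives $\|e^{it\Delta}u_{0}\|_{L^{6}_{t,x}((-T,T)\times\mathbb{R})}\to 0$ as $T\to 0$, so for $T=T(u_{0})$ small the contraction closes and yields a unique solution; continuous dependence comes from the same difference estimate. For part (2), if $\|u_{0}\|_{L^{2}}$ is sufficiently small the Strichartz estimate directly gives smallness of $\|e^{it\Delta}u_{0}\|_{L^{6}_{t,x}(\mathbb{R}\times\mathbb{R})}$, so the contraction runs on all of $\mathbb{R}$; scattering is then obtained by showing that $v(t):=e^{-it\Delta}u(t)$ is Cauchy in $L^{2}$ as $t\to\pm\infty$, since $\|v(t)-v(s)\|_{L^{2}}\lesssim \|u\|_{L^{6}_{t,x}([s,t]\times\mathbb{R})}^{5}\to 0$, and one defines $u_{\pm}$ as the limits.

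For part (3), I would argue contrapositively: if $\|u\|_{L^{6}_{t,x}([0,\sup I)\times\mathbb{R})}<\infty$, partition $[0,\sup I)$ into finitely many intervals $I_{j}$ on which $\|u\|_{L^{6}_{t,x}(I_{j}\times\mathbb{R})}$ is smaller than the small-data threshold, use Strichartz on each piece to bound $\|u\|_{L^{\infty}_{t}L^{2}_{x}}$, then apply the local theory of (1) at a time close to $\sup I$ with the uniform lower bound on the lifespan coming from the small Strichartz norm; this extends the solution past $\sup I$, contradicting maximality, and also yields scattering by the argument of (2) applied on $[0,\infty)$. For part (4), suppose for contradiction that $\sup(I)<\infty$ but $\liminf_{t\nearrow\sup I}\|u(t)\|_{H^{s}}<\infty$ for some $s>0$: Sobolev embedding and persistence of regularity (running the contraction in an $H^{s}$-Strichartz norm, where subcriticality gives a lifespan bounded below in terms of $\|u(t_{0})\|_{H^{s}}$) would extend the solution past $\sup I$, a contradiction. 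Finally, part (5) follows from the symmetry $u(t,x)\mapsto\overline{u(-t,x)}$, which maps solutions to solutions and reverses time.

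The main obstacle is really part (1): because the equation is $L^{2}$-critical, there is no a priori lifespan depending only on $\|u_{0}\|_{L^{2}}$, and one must carefully exploit that the free evolution has vanishing $L^{6}_{t,x}$ norm on short intervals — a qualitative statement that hides behind the dominated/monotone convergence theorem once Strichartz is in hand. Everything else is bookkeeping from this core contraction.
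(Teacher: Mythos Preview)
Your proposal is correct and follows the same Strichartz/contraction-mapping scheme that the paper cites from \cite{cazenave1990cauchy}; the only cosmetic difference is that you run the fixed-point argument in $L^{6}_{t,x}$ with the dual $L^{6/5}_{t,x}$ on the nonlinearity, whereas the paper records the estimate through the $L^{4}_{t}L^{\infty}_{x}$ endpoint pair, but both choices close the same Picard iteration.
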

\begin{remark}
It is very important to emphasize that throughout this paper, blow up in positive time may be in finite time or infinite time, unless specified otherwise. The same is true for blow up in negative time.
\end{remark}

\begin{proof}
Theorem $\ref{t1.1}$ was proved in \cite{cazenave1990cauchy}. See also \cite{ginibre1979class}, \cite{ginibre1979class}, \cite{ginibre1985global}, and \cite{kato1987nonlinear}. The proof uses the Strichartz estimates
\begin{equation}\label{1.7.1}
\| u \|_{L_{t}^{\infty} L_{x}^{2} \cap L_{t}^{4} L_{x}^{\infty}(I \times \mathbb{R})} \lesssim \| u_{0} \|_{L^{2}(\mathbb{R})} + \| F \|_{L_{t}^{1} L_{x}^{2} + L_{t}^{4/3} L_{x}^{1}(I \times \mathbb{R})},
\end{equation}
where $u$ is the solution to
\begin{equation}\label{1.7.2}
i u_{t} + u_{xx} = F, \qquad u(0,x) = u_{0},
\end{equation}
on the interval $I$, where $0 \in I$. The Strichartz estimates were proved in \cite{strichartz1977restrictions}, \cite{ginibre1992smoothing}, \cite{yajima1987existence}. Theorem $\ref{t1.1}$ was proved using Picard iteration, so $u$ is a strong solution to $(\ref{1.1})$. For all $t \in I$, where $I$ is the open interval on which local well-posedness of $(\ref{1.1})$ holds,
\begin{equation}\label{1.7.3}
u(t) = e^{it \partial_{xx}} u_{0} + i \int_{0}^{t} e^{i(t - \tau) \partial_{xx}} (|u(\tau)|^{4} u(\tau)) d\tau.
\end{equation}
See \cite{tao2006nonlinear} for different notions of a solution.
\end{proof}

Furthermore, a solution to $(\ref{1.1})$ has the conserved quantities mass,
\begin{equation}\label{1.3}
M(u(t)) = \int |u(t,x)|^{2} dx = M(u(0)),
\end{equation}
and energy,
\begin{equation}\label{1.4}
E(u(t)) = \frac{1}{2} \int |u_{x}(t,x)|^{2} dx - \frac{1}{6} \int |u(t,x)|^{6} dx = E(u(0)).
\end{equation}
For the more general equation $(\ref{1.1.1})$, the Hamiltonian is given by
\begin{equation}\label{1.4.1}
E(u(t)) = \frac{1}{2} \int |u_{x}(t,x)|^{2} dx - \frac{1}{p + 1} \int |u(t,x)|^{p + 1} dx = E(u(0)).
\end{equation}

For $p < 5$, \cite{ginibre1979class} and \cite{ginibre1979class} proved global well-posedness of $(\ref{1.1.1})$ with initial data $u_{0} \in H^{1}(\mathbb{R})$. Indeed, by a straightforward application of the fundamental theorem of calculus and H{\"o}lder's inequality, if $u(t,x) \in L^{2} \cap \dot{H}^{1}$,
\begin{equation}\label{1.8}
|u(t,x)|^{2} \leq \int_{x}^{\infty} |\partial_{y} |u(t,y)|^{2}| dy \leq 2 \int_{x}^{\infty} |\partial_{y} u(t,y)| |u(t,y)| dy \leq 2 \| u \|_{\dot{H}^{1}(\mathbb{R})} \| u \|_{L^{2}(\mathbb{R})}.
\end{equation}
Therefore,
\begin{equation}\label{1.8.1}
\| u(t) \|_{L^{p + 1}(\mathbb{R})}^{p + 1} \lesssim \| u(t) \|_{\dot{H}^{1}(\mathbb{R})}^{\frac{p - 1}{2}} \| u(t) \|_{L^{2}(\mathbb{R})}^{\frac{p + 3}{2}},
\end{equation}
so $(\ref{1.4.1})$ implies the existence of a uniform upper bound on $\| u(t) \|_{\dot{H}^{1}}$ when $p < 5$.

For $p > 5$, there exist singular solutions of $(\ref{1.1.1})$, that is, solutions on the finite interval $[0, T)$, $T < \infty$, for which
\begin{equation}
\lim_{t \rightarrow T} \| u(t) \|_{H^{1}(\mathbb{R})} = \infty.
\end{equation}
See \cite{glassey1977blowing} and \cite{weinstein1986structure}.

When $p = 5$, $(\ref{1.8})$ implies
\begin{equation}\label{1.9}
\int |u(t,x)|^{6} dx \lesssim \| u(t) \|_{\dot{H}^{1}(\mathbb{R})}^{2} \| u(t) \|_{L^{2}(\mathbb{R})}^{4},
\end{equation}
which implies the existence of a threshhold mass $M_{0}$ for which, if $\| u_{0} \|_{L^{2}} < M_{0}$,
\begin{equation}\label{1.10}
E(u(t)) \gtrsim_{M_{0}} \| u(t) \|_{\dot{H}^{1}(\mathbb{R})}^{2},
\end{equation}
with implicit constant $\searrow 0$ as $\| u_{0} \|_{L^{2}} \nearrow M_{0}$. 

From \cite{weinstein1983nonlinear}, the optimal constant in $(\ref{1.9})$ is given by the Gagliardo-Nirenberg inequality,
\begin{equation}\label{1.11}
\| u \|_{L^{6}(\mathbb{R})}^{6} \leq 3 (\frac{\| u \|_{L^{2}}^{2}}{\| Q \|_{L^{2}}^{2}})^{2} \| u_{x} \|_{L^{2}}^{2},
\end{equation}
where
\begin{equation}\label{1.12}
Q(x) = (\frac{3}{\cosh(2x)^{2}})^{1/4}.
\end{equation}
Therefore, if $\| u_{0} \|_{L^{2}} < \| Q \|_{L^{2}}$, then $(\ref{1.11})$ implies
\begin{equation}\label{1.13}
E(u(t)) \gtrsim_{\| u_{0} \|_{L^{2}}} \| u(t) \|_{\dot{H}^{1}}^{2},
\end{equation}
which implies global well-posedness of $(\ref{1.1})$ with initial data $u_{0} \in H^{1}$ and $\| u_{0} \|_{L^{2}} < \| Q \|_{L^{2}}$. Furthermore, the identities
\begin{equation}
\frac{d}{dt} \int x^{2} |u(t,x)|^{2} dx = 4 Im \int x \overline{u(t,x)} u_{x}(t,x) dx,
\end{equation}
and
\begin{equation}
\frac{d^{2}}{dt^{2}} \int x^{2} |u(t,x)|^{2} dx = 16 E(u(t)),
\end{equation}
imply scattering for $(\ref{1.1})$ with initial data in $u_{0} \in H^{1}(\mathbb{R}) \cap \Sigma = \{ u : \int x^{2} |u(x)|^{2} dx < \infty \}$, $\| u_{0} \|_{L^{2}} < \| Q \|_{L^{2}}$.

More recently, \cite{dodson2015global} and \cite{dodson2016global} proved that $(\ref{1.1})$ is globally well-posed and scattering for any initial data $u_{0} \in L^{2}$, $\| u_{0} \|_{L^{2}} < \| Q \|_{L^{2}}$. The proof used the concentration compactness result of \cite{keraani2006blow} and \cite{tao2008minimal} which states that if $u(t)$ is a blowup solution to $(\ref{1.1})$ of minimal mass, and if $t_{n}$ is a sequence of times approaching $\sup(I)$, and if $u$ blows up forward in time on the maximal interval of existence $I$, then $u(t_{n}, x)$ has a subsequence that converges in $L^{2}$, up to the symmetries of $(\ref{1.1})$. Using this fact, \cite{dodson2015global} proved that if $u$ is a minimal mass blowup solution to $(\ref{1.1})$, then there exists a sequence $t_{n}' \rightarrow \sup(I)$, for which $E(v_{n}) \searrow 0$, where $v_{n}$ is a good approximation of $u(t_{n}', x)$, acted on by appropriate symmetries. Since $(\ref{1.13})$ implies that the only $u$ with mass less than $\| Q \|_{L^{2}}^{2}$ and zero energy is $u \equiv 0$, and the small data scattering result implies that the zero solution is stable under small perturbations, there cannot exist a minimal mass blowup solution to $(\ref{1.1})$ with mass less than $\| Q \|_{L^{2}}^{2}$.\medskip

When $\| u \|_{L^{2}} = \| Q \|_{L^{2}}$, $(\ref{1.11})$ only implies that $E(u) \geq 0$. The $Q(x)$ in $(\ref{1.12})$ is the unique, positive solution to
\begin{equation}\label{1.14}
Q_{xx} + Q^{5} = Q.
\end{equation}
See \cite{berestycki1981ode}, \cite{MR512091}, \cite{strauss1977existence}, and \cite{kwong1989uniqueness} for existence and uniqueness of a ground state solution in general dimensions. Also observe that by the Pohozaev identity,
\begin{equation}\label{1.16}
E(Q) = \frac{1}{2} \int (Q - Q_{xx} - Q^{5})(\frac{Q}{2} + x Q_{x}) dx = 0.
\end{equation} 
Up to the scaling $(\ref{1.2})$, multiplication by a modulus one constant, and translation in space, $Q$ is the unique minimizer of the energy functional with mass $\| Q \|_{L^{2}}$. See \cite{cazenave1982orbital} and \cite{weinstein1986structure}.\medskip

It is straightforward to verify that $(\ref{1.12})$ solves $(\ref{1.14})$, and that $e^{it} Q$ solves $(\ref{1.1})$. Since $\| e^{it} Q \|_{L^{6}}$ is constant for all $t \in \mathbb{R}$, $e^{it} Q$ blows up both forward and backward in time. Furthermore, the pseudoconformal transformation of $e^{it} Q(x)$,
\begin{equation}\label{1.15}
u(t,x) = \frac{1}{t^{1/2}} e^{\frac{-i}{t} + \frac{i x^{2}}{4t}} Q(\frac{x}{t}), \qquad t > 0,
\end{equation}
is a solution to $(\ref{1.1})$ that blows up as $t \searrow 0$, and scatters as $t \rightarrow \infty$. Note that the mass is preserved under the pseudoconformal transformation of $e^{it} Q$.\medskip

It has long been conjecture that, up to symmetries of equation $(\ref{1.1})$, the only non-scattering solutions to $(\ref{1.1})$ are the soliton $e^{it} Q$ and the pseudoconformal transformation of the soliton, $(\ref{1.15})$. Partial progress has been made in this direction.\begin{theorem}
If $u_{0} \in H^{1}$, $\| u_{0} \|_{L^{2}} = \| Q \|_{L^{2}}$ and the solution $u(t)$ to $(\ref{1.1})$ blows up in finite time $T > 0$, then $u(t,x)$ is equal to $(\ref{1.15})$, up to symmetries of $(\ref{1.1})$. 
\end{theorem}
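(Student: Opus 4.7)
My plan is to follow the approach of Merle: use the pseudoconformal symmetry of (1.1) to convert the finite-time blow-up solution, of mass $\|Q\|_{L^{2}}$, into a global, non-scattering solution of the same mass, and then invoke a rigidity argument to identify the latter with the soliton $e^{is}Q$. Inverting the transformation then identifies $u$ with (1.15) up to the symmetries of (1.1).

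The first step is compactness modulo symmetries. By (1.7), $\|u(t)\|_{\dot{H}^{1}} \to \infty$ as $t \nearrow T$; combining this with mass and energy conservation and the sharp Gagliardo-Nirenberg inequality (1.11)--(1.12), for any sequence $t_{n} \nearrow T$ there exist scales $\lambda_{n} \sim \|u(t_{n})\|_{\dot{H}^{1}}^{-1}$, translations $x_{n} \in \mathbb{R}$ and phases $\gamma_{n} \in \mathbb{R}$ such that, along a subsequence,
\[
\lambda_{n}^{1/2}\, e^{i\gamma_{n}}\, u(t_{n},\, \lambda_{n}\cdot + x_{n}) \to Q \qquad \text{in } H^{1}(\mathbb{R}).
\]
Only a single profile can appear because any second profile would force the mass above $\|Q\|_{L^{2}}^{2}$. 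Using Galilean invariance, I next replace $u$ by $e^{-ix\xi_{0} + it\xi_{0}^{2}}\, u(t,\, x - 2t\xi_{0})$, choosing $\xi_{0}$ so that the conserved momentum $P(u) = \mathrm{Im}\int \bar{u}\, u_{x}\, dx$ vanishes. A refinement of the compactness argument then shows the concentration point is unique: $x_{n} \to x^{*}$ and $|u(t,\cdot)|^{2} \rightharpoonup \|Q\|_{L^{2}}^{2}\, \delta_{x^{*}}$ weakly as measures as $t \to T^{-}$. After a further spatial translation I may assume $x^{*} = 0$.

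The central technical step is to upgrade the regularity from $H^{1}$ to $\Sigma = \{f \in L^{2} : xf \in L^{2}\}$. Formally the virial identity gives $V''(t) = 16\, E(u_{0})$ for $V(t) = \int x^{2}\, |u(t,x)|^{2}\, dx$, and single-point concentration suggests $V(t) \to 0$ and $V'(t) \to 0$ as $t \to T^{-}$, whence $V(0) = 8\, E(u_{0})\, T^{2}$. To make this rigorous when $u_{0} \in H^{1} \setminus \Sigma$, I would work with truncated virial functionals $V_{R}(t) = \int \phi_{R}(x)\, |u(t,x)|^{2}\, dx$ for smooth cutoffs $\phi_{R}$ approximating $x^{2}$ on $|x| \leq R$, compute $V_{R}''(t)$ using only $H^{1}$ regularity of $u$, and pass to the limit $R \to \infty$ using the mass concentration at the origin to control boundary terms. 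I expect this to be the principal obstacle: if $u_{0}$ were assumed a priori in $\Sigma$, the identity would be immediate, but here finite variance must be extracted from the blow-up dynamics themselves.

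Once $u_{0} \in \Sigma$ is established, I apply the pseudoconformal transformation, obtaining a new solution $v(s, y)$ to (1.1) on an interval of the form $(s_{0}, \infty)$ with $\|v\|_{L^{2}} = \|Q\|_{L^{2}}$, in which the finite-time blow-up of $u$ at $t = T$ becomes global forward existence of $v$ as $s \to \infty$ with $\|v(s)\|_{\dot{H}^{1}}$ bounded. Because the $L^{6}_{t,x}$ norm is preserved (up to constants) under the pseudoconformal transform, $v$ is non-scattering, and the compactness argument above applied to $v$ shows it is compact modulo symmetries. A Liouville/rigidity argument, using energy conservation and the virial identity for $v$, then forces $v(s, y) = e^{i(s + \gamma_{0})}\, Q(y + y_{0})$ up to the scaling (1.2). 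Inverting the pseudoconformal transformation identifies $u(t,x)$ with (1.15) up to the symmetries of (1.1), completing the proof.
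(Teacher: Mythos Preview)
The paper does not give its own proof of this theorem; the entire proof environment is a citation to Merle's papers \cite{merle1992uniqueness} and \cite{merle1993determination}. Your proposal, by contrast, sketches the actual argument from those references, and the outline is essentially faithful to Merle's method: concentration--compactness to get convergence to $Q$ modulo symmetries, Galilean reduction to zero momentum and identification of a single concentration point, the truncated-virial argument to force $u_{0}\in\Sigma$ (which you correctly flag as the main technical obstacle), and then the pseudoconformal transformation to a global solution $v$.

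One imprecision worth noting: the final identification of $v$ with the soliton in Merle's argument is not really a dynamical Liouville/rigidity step of the kind used later in this paper. Once $u_{0}\in\Sigma$ and the exact virial identity $V(t)=8E(u_{0})(T-t)^{2}$ are in hand, the pseudoconformal transform produces $v\in H^{1}$ with $\|v\|_{L^{2}}=\|Q\|_{L^{2}}$ and $E(v)=0$; the sharp Gagliardo--Nirenberg inequality $(\ref{1.11})$ together with the characterization of its extremizers then forces $v(s_{0})$ to be $Q$ up to scaling, phase, and translation at a single time, hence for all times by uniqueness. No further compactness or rigidity for $v$ is required at that stage.
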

\begin{proof} 
This result was proved in \cite{merle1992uniqueness} and \cite{merle1993determination}, and was proved for the focusing, mass-critical nonlinear Schr{\"o}dinger equation in every dimension.
\end{proof}

For the mass-critical nonlinear Schr{\"o}dinger equation in higher dimensions with radially symmetric initial data, \cite{killip2009characterization} proved
\begin{theorem}
If $\| u_{0} \|_{L^{2}} = \| Q \|_{L^{2}}$ is radially symmetric, and $u$ is the solution to the focusing, mass-critical nonlinear Schr{\"o}dinger equation with initial data $u_{0}$, and $u$ blows up both forward and backward in time, then $u$ is equal to the soliton, up to symmetries of the mass-critical nonlinear Schr{\"o}dinger equation.
\end{theorem}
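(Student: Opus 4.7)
The plan is to combine the minimal-mass concentration compactness of Keraani--Tao with a classification of the resulting almost-periodic profile, and then conclude via the variational characterization of $Q$. First, since $\|u_0\|_{L^2} = \|Q\|_{L^2}$ and $u$ fails to scatter in either time direction, the Keraani--Tao reduction furnishes a scale function $N(t) > 0$ and a phase $\theta(t)$ such that
\[
\{ N(t)^{-d/2} e^{i\theta(t)} u(t, \cdot / N(t)) : t \in \mathbb{R} \}
\]
is precompact in $L^2(\mathbb{R}^d)$. Radiality eliminates the translation parameter, and at the critical mass the Galilean boost can be normalized away, so no further symmetries intervene.

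Next, I would run the standard trichotomy on the scale function $N(t)$ --- self-similar, double high-to-low frequency cascade, or soliton-like --- and use the hypothesis of two-sided blow-up to collapse it to the soliton regime where $N(t)$ is bounded above and below. A self-similar profile can concentrate only in one time direction, so two-sided blow-up immediately excludes it; the double-cascade scenario is ruled out by a frequency-localized Morawetz / additional-regularity argument which exploits the radial Sobolev embedding to upgrade the $L^2$ precompactness to $H^1$, and in fact to $\Sigma$, precompactness. At this stage $u(t) \in H^1 \cap \Sigma$ uniformly in $t$, so $E(u)$ is finite and conserved and the virial quantity $V(t) := \int |x|^{2} |u(t,x)|^{2}\, dx$ is well-defined and bounded, the boundedness coming from uniform spatial concentration in the soliton regime.

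Now the virial identity
\[
\frac{d^{2}}{dt^{2}} V(t) = 16\, E(u)
\]
gives $V(t) = V(0) + V'(0) t + 8\, E(u)\, t^{2}$; boundedness of $V$ on all of $\mathbb{R}$ forces $E(u) = 0$ (and $V'(0) = 0$). Combined with $\|u(0)\|_{L^2} = \|Q\|_{L^2}$, the Weinstein--Cazenave--Lions variational characterization of the ground state then forces $u(0,x)$ to equal $Q$ up to scaling, phase, and translation; radiality removes the translation, yielding $u(0,x) = \lambda^{d/2} e^{i\alpha} Q(\lambda x)$ for some $\lambda > 0$ and $\alpha \in \mathbb{R}$. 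Uniqueness for the Cauchy problem propagates this identity to all times, giving $u(t,x) = \lambda^{d/2} e^{i\lambda^{2} t + i\alpha} Q(\lambda x)$, i.e.\ the soliton up to symmetries of the equation.

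The main obstacle I expect is the trichotomy / additional-regularity step: ruling out the double-cascade scenario and promoting $L^2$ compactness to $\Sigma$ compactness is where essentially all of the technical work in \cite{killip2009characterization} resides, and where the radial assumption (both for killing translations and for providing improved Sobolev embedding in dimensions $d \geq 2$) is crucial --- without it, the frequency-localized Morawetz estimate that eliminates the cascade scenario breaks down.
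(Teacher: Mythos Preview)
The paper does not prove this theorem; it is quoted as a background result attributed to \cite{killip2009characterization}, with no proof environment following the statement. So there is nothing in the paper to compare your proposal against.

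That said, your outline is a faithful high-level sketch of the Killip--Li--Visan argument: minimal-mass almost periodicity, reduction of the symmetry group via radiality, the three-enemies trichotomy, promotion to $H^1$ and then $\Sigma$ regularity, and the virial computation forcing $E(u)=0$. One small correction: the self-similar scenario is not excluded simply because ``it can concentrate only in one time direction''; in the actual argument the self-similar enemy is killed, like the cascade, by the additional-regularity step --- once the solution is shown to lie in $H^1$, the self-similar scaling $N(t)\sim |t|^{-1/2}$ is incompatible with a conserved, finite $\dot H^1$ norm. The two-sided blow-up hypothesis enters primarily in setting up the global-in-time almost periodic solution and in guaranteeing that the virial quantity $V(t)$ is bounded on all of $\mathbb{R}$, which is what forces the quadratic term $8E(u)t^2$ to vanish. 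Your identification of the additional-regularity / cascade-exclusion step as the main technical obstacle is exactly right.
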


In this paper we completely resolve this conjecture in one dimension, showing that the only blowup solutions to $(\ref{1.1})$ with mass $\| u_{0} \|_{L^{2}}^{2} = \| Q \|_{L^{2}}^{2}$ are the soliton and the pseudoconformal transformation of the soliton. This result should also hold in higher dimensions, which will be addressed in a forthcoming paper.

It is convenient to begin by considering solutions symmetric in $x$ first.
\begin{theorem}\label{t1.1}
The only symmetric solutions to $(\ref{1.1})$ with mass $\| u_{0} \|_{L^{2}} = \| Q \|_{L^{2}}$ that blow up forward in time are the family of soliton solutions
\begin{equation}\label{1.16}
e^{-i \theta} e^{i \lambda^{2} t} \lambda^{1/2} Q(\lambda x), \qquad \lambda > 0, \qquad \theta \in \mathbb{R},
\end{equation}
and the pseudoconformal transformation of the soliton solution,
\begin{equation}\label{1.17}
\frac{1}{(T - t)^{1/2}} e^{i \theta} e^{\frac{i x^{2}}{4(t - T)}} e^{i \frac{\lambda^{2}}{t - T}} Q(\frac{\lambda x}{T - t}), \qquad \lambda > 0, \qquad \theta \in \mathbb{R}, \qquad T \in \mathbb{R}, \qquad t < T.
\end{equation}
\end{theorem}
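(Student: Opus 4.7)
The plan is to combine the concentration-compactness characterization of minimal mass blowup solutions with a modulation-based rigidity argument, reducing the general $L^2$ statement to Merle's $H^1$ rigidity via a regularity upgrade.

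First, I would invoke the Keraani--Tao concentration compactness theorem together with the sub-threshold scattering results of \cite{dodson2015global} and \cite{dodson2016global}: any forward-in-time blowup solution $u$ with $\|u_0\|_{L^2}=\|Q\|_{L^2}$ has forward orbit precompact in $L^2$ modulo the symmetries of \eqref{1.1}. The evenness hypothesis removes the spatial translation and Galilean boost parameters, since the even subspace of $L^2$ is preserved by the flow and is not stabilized by nontrivial translations or boosts. Hence there exist continuous functions $\lambda(t)>0$ and $\gamma(t)\in\mathbb{R}$ such that
\[
u(t,x)=\lambda(t)^{1/2}\,e^{i\gamma(t)}\,V(t,\lambda(t)x),
\]
with $\{V(t,\cdot)\}$ precompact in the even subspace of $L^{2}(\mathbb{R})$ on the forward maximal interval of existence.

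Second, I would split according to whether $\sup(I)<\infty$ or $\sup(I)=\infty$. In the finite-time case, the blowup criterion from local theory forces $\lambda(t)\to\infty$ as $t\nearrow\sup(I)$, and the aim is to identify $u$ with \eqref{1.17}. In the infinite-time case, I would argue $\lambda(t)$ stays bounded (otherwise one can extract enough dispersion to force scattering and contradict blowup), from which precompactness pushes $V$ onto the $Q$-orbit and yields \eqref{1.16}. The pseudoconformal transformation interchanges these two cases, which I would exploit: after applying it in the finite-time case one works with a new solution of bounded scale, and performs a modulation decomposition $u=e^{i\gamma}\lambda^{1/2}(Q+\varepsilon)(\lambda\,\cdot\,)$ with the standard orthogonality conditions, analyzed via the spectral properties of the linearized operator at $Q$.

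The main obstacle is the regularity gap between the hypothesis ($u_0 \in L^2$) and the input required by Merle's rigidity ($u_0 \in H^1$). At the critical mass $\|Q\|_{L^2}$ the Gagliardo--Nirenberg inequality \eqref{1.11} degenerates, so energy conservation provides no $\dot{H}^1$ bound. The way around this, in the spirit of the author's long-time Strichartz program for $\|u_0\|_{L^2}<\|Q\|_{L^2}$, is to leverage the almost-periodicity modulo symmetries to establish frequency decay of the profile $V(t,\cdot)$, thereby upgrading $L^2$ precompactness to $H^1$ precompactness along a suitable sequence $t_n\to\sup(I)$. Once the profile is known to lie in $H^1$, Merle's theorem closes the finite-time case; in the infinite-time case a Virial or Lyapunov monotonicity --- with evenness again eliminating the troublesome first-order translation drift --- forces $V(t,\cdot)$ to converge to a soliton, and uniqueness for the Cauchy problem completes the identification with \eqref{1.16}.
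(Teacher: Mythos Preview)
Your proposal has the right skeleton (sequential compactness onto the $Q$-orbit, modulation decomposition, split by $\sup(I)$, pseudoconformal link between the two cases), but the central technical step is misidentified. Your plan is to upgrade $L^2$ precompactness to $H^1$ along a subsequence and then feed the finite-time case into Merle's $H^1$ rigidity. The paper explicitly notes that it does \emph{not} establish finiteness of the energy in the finite-time case, and that if one could, the result would follow immediately from \cite{merle1993determination}. Instead, the paper handles finite-time blowup by applying the pseudoconformal transformation to produce an infinite-time blowup solution, and only in that infinite-time setting does it eventually force $E(u_0)=0$---but this comes at the very end, after all the machinery below is already in place, not as an input.

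What is missing from your outline is the engine that replaces energy coercivity when $u\notin H^1$: a \emph{frequency-localized Morawetz estimate} (the paper's Theorem~\ref{t10.1}), built on top of long-time Strichartz estimates and an almost-conservation law for $E(P_{\leq k}u)$, which gives
\[
\int_J \|\epsilon(s)\|_{L^2}^2\,ds \;\leq\; 3\,(\epsilon_2,\tfrac{1}{2}Q+xQ_x)\big|_{\partial J} \;+\; O(T^{-9}).
\]
This bound is what drives $\|\epsilon(s)\|_{L^2}\to 0$ quantitatively and, combined with a virial identity of Merle--Rapha\"el type, yields almost-monotonicity of $\lambda(s)$. Your claim that in the infinite-time case ``$\lambda(t)$ stays bounded, otherwise one can extract enough dispersion to force scattering'' is not available by a soft argument at threshold mass; the control of $\lambda$ is a hard consequence of the Morawetz/virial loop, not a preliminary observation. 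Without that loop, neither the regularity upgrade you propose nor the identification with the soliton can be closed.
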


The proof of Theorem $\ref{t1.1}$ will occupy most of the paper. Once we have proved Theorem $\ref{t1.1}$, we will remove the symmetry assumption on $u_{0}$, proving
\begin{theorem}\label{t1.2}
The only solutions to $(\ref{1.1})$ with mass $\| u_{0} \|_{L^{2}} = \| Q \|_{L^{2}}$ that blow up forward in time are the family of soliton solutions
\begin{equation}\label{1.16.1}
e^{-i \theta - it \xi_{0}^{2}} e^{i \lambda^{2} t} e^{ix \xi_{0}} \lambda^{1/2} Q(\lambda (x - 2 t \xi_{0}) + x_{0}), \qquad \lambda > 0, \qquad \theta \in \mathbb{R}, \qquad x_{0} \in \mathbb{R}, \qquad \xi_{0} \in \mathbb{R}, \qquad \xi_{0} \in \mathbb{R},
\end{equation}
and the pseudoconformal transformation of the family of solitons,
\begin{equation}\label{1.17.1}
\aligned
\frac{1}{(T - t)^{1/2}} e^{i \theta} e^{\frac{i (x - \xi_{0})^{2}}{4(t - T)}} e^{i \frac{\lambda^{2}}{t - T}} Q(\frac{\lambda (x - \xi_{0}) - (T - t) x_{0}}{T - t}), \\ \text{where} \qquad \lambda > 0, \qquad \theta \in \mathbb{R}, \qquad x_{0} \in \mathbb{R}, \qquad \xi_{0} \in \mathbb{R}, \qquad T \in \mathbb{R}, \qquad t < T.
\endaligned
\end{equation}
Applying time reversal symmetry to $(\ref{1.1})$, this theorem completely settles the question of qualitative behavior of solutions to $(\ref{1.1})$ for initial data satisfying $\| u_{0} \|_{L^{2}} = \| Q \|_{L^{2}}$.
\end{theorem}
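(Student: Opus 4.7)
The plan is to reduce Theorem \ref{t1.2} to Theorem \ref{t1.1} by normalizing the Galilean and translation degrees of freedom of the solution, exploiting conservation of momentum. Given a blowup solution $u$ to \eqref{1.1} with $\|u_0\|_{L^{2}} = \|Q\|_{L^{2}}$, the momentum $P(u) := \operatorname{Im}\int \overline{u}\, u_{x}\, dx$ is conserved in time. Setting $\xi_{0} := P(u)/\|Q\|_{L^{2}}^{2}$ and
\[
v_{1}(t,x) := e^{-ix\xi_{0} + it\xi_{0}^{2}}\, u(t,\, x + 2t\xi_{0}),
\]
one verifies directly from \eqref{1.1} that $v_{1}$ is again a solution to the equation with the same mass, the same $L_{t,x}^{6}$-norm on every time interval (so $v_{1}$ blows up forward in time if and only if $u$ does), and with $P(v_{1}) \equiv 0$.

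Since $P(v_{1}) \equiv 0$, the identity $\frac{d}{dt}\int x|v_{1}(t,x)|^{2}\,dx = 2\,P(v_{1}) = 0$ shows that the first moment is conserved; set $x_{0} := \|Q\|_{L^{2}}^{-2}\int x|v_{1}(0,x)|^{2}\,dx$ and define $v(t,x) := v_{1}(t,\, x+x_{0})$. Then $v$ is a blowup (forward in time) solution to \eqref{1.1} with mass $\|Q\|_{L^{2}}^{2}$, zero momentum, and vanishing first spatial moment for all $t$ in its maximal interval of existence.

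The main step is to show that $v$ satisfies the hypothesis of Theorem \ref{t1.1}, i.e. that $v(t,\cdot)$ is symmetric in $x$. Close inspection of the proof of Theorem \ref{t1.1} reveals that the symmetry hypothesis on $u_{0}$ was used there only to force the Galilean and translation modulation parameters to vanish in the Merle--Raphael-type decomposition of a threshold-mass blowup solution. Since the normalization above has already made the momentum and the first spatial moment vanish identically, these modulation parameters are automatically zero, and the entire argument of Theorem \ref{t1.1} applies to $v$ verbatim, yielding that $v$ is of the form \eqref{1.16} or \eqref{1.17} for suitable parameters $\lambda>0$, $\theta\in\mathbb{R}$, and $T\in\mathbb{R}$.

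Finally, unwinding the Galilean boost $\xi_{0}$ and the spatial translation $x_{0}$ converts \eqref{1.16} and \eqref{1.17} into precisely the four-parameter families \eqref{1.16.1} and \eqref{1.17.1}, completing the proof. The main obstacle is the third step: verifying that the symmetry hypothesis in Theorem \ref{t1.1} was, in the proof, only a convenient way of encoding the vanishing of momentum and of the first moment, so that once these are enforced directly via the Galilean and translation symmetries, no genuine use of reflection symmetry remains in the argument.
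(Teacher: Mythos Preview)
Your reduction has two genuine gaps.

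\textbf{First, the conserved quantities you invoke are not available.} The theorem is stated for $u_{0}\in L^{2}$ only; the paper stresses repeatedly that $u$ need not lie in $H^{s}$ for any $s>0$, and the whole machinery of long-time Strichartz estimates, Fourier truncation of the energy, and frequency-localized Morawetz is built precisely to cope with this. For a merely $L^{2}$ solution the momentum $P(u)=\operatorname{Im}\int \bar u\,u_{x}\,dx$ is not defined, nor is the first moment $\int x|u|^{2}\,dx$. So you cannot set $\xi_{0}=P(u)/\|Q\|_{L^{2}}^{2}$ or $x_{0}=\|Q\|_{L^{2}}^{-2}\int x|v_{1}|^{2}\,dx$ to begin with, and the conservation laws you use for them are not available either.

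\textbf{Second, even if those quantities were defined, vanishing momentum and vanishing first moment do not force the modulation parameters $\xi(t)$ and $x(t)$ to vanish.} With the four-parameter decomposition $u=e^{-i\gamma}e^{-ix\xi/\lambda}\lambda^{-1/2}(Q+\epsilon)((x-x(t))/\lambda)$, computing $P(u)=0$ only gives $\xi(t)/\lambda(t)=O(\|\epsilon\|)$, not $\xi(t)\equiv 0$; likewise the first moment only pins $x(t)$ up to an $O(\|\epsilon\|)$ error. The proof of Theorem~\ref{t1.1} genuinely uses $\xi(t)=x(t)=0$ \emph{identically}: the Littlewood--Paley cutoffs in the long-time Strichartz estimates and in the Morawetz argument are centered at frequency zero, and the weight $\phi(x)$ is centered at the spatial origin. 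A nonzero drift, even one of size $\|\epsilon\|$, has to be tracked through every step. That is exactly what the paper does in Section~10: it does not reduce to the symmetric case but reruns the entire argument with the four-parameter decomposition (Theorem~\ref{t15.3}), deriving modulation equations for $\xi_{s}$ and $x_{s}$ (equations \eqref{15.39}, \eqref{15.41}), and on each interval where the Morawetz estimate is applied it performs a \emph{local} Galilean boost setting $\xi=0$ at one endpoint and a translation setting $x=0$ at the other, then controls the resulting drift via those modulation equations. Your hoped-for shortcut---one global normalization followed by a black-box application of Theorem~\ref{t1.1}---does not exist.
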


The reader should see \cite{nakanishi2011invariant} and the references therein for this result for the Klein--Gordon equation.

\section{Reductions of a symmetric blowup solution}
Let $u$ be a symmetric blowup solution to $(\ref{1.1})$ with mass $\| u_{0} \|_{L^{2}} = \| Q \|_{L^{2}}$. Defining the distance to the two dimensional manifold of symmetries acting on the soliton $(\ref{1.12})$ by
\begin{equation}\label{3.0.1}
\inf_{\lambda > 0, \gamma \in \mathbb{R}} \| u_{0}(x) - e^{i \gamma} \lambda^{1/2} Q(\lambda x) \|_{L^{2}},
\end{equation}
there exist $\lambda_{0} > 0$ and $\gamma_{0} \in \mathbb{R}$ where this infimum is attained. Indeed,

\begin{lemma}\label{l3.3}
There exist $\lambda_{0} > 0$ and $\gamma_{0} \in \mathbb{R}$ such that
\begin{equation}\label{3.17}
\| u_{0}(x) - e^{-i \gamma_{0}} \lambda_{0}^{-1/2} Q(\lambda_{0}^{-1} x) \|_{L^{2}(\mathbb{R})} = \inf_{\gamma, \lambda} \| u_{0}(x) - e^{-i \gamma} \lambda^{-1/2} Q(\lambda^{-1} x) \|_{L^{2}}.
\end{equation}
\end{lemma}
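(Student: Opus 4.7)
The plan is a direct variational/compactness argument: first eliminate the phase $\gamma$ analytically, then show that the remaining scalar function of $\lambda$ is continuous on $(0,\infty)$ and decays to $0$ at both ends, so it attains its supremum.

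First I would expand the $L^{2}$ distance. Since $\|u_0\|_{L^2}=\|Q\|_{L^2}$ and $Q$ is real,
\begin{equation*}
\|u_0 - e^{-i\gamma}\lambda^{-1/2}Q(\lambda^{-1}x)\|_{L^2}^{2} = 2\|Q\|_{L^2}^{2} - 2\,\mathrm{Re}\!\left(e^{i\gamma}\int u_0(x)\,\lambda^{-1/2}Q(\lambda^{-1}x)\,dx\right).
\end{equation*}
For fixed $\lambda$, the maximum over $\gamma \in \mathbb{R}$ of the real part equals the modulus, so minimizing the distance is equivalent to maximizing
\begin{equation*}
g(\lambda) := \left| \int u_0(x)\,\lambda^{-1/2} Q(\lambda^{-1}x)\,dx \right|, \qquad \lambda > 0,
\end{equation*}
and any $\gamma_0 \in \mathbb{R}$ with $e^{i\gamma_0}\int u_0(x)\lambda_0^{-1/2}Q(\lambda_0^{-1}x)dx = g(\lambda_0)$ realizes the optimum.

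Next I would verify the two analytic properties of $g$. Continuity on $(0,\infty)$: for $\lambda_n \to \lambda >0$, the family $\lambda_n^{-1/2}Q(\lambda_n^{-1}\cdot)$ converges to $\lambda^{-1/2}Q(\lambda^{-1}\cdot)$ in $L^{2}(\mathbb{R})$ by dominated convergence (using the rapid decay of $Q$ from \eqref{1.12} to construct an $L^{2}$ majorant on any compact $\lambda$-interval), so $g(\lambda_n)\to g(\lambda)$ by Cauchy--Schwarz. Decay at the endpoints: the rescaled profiles $\lambda^{-1/2}Q(\lambda^{-1}x)$ have constant $L^{2}$ norm $\|Q\|_{L^{2}}$ but converge weakly to $0$ in $L^{2}$ as $\lambda \to 0$ and as $\lambda \to \infty$. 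For test functions $\varphi \in C_{c}^{\infty}(\mathbb{R})$ this follows from the change of variables $y=x/\lambda$:
\begin{equation*}
\int \varphi(x)\,\lambda^{-1/2}Q(\lambda^{-1}x)\,dx = \int \varphi(\lambda y)\,\lambda^{1/2}Q(y)\,dy,
\end{equation*}
which tends to $0$ in either limit by dominated convergence. Weak $L^{2}$ convergence then extends to all of $L^{2}$ by the uniform $L^{2}$ bound and density of $C_{c}^{\infty}$, giving $g(\lambda)\to 0$ as $\lambda\to 0^{+}$ and as $\lambda\to\infty$.

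Combining these, $g$ is a continuous nonnegative function on $(0,\infty)$, bounded above by $\|Q\|_{L^{2}}^{2}$ (Cauchy--Schwarz), and vanishing at both ends. Hence $g$ attains its supremum at some $\lambda_0 \in (0,\infty)$; choosing $\gamma_0$ as above yields the pair realizing the infimum in \eqref{3.17}. The only place one might worry is whether the infimum could be attained only in the limit $\lambda\to 0$ or $\lambda\to\infty$, but the endpoint decay of $g$ rules exactly this out, so no separate argument is needed. If desired, one can observe that if $g \equiv 0$ the infimum is trivially attained at any $(\lambda,\gamma)$, while otherwise the maximum of $g$ is strictly positive and is attained in the interior by the continuity plus decay just established.
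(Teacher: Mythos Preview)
Your proof is correct and follows essentially the same compactness strategy as the paper: continuity of the distance in the parameters, limits $2\|Q\|_{L^2}^2$ (equivalently $g\to 0$) as $\lambda\to 0^+$ or $\lambda\to\infty$, and a separate treatment of the degenerate case where the infimum equals $2\|Q\|_{L^2}^2$. Your version is a bit more streamlined because you eliminate $\gamma$ analytically via $\sup_\gamma \mathrm{Re}(e^{i\gamma}z)=|z|$ and reduce to a one-variable problem, whereas the paper keeps both variables and uses compactness of $[\lambda_1,\lambda_2]\times[0,2\pi]$ directly; either way the argument is the same in substance.
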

\begin{proof}
Since $Q$, along with all its derivatives, is rapidly decreasing,
\begin{equation}\label{3.18}
\| u_{0}(x) - e^{-i \gamma} \lambda^{-1/2} Q(\lambda^{-1} x) \|_{L^{2}}^{2},
\end{equation}
is differentiable, and hence continuous as a function of $\lambda$ and $\gamma$.\medskip

Next, by the dominated convergence theorem,
\begin{equation}\label{3.19}
\aligned
\lim_{\lambda \nearrow \infty} \inf_{\gamma \in [0, 2 \pi]} \| u_{0}(x) - e^{-i \gamma} \lambda^{-1/2} Q(\lambda^{-1} x) \|_{L^{2}}^{2} \\ = \| u_{0} \|_{L^{2}}^{2} + \| Q \|_{L^{2}}^{2} - 2 \lim_{\lambda \nearrow \infty} \sup_{\gamma \in [0, 2 \pi]} |(e^{-i \gamma} \lambda^{-1/2} Q(\lambda^{-1} x), u_{0}(x))_{L^{2}}| = 2 \| Q \|_{L^{2}}^{2}.
\endaligned
\end{equation}
Here, $(f, g)_{L^{2}}$ denotes the $L^{2}$-inner product
\begin{equation}\label{3.19.1}
(f, g)_{L^{2}} = Re \int f(x) \overline{g(x)} dx.
\end{equation}
Meanwhile, rescaling $(\ref{3.19})$,
\begin{equation}\label{3.19.2}
(e^{-i \gamma} \lambda^{-1/2} Q(\lambda^{-1} x), u_{0}(x))_{L^{2}} = (Q(x), e^{i \gamma} \lambda^{1/2} u_{0}(\lambda x))_{L^{2}},
\end{equation}
and therefore,
\begin{equation}\label{3.20}
\aligned
\lim_{\lambda \searrow 0} \inf_{\gamma \in [0, 2 \pi]} \| u_{0}(x) - e^{-i \gamma} \lambda^{-1/2} Q(\lambda^{-1} x) \|_{L^{2}}^{2} = 2 \| Q \|_{L^{2}}^{2}.
\endaligned
\end{equation}

Finally, the polarization identity,
\begin{equation}\label{3.21}
\| u_{0}(x) - \lambda^{-1/2} Q(\lambda^{-1} x) \|_{L^{2}}^{2} + \| u_{0}(x) + \lambda^{-1/2} Q(\lambda^{-1} x) \|_{L^{2}}^{2} = 4 \| Q \|_{L^{2}}^{2},
\end{equation}
implies that
\begin{equation}\label{3.22}
\frac{1}{2 \pi} \int_{0}^{2 \pi} \| u_{0}(x) - e^{-i \gamma} \lambda^{-1/2} Q(\lambda^{-1} x) \|_{L^{2}}^{2} d\gamma = 2 \| Q \|_{L^{2}}^{2}.
\end{equation}
If, for all $\lambda > 0$,
\begin{equation}\label{3.23}
\inf_{\gamma \in [0, 2\pi]} \| u_{0}(x) - e^{-i \gamma} \lambda^{-1/2} Q(\lambda^{-1} x) \|_{L^{2}}^{2} = 2 \| Q \|_{L^{2}}^{2},
\end{equation}
then $(\ref{3.22})$ implies
\begin{equation}\label{3.24}
\| u_{0}(x) - e^{-i \gamma} \lambda^{-1/2} Q(\lambda^{-1} x) \|_{L^{2}}^{2} = 2 \| Q \|_{L^{2}}^{2}, \qquad \forall \lambda > 0, \qquad \gamma \in [0, 2 \pi].
\end{equation}
In this case simply take $\lambda_{0} = 1$ and $\gamma_{0} = 0$.
\begin{remark}
This seems very unlikely to the author, since $(\ref{3.24})$ is equivalent to the statement that there exists $\| u_{0} \|_{L^{2}} = \| Q \|_{L^{2}}$ that satisfies
\begin{equation}\label{3.24.1}
(u_{0}(x), e^{-i \gamma} \lambda^{-1/2} Q(\lambda^{-1} x))_{L^{2}} = 0, \qquad \forall \gamma \in [0, 2\pi], \qquad \forall \lambda > 0.
\end{equation}
Since it is unnecessary to the proof of Theorem $\ref{t1.1}$ to show that $(\ref{3.24.1})$ cannot happen, this question will remain unconsidered in this paper.
\end{remark}

On the other hand, if
\begin{equation}\label{3.25}
\inf_{\lambda > 0} \inf_{\gamma \in \mathbb{R}} \| u_{0}(x) - e^{-i \gamma} \lambda^{-1/2} Q(\lambda^{-1} x) \|_{L^{2}}^{2} < 2 \| Q \|_{L^{2}}^{2},
\end{equation}
then $(\ref{3.19})$ and $(\ref{3.20})$ imply that there exist $0 < \lambda_{1} < \lambda_{2} < \infty$, such that
\begin{equation}\label{3.25.1}
\inf_{\lambda > 0} \inf_{\gamma \in [0, 2\pi]} \| u_{0}(x) - e^{-i \gamma} \lambda^{-1/2} Q(\lambda^{-1} x) \|_{L^{2}}^{2} = \inf_{\lambda \in [\lambda_{1}, \lambda_{2}]} \inf_{\gamma \in [0, 2\pi]} \| u_{0}(x) - e^{-i \gamma} \lambda^{-1/2} Q(\lambda^{-1} x) \|_{L^{2}}^{2}.
\end{equation}
Since $(\ref{3.18})$ is continuous as a function of $\lambda > 0$, $\gamma \in [0, 2\pi]$, and $[\lambda_{1}, \lambda_{2}] \times [0, 2 \pi]$ is a compact set, there exists $\lambda_{0} > 0$ and $\gamma_{0} \in [0, 2\pi]$ such that
\begin{equation}\label{3.26}
\| u_{0}(x) - e^{-i \gamma_{0}} \lambda_{0}^{-1/2} Q(\lambda_{0}^{-1} x) \|_{L^{2}(\mathbb{R})} = \inf_{\gamma \in [0, 2 \pi], \lambda > 0} \| u_{0}(x) - e^{-i \gamma} \lambda^{-1/2} Q(\lambda^{-1} x) \|_{L^{2}}.
\end{equation}
This proves the lemma.
\end{proof}

Using the weak sequential convergence result of \cite{fan20182}, Theorem $\ref{t1.1}$ may be reduced to considering solutions that blow up in positive time for which $(\ref{3.0.1})$ is small for all $t > 0$.


\begin{theorem}\label{t3.1}
Let $0 < \eta_{\ast} \ll 1$ be a small, fixed constant to be defined later. If $u$ is a symmetric solution to $(\ref{1.1})$ on the maximal interval of existence $I \subset \mathbb{R}$, $\| u_{0} \|_{L^{2}} = \| Q \|_{L^{2}}$, $u$ blows up forward in time, and
\begin{equation}\label{3.13}
\sup_{t \in [0, \sup(I))} \inf_{\lambda, \gamma} \| e^{i \gamma} \lambda^{1/2} u(t, \lambda x) - Q(x) \|_{L^{2}} \leq \eta_{\ast},
\end{equation}
then $u$ is a soliton solution of the form $(\ref{1.16})$ or a pseudoconformal transformation of a soliton of the form $(\ref{1.17})$.

\end{theorem}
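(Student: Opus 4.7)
The plan is to perform a modulational analysis of $u$ near the soliton $Q$, classify the dynamics of the scale parameter by monotonicity and almost-periodicity, and identify the solution in each regime with either the rescaled soliton or its pseudoconformal transform.

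First, by hypothesis $(\ref{3.13})$ and the implicit function theorem applied near the orbit of $Q$ under scaling and phase, there exist $C^{1}$ modulation parameters $\lambda(t)>0$ and $\gamma(t)\in\mathbb{R}$ and a symmetric remainder $\epsilon(t,y)$ such that
\[
u(t,x) \;=\; e^{-i\gamma(t)}\lambda(t)^{-1/2}\bigl[\,Q(\lambda(t)^{-1}x) + \epsilon(t,\lambda(t)^{-1}x)\,\bigr], \qquad (\epsilon(t),\Lambda Q)_{L^{2}} = 0,
\]
where $\Lambda Q = \tfrac12 Q + xQ_{x}$ is the infinitesimal generator of $L^{2}$-critical scaling. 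The evenness hypothesis on $u$ eliminates the translation and Galilean modes, so no further orthogonality need be imposed. Mass conservation combined with $\|u_{0}\|_{L^{2}}=\|Q\|_{L^{2}}$ gives the identity $2(\mathrm{Re}\,\epsilon, Q)_{L^{2}} = -\|\epsilon\|_{L^{2}}^{2}$.

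Next, I would derive modulation ODEs by inserting the decomposition into $(\ref{1.1})$ and pairing the resulting equation for $\epsilon$ with $Q$ and $iQ$, obtaining a system of the schematic form
\[
\frac{\lambda'(t)}{\lambda(t)} = O(\|\epsilon(t)\|_{L^{2}}), \qquad \gamma'(t) - \lambda(t)^{-2} = O(\|\epsilon(t)\|_{L^{2}}).
\]
Weinstein's coercivity estimate on the linearized Hamiltonian around $Q$, restricted to the symmetric subspace under the orthogonality $(\epsilon,\Lambda Q)_{L^{2}} = (\epsilon, iQ)_{L^{2}} = 0$ and the mass constraint above, together with energy conservation, then controls $\|\epsilon\|_{H^{1}}$ along the trajectory of $\lambda$.

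The rigidity step is a dichotomy on the asymptotic behavior of $\lambda(t)$ as $t\nearrow\sup(I)$. When $\lambda(t)$ stays bounded both above and below, I would run a Martel--Merle-type long-time virial/monotonicity argument in the soliton-centered frame to force $\epsilon\equiv 0$, so that $u$ is an exact rescaled soliton of the form $(\ref{1.16})$. When instead $\lambda(t)\to 0$ or $\lambda(t)\to\infty$, I would apply the pseudoconformal transformation
\[
v(s,y) = \frac{1}{s^{1/2}}\, e^{iy^{2}/(4s)}\, \overline{u(s^{-1}, y/s)},
\]
which is again a solution of $(\ref{1.1})$ with the same mass and which interchanges self-similar behavior with soliton-like behavior at the level of the modulation scale. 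After at most one application of this transformation, the problem is reduced to a blowup solution of $(\ref{1.1})$ with bounded scale but finite blowup time, to which Merle's uniqueness theorem quoted in the introduction applies; inverting the transformation then yields the form $(\ref{1.17})$ for $u$.

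The hard part is closing the dichotomy in the $L^{2}$ setting: to invoke Merle's theorem one needs both $H^{1}$ regularity and finite variance $\int y^{2}|v(s,y)|^{2}\,dy < \infty$ for the pseudoconformally transformed solution, and neither follows from $u_{0}\in L^{2}$ alone. These must be extracted from the minimal-mass/almost-periodic structure of the blowup via the frequency-localized long-time Strichartz and interaction Morawetz estimates of \cite{dodson2015global} and \cite{dodson2016global}, exploited in the regime where the renormalized profile $\lambda(t)^{1/2}e^{i\gamma(t)}u(t,\lambda(t)y)$ is close to $Q$; showing that this regularity and localization survive the pseudoconformal inversion is the heart of the argument.
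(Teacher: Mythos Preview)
Your high-level architecture (modulate near $Q$, control $\epsilon$ via the linearized energy, then run a rigidity dichotomy on the scale $\lambda$) matches the paper, but the concrete route you propose at the crucial step is not the one the paper takes, and as you yourself flag, your route has a real obstruction.

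You plan to reduce the non-soliton case to Merle's $H^{1}$ uniqueness theorem by extracting $H^{1}$ regularity and finite variance for the pseudoconformally transformed solution. The paper deliberately avoids this. Instead, it builds a quantitative frequency-localized Morawetz inequality (Theorem~\ref{t10.1}) which, combined with long-time Strichartz estimates and an almost-conservation-of-energy argument for $P_{\leq k}u$, yields the key integral bound
\[
\int_{J} \|\epsilon(s)\|_{L^{2}}^{2}\,ds \;\leq\; 3\bigl(\epsilon_{2}(a),\tfrac12 Q + xQ_{x}\bigr)_{L^{2}} - 3\bigl(\epsilon_{2}(b),\tfrac12 Q + xQ_{x}\bigr)_{L^{2}} + O(T^{-9}).
\]
From this the paper extracts $L^{p}_{s}$ bounds on $\|\epsilon\|_{L^{2}}$ for all $p>1$, proves that $\lambda$ is almost monotone decreasing (via a virial identity against $y^{2}Q$, not a Martel--Merle Liouville argument), and then splits on $\sup(I)$. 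When $\sup(I)=\infty$, a further induction on the Morawetz estimate drives $E(P_{\leq k_{n}}u(0))\to 0$ directly, forcing $E(u_{0})=0$ and hence $u_{0}$ is a soliton by Gagliardo--Nirenberg. When $\sup(I)<\infty$, the pseudoconformal transform produces a solution with infinite lifespan still satisfying \eqref{3.13}, and the previous case applies. At no point is Merle's theorem invoked or is $u\in H^{1}$ established globally.

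Two smaller points. Your orthogonality choice $(\epsilon,\Lambda Q)=(\epsilon,iQ)=0$ differs from the paper's $(\epsilon,Q^{3})=(\epsilon,iQ^{3})=0$; the latter is chosen because $Q^{3}$ is the negative eigenvector of $\mathcal L$, which makes the spectral coercivity clean. And your dichotomy ``$\lambda$ bounded versus $\lambda\to 0$ or $\infty$'' is not quite the right split: the paper first proves almost-monotonicity of $\lambda$ (Theorem~\ref{t8.2}), which rules out oscillation, and then splits on whether the maximal time is finite or infinite.
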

\begin{remark}
Scaling symmetries imply that $(\ref{3.0.1})$ and the left hand side of $(\ref{3.13})$ at a fixed time are equal.
\end{remark}
\begin{proof}[Proof that Theorem $\ref{t3.1}$ implies Theorem $\ref{t1.1}$]
Let $u(t)$ be the solution to $(\ref{1.1})$ with symmetric initial data $u_{0}$ that satisfies $\| u_{0} \|_{L^{2}} = \| Q \|_{L^{2}}$. If 
\begin{equation}\label{3.15}
\lim_{t \nearrow \sup(I)} \inf_{\lambda > 0, \gamma \in \mathbb{R}} \| \lambda^{1/2} e^{i \gamma} u(t, \lambda x) - Q \|_{L^{2}} = 0,
\end{equation}
then $(\ref{3.13})$ holds for all $t > t_{0}$, for some $t_{0} \in I$. After translating in time so that $t_{0} = 0$, Theorem $\ref{t3.1}$ easily implies Theorem $\ref{t1.1}$ in this case.

However, the convergence theorem of \cite{fan20182} only implies $u(t)$ must converge to $Q$ along a subsequence after rescaling and multiplying by a complex number of modulus one.
\begin{theorem}\label{t2.1}
Let $u$ be a symmetric solution to $(\ref{1.1})$ that satisfies $\| u_{0} \|_{L^{2}} = \| Q \|_{L^{2}}$ and blows up forward in time. Let $(T_{-}(u), T_{+}(u))$ be the maximal lifespan of the solution $u$. Then there exists a sequence $t_{n} \rightarrow T_{+}(u)$ and a family of parameters $\lambda_{n} > 0$, $\gamma_{n} \in \mathbb{R}$ such that
\begin{equation}\label{2.3}
e^{i \gamma_{n}} \lambda_{n}^{1/2} u(t_{n}, \lambda_{n} x) \rightarrow Q, \qquad \text{in} \qquad L^{2}.
\end{equation}
\end{theorem}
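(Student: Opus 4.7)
The plan is to follow the standard concentration compactness / profile decomposition strategy for the mass-critical NLS at critical mass, which is what Fan's paper carries out; here I just sketch the outline one would use.

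First, choose any sequence $t_n \to T_+(u)$. Since mass is conserved, the sequence $\{u(t_n)\}$ is bounded in $L^2(\mathbb{R})$, so one may apply the $L^2$ profile decomposition of Carles--Keraani: up to a subsequence,
\begin{equation*}
u(t_n, x) = \sum_{j=1}^{J} e^{i \gamma_n^j} (\lambda_n^j)^{1/2} e^{i x \xi_n^j} \phi^j\bigl(\lambda_n^j (x - x_n^j)\bigr) + w_n^J(x),
\end{equation*}
with the usual orthogonality of frames, Pythagorean decomposition of the $L^2$ mass, and smallness of $\limsup_{n} \|e^{it\Delta} w_n^J\|_{L^6_{t,x}}$ as $J \to \infty$. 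Because $u$ is symmetric in $x$ and the profile decomposition is canonical, one may take all translation parameters $x_n^j$ to vanish (or pair up in $\pm$ copies) and all Galilean parameters $\xi_n^j$ to vanish.

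The core of the argument is then to show exactly one profile survives. Suppose two or more nontrivial profiles appeared, or that $\|\phi^1\|_{L^2}^2 < \|Q\|_{L^2}^2$. Then each profile has mass strictly less than $\|Q\|_{L^2}^2$, so by the global well-posedness and scattering result of \cite{dodson2015global}, \cite{dodson2016global}, the nonlinear profiles associated to each $\phi^j$ all have finite $L^6_{t,x}$ norm on $\mathbb{R}$. Combining this with the asymptotic smallness of the free evolution of $w_n^J$ and the long-time perturbation lemma, one can then approximate $u(t_n + s, x)$ for $s$ in a uniform Strichartz window by the sum of the nonlinear profiles, obtaining a uniform bound on $\|u\|_{L^6_{t,x}}$ past $t_n$, contradicting the blow-up criterion (\ref{1.6}) from Theorem \ref{t1.1}. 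Hence there is precisely one profile $\phi = \phi^1$, its mass is $\|Q\|_{L^2}^2$, and $\|w_n\|_{L^2} \to 0$.

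At this point one has
\begin{equation*}
\| u(t_n, x) - e^{i \gamma_n} \lambda_n^{1/2} \phi(\lambda_n x) \|_{L^2} \longrightarrow 0,
\end{equation*}
for some profile $\phi \in L^2(\mathbb{R})$, symmetric, with $\|\phi\|_{L^2} = \|Q\|_{L^2}$. It remains to identify $\phi$ with $Q$. The last step uses the variational characterization: by Weinstein's sharp Gagliardo--Nirenberg inequality (\ref{1.11}) and the minimality of $Q$ among $L^2$ functions with mass $\|Q\|_{L^2}$ (see \cite{weinstein1983nonlinear}, \cite{cazenave1982orbital}), any $\phi \in L^2$ with $\|\phi\|_{L^2} = \|Q\|_{L^2}$ that arises as an $L^2$-limit of a critical-mass blowup sequence must be $Q$ up to the symmetries $e^{i\gamma}\lambda^{1/2}(\cdot)(\lambda x)$ and translation. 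Absorbing the $(\gamma, \lambda)$ ambiguity into the parameters $\gamma_n, \lambda_n$, and using symmetry to rule out translation, yields (\ref{2.3}).

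The main obstacle is the second paragraph: running the nonlinear profile decomposition and perturbation argument cleanly and at critical mass. Specifically, one needs the scattering bounds from \cite{dodson2015global}, \cite{dodson2016global} below the mass of $Q$ to be strong enough to give a uniform-in-$n$ bound on $\|u(t_n + \cdot)\|_{L^6_{t,x}}$ on any fixed Strichartz window, so that the long-time perturbation argument applies on an interval extending past $T_+(u)$. This is exactly the role played by Fan's convergence result, which is why I have simply cited it rather than re-derived it in the paper.
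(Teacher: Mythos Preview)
The paper does not give its own proof of this theorem; it is stated inside the proof that Theorem~\ref{t3.1} implies Theorem~\ref{t1.1} and attributed to \cite{fan20182}. So there is no in-paper argument to compare against directly, only the cited external result, and your closing paragraph acknowledges as much.

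Your outline is correct through the single-profile step: profile decomposition together with scattering below the mass of $Q$ (from \cite{dodson2015global}, \cite{dodson2016global}) and long-time perturbation forces exactly one profile $\phi$ with $\|\phi\|_{L^2} = \|Q\|_{L^2}$ and remainder $\|w_n\|_{L^2}\to 0$. This part is standard and matches what \cite{fan20182} does.

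The genuine gap is the identification step. You invoke Weinstein's variational characterization to conclude that ``any $\phi\in L^2$ with $\|\phi\|_{L^2}=\|Q\|_{L^2}$ that arises as an $L^2$-limit of a critical-mass blowup sequence must be $Q$ up to symmetries.'' But Weinstein's result identifies $Q$ (up to symmetries) as the unique function with mass $\|Q\|_{L^2}$ and \emph{zero energy}; having the correct mass and being a blowup limit is not sufficient. Indeed, perturbation theory only tells you that the nonlinear evolution from $\phi$ itself blows up forward in time, and there is no a priori reason such critical-mass blowup data must be $Q$: the pseudoconformal soliton $(\ref{1.17})$ at any fixed time $t<T$ is symmetric, has mass $\|Q\|_{L^2}$, blows up forward, and is \emph{not} of the form $e^{i\gamma}\lambda^{1/2}Q(\lambda x)$ because of the quadratic phase $e^{ix^2/4(t-T)}$. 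For $H^1$ data one could use energy concentration (as in Hmidi--Keraani) to force $E(\phi)\le 0$ directly, but here $u$ is only in $L^2$. The actual content of \cite{fan20182} and \cite{dodson20202} is precisely this missing step: from almost periodicity one runs a frequency-localized Morawetz estimate to show $E(P_n u(t_n))\to 0$ along a further subsequence, and only \emph{then} does the sharp Gagliardo--Nirenberg inequality pin down $Q$. Your sketch elides this, and the obstacle you flag in your last paragraph (making the perturbation argument go through) is not the hard part --- the energy-to-zero step is.
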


If $(\ref{3.15})$ does not hold, but there exists some $t_{0} > 0$ such that
\begin{equation}
\sup_{t \in [t_{0}, \sup(I))} \inf_{\lambda, \gamma} \| e^{i \gamma} \lambda^{1/2} u(t, \lambda x) - Q(x) \|_{L^{2}} \leq \eta_{\ast},
\end{equation}
then after translating in time so that $t_{0} = 0$, $(\ref{3.13})$ holds.

Now suppose $(\ref{3.15})$ does not hold, and furthermore that there exists a sequence $t_{n}^{-} \nearrow \sup(I)$ such that
\begin{equation}\label{3.3.1}
\inf_{\gamma \in \mathbb{R}, \lambda > 0} \| e^{i \gamma} \lambda^{1/2} u(t_{n}^{-}, \lambda x) - Q \|_{L^{2}} > \eta_{\ast},
\end{equation}
for every $n$. After passing to a subsequence, suppose that for every $n$, $t_{n}^{-} < t_{n} < t_{n + 1}^{-}$, where $t_{n}$ is the sequence in $(\ref{2.3})$ and $t_{n}^{-}$ is the sequence in $(\ref{3.3.1})$. The fact that
\begin{equation}\label{3.3.2}
\inf_{\gamma \in \mathbb{R}, \lambda > 0} \| e^{i \gamma} \lambda^{1/2} u(t, \lambda x) - Q \|_{L^{2}}
\end{equation}
is upper semicontinuous as a function of $t$, and is continuous for every $t$ such that $(\ref{3.3.2})$ is small guarantees that there exists a small, fixed $0 < \eta_{\ast} \ll 1$ such that the sequence $t_{n}^{+}$, defined by,
\begin{equation}\label{3.3}
t_{n}^{+} = \inf \{ t \in I : \sup_{\tau \in [t, t_{n}]} \inf_{\lambda, \gamma} \| \lambda^{1/2} e^{i \gamma} u(\tau, \lambda x) - Q \|_{L^{2}}  < \eta_{\ast} \},
\end{equation}
satisfies $t_{n}^{+} \nearrow \sup(I)$ and
\begin{equation}
\inf_{\lambda > 0, \gamma \in \mathbb{R}} \| e^{i \gamma} \lambda^{1/2} u(t_{n}^{+}, \lambda x) - Q(x) \|_{L^{2}} = \eta_{\ast}.
\end{equation}
Indeed, the fact that $(\ref{3.3.2})$ is upper semicontinuous as a function of $t$ implies that
\begin{equation}
\{ 0 \leq t < t_{n} : \inf_{\lambda > 0, \gamma \in \mathbb{R}} \| e^{i \gamma} \lambda^{1/2} u(t, \lambda x) - Q(x) \|_{L^{2}} \geq \eta_{\ast} \}
\end{equation}
is a closed set. Since this set is also contained in a bounded set, it has a maximal element $t_{n}^{+}$, and $t_{n}^{+} \geq t_{n}^{-}$. The fact that $(\ref{3.3.2})$ is upper semicontinuous in time also implies that
\begin{equation}
\inf_{\lambda > 0, \gamma \in \mathbb{R}} \| e^{i \gamma} \lambda^{1/2} u(t_{n}^{+}, \lambda x) - Q \|_{L^{2}} \geq \eta_{\ast}.
\end{equation}
On the other hand, since
\begin{equation}
\inf_{\lambda > 0, \gamma \in \mathbb{R}} \| e^{i \gamma} \lambda^{1/2} u(t, \lambda x) - Q \|_{L^{2}} < \eta_{\ast} \qquad \text{for all} \qquad t_{n}^{+} < t < t_{n},
\end{equation}
and $(\ref{3.3.2})$ is continuous at times $t \in I$ where $(\ref{3.3.2})$ is small,
\begin{equation}\label{3.16.1}
\inf_{\lambda > 0, \gamma \in \mathbb{R}} \| e^{i \gamma} \lambda^{1/2} u(t_{n}^{+}, \lambda x) - Q \|_{L^{2}} = \eta_{\ast}.
\end{equation}
\begin{remark}
The constant $0 < \eta_{\ast} \ll 1$ will be chosen to be a small fixed quantity that is sufficiently small to satisfy the hypotheses of Theorem $\ref{t2.3}$, sufficiently small such that $(\ref{3.3.2})$ is continuous in time when $(\ref{3.3.2})$ is bounded by $\eta_{\ast}$, sufficiently small such that $\eta_{\ast} \leq \eta_{0}$, where $\eta_{0}$ is the constant in the induction on frequency arguments in Theorem $\ref{t6.2}$, and so that $T_{\ast} = \frac{1}{\eta_{\ast}}$ is sufficiently large to satisfy the hypotheses of Theorem $\ref{t10.1}$. 
\end{remark}

\begin{theorem}[Upper semicontinuity of the distance to a soliton]\label{t3.2}
The quantity
\begin{equation}\label{3.16}
\inf_{\lambda, \gamma} \| e^{i \gamma} \lambda^{1/2} u(t, \lambda x) - Q(x) \|_{L^{2}(\mathbb{R})},
\end{equation}
is upper semicontinuous as a function of time for any $t \in I$, where $I$ is the maximal interval of existence for $u$. The quantity $(\ref{3.16})$ is also continuous in time when $(\ref{3.16})$ is small.
\end{theorem}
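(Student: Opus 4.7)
I would introduce the abbreviation $D(t) := \inf_{\lambda > 0,\, \gamma \in \mathbb{R}} \| e^{i \gamma} \lambda^{1/2} u(t, \lambda x) - Q(x) \|_{L^{2}}$ and prove first that $D$ is upper semicontinuous everywhere, then upgrade to continuity at times $t_{0}$ where $D(t_{0})$ is small. The plan rests on two easy facts: local well-posedness (Theorem $\ref{t1.1}$) gives $u \in C(I, L^{2})$, and the Schwartz decay of $Q$ makes the map $(\lambda, \gamma) \mapsto e^{-i \gamma} \lambda^{-1/2} Q(\lambda^{-1} \cdot)$ continuous from $(0, \infty) \times \mathbb{R}$ into $L^{2}$ (by dominated convergence, exactly as in the proof of Lemma $\ref{l3.3}$).

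For upper semicontinuity, fix $t_{0} \in I$ and $\varepsilon > 0$, and choose $(\lambda_{0}, \gamma_{0})$ with $\| e^{i \gamma_{0}} \lambda_{0}^{1/2} u(t_{0}, \lambda_{0} x) - Q(x) \|_{L^{2}} < D(t_{0}) + \varepsilon$. Changing variables $y = \lambda_{0} x$, the $L^{2}$ norm rewrites as $\| e^{i \gamma_{0}} u(t_{0}, y) - \lambda_{0}^{-1/2} Q(\lambda_{0}^{-1} y) \|_{L^{2}}$, which by the $L^{2}$-continuity of the flow is continuous in $t$ at $t_{0}$. Hence for $t$ near $t_{0}$,
\[
D(t) \leq \| e^{i \gamma_{0}} \lambda_{0}^{1/2} u(t, \lambda_{0} x) - Q(x) \|_{L^{2}} < D(t_{0}) + 2\varepsilon,
\]
giving $\limsup_{t \to t_{0}} D(t) \leq D(t_{0})$.

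To prove continuity when $D(t_{0})$ is small, I would reuse the calculation $(\ref{3.19})$--$(\ref{3.20})$ from the proof of Lemma $\ref{l3.3}$. For any $v$ with $\| v \|_{L^{2}} = \| Q \|_{L^{2}}$, dominated convergence gives
\[
\inf_{\gamma} \| v - e^{-i\gamma} \lambda^{-1/2} Q(\lambda^{-1} \cdot) \|_{L^{2}}^{2} \longrightarrow 2 \| Q \|_{L^{2}}^{2} \qquad \text{as } \lambda \to 0 \text{ or } \lambda \to \infty,
\]
and this convergence is uniform on any $L^{2}$-compact family of $v$'s. Applying this to the pre-compact trajectory $\{ u(t) : |t - t_{0}| \leq \delta \}$ (pre-compact in $L^{2}$ by continuity of the flow), and using the already proved upper semicontinuity to control $D$ on a small neighborhood of $t_{0}$, I can find $0 < \lambda_{1} < \lambda_{2} < \infty$ and a neighborhood $J$ of $t_{0}$ such that every $\varepsilon$-approximate minimizer of $D(t)$ for $t \in J$ satisfies $\lambda \in [\lambda_{1}, \lambda_{2}]$; smallness of $D(t_{0}) \leq \eta_{\ast}$ is exactly what makes this uniform confinement possible.

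Lower semicontinuity on $J$ then follows from joint continuity of $\Phi(t, \lambda, \gamma) := \| e^{i\gamma} u(t, \cdot) - \lambda^{-1/2} Q(\lambda^{-1} \cdot) \|_{L^{2}}$ on the compact set $J \times [\lambda_{1}, \lambda_{2}] \times [0, 2\pi]$: given $t_{n} \to t_{0}$ and minimizers $(\lambda_{n}, \gamma_{n})$ confined to $[\lambda_{1}, \lambda_{2}] \times [0, 2\pi]$, pass to a subsequence with $(\lambda_{n}, \gamma_{n}) \to (\lambda_{\infty}, \gamma_{\infty})$ and conclude $D(t_{n}) = \Phi(t_{n}, \lambda_{n}, \gamma_{n}) \to \Phi(t_{0}, \lambda_{\infty}, \gamma_{\infty}) \geq D(t_{0})$. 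Combined with upper semicontinuity, this yields continuity on $J$. The only genuinely delicate step is the uniform compact confinement of near-minimizers on a whole neighborhood of $t_{0}$; everything else is a routine consequence of $u \in C(I, L^{2})$ and the Schwartz character of $Q$.
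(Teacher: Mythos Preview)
Your proof is correct and takes a genuinely different route from the paper's. The paper argues via PDE estimates: it sets $\epsilon(t,x) = u(t,x) - e^{i(t-t_{0})}Q(x)$, uses that $e^{i(t-t_{0})}Q$ solves $(\ref{1.1})$, and applies Strichartz estimates (in the small case) or a Gronwall argument (in the large case) to control $\|\epsilon(t)\|_{L^{2}}$ near $t_{0}$; lower semicontinuity in the small regime is then obtained by rerunning this Strichartz control starting from a hypothetical sequence $t_{n}' \to t_{0}$ with strictly smaller distance. By contrast, you use only the abstract fact $u \in C(I,L^{2})$ together with the confinement argument of Lemma~$\ref{l3.3}$: upper semicontinuity is the trivial statement that an infimum of a family of continuous functions is upper semicontinuous, and continuity in the small regime follows from uniform compact confinement of near-minimizers and joint continuity of $\Phi$. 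Your argument is more elementary and conceptually cleaner for this particular statement; the paper's approach has the side benefit that the Strichartz bounds $(\ref{3.31})$--$(\ref{3.32})$ are reused later, for instance in establishing $(\ref{7.15.1})$.
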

\begin{proof}
Choose some $t_{0} \in I$ and suppose without loss of generality that
\begin{equation}\label{3.27}
\|  u(t_{0},  x) - Q(x) \|_{L^{2}} = \inf_{\lambda, \gamma} \| e^{i \gamma} \lambda^{1/2} u(t_{0}, \lambda x) - Q(x) \|_{L^{2}}.
\end{equation}
For $t$ close to $t_{0}$, let
\begin{equation}\label{3.28}
\epsilon(t, x) = u(t, x) - e^{i(t - t_{0})} Q(x).
\end{equation}
Since $e^{i(t - t_{0})} Q$ solves $(\ref{1.1})$,
\begin{equation}\label{3.29}
i \epsilon_{t} + \epsilon_{xx} + |u|^{4} u - e^{it} |Q|^{4} Q = i \epsilon_{t} + \epsilon_{xx} + 3 |Q|^{4} \epsilon + 2 e^{2i (t - t_{0})} |Q|^{2} Q^{2} \bar{\epsilon} + O(\sum_{j = 2}^{5} \epsilon^{2 + j} Q^{5 - j}) = 0.
\end{equation}
Equations $(\ref{3.28})$, $(\ref{3.29})$, and Strichartz estimates imply that for $J \subset \mathbb{R}$, $t_{0} \in J$,
\begin{equation}\label{3.30}
\| \epsilon \|_{L_{t}^{\infty} L_{x}^{2} \cap L_{t}^{4} L_{x}^{\infty}(J \times \mathbb{R})} \lesssim \| \epsilon(t_{0}) \|_{L^{2}} + \| \epsilon \|_{L_{t}^{\infty} L_{x}^{2}(J \times \mathbb{R})} \| u \|_{L_{t}^{4} L_{x}^{\infty}(J \times \mathbb{R})}^{4} + \| \epsilon \|_{L_{t,x}^{6}(J \times \mathbb{R})}^{5}.
\end{equation}
Local well-posedness of $(\ref{1.1})$ combined with Strichartz estimates implies that $\| u \|_{L_{t}^{4} L_{x}^{\infty}(J \times \mathbb{R})} = 1$ on some open neighborhood $J$ of $t_{0}$. Therefore, for $\| \epsilon(t_{0}) \|_{L^{2}}$ small, partitioning $J$ into finitely many pieces,
\begin{equation}\label{3.31}
\sup_{t \in J} \| \epsilon(t) \|_{L^{2}} \lesssim \| \epsilon(t_{0}) \|_{L^{2}},
\end{equation}
and
\begin{equation}\label{3.32}
\lim_{t \rightarrow t_{0}} \| \epsilon(t) \|_{L^{2}} = \| \epsilon(t_{0}) \|_{L^{2}}.
\end{equation}

Therefore,
\begin{equation}\label{3.33}
\lim_{t \rightarrow t_{0}} \inf_{\lambda, \gamma} \| \lambda^{1/2} e^{i \gamma} u(t, \lambda x) - Q \|_{L^{2}} \leq \| u(t_{0},x) - Q \|_{L^{2}} = \inf_{\lambda > 0, \gamma \in \mathbb{R}} \| \lambda^{1/2} e^{i \gamma} u(t_{0}, \lambda x) - Q \|_{L^{2}}.
\end{equation}
Furthermore, if
\begin{equation}\label{3.34}
\lim_{t \rightarrow t_{0}} \inf_{\lambda, \gamma} \| \lambda^{1/2} e^{i \gamma} u(t, \lambda x) - Q \|_{L^{2}} <  \| u(t_{0},x) - Q \|_{L^{2}}.
\end{equation}
Then there exists a sequence $t_{n}' \rightarrow t_{0}$, $\lambda_{n}' > 0$, $\gamma_{n}' \in \mathbb{R}$ such that
\begin{equation}\label{3.35}
\lim_{n \rightarrow \infty} \| \lambda_{n}'^{1/2} e^{i \gamma_{n}'} u(t_{n}', \lambda_{n}' x) - Q \|_{L^{2}} < \inf_{\lambda, \gamma} \| \lambda^{1/2} e^{i \gamma} u(t_{0}, \lambda x) \|_{L^{2}}.
\end{equation}
For $t_{n}'$ sufficiently close to $t_{0}$, repeating the arguments giving $(\ref{3.31})$ and $(\ref{3.32})$ with $t_{n}'$ as the initial data gives a contradiction.

When $\| \epsilon(t_{0}) \|_{L^{2}}$ is large, $(\ref{3.29})$ implies
\begin{equation}\label{3.36}
\frac{d}{dt} \| \epsilon(t) \|_{L^{2}}^{2} \lesssim \| Q \|_{L^{\infty}}^{4} \| \epsilon \|_{L^{2}}^{2} + \| u \|_{L^{\infty}}^{4} \| \epsilon \|_{L^{2}}^{2}.
\end{equation}
Therefore, Gronwall's inequality and the fact that $u \in L_{t, loc}^{4} L_{x}^{\infty}$ imply
\begin{equation}\label{3.37}
\lim_{t \rightarrow t_{0}} \inf_{\lambda > 0, \gamma \in \mathbb{R}} \| e^{i \gamma} \lambda^{1/2} u(t, \lambda x) - Q \|_{L^{2}} \leq \inf_{\lambda > 0, \gamma \in \mathbb{R}} \| e^{i \gamma} \lambda^{1/2} u(t_{0}, \lambda x) - Q \|_{L^{2}},
\end{equation}
which implies upper semicontinuity.
\end{proof}

Making a profile decomposition of $u(t_{n}^{+}, x)$, the fact that $u$ is a minimal mass blowup solution that blows up forward in time and $t_{n}^{+} \nearrow \sup(I)$ implies that there exist $\lambda(t_{n}^{+}) > 0$ and $\gamma(t_{n}^{+}) \in \mathbb{R}$ such that
\begin{equation}\label{3.5}
\lambda(t_{n}^{+})^{1/2} e^{i \gamma(t_{n}^{+})} u(t_{n}^{+}, \lambda(t_{n}^{+}) x) \rightarrow \tilde{u}_{0},
\end{equation}
in $L^{2}$. Also, $t_{n}^{+} \nearrow \sup(I)$ implies $\| \tilde{u}_{0} \|_{L^{2}} = \| Q \|_{L^{2}}$ is the initial data for a solution to $(\ref{1.1})$ that blows up forward and backward in time, and by $(\ref{3.16.1})$,
\begin{equation}\label{3.6}
\inf_{\lambda > 0, \gamma \in \mathbb{R}} \| \lambda^{1/2} e^{i \gamma} \tilde{u}_{0}(\lambda x) - Q \|_{L^{2}} = \eta_{\ast}.
\end{equation}
Moreover, observe that $(\ref{2.3})$ and $(\ref{3.31})$ directly imply that
\begin{equation}\label{3.36}
\lim_{n \rightarrow \infty} \| u \|_{L_{t,x}^{6}([t_{n}^{+}, t_{n}] \times \mathbb{R})} = \infty,
\end{equation}
so if $\tilde{u}$ is the solution to $(\ref{1.1})$ with initial data $\tilde{u}_{0}$,
\begin{equation}\label{3.7}
\inf_{\lambda > 0, \gamma \in \mathbb{R}} \| \lambda^{1/2} e^{i \gamma} \tilde{u}(t, \lambda x) - Q \|_{L^{2}} \leq \eta_{\ast},
\end{equation}
for all $t \in [0, \sup(\tilde{I}))$, where $\tilde{I}$ is the interval of existence of the solution $\tilde{u}$ to $(\ref{1.1})$ with initial data $\tilde{u}_{0}$, and $\tilde{u}$ blows up both forward and backward in time. However, Theorem $\ref{t3.1}$ and $(\ref{3.6})$ imply that $\tilde{u}$ must be of the form $(\ref{1.17})$. Such a solution scatters backward in time, which contradicts the fact that $\tilde{u}$ blows up both forward and backward in time.


Therefore, Theorem $\ref{t3.1}$ implies that $(\ref{3.3.1})$ cannot hold for any symmetric solution to $(\ref{1.1})$ with mass $\| u_{0} \|_{L^{2}} = \| Q \|_{L^{2}}$, so by Theorem $\ref{t3.1}$, any symmetric solution to $(\ref{1.1})$ that blows up forward in time must be of the form $(\ref{1.16})$ or $(\ref{1.17})$.

\end{proof}

\section{Decomposition of the solution near $Q$}
Turning now to the proof of Theorem $\ref{t3.1}$, make a decomposition of a symmetric solution close to $Q$, up to rescaling and multiplication by a modulus one constant. This result is classical, see for example \cite{martel2002stability}, although here there is an additional technical complication due to the fact that $u$ need not lie in $H^{1}$.

\begin{theorem}\label{t2.3}
Take $u \in L^{2}$. There exists $\alpha > 0$ sufficiently small such that if there exist $\lambda_{0} > 0$, $\gamma_{0} \in \mathbb{R}$ that satisfy
\begin{equation}\label{2.12}
\| e^{i \gamma_{0}} \lambda_{0}^{1/2} u(\lambda_{0} x) - Q \|_{L^{2}} \leq \alpha,
\end{equation}
then there exist unique $\lambda > 0$, $\gamma \in \mathbb{R}$ which satisfy
\begin{equation}\label{2.13}
(\epsilon, Q^{3})_{L^{2}} = (\epsilon, i Q^{3})_{L^{2}} = 0,
\end{equation}
where
\begin{equation}\label{2.14}
\epsilon(x) = e^{i \gamma} \lambda^{1/2} u(\lambda x) - Q.
\end{equation}
Furthermore,
\begin{equation}\label{2.14.1}
\| \epsilon \|_{L^{2}} + |\frac{\lambda}{\lambda_{0}} - 1| + |\gamma - \gamma_{0}| \lesssim \| e^{i \gamma_{0}} \lambda_{0}^{1/2} u(\lambda_{0} x) - Q \|_{L^{2}}.
\end{equation}
\end{theorem}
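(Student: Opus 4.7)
The plan is to apply the implicit function theorem to the two-variable map encoding the orthogonality conditions. After replacing $u$ by $e^{i \gamma_{0}} \lambda_{0}^{1/2} u(\lambda_{0} \cdot)$, one may assume $\lambda_{0} = 1$, $\gamma_{0} = 0$, and $\| u - Q \|_{L^{2}} \leq \alpha$; the resulting $(\lambda, \gamma)$ near $(1, 0)$ with $|\lambda - 1| + |\gamma| \lesssim \alpha$ can then be converted back by the scaling/phase group law into the required parameters satisfying $(\ref{2.14.1})$.

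Define $\Phi : (0, \infty) \times \mathbb{R} \to \mathbb{R}^{2}$ by
\[
\Phi(\lambda, \gamma) = \big( (e^{i \gamma} \lambda^{1/2} u(\lambda \cdot) - Q,\, Q^{3})_{L^{2}},\ (e^{i \gamma} \lambda^{1/2} u(\lambda \cdot) - Q,\, i Q^{3})_{L^{2}} \big).
\]
After the change of variables $y = \lambda x$,
\[
(e^{i \gamma} \lambda^{1/2} u(\lambda \cdot), Q^{3})_{L^{2}} = \mathrm{Re} \int e^{i \gamma} \lambda^{-1/2} Q^{3}(y / \lambda)\, \overline{u(y)}\, dy,
\]
and similarly for the second component. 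All $(\lambda, \gamma)$-dependence now sits on the Schwartz factor $\lambda^{-1/2} Q^{3}(\cdot / \lambda)$, while $u \in L^{2}$ is merely a fixed element paired against it. This is precisely the device that handles the technical complication flagged in the theorem: differentiating under the integral, Cauchy--Schwarz, and dominated convergence show that $\Phi$ is $C^{1}$ in $(\lambda, \gamma)$ for any fixed $u \in L^{2}$, with modulus of continuity depending on $u$ only through $\| u \|_{L^{2}}$, even though $u$ itself need not be weakly differentiable.

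Next, compute the Jacobian at the reference point $(\lambda, \gamma, u) = (1, 0, Q)$. With $\Lambda Q = \tfrac{1}{2} Q + x Q_{x}$ the generator of $L^{2}$-critical scaling,
\[
\partial_{\lambda} \Phi_{1}|_{0} = (\Lambda Q, Q^{3})_{L^{2}}, \qquad \partial_{\gamma} \Phi_{1}|_{0} = (i Q, Q^{3})_{L^{2}} = 0,
\]
\[
\partial_{\lambda} \Phi_{2}|_{0} = (\Lambda Q, i Q^{3})_{L^{2}} = 0, \qquad \partial_{\gamma} \Phi_{2}|_{0} = (i Q, i Q^{3})_{L^{2}} = \| Q \|_{L^{4}}^{4},
\]
the two off-diagonal vanishings coming from the reality of $Q$ together with the $\mathrm{Re}$ in the pairing. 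Integration by parts gives $(\Lambda Q, Q^{3}) = \tfrac{1}{4} \| Q \|_{L^{4}}^{4} > 0$, so the Jacobian is diagonal and invertible. A quantitative implicit function theorem (Banach contraction applied to the Newton map $(\lambda, \gamma) \mapsto (\lambda, \gamma) - D\Phi(1, 0; Q)^{-1} \Phi(\lambda, \gamma; u)$) then yields, for every $u$ with $\| u - Q \|_{L^{2}} \leq \alpha$ and $\alpha$ sufficiently small, a unique $(\lambda, \gamma)$ in a neighborhood of $(1, 0)$ with $\Phi(\lambda, \gamma) = 0$, together with the estimate $|\lambda - 1| + |\gamma| \lesssim |\Phi(1, 0; u)| \lesssim \| u - Q \|_{L^{2}}$. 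The bound on $\| \epsilon \|_{L^{2}}$ then follows from the triangle inequality and the smoothness of $(\lambda, \gamma) \mapsto e^{i \gamma} \lambda^{1/2} Q(\lambda \cdot)$ into $L^{2}$ at $(1, 0)$.

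The main obstacle, as emphasized in the statement, is the $L^{2}$-only regularity of $u$; one must take care never to differentiate $u$ in space. The change of variables above transfers every $\lambda$-derivative onto the Schwartz function $Q$, so the classical $H^{1}$ modulation argument (as in \cite{martel2002stability}) goes through with essentially no change, and the rest is routine implicit-function-theorem bookkeeping.
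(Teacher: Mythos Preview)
Your proposal is correct and follows essentially the same approach as the paper: both reduce to $\lambda_0=1$, $\gamma_0=0$, transfer all $(\lambda,\gamma)$-derivatives onto the Schwartz factor $Q^3$ (the paper via integration by parts in $(\ref{2.22})$--$(\ref{2.23})$, you via the change of variables $y=\lambda x$), compute the identical diagonal Jacobian $\mathrm{diag}(\tfrac14\|Q\|_{L^4}^4,\ \|Q\|_{L^4}^4)$, and invoke the implicit/inverse function theorem. The only step you omit is the paper's short argument in $(\ref{2.29.1})$--$(\ref{2.29.2})$ that outside a fixed $\delta$-neighborhood of $(1,0)$ the distance $\|e^{i\gamma}\lambda^{1/2}u(\lambda\cdot)-Q\|_{L^2}$ is bounded below by $\gtrsim\delta^2-O(\alpha)$, which upgrades the local uniqueness from the implicit function theorem to the global uniqueness (modulo $2\pi$ in $\gamma$) asserted in the theorem; this is routine and worth one additional sentence.
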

\begin{remark}
Since $e^{i \gamma}$ is $2\pi$-periodic, the $\gamma$ in $(\ref{2.14})$ is unique up to translations by $2 \pi k$ for some integer $k$.
\end{remark}

\begin{proof}
By H{\"o}lder's inequality,
\begin{equation}\label{2.16}
|(e^{i \gamma_{0}} \lambda_{0}^{1/2} u(\lambda_{0} x) - Q(x), Q^{3})_{L^{2}}| \lesssim \| e^{i \gamma_{0}} \lambda_{0}^{1/2} u(\lambda_{0} x) - Q \|_{L^{2}},
\end{equation}
and
\begin{equation}\label{2.17}
|(e^{i \gamma_{0}} \lambda_{0}^{1/2} u(\lambda_{0} x) - Q(x), i Q^{3})_{L^{2}}| \lesssim \| e^{i \gamma_{0}} \lambda_{0}^{1/2} u(\lambda_{0} x) - Q \|_{L^{2}}.
\end{equation}

First suppose that $\lambda_{0} = 1$ and $\gamma_{0} = 0$. The inner products
\begin{equation}\label{2.18}
( e^{i \gamma} \lambda^{1/2} u( \lambda x) - Q(x), Q^{3} )_{L^{2}},
\end{equation}
and
\begin{equation}\label{2.19}
( e^{i \gamma} \lambda^{1/2} u(\lambda x) - Q(x), i Q^{3} )_{L^{2}},
\end{equation}
are $C^{1}$ as functions of $\lambda$ and $\gamma$. Indeed,
\begin{equation}\label{2.20}
\frac{\partial}{\partial \gamma} ( e^{i \gamma} \lambda^{1/2} u(\lambda x) - Q(x), Q^{3} )_{L^{2}} = ( i e^{i \gamma} \lambda^{1/2} u(\lambda x), Q^{3} )_{L^{2}} \lesssim \| u \|_{L^{2}} \| Q \|_{L^{6}}^{3},
\end{equation}
and
\begin{equation}\label{2.21}
\frac{\partial}{\partial \gamma} ( e^{i \gamma} \lambda^{1/2} u(\lambda x) - Q(x), i Q^{3} )_{L^{2}} = ( i e^{i \gamma} \lambda^{1/2} u(\lambda x), i Q^{3} )_{L^{2}} \lesssim \| u \|_{L^{2}} \| Q \|_{L^{6}}^{3}.
\end{equation}
Next, integrating by parts,
\begin{equation}\label{2.22}
\aligned
\frac{\partial}{\partial \lambda} ( e^{i \gamma} \lambda^{1/2} u(\lambda x) - Q(x), Q^{3} )_{L^{2}} = ( \frac{e^{i \gamma}}{2 \lambda^{1/2}} u(\lambda x) + x e^{i \gamma} \lambda^{1/2} u_{x}(\lambda x), Q^{3} )_{L^{2}} \\
= ( \frac{e^{i \gamma}}{2 \lambda^{1/2}} u(\lambda x) - \frac{1}{\lambda^{1/2}} e^{i \gamma} u(\lambda x), Q^{3} )_{L^{2}} - \frac{3}{\lambda^{1/2}} ( e^{i \gamma} u(\lambda x), Q^{2} Q_{x} )_{L^{2}} \\ \lesssim \frac{1}{\lambda} \| u \|_{L^{2}} \| Q \|_{L^{6}}^{3} + \frac{1}{\lambda} \| u \|_{L^{2}} \| x Q_{x} \|_{L^{2}} \| Q \|_{L^{\infty}}^{2},
\endaligned
\end{equation}
and
\begin{equation}\label{2.23}
\aligned
\frac{\partial}{\partial \lambda} ( e^{i \gamma} \lambda^{1/2} u(\lambda x) - Q(x), i Q^{3} )_{L^{2}} = ( \frac{e^{i \gamma}}{2 \lambda^{1/2}} u(\lambda x) + x e^{i \gamma} \lambda^{1/2} u_{x}(\lambda x), i Q^{3} )_{L^{2}} \\
= ( \frac{e^{i \gamma}}{2 \lambda^{1/2}} u(\lambda x) - \frac{1}{\lambda^{1/2}} e^{i \gamma} u(\lambda x), i Q^{3} )_{L^{2}} - \frac{3}{\lambda^{1/2}} ( e^{i \gamma} u(\lambda x), i Q^{2} Q_{x} )_{L^{2}} \\ \lesssim \frac{1}{\lambda} \| u \|_{L^{2}} \| Q \|_{L^{6}}^{3} + \frac{1}{\lambda} \| u \|_{L^{2}} \| x Q_{x} \|_{L^{2}} \| Q \|_{L^{\infty}}^{2}.
\endaligned
\end{equation}
Similar calculations prove uniform bounds on the Hessians of $(\ref{2.18})$ and $(\ref{2.19})$.

Next, compute
\begin{equation}\label{2.24}
\frac{\partial}{\partial \gamma} ( e^{i \gamma} \lambda^{1/2} u(\lambda x) - Q(x), Q^{3} )_{L^{2}}|_{\lambda = 1, \gamma = 0, u = Q} = ( i Q, Q^{3} )_{L^{2}} = 0,
\end{equation}
\begin{equation}\label{2.25}
\frac{\partial}{\partial \gamma} ( e^{i \gamma} \lambda^{1/2} u(\lambda x) - Q(x), i Q^{3} )_{L^{2}}|_{\lambda = 1, \gamma = 0, u = Q} = ( i Q, i Q^{3} )_{L^{2}} = \| Q \|_{L^{4}}^{4},
\end{equation}
\begin{equation}\label{2.26}
\frac{\partial}{\partial \lambda} ( e^{i \gamma} \lambda^{1/2} u(\lambda x) - Q(x), Q^{3} )_{L^{2}}|_{\lambda = 1, \gamma = 0, u = Q} = ( \frac{Q}{2} + x Q_{x}, Q^{3} )_{L^{2}} = \frac{1}{4} \| Q \|_{L^{4}}^{4},
\end{equation}
and
\begin{equation}\label{2.27}
\frac{\partial}{\partial \lambda} ( e^{i \gamma} \lambda^{1/2} u(\lambda x),i Q )_{L^{2}}|_{\lambda = 1, \gamma = 0, u = Q} = ( \frac{Q}{2} + x Q_{x}, i Q )_{L^{2}} = 0.
\end{equation}

Therefore, by the inverse function theorem, if $\lambda_{0} = 1$ and $\gamma_{0} = 0$, there exists $\lambda$ and $\gamma$ satisfying
\begin{equation}\label{2.28}
|\lambda - 1| + |\gamma| \lesssim \| e^{i \gamma_{0}} u(x) - Q(x) \|_{L^{2}},
\end{equation}
such that
\begin{equation}\label{2.29}
( e^{i \gamma} \lambda^{1/2} u(t, \lambda x) - Q(x), Q^{3} )_{L^{2}} = ( e^{i \gamma} \lambda^{1/2} u(t, \lambda x) - Q(x), i Q^{3} )_{L^{2}} = 0.
\end{equation}
The inverse function theorem also guarantees that $\lambda$ and $\gamma$ are unique for all $\lambda, \gamma \in [1 - \delta, 1 + \delta] \times [-\delta, \delta]$ for some $\delta > 0$, up to $2\pi$ periodicity.

For $\lambda$ outside $[1 - \delta, 1 + \delta]$, observe that
\begin{equation}\label{2.29.1}
\| e^{i \gamma} \lambda^{1/2} u(\lambda x) - Q \|_{L^{2}}^{2} = \| u \|_{L^{2}}^{2} + \| Q \|_{L^{2}}^{2} - 2 (e^{i \gamma} \lambda^{1/2} Q(\lambda x), Q)_{L^{2}} - 2 (e^{i \gamma} \lambda^{1/2} [u - Q](\lambda x), Q)_{L^{2}} \gtrsim \delta^{2} - O(\alpha).
\end{equation}
Similarly, for $\gamma$ outside $[-\delta, \delta]$, up to $2\pi$-multiplicity,
\begin{equation}\label{2.29.2}
\| e^{i \gamma} \lambda^{1/2} u(\lambda x) - Q \|_{L^{2}}^{2} = \| u \|_{L^{2}}^{2} + \| Q \|_{L^{2}}^{2} - 2 (e^{i \gamma} \lambda^{1/2} Q(\lambda x), Q)_{L^{2}} - 2 (e^{i \gamma} \lambda^{1/2} [u - Q](\lambda x), Q)_{L^{2}} \gtrsim \delta^{2} - O(\alpha),
\end{equation}
which implies uniqueness for $\alpha > 0$ sufficiently small.

For general $\lambda_{0}$ and $\gamma_{0}$, after rescaling,
\begin{equation}\label{2.30}
|\frac{\lambda}{\lambda_{0}} - 1| + |\gamma - \gamma_{0}| \lesssim \| e^{i \gamma_{0}} \lambda_{0}^{1/2} u(t, \lambda_{0} x) - Q(x) \|_{L^{2}}.
\end{equation}

Finally, using scaling symmetries, the triangle inequality, and $(\ref{2.30})$,
\begin{equation}\label{2.31}
\aligned
\| e^{i \gamma} \lambda^{1/2} u(t, \lambda x) - Q(x) \|_{L^{2}} = \| u(x) - e^{-i \gamma} \lambda^{-1/2} Q(\frac{x}{\lambda}) \|_{L^{2}} \\
\leq \| u(x) - e^{-i \gamma_{0}} \lambda_{0}^{-1/2} Q(\frac{x}{\lambda_{0}}) \|_{L^{2}} + \| e^{-i \gamma_{0}} \lambda_{0}^{-1/2} Q(\frac{x}{\lambda_{0}}) - e^{-i \gamma} \lambda_{0}^{-1/2} Q(\frac{x}{\lambda_{0}}) \|_{L^{2}} \\
+ \| e^{-i \gamma} \lambda_{0}^{-1/2} Q(\frac{x}{\lambda_{0}}) - e^{-i \gamma} \lambda^{-1/2} Q(\frac{x}{\lambda}) \|_{L^{2}}
\lesssim  \| e^{i \gamma_{0}} u(x) - Q(x) \|_{L^{2}}.
\endaligned
\end{equation}
This proves $(\ref{2.14.1})$.
\end{proof}

Therefore, in Theorem $\ref{t3.1}$, there exist functions
\begin{equation}
\lambda : I \rightarrow (0, \infty), \qquad \text{and} \qquad \gamma : I \rightarrow \mathbb{R},
\end{equation}
such that $(\ref{2.13})$ holds for all $t \in [0, \sup(I))$. 


\begin{theorem}\label{t7.2}
Under the hypotheses of Theorem $\ref{t3.1}$, the functions $\lambda(t)$, $\gamma(t)$ are continuous as functions of time on $[0, \sup(I))$. Additionally, $\lambda(t)$ and $\gamma(t)$ are differentiable in time almost everywhere on $[0, \sup(I))$.
\end{theorem}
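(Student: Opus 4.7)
The plan is to view Theorem \ref{t2.3} as an instance of the implicit function theorem and then combine the resulting Lipschitz estimates with the weak $t$-regularity of $u$ provided by the Duhamel formula $(\ref{1.7.3})$.

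For continuity, fix $t_0 \in [0, \sup(I))$ and set $(\lambda_0, \gamma_0) = (\lambda(t_0), \gamma(t_0))$, so that $\|\epsilon(t_0)\|_{L^2} \lesssim \eta_\ast$. Strichartz estimates together with $(\ref{1.7.3})$ give $u \in C_t L^2_x(I)$, so $u(t) \to u(t_0)$ in $L^2$ as $t \to t_0$. Consider the map
\[
\Phi(v, \lambda, \gamma) = \bigl( (e^{i\gamma}\lambda^{1/2} v(\lambda \cdot) - Q,\, Q^3)_{L^2}, \; (e^{i\gamma}\lambda^{1/2} v(\lambda \cdot) - Q,\, iQ^3)_{L^2} \bigr),
\]
which is $C^\infty$ in $(\lambda, \gamma)$ and affine in $v \in L^2$, since $Q$ is Schwartz. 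Redoing the Jacobian computations $(\ref{2.24})$--$(\ref{2.27})$ at $(u(t_0), \lambda_0, \gamma_0)$ shows that $\partial_{(\lambda, \gamma)}\Phi$ differs from its value at $(Q, 1, 0)$ by $O(\|\epsilon(t_0)\|_{L^2}) = O(\eta_\ast)$, and the unperturbed Jacobian is diagonal with nonzero entries $\tfrac{1}{4}\|Q\|_{L^4}^4$ and $\|Q\|_{L^4}^4$; so for $\eta_\ast$ small it remains invertible. The Banach-space implicit function theorem then furnishes a Lipschitz map $v \mapsto (\tilde\lambda(v), \tilde\gamma(v))$ on an $L^2$-neighborhood of $u(t_0)$ satisfying $\Phi(v, \tilde\lambda(v), \tilde\gamma(v)) = 0$. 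The global uniqueness built into Theorem \ref{t2.3} via $(\ref{2.29.1})$--$(\ref{2.29.2})$ forces $(\lambda(t), \gamma(t)) = (\tilde\lambda(u(t)), \tilde\gamma(u(t)))$ for $t$ near $t_0$, once the $2\pi$ ambiguity in $\gamma$ is fixed by continuity from the initial time. Therefore $(\lambda(t), \gamma(t)) \to (\lambda_0, \gamma_0)$ as $t \to t_0$.

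For almost everywhere differentiability, the same Lipschitz IFT conclusion yields
\[
|\lambda(t) - \lambda(s)| + |\gamma(t) - \gamma(s)| \lesssim \sup_{(\lambda, \gamma) \in K} \bigl| \Phi(u(t), \lambda, \gamma) - \Phi(u(s), \lambda, \gamma) \bigr|
\]
for $s, t$ in a small neighborhood of $t_0$ and $K$ a compact parameter set. Each component on the right-hand side has the form $(u(t) - u(s), \Psi_{\lambda, \gamma})_{L^2}$, where $\Psi_{\lambda, \gamma}$ is a Schwartz function depending smoothly on $(\lambda, \gamma)$. Pairing the distributional form of $(\ref{1.1})$ against this fixed test function and integrating by parts in $x$ gives
\[
(u(t) - u(s), \Psi_{\lambda, \gamma})_{L^2} = -\int_s^t \bigl[ (u(\tau), i\partial_{xx}\Psi_{\lambda, \gamma})_{L^2} + (|u(\tau)|^4 u(\tau), i\Psi_{\lambda, \gamma})_{L^2} \bigr] d\tau,
\]
whose integrand is dominated, using H\"older, by $C_\Psi ( \|u(\tau)\|_{L^2} + \|u(\tau)\|_{L^2} \|u(\tau)\|_{L^\infty}^4 )$. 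This bound lies in $L^1_{t,\mathrm{loc}}(I)$ since $u \in L^\infty_t L^2_x \cap L^4_{t,\mathrm{loc}} L^\infty_x$ by $(\ref{1.7.1})$; uniformity in $(\lambda, \gamma) \in K$ follows from the Schwartz decay of $\Psi_{\lambda, \gamma}$ and of its $(\lambda, \gamma)$-derivatives. Consequently $|\lambda(t) - \lambda(s)| + |\gamma(t) - \gamma(s)| \lesssim \int_{\min(s,t)}^{\max(s,t)} g(\tau)\,d\tau$ with $g \in L^1_{t,\mathrm{loc}}$, so $\lambda$ and $\gamma$ are locally absolutely continuous, and hence differentiable almost everywhere by the Lebesgue differentiation theorem.

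The only subtle point I anticipate is that $u(t)$ is merely an $L^2$-valued curve with no pointwise time derivative, so a literal chain rule $\tfrac{d}{dt}\lambda(t) = D\lambda(u(t))\,\partial_t u(t)$ is unavailable; this is circumvented by always pairing $u$ against the fixed Schwartz profiles coming from the orthogonality conditions, against which the Duhamel integrand is $L^1_t$ by H\"older and Strichartz. Once this reduction is in place the remainder of the argument is mechanical bookkeeping.
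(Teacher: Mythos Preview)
Your proposal is correct and follows essentially the same strategy as the paper: reduce to the Lipschitz dependence of $(\lambda,\gamma)$ on the two orthogonality inner products (via the implicit function theorem underlying Theorem~\ref{t2.3}), control the time variation of those inner products by pairing the equation against the Schwartz profiles $Q^3$, $iQ^3$ with the spatial derivatives integrated onto $Q$, and conclude almost everywhere differentiability by Lebesgue.

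The one difference worth recording: the paper first assumes $u(a)\in H^1$ so that $u\in L^\infty_t L^\infty_x$ locally in time, which makes $t\mapsto (u(t),Q^3)_{L^2}$ genuinely Lipschitz; it then passes to general $L^2$ data by approximating $u(a)$ with $P_{\le N}u(a)$, obtaining parameters $\lambda^N,\gamma^N$ that converge uniformly to $\lambda,\gamma$, and taking the limit in the resulting integrated modulation identities. You sidestep this approximation entirely by observing that after integration by parts the integrand is dominated by $\|u(\tau)\|_{L^2}(1+\|u(\tau)\|_{L^\infty_x}^4)$, which already lies in $L^1_{t,\mathrm{loc}}$ by the Strichartz bound $u\in L^4_{t,\mathrm{loc}}L^\infty_x$; this yields local absolute continuity directly, with no $H^1$ hypothesis or limiting step needed. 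Your route is cleaner for the bare statement of Theorem~\ref{t7.2}; the paper's detour is there in part because the same computation is simultaneously used to derive the modulation equations $(\ref{7.15.3})$--$(\ref{7.15.4})$ and the estimates $(\ref{7.14})$--$(\ref{7.15})$, which are needed downstream.
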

\begin{proof}
Suppose $J = [a, b]$ is an interval that satisfies
\begin{equation}
\| u \|_{L_{t}^{4} L_{x}^{\infty}(J \times \mathbb{R})} \leq 1,
\end{equation}
and $J \subset [0, \sup(I))$. Suppose without loss of generality that $\lambda(a) = 1$ and $\gamma(a) = 0$. Also, suppose for now that $\| u(a) \|_{\dot{H}^{1}} < \infty$. Strichartz estimates and local well-posedness theory imply that
\begin{equation}\label{7.9.1}
\| u \|_{L_{t}^{\infty} \dot{H}^{1}(J \times \mathbb{R})} \lesssim \| u(a) \|_{\dot{H}^{1}}.
\end{equation}
Since $\lambda(a) = 1$ and $\gamma(a) = 0$,
\begin{equation}
(u(a, x) - Q(x), Q^{3})_{L^{2}} = (u(a, x) - Q(x), i Q^{3})_{L^{2}} = 0.
\end{equation}
Then, by direct calculation and the fact that $Q$ is smooth and rapidly decreasing,
\begin{equation}
\aligned
\frac{d}{dt} (u(t,x) - Q, Q^{3})_{L^{2}} = (i u_{xx}, Q^{3})_{L^{2}} + (i |u|^{4} u, Q^{3})_{L^{2}} \\ = (i u, \partial_{xx}(Q^{3}))_{L^{2}} + i(|u|^{4} u, Q^{3})_{L^{2}} \lesssim \| u \|_{L^{2}} + \| u \|_{L^{\infty}}^{3} \| u \|_{L^{2}}^{2}.
\endaligned
\end{equation}
Therefore, $(\ref{7.9.1})$ implies that $(u(t,x) - Q(x), Q^{3})_{L^{2}}$ is Lipschitz in time on $J$, as is $(u(t,x) - Q(x), i Q^{3})_{L^{2}}$ by an identical calculation. Then by the proof of Theorem $\ref{t2.3}$, $\lambda(t)$ and $\gamma(t)$ are Lipschitz as a function of time for $t$ close to $a$, and by the Lebesgue differentiation theorem, $\lambda$ and $\gamma$ are differentiable almost everywhere for $t$ near $a$.\medskip

Recall from $(\ref{2.13})$ that
\begin{equation}\label{7.9}
\epsilon(t,x) = e^{i \gamma(t)} \lambda(t)^{1/2} u(t, \lambda(t) x) - Q(x).
\end{equation}
By direct computation, for almost every $t$ near $a$,
\begin{equation}\label{7.10}
\aligned
\epsilon_{t} = i \dot{\gamma}(t) (Q + \epsilon) + \frac{\dot{\lambda}(t)}{\lambda(t)} (\frac{Q}{2} + x Q_{x} + \frac{\epsilon}{2} + x \epsilon_{x}) + i \lambda(t)^{-2} (Q_{xx} + \epsilon_{xx}) + i \lambda(t)^{1/2} e^{i \gamma(t)} |u(t, \lambda(t) x)|^{4} u(t, \lambda(x)) \\
= i (\dot{\gamma}(t) + \lambda(t)^{-2}) Q + \frac{\dot{\lambda}(t)}{\lambda(t)} (\frac{Q}{2} + x Q_{x}) + i \lambda(t)^{-2} (\epsilon_{xx} + 5 Q^{4} Re(\epsilon) + i Q^{4} Im(\epsilon) - \epsilon) \\ +  i (\dot{\gamma}(t) + \lambda(t)^{-2}) \epsilon + \frac{\dot{\lambda}(t)}{\lambda(t)} (\frac{\epsilon}{2} + x \epsilon_{x}) + \lambda(t)^{-2} O (|Q|^{3} |\epsilon|^{2} + |\epsilon|^{5}).
\endaligned
\end{equation}
Since $a$ is arbitrary, $\lambda$ and $\gamma$ are differentiable at almost every $t \in [0, \sup(I))$. 

Next, define the monotone function $s : [0, \sup(I)) \rightarrow \mathbb{R}$,
\begin{equation}\label{7.5}
s(t) = \int_{0}^{t} \lambda(\tau)^{-2} d\tau.
\end{equation}
Making a change of variables, $\epsilon_{s} = \lambda^{2} \epsilon_{t}$, by $(\ref{7.10})$,
\begin{equation}\label{7.11}
\aligned
\epsilon_{s} = i (\gamma_{s} + 1) Q + \frac{\lambda_{s}}{\lambda} (\frac{Q}{2} + x Q_{x}) + i (\epsilon_{xx} + 5 Q^{4} Re(\epsilon) + i Q^{4} Im(\epsilon) - \epsilon) \\ +  i (\gamma_{s} + 1) \epsilon + \frac{\lambda_{s}}{\lambda} (\frac{\epsilon}{2} + x \epsilon_{x}) + O (|Q|^{3} |\epsilon|^{2} + |\epsilon|^{2} |u|^{3}).
\endaligned
\end{equation}

Plugging $(\ref{7.11})$ into $(\ref{2.14})$ and integrating by parts,
\begin{equation}\label{7.12}
\aligned
\frac{d}{ds} (\epsilon, Q^{3}) = (\epsilon_{s}, Q^{3}) = 0 = \frac{\lambda_{s}}{4 \lambda} \| Q \|_{L^{4}}^{4} - (Im(\epsilon), \mathcal L_{-} Q^{3})_{L^{2}} + O(|\gamma_{s} + 1| \| \epsilon \|_{L^{2}}) + O(\frac{\lambda_{s}}{\lambda} \| \epsilon \|_{L^{2}}) \\ + O(\| \epsilon \|_{L^{2}}^{2}) + O(\| \epsilon \|_{L^{2}}^{2} \| u \|_{L^{\infty}}^{3}),
\endaligned
\end{equation}
and
\begin{equation}\label{7.13}
\aligned
\frac{d}{ds} (\epsilon, i Q^{3}) = (\epsilon_{s}, i Q^{3}) = 0 = (\gamma_{s} + 1) \| Q \|_{L^{4}}^{4} + (\epsilon, \mathcal L Q^{3})_{L^{2}} + O(|\gamma_{s} + 1| \| \epsilon \|_{L^{2}}) + O(\frac{\lambda_{s}}{\lambda} \| \epsilon \|_{L^{2}}) \\ + O(\| \epsilon \|_{L^{2}}^{2}) + O(\| \epsilon \|_{L^{2}}^{2} \| u \|_{L^{\infty}}^{3}),
\endaligned
\end{equation}
where $\mathcal L_{-}$ and $\mathcal L$ are the linear operators
\begin{equation}\label{2.39}
\mathcal L_{-} f = -f_{xx} - Q^{4} f + f, \qquad \text{and} \qquad \mathcal L f = -f_{xx} - 5 Q^{4} f + f.
\end{equation}
Since $\mathcal L Q^{3} = -8 Q^{3}$ and $(\epsilon, Q^{3})_{L^{2}} = 0$,
\begin{equation}\label{7.15.3}
\frac{\| Q \|_{L^{4}}^{4}}{4} \frac{\lambda_{s}}{\lambda} = (Im(\epsilon), \mathcal L_{-} Q^{3})_{L^{2}} + O(|\gamma_{s} + 1| \| \epsilon \|_{L^{2}}) + O(\frac{\lambda_{s}}{\lambda} \| \epsilon \|_{L^{2}}) + O(\| \epsilon \|_{L^{2}}^{2}) + O(\| \epsilon \|_{L^{2}}^{2} \| u \|_{L^{\infty}}^{3}),
\end{equation}
and
\begin{equation}\label{7.15.4}
\| Q \|_{L^{4}}^{4} (\gamma_{s} + 1) = O(|\gamma_{s} + 1| \| \epsilon \|_{L^{2}}) + O(\frac{\lambda_{s}}{\lambda} \| \epsilon \|_{L^{2}}) + O(\| \epsilon \|_{L^{2}}^{2}) + O(\| \epsilon \|_{L^{2}}^{2} \| u \|_{L^{\infty}}^{3}).
\end{equation}

Doing some algebra, $(\ref{7.15.3})$, $(\ref{7.15.4})$, and the computations proving $(\ref{3.31})$ imply that for any $a \in \mathbb{Z}_{\geq 0}$,
\begin{equation}\label{7.14}
\int_{a}^{a + 1} |\frac{\lambda_{s}}{\lambda}| ds \lesssim \int_{a}^{a + 1} \| \epsilon \|_{L^{2}} ds,
\end{equation}
and
\begin{equation}\label{7.15}
\int_{a}^{a + 1} |\gamma_{s} + 1| ds \lesssim \int_{a}^{a + 1} \| \epsilon \|_{L^{2}} ds.
\end{equation}
Indeed, the computations proving $(\ref{3.31})$ imply that
\begin{equation}\label{7.15.1}
\sup_{s \in [a, a + 1]} \| \epsilon(s) \|_{L^{2}} \lesssim \int_{a}^{a + 1} \| \epsilon(s) \|_{L^{2}} ds,
\end{equation}
so
\begin{equation}\label{7.15.2}
\int_{a}^{a + 1} \| \epsilon \|_{L^{2}}^{2} \| u \|_{L^{\infty}}^{3} ds \lesssim \int_{a}^{a + 1} \| \epsilon(s) \|_{L^{2}}^{2} ds \cdot \int_{a}^{a + 1} \| u \|_{L^{\infty}}^{3} ds.
\end{equation}
Furthermore, Strichartz estimates and the computations proving $(\ref{3.31})$ imply that $\int_{a}^{a + 1} \| u \|_{L^{\infty}}^{4} ds \lesssim 1$, and crucially, the bound is independent of $\| u(a) \|_{\dot{H}^{1}}$.

For a general $u(a) \in L^{2}$, let $u^{N}(a) = P_{\leq N} u(a)$. Taking $N$ sufficiently large so that
\begin{equation}\label{7.15.5}
\| e^{i \gamma(a)} (\lambda(a))^{1/2} u^{N}(a, \lambda(a) x) - Q \|_{L^{2}} \leq 2 \eta_{\ast},
\end{equation}
Theorem $\ref{t2.3}$ implies that there exists $\gamma^{N}(s)$, $\lambda^{N}(s)$ for any $s \in [a, a + 1]$ such that $(\ref{2.13})$ holds. Furthermore, $\lambda^{N}(s)$ and $\gamma^{N}(s)$ satisfy $(\ref{7.15.3})$ and $(\ref{7.15.4})$, and $\gamma^{N}(s)$ and $\lambda^{N}(s)$ converge to $\gamma(s)$ and $\lambda(s)$ uniformly on $[a, a + 1]$, so $\gamma(s)$ and $\lambda(s)$ are continuous as functions of $s$. Furthermore, $\epsilon^{N} \rightarrow \epsilon$ in $L^{2}$ uniformly in $s$, and $u^{N} \rightarrow u$ in $L_{s}^{4} L_{x}^{\infty}$.

Therefore, plugging $\lambda^{N}(s)$, $\gamma^{N}(s)$, $\epsilon^{N}$, $u^{N}$ into $(\ref{7.15.3})$ and $(\ref{7.15.4})$ and doing some algebra implies, by the dominated convergence theorem,
\begin{equation}\label{7.15.6}
\aligned
\frac{\| Q \|_{L^{4}}^{4}}{4} [\ln(\lambda(s)) - \ln(\lambda(a))] = \int_{a}^{s} O((Im(\epsilon), \mathcal L_{-} Q^{3})_{L^{2}}) + O(\| \epsilon \|_{L^{2}}^{2}) + O(\| \epsilon \|_{L^{2}}^{2} \| u \|_{L^{\infty}}^{3}) ds,
\endaligned
\end{equation}
and
\begin{equation}\label{7.15.4=7}
\| Q \|_{L^{4}}^{4} [\gamma(s) - \gamma(a) + (s - a)] = \int_{a}^{s} O(\| \epsilon \|_{L^{2}}^{2}) + O(\| \epsilon \|_{L^{2}}^{2} \| u \|_{L^{\infty}}^{3}) ds.
\end{equation}
Therefore, by the Lebesgue differentiation theorem, $\frac{\lambda_{s}}{\lambda}$ and $\gamma_{s}$ exist for almost every $s \in [a, a + 1]$, and satisfy $(\ref{7.15.3})$ and $(\ref{7.15.4})$.
\end{proof}

Following \cite{merle2001existence}, the decomposition in Theorem $\ref{t2.3}$ gives a positivity result. 
\begin{theorem}\label{t2.4}
If $\epsilon(t,x)$ is a symmetric function, $\epsilon \perp Q^{3}$, $\epsilon \perp i Q^{3}$, $\| \epsilon(t,x) \|_{L^{2}} \ll 1$, and $\| Q + \epsilon \|_{L^{2}} = \| Q \|_{L^{2}}$, then
\begin{equation}\label{2.32}
E(Q + \epsilon) \gtrsim \| \epsilon(t) \|_{H^{1}(\mathbb{R})}^{2} = \int |\epsilon_{x}(t,x)|^{2} dx + \int |\epsilon(t,x)|^{2} dx.
\end{equation}
\end{theorem}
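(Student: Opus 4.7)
The plan is to expand $E(Q+\epsilon)$ around $Q$, use the mass constraint together with $E(Q) = 0$ to kill the linear part, and then apply spectral coercivity of the linearized operators $\mathcal L$ and $\mathcal L_-$. Write $\epsilon = \epsilon_{1} + i \epsilon_{2}$ with $\epsilon_1, \epsilon_2$ real. The hypotheses translate to $(\epsilon_{1}, Q^{3})_{L^{2}} = (\epsilon_{2}, Q^{3})_{L^{2}} = 0$, while $\|Q + \epsilon\|_{L^{2}} = \|Q\|_{L^{2}}$ expands to $(\epsilon_{1}, Q)_{L^{2}} = -\tfrac{1}{2} \|\epsilon\|_{L^{2}}^{2}$, so $\epsilon_1$ is only approximately orthogonal to $Q$. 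If $\epsilon \notin H^{1}$ both sides of the claimed inequality are $+\infty$, so one may assume $\epsilon \in H^{1}$. Using $Q_{xx} = Q - Q^{5}$, integration by parts, and expanding $|Q+\epsilon|^{6}$ through quadratic order gives
\begin{equation*}
E(Q+\epsilon) = -(Q, \epsilon_{1})_{L^{2}} + \tfrac{1}{2}(\mathcal L \epsilon_{1}, \epsilon_{1})_{L^{2}} - \tfrac{1}{2}\|\epsilon_{1}\|_{L^{2}}^{2} + \tfrac{1}{2}(\mathcal L_{-} \epsilon_{2}, \epsilon_{2})_{L^{2}} - \tfrac{1}{2}\|\epsilon_{2}\|_{L^{2}}^{2} + R,
\end{equation*}
with $|R| \lesssim \int Q^{3}|\epsilon|^{3} + \int |\epsilon|^{6}$. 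Substituting the mass identity cancels the linear term against the two $-\tfrac{1}{2}\|\epsilon_{j}\|^{2}$ pieces, leaving $E(Q+\epsilon) = \tfrac{1}{2}(\mathcal L \epsilon_{1}, \epsilon_{1})_{L^{2}} + \tfrac{1}{2}(\mathcal L_{-} \epsilon_{2}, \epsilon_{2})_{L^{2}} + R$.

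For $\mathcal L_{-}$, which is nonnegative on symmetric $L^{2}$ with $\ker \mathcal L_{-} = \operatorname{span}\{Q\}$: decomposing $\epsilon_{2} = \alpha Q + \tilde\epsilon_{2}$ with $(\tilde\epsilon_{2}, Q)_{L^{2}} = 0$, the constraint $(\epsilon_{2}, Q^{3})_{L^{2}} = 0$ together with $(Q, Q^{3})_{L^{2}} = \|Q\|_{L^{4}}^{4} \neq 0$ forces $|\alpha| \lesssim \|\tilde\epsilon_{2}\|_{L^{2}}$, and the spectral gap of $\mathcal L_{-}$ above its kernel gives $(\mathcal L_{-} \epsilon_{2}, \epsilon_{2})_{L^{2}} \gtrsim \|\epsilon_{2}\|_{H^{1}}^{2}$. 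For $\mathcal L$ on symmetric functions, there is exactly one negative eigenvalue (the positive even ground state $\chi_{-}$) and no zero mode, since $\ker\mathcal L$ is spanned by the odd function $Q_{x}$. The classical Weinstein-type spectral lemma, which exploits the identity $\mathcal L(\Lambda Q) = -2Q$ for $\Lambda Q = Q/2 + xQ_{x}$ together with $(\Lambda Q, Q)_{L^{2}} = 0$ and $(\Lambda Q, Q^{3})_{L^{2}} = \tfrac{1}{4}\|Q\|_{L^{4}}^{4} \neq 0$ (the computation $(\ref{2.26})$), yields $(\mathcal L f, f)_{L^{2}} \gtrsim \|f\|_{H^{1}}^{2}$ for every symmetric $f$ with $(f, Q)_{L^{2}} = (f, Q^{3})_{L^{2}} = 0$.

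To apply this to $\epsilon_{1}$, which only satisfies $(\epsilon_{1}, Q)_{L^{2}} = O(\|\epsilon\|_{L^{2}}^{2})$, I subtract a correction $\alpha Q + \beta \Lambda Q$ with $\alpha, \beta = O(\|\epsilon\|_{L^{2}}^{2})$ chosen so that the shifted function is orthogonal to both $Q$ and $Q^{3}$ exactly; applying the lemma to the shifted function and expanding produces cross terms of size $O(\|\epsilon\|_{L^{2}}^{2}\|\epsilon\|_{H^{1}})$ and quadratic terms of size $O(\|\epsilon\|_{L^{2}}^{4})$, both of which Young's inequality absorbs for $\|\epsilon\|_{L^{2}}$ small. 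The remainder $R$ is controlled by $\int |\epsilon|^{6} \lesssim \|\epsilon\|_{L^{2}}^{4}\|\epsilon_{x}\|_{L^{2}}^{2}$ from $(\ref{1.9})$ and $\int Q^{3}|\epsilon|^{3} \lesssim \|Q\|_{L^{\infty}}^{3}\|\epsilon\|_{L^{2}}^{5/2}\|\epsilon\|_{H^{1}}^{1/2}$ via the 1D Gagliardo-Nirenberg bound $\|\epsilon\|_{L^{\infty}}^{2} \lesssim \|\epsilon\|_{L^{2}}\|\epsilon\|_{H^{1}}$. Each estimate carries a positive power of $\|\epsilon\|_{L^{2}}$, so the remainder is strictly subdominant to the coercive quadratic form once $\|\epsilon\|_{L^{2}}$ is sufficiently small, and combining yields $E(Q+\epsilon) \gtrsim \|\epsilon\|_{H^{1}}^{2}$.

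The main obstacle is the spectral coercivity of $\mathcal L$: its unique negative eigenvalue is not excluded by either orthogonality condition on its own, so one must use $(f, Q) = 0$ and $(f, Q^{3}) = 0$ jointly together with the algebraic identity $\mathcal L(\Lambda Q) = -2Q$. Because in our setup $(\epsilon_{1}, Q)_{L^{2}}$ is only quadratically small rather than zero, the perturbation to restore exact orthogonality must be tracked carefully to ensure the coercivity of the quadratic form survives at leading order.
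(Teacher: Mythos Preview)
Your argument is correct, but you have overlooked the key algebraic fact that makes the paper's proof much shorter: $Q^{3}$ is \emph{exactly} the negative eigenfunction of $\mathcal L$, with $\mathcal L Q^{3} = -8Q^{3}$. Since $\epsilon_{1} \perp Q^{3}$ by hypothesis and $\epsilon_{1} \perp Q_{x}$ by symmetry, $\epsilon_{1}$ already lies in the nonnegative spectral subspace of $\mathcal L$ (in fact $(\mathcal L \epsilon_{1},\epsilon_{1}) \geq \|\epsilon_{1}\|_{L^{2}}^{2}$, the bottom of the essential spectrum), and coercivity follows immediately with no Weinstein lemma, no approximate orthogonality to $Q$, and no correction by $\alpha Q + \beta \Lambda Q$. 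The paper handles $\epsilon_{2}$ the same way, writing $\tfrac{1}{2}(\mathcal L_{-}\epsilon_{2},\epsilon_{2}) = \tfrac{1}{2}(\mathcal L \epsilon_{2},\epsilon_{2}) + 2\int Q^{4}\epsilon_{2}^{2} \geq \tfrac{1}{2}(\mathcal L \epsilon_{2},\epsilon_{2})$ and again using $\epsilon_{2} \perp Q^{3}$, rather than working with the kernel of $\mathcal L_{-}$.

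Your route via the Weinstein identity $\mathcal L(\Lambda Q) = -2Q$ and the pair of constraints $(f,Q) = (f,Q^{3}) = 0$ is the argument one would use if the orthogonality conditions had been chosen differently (say $\epsilon \perp Q$, $\epsilon \perp iQ$); it is logically sound and the correction step is handled properly, but here the decomposition in Theorem~\ref{t2.3} was set up precisely so that the negative direction of $\mathcal L$ is removed exactly, and the mass constraint is used only to convert the linear term into $\tfrac{1}{2}\|\epsilon\|_{L^{2}}^{2}$, not for spectral purposes.
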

\begin{proof}
Decomposing the energy and integrating by parts, since $Q$ is a real-valued function,
\begin{equation}\label{2.33}
\aligned
E(Q + \epsilon) = \frac{1}{2} \int Q_{x}^{2} dx + Re \int Q_{x}(x) \epsilon_{x}(t,x) dx + \frac{1}{2} \| \epsilon_{x} \|_{L^{2}}^{2} - \frac{1}{6} \int Q(x)^{6} dx - Re \int Q(x)^{5} \epsilon(t,x) dx \\
- \frac{3}{2} \int Q(x)^{4} |\epsilon(t,x)|^{2} dx - Re \int Q(x)^{4} \epsilon(t,x)^{2} dx - \int O(|\epsilon(t,x)|^{3} Q^{3} + |\epsilon(t,x)|^{6}) dx.
\endaligned
\end{equation}
First observe that since $E(Q) = 0$,
\begin{equation}\label{2.34}
\frac{1}{2} \int Q_{x}^{2} dx - \frac{1}{6} \int Q^{6} dx = 0.
\end{equation}
Next, by $(\ref{1.14})$ and integrating by parts,
\begin{equation}\label{2.35}
Re \int Q_{x}(x) \epsilon_{x}(t,x) - Re \int Q(x)^{5} \epsilon(t,x) = -Re \int (Q_{xx}(x) + Q(x)^{5}) \epsilon(t,x) dx = -Re \int Q(x) \epsilon(t,x) dx.\end{equation}
Using the fact that $\| Q + \epsilon \|_{L^{2}} = \| Q \|_{L^{2}}$,
\begin{equation}\label{2.36}
\frac{1}{2} \| Q \|_{L^{2}}^{2} - \frac{1}{2} \| Q + \epsilon \|_{L^{2}}^{2} + \frac{1}{2} \| \epsilon \|_{L^{2}}^{2}  = -(Q, \epsilon)_{L^{2}} =  -Re \int Q(x) \epsilon(t,x) dx = \frac{1}{2} \| \epsilon \|_{L^{2}}^{2}.
\end{equation}
Therefore,
\begin{equation}
\aligned
E(Q + \epsilon) =  \frac{1}{2} \| \epsilon \|_{L^{2}}^{2} + \frac{1}{2} \| \epsilon_{x} \|_{L^{2}}^{2}
- \frac{3}{2} \int Q(x)^{4} |\epsilon(t,x)|^{2} dx - Re \int Q(x)^{4} \epsilon(t,x)^{2} dx \\ - \int O(|\epsilon(t,x)|^{3} Q^{3} + |\epsilon(t,x)|^{6}) dx.
\endaligned
\end{equation}
Decomposing the terms of order $\epsilon^{2}$ into real and imaginary parts,
\begin{equation}\label{2.37}
\aligned
\frac{1}{2} \| \epsilon_{x} \|_{L^{2}}^{2} + \frac{1}{2} \| \epsilon \|_{L^{2}}^{2} - \frac{3}{2} \int Q(x)^{4} |\epsilon(t,x)|^{2} dx - Re \int Q(x)^{4} \epsilon(t,x)^{2} dx \\
= \frac{1}{2} \int Re(\epsilon)_{x}^{2} dx + \frac{1}{2} \int Re(\epsilon)^{2} dx - \frac{5}{2} \int Q(x)^{4} Re(\epsilon)^{2} dx \\
+ \frac{1}{2} \int Im(\epsilon)_{x}^{2} dx + \frac{1}{2} \int Im(\epsilon)^{2} dx - \frac{1}{2} \int Q(x)^{4} Im(\epsilon)^{2} dx.
\endaligned
\end{equation}

Recalling $(\ref{2.39})$,
\begin{equation}\label{2.38}
\frac{1}{2} \int Re(\epsilon)_{x}^{2} dx + \frac{1}{2} \int Re(\epsilon)^{2} dx - \frac{5}{2} \int Q(x)^{4} Re(\epsilon)^{2} dx = \frac{1}{2} (\mathcal L Re(\epsilon), Re(\epsilon))_{L^{2}}.
\end{equation}

It is well-known, see for example \cite{merle2001existence}, that $\mathcal L$ has one negative eigenvector, $\mathcal L(Q^{3}) = -8 Q^{3}$, and one zero eigenvector, $\mathcal L(Q_{x}) = 0$. Since $Re(\epsilon) \perp Q^{3}$, and $Re(\epsilon)$ symmetric guarantees that $Re(\epsilon) \perp Q_{x}$,
\begin{equation}\label{2.40}
 \frac{1}{2} \int Re(\epsilon)_{x}^{2} dx + \frac{1}{2} \int Re(\epsilon)^{2} dx - \frac{5}{2} \int Q(x)^{4} Re(\epsilon)^{2} dx \geq \frac{1}{2} \int Re(\epsilon)^{2} dx.
 \end{equation}
Next, doing some algebra,
\begin{equation}\label{2.41}
\aligned
\frac{1}{2} \int Re(\epsilon)_{x}^{2} dx = \frac{1}{2} (\mathcal L Re(\epsilon), Re(\epsilon)) - \frac{1}{2} \int Re(\epsilon)^{2} dx + \frac{5}{2} \int Q(x)^{4} Re(\epsilon)^{2} dx
\leq C (\mathcal L Re(\epsilon), Re(\epsilon)).
\endaligned
\end{equation}
By similar calculations, since $Im(\epsilon) \perp Q^{3}$ and $Im(\epsilon) \perp Q_{x}$,
\begin{equation}\label{2.41.1}
\aligned
\frac{1}{2} \int Im(\epsilon)_{x}^{2} dx + \frac{1}{2} \int Im(\epsilon)^{2} dx - \frac{1}{2} \int Q(x)^{4} Im(\epsilon)^{2} dx = \frac{1}{2} (\mathcal L Im(\epsilon), Im(\epsilon)) + 2 \int Q(x)^{4} Im(\epsilon)^{2} \\ \geq \frac{1}{2} (\mathcal L Im(\epsilon), Im(\epsilon))
\geq \frac{1}{2} \int Im(\epsilon)^{2} dx + \frac{1}{2C} \int Im(\epsilon)_{x}^{2} dx.
\endaligned
\end{equation}

Finally, by the Sobolev embedding theorem and $\| \epsilon \|_{L^{2}} \ll 1$,
\begin{equation}\label{2.42}
\int |\epsilon|^{6} dx \lesssim \| \epsilon \|_{\dot{H}^{1}}^{2} \| \epsilon \|_{L^{2}}^{4} \ll \| \epsilon \|_{\dot{H}^{1}}^{2},
\end{equation}
and
\begin{equation}\label{2.43}
\int Q(x)^{3} |\epsilon(t,x)|^{3} dx \lesssim \| \epsilon \|_{L^{2}}^{3/2} \| \epsilon \|_{L^{6}}^{3/2} \lesssim \| \epsilon \|_{L^{2}}^{5/2} \| \epsilon \|_{\dot{H}^{1}}^{1/2} \lesssim \| \epsilon \|_{L^{2}}^{5/2} + \| \epsilon \|_{L^{2}}^{5/2} \| \epsilon \|_{\dot{H}^{1}}^{2} \ll \| \epsilon \|_{L^{2}}^{2} +  \| \epsilon \|_{\dot{H}^{1}}^{2},
\end{equation}
which completes the proof of Theorem $\ref{t2.4}$.
\end{proof}

\section{A long time Strichartz estimate}
Having shown that it is enough to consider solutions to $(\ref{1.1})$ that are close to the family of solitons, and that there is a good decomposition of solutions that are close to the family of solitons, the next task is to obtain a good frequency localized Morawetz estimate. The proof of the frequency localized Morawetz estimate will occupy sections four, five, and six.

The proof of scattering in \cite{dodson2015global} for $(\ref{1.1})$ when $\| u_{0} \|_{L^{2}} < \| Q \|_{L^{2}}$ utilized a frequency localized Morawetz estimate. There, the Morawetz estimate was used to show that $E(P_{n} u(t_{n})) \rightarrow 0$ along a subsequence, where $P_{n}$ is a Fourier truncation operator that converges to the identity in the strong $L^{2}$-operator topology. Then the Gagliardo--Nirenberg inequality, $(\ref{1.11})$, and the stability of the zero solution to $(\ref{1.1})$ implies that $u \equiv 0$. In the case that $\| u_{0} \|_{L^{2}} = \| Q \|_{L^{2}}$, \cite{fan20182} and \cite{dodson20202} proved that $E(P_{n} u(t_{n})) \rightarrow 0$ along a subsequence, so the almost periodicity of $u$ implies that $u(t_{n})$ converges to a rescaled version of $Q$.

In fact, \cite{fan20182} and \cite{dodson20202} proved more, that $E(Pu(t)) \rightarrow 0$ in an averaged sense on an interval $[0, T] \subset I$. The operator $P$ is fixed on a fixed time interval, but $P$ converges to the identity in the strong $L^{2}$-operator topology as $T \rightarrow \sup(I)$. The proof of Theorem $\ref{t3.1}$ will argue that if $E(P u(t))$ goes to zero in a time averaged sense, then $u$ must be equal to the soliton, if the solution is global. If the solution blows up in finite time, then $u$ must equal a pseudoconformal transformation of the soliton. 

An essential ingredient in this proof is an improved version of the long time Strichartz estimates in \cite{dodson2016global}. The proof will make use of the bilinear estimates of \cite{planchon2009bilinear}, which were also used in the two dimensional problem, \cite{dodson2016global2}.

Eventually, the proof of Theorem $\ref{t3.1}$ will make use of long time Strichartz estimates on an interval $J = [a, b]$, for
\begin{equation}\label{6.0}
1 \leq \lambda(t) \leq T^{1/100},
\end{equation}
where $T = s(b) - s(a)$ and $s(t) : [0, \sup(I)) \rightarrow [0, \infty)$ is the function given by $(\ref{7.5})$. However, to avoid obscuring the main idea, it will be convenient to consider the case when $\lambda(t) = 1$ first, since the generalization to the case $(\ref{6.0})$ is fairly straightforward.\medskip

Suppose without loss of generality that $a = 0$ and $b = T$. Choose
\begin{equation}\label{6.1}
0 < \eta_{1} \ll \eta_{0} \ll 1,
\end{equation}
to be small constants, suppose
\begin{equation}\label{6.1.1}
\| \epsilon(t,x) \|_{L^{2}} \leq \eta_{0},
\end{equation}
for all $t \in J$, and choose $\eta_{1} \ll \eta_{0}$ sufficiently small so that
\begin{equation}\label{6.2}
 \int_{|\xi| \geq \eta_{1}^{-1/2}} |\hat{Q}(\xi)|^{2} d\xi \leq \eta_{0}^{2},
\end{equation}
and therefore,
\begin{equation}\label{6.2.1}
\sup_{t \in J} \int_{|\xi| \geq \eta_{1}^{-1/2}} |\hat{u}(t,\xi)|^{2} d\xi \leq 4 \eta_{0}^{2}.
\end{equation}
Then rescale from $\lambda(t) = 1$ to $\lambda(t) = \frac{1}{\eta_{1}}$, and $[0, T] \mapsto [0, \eta_{1}^{-2} T]$.

When $i \in \mathbb{Z}$, $i > 0$, let $P_{i}$ denote the standard Littlewood-Paley projection operator. When $i = 0$, let $P_{i}$ denote the projection operator $P_{\leq 0}$, and when $i < 0$, let $P_{i}$ denote the zero operator.
\begin{definition}\label{d6.1}
Suppose $\eta_{1}^{-2} T = 2^{3k}$ for some $k \in \mathbb{Z}_{\geq 0}$. Then define the norm
\begin{equation}\label{6.3}
\| u \|_{X([0, \eta_{1}^{-2} T] \times \mathbb{R})}^{2} = \sup_{0 \leq i \leq k} \sup_{1 \leq a < 2^{3k - 3i}} \| P_{i} u \|_{U_{\Delta}^{2}([(a - 1) 2^{3i}, a 2^{3i}] \times \mathbb{R})}^{2} + 2^{i} \| (P_{\geq i} u)(P_{\leq i - 3} u) \|_{L_{t,x}^{2}([(a - 1) 2^{3i}, a 2^{3i}] \times \mathbb{R})}^{2}.
\end{equation}
Also, for any $0 \leq j \leq k$, let
\begin{equation}\label{6.4}
\| u \|_{X_{j}([0, \eta_{1}^{-2} T] \times \mathbb{R})}^{2} = \sup_{0 \leq i \leq j} \sup_{1 \leq a < 2^{3k - 3i}} \| P_{i} u \|_{U_{\Delta}^{2}([(a - 1) 2^{3i}, a 2^{3i}] \times \mathbb{R})}^{2} + 2^{i} \| (P_{\geq i} u)(P_{\leq i - 3} u) \|_{L_{t,x}^{2}([(a - 1) 2^{3i}, a 2^{3i}] \times \mathbb{R})}^{2}.
\end{equation}
\end{definition}

See \cite{koch2007priori} for a definition of the $U_{\Delta}^{p}$ and $V_{\Delta}^{p}$ norms, and the references therein. See also \cite{dodson2016global2}, \cite{dodson2016global}, and \cite{dodson2019defocusing}.

\begin{theorem}\label{t6.2}
The long time Strichartz estimate,
\begin{equation}\label{6.5}
\| u \|_{X([0, \eta_{1}^{-2} T] \times \mathbb{R})} \lesssim 1,
\end{equation}
holds with implicit constant independent of $T$.
\end{theorem}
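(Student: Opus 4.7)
The plan is to prove \eqref{6.5} by induction on the frequency scale $j$ with $0 \leq j \leq k$, establishing a uniform bound $\|u\|_{X_{j}([0, \eta_{1}^{-2} T] \times \mathbb{R})} \leq C_{0}$ for a constant $C_{0}$ that depends on $\eta_{0}$ but not on $T$ or $j$. The base case $j = 0$ follows because on each unit-length sub-interval, mass conservation combined with Strichartz estimates gives $\|P_{0} u\|_{U_{\Delta}^{2}} \lesssim 1$, while the bilinear contribution vanishes trivially since $P_{\leq -3}$ is the zero operator. The key structural input is \eqref{6.2.1}, which after the rescaling to $\lambda \equiv \eta_{1}^{-1}$ reads $\|P_{\geq 1} u\|_{L_{t}^{\infty} L_{x}^{2}} \leq 2 \eta_{0}$; this is the small factor that will absorb the bootstrap constant in the inductive step.

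For the inductive step, fix $1 \leq i \leq j$ and a sub-interval $J_{a} = [(a - 1) 2^{3i}, a 2^{3i}]$. By Duhamel's formula together with $U_{\Delta}^{2}$--$V_{\Delta}^{2}$ duality, estimating $\|P_{i} u\|_{U_{\Delta}^{2}(J_{a} \times \mathbb{R})}$ reduces to bounding the space-time pairing of $P_{i}(|u|^{4} u)$ against an arbitrary unit-norm $V_{\Delta}^{2}$ test function $v$. I would decompose each of the five factors of $|u|^{4} u$ into $P_{< i - 3} u$ and $P_{\geq i - 3} u$; the outer $P_{i}$ forces at least one factor to carry frequency $\gtrsim 2^{i - 3}$. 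The delicate case is the one in which exactly one factor sits at frequency $\sim 2^{i'}$ with $i' \geq i - 3$ while the other four sit at frequency $< 2^{i - 3}$. For this case the strategy is to invoke the one dimensional bilinear Strichartz estimate of Planchon--Vega,
\begin{equation*}
\|(P_{\geq N} f)(P_{\leq N/8} g)\|_{L_{t,x}^{2}} \lesssim N^{-1/2} \|P_{\geq N} f\|_{U_{\Delta}^{2}} \|P_{\leq N/8} g\|_{U_{\Delta}^{2}},
\end{equation*}
pairing the single high-frequency factor against one low-frequency factor and the test function $v$ against another low-frequency factor. This yields a favorable power $2^{-i/2}$ which exactly matches the $2^{i/2}$ weight of the bilinear term in \eqref{6.3}, so that the bilinear summand in $\|u\|_{X_{j}}$ arises essentially for free from the same estimate applied to the Duhamel expression. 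The remaining multilinear interactions, where two or more factors carry frequency $\gtrsim 2^{i - 3}$, are controlled by combining Bernstein, standard Strichartz on $J_{a}$, and the inductive hypothesis $\|u\|_{X_{j - 1}} \leq C_{0}$.

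The main obstacle is twofold. First, one must sum the contributions over the $\sim 2^{3(k - i)}$ sub-intervals $J_{a}$ without accumulating any factor growing in $T$; this is handled by exploiting that the bilinear $L^{2}_{t,x}$ piece of the $X$-norm is square-summable in $a$ thanks to the Planchon--Vega gain, and by matching each Littlewood-Paley scale to its natural time window $2^{3i}$. Second, one must handle the resonant output in which two high-frequency pieces at scale $2^{i'} \gg 2^{i}$ combine to produce frequency $\sim 2^{i}$; this requires invoking the bilinear part of $\|u\|_{X_{j - 1}}$ at scale $2^{i'}$ together with an almost-orthogonality argument in the frequency variable, summed over $i' \geq i$. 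Once the resulting inequality takes the schematic form $\|u\|_{X_{j}} \leq C + \eta_{0}^{1/2} (1 + \|u\|_{X_{j}})^{\alpha}$ for some universal exponent $\alpha > 0$, a continuity argument in the length of $[0, \eta_{1}^{-2} T]$ together with the base case closes the bootstrap and yields \eqref{6.5} uniformly in $T$.
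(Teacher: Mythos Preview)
Your overall architecture---induction on $j$, Duhamel plus $U_{\Delta}^{2}$--$V_{\Delta}^{2}$ duality, frequency decomposition of the nonlinearity, and closing via a small factor of $\eta_{0}$---matches the paper. The base case and the identification of \eqref{6.2.1} as the source of smallness are correct.

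The genuine gap is in your treatment of the bilinear term. You invoke the transferred Planchon--Vega estimate
\[
\|(P_{\geq N} f)(P_{\leq N/8} g)\|_{L_{t,x}^{2}} \lesssim N^{-1/2} \|P_{\geq N} f\|_{U_{\Delta}^{2}} \|P_{\leq N/8} g\|_{U_{\Delta}^{2}},
\]
and apply it with $g = u$ on the interval $J_{a}$ of length $2^{3i}$. But the inductive hypothesis only controls $\|P_{i'} u\|_{U_{\Delta}^{2}}$ on intervals of length $2^{3i'}$ for $i' \leq i-3$; it gives no uniform bound on $\|P_{\leq i-3} u\|_{U_{\Delta}^{2}(J_{a})}$ for the much longer interval $J_{a}$. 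Gluing $2^{3(i-i')}$ sub-intervals via the $\ell^{2}$ structure of $U^{2}$ loses a factor $2^{3(i-i')/2}$, which destroys the argument. The same problem arises when you pair the test function $v$ against a low-frequency factor of $u$.

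The paper does not use the transferred bilinear estimate here. Instead it proves the needed bound \eqref{6.28},
\[
2^{i/2} \sup_{\|v_{0}\|_{L^{2}}=1} \|(e^{it\Delta} v_{0})(u_{\leq i-3})\|_{L_{t,x}^{2}(J_{a})} \lesssim 1 + \|u\|_{X_{i-3}},
\]
directly by running an interaction Morawetz identity \eqref{6.33}--\eqref{6.36} between the free wave $e^{it\Delta} v_{0}$ and the truncated nonlinear solution $u_{\leq i-3}$. The commutator error $\mathcal{N}_{i-3} = P_{\leq i-3} F(u) - F(u_{\leq i-3})$ is then estimated using the $X_{i-3}$ norm, which is exactly what the inductive hypothesis provides. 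This is the step you are missing, and it is the technical heart of the proof. Your closing continuity argument in the interval length is also unnecessary: once \eqref{6.28} is available, the paper closes by a clean induction $j-3 \mapsto j$ via \eqref{6.31}, with no bootstrap in $T$.
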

\begin{proof}
This estimate is proved by induction on $j$. Local well-posedness arguments combined with the fact that $\lambda(t) = \eta_{1}^{-1}$ for any $t \in [0, \eta_{1}^{-2} T]$ imply that
\begin{equation}\label{6.6}
\| u \|_{U_{\Delta}^{2}([a, a + 1] \times \mathbb{R})} \lesssim 1,
\end{equation}
and when $i = 0$,
\begin{equation}\label{6.7}
(P_{\geq i} u)(P_{\leq i - 3} u) = 0.
\end{equation}
Therefore,
\begin{equation}\label{6.8}
\| u \|_{X_{0}([0, \eta_{1}^{-2} T] \times \mathbb{R})} \lesssim 1.
\end{equation}
This is the base case.\medskip

\begin{remark}
The implicit constant in $(\ref{6.8})$ does not depend on $T$ or $\eta_{1}$.
\end{remark}

To prove the inductive step, recall that by Duhamel's principle that if $J = [(a - 1) 2^{3k - 3i}, a 2^{3k - 3i}]$, then for any $t_{0} \in J$,
\begin{equation}\label{6.9}
u(t) = e^{i(t - t_{0}) \Delta} u(t_{0}) + i \int_{t_{0}}^{t} e^{i(t - \tau) \Delta} (|u|^{4} u) d\tau,
\end{equation}
and
\begin{equation}\label{6.10}
\| P_{\geq i} u \|_{U_{\Delta}^{2}(J \times \mathbb{R})} \lesssim \| P_{\geq i}(u(t_{0})) \|_{L^{2}} + \| \int_{t_{0}}^{t} e^{i(t - \tau) \Delta} P_{\geq i}(|u|^{4} u) d\tau \|_{U_{\Delta}^{2}(J \times \mathbb{R})}.
\end{equation}

By $(\ref{6.2})$ and the fact that $\lambda(t) = \frac{1}{\eta_{1}}$ for all $t \in [0, \eta_{1}^{-2} T]$, if $i > 0$,
\begin{equation}\label{6.11}
\sup_{t_{0} \in [0, \eta_{1}^{-2} T]} \| P_{\geq i} u(t_{0}) \|_{L^{2}} \lesssim \eta_{0}.
\end{equation}
Next, choose $v \in V_{\Delta}^{2}(J \times \mathbb{R})$ such that $\| v \|_{V_{\Delta}^{2}(J \times \mathbb{R})} = 1$ and $\hat{v}(t, \xi)$ is supported on the Fourier support of $P_{i}$. It is a well-known fact that
\begin{equation}\label{6.12}
 \| \int_{t_{0}}^{t} e^{i(t - \tau) \Delta} P_{\geq i}(|u|^{4} u) d\tau \|_{U_{\Delta}^{2}(J \times \mathbb{R})} \lesssim \sup_{v} \| v P_{\geq i}(|u|^{4} u) \|_{L_{t,x}^{1}},
 \end{equation}
 where $\sup_{v}$ is the supremum over all such $v$ supported on $P_{i}$ satisfying $\| v \|_{V_{\Delta}^{2}(J \times \mathbb{R})} = 1$. See \cite{hadac2009well} for a proof.

By H{\"o}lder's inequality,
\begin{equation}\label{6.13}
\aligned
\| v(u_{\geq i - 3})^{2}(u_{\leq i - 3})^{3} \|_{L_{t,x}^{1}} + \|  v(u_{\geq i - 3})^{3}(u_{\leq i - 3})^{2} \|_{L_{t,x}^{1}} + \| v(u_{\geq i - 3})^{4}(u_{\leq i - 3}) \|_{L_{t,x}^{1}} + \| v (u_{\geq i - 3})^{5} \|_{L_{t,x}^{1}} \\
\lesssim \| v \|_{L_{t}^{\infty} L_{x}^{2}} \| u_{\geq i - 3} \|_{L_{t}^{5} L_{x}^{10}}^{5} + \| v \|_{L_{t}^{4} L_{x}^{\infty}} \| u_{\geq i - 3} \|_{L_{t}^{16/3} L_{x}^{8}}^{4} \| u_{\leq i - 3} \|_{L_{t}^{\infty} L_{x}^{2}} \\ + \| v \|_{L_{t,x}^{6}} \| (u_{\geq i - 3})(u_{\leq i - 6}) \|_{L_{t,x}^{2}} \| u_{\leq i - 6} \|_{L_{t,x}^{\infty}} \| u_{\geq i - 3} \|_{L_{t,x}^{6}}^{2} + \| v \|_{L_{t,x}^{6}} \| u_{\geq i - 3} \|_{L_{t,x}^{6}}^{3} \| u_{\geq i - 6} \|_{L_{t,x}^{6}}^{2} \\
+ \| v \|_{L_{t,x}^{6}} \| (u_{\geq i - 3})(u_{\leq i - 6}) \|_{L_{t,x}^{2}}^{3/2} \| u_{\leq i - 6} \|_{L_{t,x}^{\infty}}^{3/2} \| u_{\geq i - 3} \|_{L_{t,x}^{6}}^{1/2} + \| v \|_{L_{t,x}^{6}} \| u_{\geq i - 3} \|_{L_{t,x}^{6}}^{2} \| u_{\geq i - 6} \|_{L_{t,x}^{6}}^{3}.
\endaligned
\end{equation}
Since $V_{\Delta}^{2} \subset U_{\Delta}^{p}$ for any $p > 2$, again see \cite{hadac2009well},
\begin{equation}\label{6.14}
\| v \|_{L_{t}^{\infty} L_{x}^{2}} + \| v \|_{L_{t,x}^{6}} + \| v \|_{L_{t}^{4} L_{x}^{\infty}} \lesssim \| v \|_{V_{\Delta}^{2}} \lesssim 1.
\end{equation}

Next, when $i > 4$, since $U_{\Delta}^{2} \subset U_{\Delta}^{4}$, $(\ref{6.2})$ and $(\ref{6.11})$ imply
\begin{equation}\label{6.15}
\| u_{\geq i - 3} \|_{L_{t}^{5} L_{x}^{10}}^{5} \lesssim \| u_{\geq i - 3} \|_{L_{t}^{4} L_{x}^{\infty}}^{4} \| u_{\geq i - 3} \|_{L_{t}^{\infty} L_{x}^{2}} \lesssim \eta_{0} \| u_{\geq i - 3} \|_{L_{t}^{4} L_{x}^{\infty}}^{4} \lesssim \eta_{0} \| u \|_{X_{i - 3}([0, T] \times \mathbb{R})}^{4}.
\end{equation}
When $i \leq 4$, the fact that for any $a \in \mathbb{Z}$,
\begin{equation}\label{6.16}
\| u \|_{L_{t}^{4} L_{x}^{\infty}([a, a + 1] \times \mathbb{R})} \lesssim 1,
\end{equation}
the fact that the Fourier inversion formula and H{\"o}lder's inequality imply
\begin{equation}\label{6.16.1}
\|\int_{|\xi| \leq \eta_{1}^{1/2}} e^{ix \cdot \xi} \hat{u}(t, \xi) d\xi \|_{L^{\infty}} \lesssim \eta_{1}^{1/4} \| u(t) \|_{L^{2}},
\end{equation}
and the fact that $(\ref{6.2})$ implies, after rescaling $\lambda(t) = 1 \mapsto \lambda(t) = \frac{1}{\eta_{1}}$, 
\begin{equation}\label{6.16.1.1}
\int_{|\xi| \geq \eta_{1}^{1/2}} |\hat{u}(t,\xi)|^{2} d\xi)^{1/2} \lesssim \eta_{0},
\end{equation}
combine to imply that
\begin{equation}\label{6.16.2}
\| u_{\geq i - 3} \|_{L_{t}^{5} L_{x}^{10}}^{5} \lesssim \eta_{0}.
\end{equation}
Similar calculations can be made for the terms
\begin{equation}\label{6.17}
\| u_{\geq i - 3} \|_{L_{t}^{16/3} L_{x}^{8}}^{4} \| u_{\leq i - 3} \|_{L_{t}^{\infty} L_{x}^{2}} + \| u_{\geq i - 3} \|_{L_{t,x}^{6}}^{3} \| u_{\geq i - 6} \|_{L_{t,x}^{6}}^{2} +  \| u_{\geq i - 3} \|_{L_{t,x}^{6}}^{2} \| u_{\geq i - 6} \|_{L_{t,x}^{6}}^{3}.
\end{equation}
Therefore,
\begin{equation}\label{6.18}
\aligned
\| u_{\geq i - 3} \|_{L_{t}^{5} L_{x}^{10}}^{5} + \| u_{\geq i - 3} \|_{L_{t}^{16/3} L_{x}^{8}}^{4} \| u_{\leq i - 3} \|_{L_{t}^{\infty} L_{x}^{2}} + \| u_{\geq i - 3} \|_{L_{t,x}^{6}}^{3} \| u_{\geq i - 6} \|_{L_{t,x}^{6}}^{2} +  \| u_{\geq i - 3} \|_{L_{t,x}^{6}}^{2} \| u_{\geq i - 6} \|_{L_{t,x}^{6}}^{3} \\ \lesssim \eta_{0} \| u \|_{X_{i - 3}([0, T] \times \mathbb{R})}^{4} + \eta_{0} \| u \|_{X_{i - 3}([0, T] \times \mathbb{R})}^{3} + \eta_{0}.
\endaligned
\end{equation}

Next, by Definition $\ref{d6.1}$,
\begin{equation}\label{6.19}
 \| (u_{\geq i - 3})(u_{\leq i - 6}) \|_{L_{t,x}^{2}} \| u_{\leq i - 6} \|_{L_{t,x}^{\infty}} \| u_{\geq i - 3} \|_{L_{t,x}^{6}}^{2} \lesssim 2^{-i/2} \| u \|_{X_{i - 3}([0, T] \times \mathbb{R})}^{3} \| u_{\leq i - 6} \|_{L_{t,x}^{\infty}}.
 \end{equation}
 By $(\ref{6.16.1})$, $(\ref{6.16.1.1})$, and the Sobolev embedding theorem,
 \begin{equation}\label{6.20}
 2^{-i/2} \| u_{\leq i - 6} \|_{L_{t,x}^{\infty}} \lesssim \eta_{0},
 \end{equation}
 so
 \begin{equation}\label{6.21}
2^{-i/2} \| u \|_{X_{i - 3}([0, T] \times \mathbb{R})}^{3} \| u_{\leq i - 6} \|_{L_{t,x}^{\infty}} \lesssim \eta_{0} \| u \|_{X_{i - 3}([0, T] \times \mathbb{R})}^{3}.
\end{equation}

Making a similar calculation,
\begin{equation}\label{6.22}
\aligned
\| (u_{\geq i - 3})(u_{\leq i - 6}) \|_{L_{t,x}^{2}}^{3/2} \| u_{\leq i - 6} \|_{L_{t,x}^{\infty}}^{3/2} \| u_{\geq i - 3} \|_{L_{t,x}^{6}}^{1/2} \lesssim \eta_{0}^{3/2} \| u \|_{X_{i - 3}([0, T] \times \mathbb{R})}^{2}.
\endaligned
\end{equation}
Since
\begin{equation}\label{6.23}
P_{\geq i}(|u_{\leq i - 3}|^{4} u_{\leq i - 3}) = 0,
\end{equation}
it only remains to compute, using Definition $\ref{d6.1}$, $(\ref{6.18})$, and $(\ref{6.20})$,
\begin{equation}\label{6.24}
\aligned
\| v ((P_{\geq i - 3} u)(P_{\leq i - 3} u)^{4}) \|_{L_{t,x}^{1}} \lesssim \| (P_{\geq i - 3} u)(P_{\leq i - 6} u) \|_{L_{t,x}^{2}} \| P_{\leq i - 6} u \|_{L_{t,x}^{\infty}} \| v (P_{\leq i - 3} u)^{2} \|_{L_{t,x}^{2}} \\
+ \| P_{\geq i - 3} u \|_{L_{t,x}^{6}} \| P_{\geq i - 6} u \|_{L_{t,x}^{6}}^{2} \| v (P_{\leq i - 3} u)^{2} \|_{L_{t,x}^{2}} \\ \lesssim \eta_{0} \| u \|_{X_{i - 3}([0, T] \times \mathbb{R})}^{2} \| v (P_{\leq i - 3} u)^{2} \|_{L_{t,x}^{2}} + \eta_{0} \| u \|_{X_{i - 3}([0, T] \times \mathbb{R})}^{4} + \eta_{0}.
\endaligned
\end{equation}
By the Sobolev embedding theorem,
\begin{equation}\label{6.25}
\| P_{\leq i - 3} u \|_{L_{t,x}^{18}} \lesssim \sum_{0 \leq j \leq i - 3} \| P_{j} u \|_{L_{t,x}^{18}} \lesssim \sum_{0 \leq j \leq i - 3} 2^{\frac{(3i - 3j)}{18}} 2^{j/3} \| u \|_{X_{i - 3}([0, T] \times \mathbb{R})} \lesssim 2^{i/3} \| u \|_{X_{i - 3}([0, T] \times \mathbb{R})}.
\end{equation}
Also, by $V_{\Delta}^{2} \subset U_{\Delta}^{9/4}$ and the Sobolev embedding theorem,
\begin{equation}\label{6.26}
\| v (P_{\leq i - 3} u) \|_{L_{t,x}^{9/4}} \lesssim 2^{i/9} \| v \|_{V_{\Delta}^{12/5}(J \times \mathbb{R})} \cdot \sup_{v_{0}} \| (e^{it \Delta} v_{0})(u_{\leq i - 3}) \|_{L_{t,x}^{2}}^{8/9},
\end{equation}
where $\sup_{v_{0}}$ is over all $\| v_{0} \|_{L^{2}} = 1$ supported in Fourier space on the support of $P_{i}$. Therefore, we have finally proved,
\begin{equation}\label{6.27}
\aligned
\| P_{\geq i} u \|_{U_{\Delta}^{2}(J \times \mathbb{R})} \lesssim \eta_{0} + \eta_{0} \| u \|_{X_{i - 3}([0, T] \times \mathbb{R})}^{2} + \eta_{0} \| u \|_{X_{i - 3}([0, T] \times \mathbb{R})}^{4} \\ + \eta_{0} \| u \|_{X_{i - 3}([0, T] \times \mathbb{R})}^{3} \cdot 2^{4i/9} \sup_{v_{0}} \| (e^{it \Delta} v_{0})(u_{\leq i - 3}) \|_{L_{t,x}^{2}}^{8/9}.
\endaligned
\end{equation}

To complete the proof of Theorem $\ref{t6.2}$, it only remains to prove
\begin{equation}\label{6.28}
2^{i/2} \sup_{v_{0}} \| (e^{it \Delta} v_{0})(u_{\leq i - 3}) \|_{L_{t,x}^{2}} \lesssim 1 + \| u \|_{X_{i - 3}([0, T] \times \mathbb{R})}.
\end{equation}
Indeed, assuming that $(\ref{6.28})$ is true, $(\ref{6.27})$ becomes
\begin{equation}\label{6.29}
\| P_{\geq i} u \|_{U_{\Delta}^{2}(J \times \mathbb{R})} \lesssim \eta_{0} + \eta_{0} \| u \|_{X_{i - 3}([0, T] \times \mathbb{R})}^{2} + \eta_{0} \| u \|_{X_{i - 3}([0, T] \times \mathbb{R})}^{4}.
\end{equation}
Equation $(\ref{6.28})$ would also imply
\begin{equation}\label{6.30}
\aligned
2^{i/2} \| (P_{\geq i} u)(P_{\leq i - 3} u) \|_{L_{t,x}^{2}(J \times \mathbb{R})} \lesssim \| P_{\geq i} u \|_{U_{\Delta}^{2}(J \times \mathbb{R})}(1 +  \| u \|_{X_{i - 3}([0, T] \times \mathbb{R})}) \\ \lesssim \eta_{0} + \eta_{0} \| u \|_{X_{i - 3}([0, T] \times \mathbb{R})} + \eta_{0} \| u \|_{X_{i - 3}([0, T] \times \mathbb{R})}^{5}.
\endaligned
\end{equation}
Then taking a supremum over $0 \leq i \leq j$,
\begin{equation}\label{6.31}
\| u \|_{X_{j}([0, T] \times \mathbb{R})} \lesssim 1 + \eta_{0} \| u \|_{X_{j - 3}([0, T] \times \mathbb{R})} + \eta_{0} \| u \|_{X_{j - 3}([0, T] \times \mathbb{R})}^{5},
\end{equation}
which by induction on $j$, starting from the base case $(\ref{6.8})$, proves Theorem $\ref{t6.2}$.\medskip

The bilinear estimate $(\ref{6.28})$ is proved using the interaction Morawetz estimate (see \cite{planchon2009bilinear} and \cite{dodson2016global2}). To simplify notation, let
\begin{equation}\label{6.32}
v(t,x) = e^{it \Delta} v_{0},
\end{equation}
where $\| v_{0} \|_{L^{2}} = 1$ and $\hat{v}_{0}$ is supported on the Fourier support of $P_{j}$ for some $j \geq i$. Then take the Morawetz potential,
\begin{equation}\label{6.33}
M(t) = \int |v(t, y)|^{2} \frac{(x - y)}{|x - y|} Im[\bar{u}_{\leq i - 3} \partial_{x} u_{\leq i - 3}] dx dy + \int |u_{\leq i - 3}|^{2} \frac{(x - y)}{|x - y|} Im[\bar{v} v_{x}] dx dy.
\end{equation}
Let $F(u) = |u|^{4} u$. Then $u_{\leq i - 3}$ solves the equation
\begin{equation}\label{6.34}
i \partial_{t} u_{\leq i - 3} + \Delta u_{\leq i - 3} + F(u_{\leq i - 3}) = F(u_{\leq i - 3}) - P_{\leq i - 3} F(u) = -\mathcal N_{i - 3}.
\end{equation}
Following  \cite{planchon2009bilinear},
\begin{equation}\label{6.35}
\aligned
\frac{d}{dt} M(t) = 8 \int |\partial_{x}(\overline{v(t,x)} u_{\leq i - 3})(t,x)|^{2} dx - \frac{8}{3} \int |v(t,x)|^{2} |u_{\leq i - 3}(t,x)|^{6} dx \\
+ \int |v(t,y)|^{2} \frac{(x - y)}{|x - y|} Re[\bar{u}_{\leq i - 3} \partial_{x} \mathcal N_{i - 3}](t,x) dx -  \int |v(t,y)|^{2} \frac{(x - y)}{|x - y|} Re[\bar{\mathcal N}_{i - 3} \partial_{x} u_{\leq i - 3}](t,x) dx \\
+ 2 \int Im[\bar{u}_{\leq i - 3} \mathcal N_{i - 3}](t, y) \frac{(x - y)}{|x - y|} Im[\bar{v} \partial_{x} v](t,x) dx dy.
\endaligned
\end{equation}
Then by the fundamental theorem of calculus, Bernstein's inequality, the Fourier support of $\bar{v} u_{\leq i - 3}$, $\| v_{0} \|_{L^{2}} = 1$, and the fact that $\| u \|_{L^{2}} = \| Q \|_{L^{2}}$,
\begin{equation}\label{6.36}
\aligned
2^{2j} \| \bar{v} u_{\leq i - 3} \|_{L_{t,x}^{2}(J \times \mathbb{R})}^{2} \lesssim 2^{j} + \| |v|^{2} |u_{\leq i - 3}|^{6} \|_{L_{t,x}^{1}} -  \int |v(t,y)|^{2} \frac{(x - y)}{|x - y|} Re[\bar{\mathcal N}_{i - 3} \partial_{x} u_{\leq i - 3}](t,x) dx \\ + \int |v(t,y)|^{2} \frac{(x - y)}{|x - y|} Re[\bar{u}_{\leq i - 3} \partial_{x} \mathcal N_{i - 3}](t,x) dx
+ 2 \int Im[\bar{u}_{\leq i - 3} \mathcal N_{i - 3}](t, y) \frac{(x - y)}{|x - y|} Im[\bar{v} \partial_{x} v](t,x) dx dy.
\endaligned
\end{equation}
Also note that
\begin{equation}\label{6.37}
\| \bar{v} u_{\leq i - 3} \|_{L_{t,x}^{2}}^{2} = \| \bar{v} v \bar{u}_{\leq i - 3} u_{\leq i - 3} \|_{L_{t,x}^{1}} = \| v u_{\leq i - 3} \|_{L_{t,x}^{2}}^{2},
\end{equation}
so it is not too important to pay attention to complex conjugates in the proceeding calculations.

First, by $(\ref{6.20})$,
\begin{equation}\label{6.38}
\| |v|^{2} |u_{\leq i - 3}|^{6} \|_{L_{t,x}^{1}(J \times \mathbb{R})} \lesssim \| v u_{\leq i - 3} \|_{L_{t,x}^{2}}^{2} \| u_{\leq i - 3} \|_{L_{t,x}^{\infty}}^{4} \lesssim \eta_{0}^{4} 2^{2i} \| v u_{\leq i - 3} \|_{L_{t,x}^{2}}^{2}.
\end{equation}
Now consider the term,
\begin{equation}\label{6.39}
\mathcal N_{i - 3} = P_{\leq i - 3} F(u) - F(u_{\leq i - 3}).
\end{equation}
Since by Fourier support arguments
\begin{equation}\label{6.40}
P_{\leq i - 3} F(u_{\leq i - 6}) - F(u_{\leq i - 6}) = 0,
\end{equation}
\begin{equation}\label{6.41}
\aligned
\mathcal N_{i - 3} = P_{\leq i - 3}(3 |u_{\leq i - 6}|^{4} u_{\geq i - 6} + 2|u_{\leq i - 6}|^{2} (u_{\leq i - 6})^{2} \bar{u}_{\geq i - 6}) \\ - (3 |u_{\leq i - 6}|^{4} u_{i - 6 \leq \cdot \leq i - 3} + 2|u_{\leq i - 6}|^{2} (u_{\leq i - 6})^{2} \bar{u}_{i - 6 \leq \cdot \leq i - 3}) \\
+ P_{\leq i - 3} O((u_{\geq i - 6})^{2} u^{3}) + O((u_{i - 6 \leq \cdot \leq i - 3})^{2} u^{3}) = \mathcal N_{i - 3}^{(1)} + \mathcal N_{i - 3}^{(2)}.
\endaligned
\end{equation}
Following $(\ref{6.13})$--$(\ref{6.22})$,
\begin{equation}\label{6.42}
\aligned
\| |\mathcal N_{i - 3}^{(2)}| |u_{\leq i - 3}| \|_{L_{t,x}^{1}} \lesssim \| |u_{\geq i - 6}|^{2} |u_{\leq i - 9}|^{4} \|_{L_{t,x}^{1}} + \| |u_{\geq i - 6}|^{2} |u_{\geq i - 9}|^{4} \|_{L_{t,x}^{1}} \\ \lesssim \| (u_{\geq i - 6})(u_{\leq i - 9}) \|_{L_{t,x}^{2}}^{2} \| u_{\leq i - 9} \|_{L_{t,x}^{\infty}}^{2} + \| u_{\geq i - 6} \|_{L_{t,x}^{6}}^{2} \| u_{\geq i - 9} \|_{L_{t,x}^{6}}^{4} \lesssim \eta_{0} (1 + \| u \|_{X_{i - 3}([0, T] \times \mathbb{R})}^{6}).
\endaligned
\end{equation}
Therefore, since $\| v_{0} \|_{L^{2}} = 1$,
\begin{equation}\label{6.43}
\aligned
-  \int \int \int |v(t,y)|^{2} \frac{(x - y)}{|x - y|} Re[\bar{\mathcal N}_{i - 3}^{(2)} \partial_{x} u_{\leq i - 3}](t,x) dx dy dt \\ + \int \int \int |v(t,y)|^{2} \frac{(x - y)}{|x - y|} Re[\bar{u}_{\leq i - 3} \partial_{x} \mathcal N_{i - 3}^{(2)}](t,x) dx dy dt \\
+ 2 \int \int \int Im[\bar{u}_{\leq i - 3} \mathcal N_{i - 3}^{(2)}](t, y) \frac{(x - y)}{|x - y|} Im[\bar{v} \partial_{x} v](t,x) dx dy dt \\ \lesssim \eta_{0} 2^{j} (1 + \| u \|_{X_{i - 3}([0, T] \times \mathbb{R})}^{6}).
\endaligned
\end{equation}

Next, observe that
\begin{equation}\label{6.44}
3 P_{\leq i - 3}(|u_{\leq i - 6}|^{4} u_{\geq i - 6}) - 3(|u_{\leq i - 6}|^{4} u_{i - 6 \leq \cdot \leq i - 3}) = 3 P_{> i - 3}(|u_{\leq i - 6}|^{4} u_{i - 6 \leq \cdot \leq i - 3}) + 3 P_{\leq i - 3}(|u_{\leq i - 6}|^{4} u_{> i - 3}).
\end{equation}
Again following $(\ref{6.13})$--$(\ref{6.22})$,
\begin{equation}\label{6.45}
\aligned
\| P_{\leq i - 3}(|u_{\leq i - 6}|^{4} u_{> i - 3}) (u_{i - 6 \leq \cdot \leq i - 3}) \|_{L_{t,x}^{1}} \\ + \| P_{> i - 3}(|u_{\leq i - 6}|^{4} u_{ i - 6 \leq \cdot \leq i - 3}) (u_{i - 6 \leq \cdot \leq i - 3}) \|_{L_{t,x}^{1}} \lesssim \eta_{0}(1 + \| u \|_{X_{i - 3}([0, T] \times \mathbb{R})}^{6}).
\endaligned
\end{equation}
Finally, observe that the Fourier support of
\begin{equation}\label{6.46}
3 P_{> i - 3}(|u_{\leq i - 6}|^{4} u_{i - 6 \leq \cdot \leq i - 3})(u_{\leq i - 6}) + 3 P_{\leq i - 3}(|u_{\leq i - 6}|^{4} u_{> i - 3})(u_{\leq i - 6})
\end{equation}
is on frequencies $|\xi| \geq 2^{i - 6}$. Therefore, integrating by parts,
\begin{equation}\label{6.47}
\aligned
\int \int \int Im[\bar{u}_{\leq i - 6} P_{> i - 3}(|u_{\leq i - 6}|^{4} u_{i - 6 \leq \cdot \leq i - 3})](t, y) \frac{(x - y)}{|x - y|} Im[\bar{v} \partial_{x} v](t,x) dx dy dt \\
= \int \int \int Im[\bar{v} \partial_{x} v](t,x) \cdot \frac{\partial_{x}}{\partial_{x}^{2}} Im[\bar{u}_{\leq i - 6} P_{> i - 3}(|u_{\leq i - 6}|^{4} u_{i - 6 \leq \cdot \leq i - 3})](t, x)  dx dy dt \\
\lesssim 2^{-i} \| v_{x} \|_{L_{t}^{4} L_{x}^{\infty}} \| v \|_{L_{t}^{4} L_{x}^{\infty}} \| (u_{i - 6 \leq \cdot \leq i - 3})(u_{\leq i - 9}) \|_{L_{t,x}^{2}} \| u_{\leq i - 6} \|_{L_{t}^{\infty} L_{x}^{8}}^{4} \\
+ 2^{-i} \| v_{x} \|_{L_{t}^{4} L_{x}^{\infty}} \| v \|_{L_{t}^{4} L_{x}^{\infty}} \| u_{i - 6 \leq \cdot \leq i - 3} \|_{L_{t}^{4} L_{x}^{\infty}} \| u_{i - 9 \leq \cdot \leq i - 6} \|_{L_{t}^{4} L_{x}^{\infty}} \| u_{\leq i - 6} \|_{L_{t}^{\infty} L_{x}^{4}}^{4} \\
\lesssim \eta_{0} 2^{j} \| u \|_{X_{i - 3}([0, T] \times \mathbb{R})}^{2}.
\endaligned
\end{equation}
A similar calculation gives the estimate
\begin{equation}\label{6.47.1}
\int \int \int Im[\bar{u}_{\leq i - 6} P_{\leq i - 3}(|u_{\leq i - 6}|^{4} u_{> i - 3})](t, y) \frac{(x - y)}{|x - y|} Im[\bar{v} \partial_{x} v](t,x) dx dy dt \lesssim \eta_{0} 2^{j} \| u \|_{X_{i - 3}([0, T] \times \mathbb{R})}^{2}.
\end{equation}

The terms
\begin{equation}\label{6.48}
\int \int \int |v(t,y)|^{2} \frac{(x - y)}{|x - y|} Re[\bar{\mathcal N}_{i - 3}^{(1)} \partial_{x} u_{\leq i - 3}](t,x) dx dy dt,
 \end{equation}
 and
 \begin{equation}\label{6.49}
\int \int  \int |v(t,y)|^{2} \frac{(x - y)}{|x - y|} Re[\bar{u}_{\leq i - 3} \partial_{x} \mathcal N_{i - 3}^{(1)}](t,x) dx dy dt,
\end{equation}
may be analyzed in a similar manner.\medskip

Plugging $(\ref{6.37})$--$(\ref{6.49})$ into $(\ref{6.36})$ gives
\begin{equation}\label{6.50}
2^{2j} \| \bar{v} u_{\leq i - 3} \|_{L_{t,x}^{2}}^{2} + 2^{2j} \| v u_{\leq i - 3} \|_{L_{t,x}^{2}}^{2} \lesssim 2^{j} + \eta_{0} 2^{j} (1 + \| u \|_{X_{i - 3}}^{6}).
\end{equation}
Summing up over $j \geq i$ implies $(\ref{6.28})$, which completes the proof of Theorem $\ref{t6.2}$.
\end{proof}

Theorem $\ref{t6.2}$ may be upgraded to take advantage of the fact that $u$ is close to the soliton.
\begin{theorem}\label{t6.3}
When $\lambda(t) = \frac{1}{\eta_{1}}$, and $T = 2^{3k}$ for some positive integer $k$,
\begin{equation}\label{6.51}
\| P_{\geq k} u \|_{U_{\Delta}^{2}([0, T] \times \mathbb{R})} \lesssim (\frac{\eta_{1}^{2}}{T} \int_{0}^{\eta_{1}^{-2} T} \| \epsilon(t) \|_{L^{2}}^{2} dt)^{1/2} + \frac{1}{T^{10}}.
\end{equation}
\end{theorem}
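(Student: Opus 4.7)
The plan is to combine soliton Fourier localization with the nonlinear machinery of Theorem \ref{t6.2}. The key observation is that, in the rescaling $\lambda(t) \equiv \eta_1^{-1}$, the soliton component of $u$ is $u_{\text{sol}}(t,y) = e^{-i\gamma(t)} \eta_1^{1/2} Q(\eta_1 y)$, whose Fourier transform is $e^{-i\gamma(t)} \eta_1^{-1/2} \hat{Q}(\xi/\eta_1)$. Since $Q(x) = (3/\cosh(2x)^2)^{1/4}$ is real-analytic with exponential decay, $\hat{Q}$ also decays exponentially, so for every $j \geq k/2$,
$$
\|P_{\geq j} u_{\text{sol}}(t)\|_{L^2} \lesssim e^{-c\, 2^{j}/\eta_1} \leq T^{-100}
$$
when $T$ is sufficiently large. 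This yields the pointwise-in-time bound $\|P_{\geq j} u(t)\|_{L^2} \leq \|\epsilon(t)\|_{L^2} + T^{-100}$ for all $t \in [0, \eta_1^{-2}T]$.

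Next, pigeonhole in time yields a base point $t_0 \in [0, \eta_1^{-2}T]$ at which $\|\epsilon(t_0)\|_{L^2}^2 \leq A^2$, where $A^2 := \frac{\eta_1^2}{T}\int_0^{\eta_1^{-2}T} \|\epsilon(s)\|_{L^2}^2\,ds$. Hence $\|P_{\geq j} u(t_0)\|_{L^2} \leq A + T^{-100}$ for all $j \geq k/2$. Applying the Duhamel formula $(\ref{6.9})$ with this base point and the atomic-space duality $(\ref{6.12})$, it suffices to bound
$$
\sup_{v} \|v\, P_{\geq j}(|u|^{4} u)\|_{L^1_{t,x}([0, \eta_1^{-2} T] \times \mathbb{R})},
$$
where $v$ ranges over $V_\Delta^2$-normalized functions Fourier-supported on frequency $\sim 2^j$.

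For the nonlinear term, Fourier support forces $P_{\geq j}(|P_{\leq j-3} u|^{4} P_{\leq j-3} u) = 0$, so every surviving monomial contains at least one factor $P_{\geq j-3} u$. I retrace the H\"older and bilinear-Morawetz estimates $(\ref{6.13})$--$(\ref{6.50})$, but at each appearance of the bound $\|P_{\geq i-3} u\|_{L_t^\infty L_x^2} \leq \eta_0$ (which previously came from $(\ref{6.11})$), I instead use $\|P_{\geq i-3} u\|_{L_t^\infty L_x^2} \lesssim \|P_{\geq i-3} u\|_{U_\Delta^{2}([0, \eta_1^{-2}T])}$, while the remaining four factors are absorbed into $\|u\|_X \lesssim 1$ supplied by Theorem \ref{t6.2}. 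This produces an estimate of the form
$$
\|P_{\geq j} u\|_{U_\Delta^{2}([0, \eta_1^{-2}T])} \lesssim A + T^{-100} + \eta_0^{c}\, \|P_{\geq j-C} u\|_{U_\Delta^{2}([0, \eta_1^{-2}T])}
$$
for absolute constants $c, C > 0$ and every $j \in [k/2, k]$.

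Finally, iterating this relation downward from $j = k$ in steps of size $C$ converts it into a geometric series whose main terms sum to $O(A + T^{-100})$; after $\sim k/C$ steps one reaches $j = k/2$, where Theorem \ref{t6.2} provides the a priori bound $\|P_{\geq k/2} u\|_{U_\Delta^{2}} \lesssim 1$. The accumulated terminal error is $\eta_0^{ck/C}$, which is at most $T^{-10}$ once $\eta_0$ is fixed so that $c\log_2(1/\eta_0)/(3C) \geq 10$ (recall $k = \log_2(T)/3$). The main obstacle is the third paragraph: every piece of the multilinear decomposition $(\ref{6.13})$--$(\ref{6.27})$ and each term of the interaction-Morawetz argument $(\ref{6.33})$--$(\ref{6.50})$ must be reworked so that the smallness factor $\eta_0^{c}$ is retained while the residual $U_\Delta^{2}$-norm appears only once and at a dyadic scale no lower than $j - C$, which is what makes the downward induction close.
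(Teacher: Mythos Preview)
Your strategy has a structural gap: you run the entire recursion on the full interval $[0,\eta_1^{-2}T]$, but neither the base case nor the inductive step are available there. Theorem~\ref{t6.2} does \emph{not} give $\|P_{\geq k/2}u\|_{U_\Delta^2([0,\eta_1^{-2}T])}\lesssim 1$; the $X$--norm only controls $\|P_i u\|_{U_\Delta^2}$ on subintervals of length $2^{3i}$, so for $i=k/2$ you get the bound on intervals of length $2^{3k/2}=\eta_1^{-1}T^{1/2}$, not on the whole interval. Likewise, when you say ``the remaining four factors are absorbed into $\|u\|_X\lesssim 1$'', this fails on the full interval: for instance the bilinear piece $2^{j/2}\|(P_{\geq j}u)(P_{\leq j-3}u)\|_{L^2_{t,x}}$ is $\lesssim 1$ only on intervals of length $2^{3j}$, and summing in $\ell^2$ over the $2^{3(k-j)}$ such subintervals contained in $[0,\eta_1^{-2}T]$ produces a factor $2^{3(k-j)/2}$ that blows up as $j$ drops toward $k/2$. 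The same growth contaminates every term in $(\ref{6.13})$--$(\ref{6.27})$ that carries low-frequency Strichartz factors, so the clean recursion $\|P_{\geq j}u\|\lesssim A+\eta_0^c\|P_{\geq j-C}u\|$ on the fixed full interval cannot be established.

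The paper's proof avoids this precisely by synchronizing the interval length with the frequency threshold: it starts from $(\ref{6.54})$ on intervals of length $2^{3k/2}$, and at each step raises the frequency by $3$ while enlarging the interval by a factor $512=2^9$, with an $\ell^2$--summation over the $512$ subintervals (this is the sum in $(\ref{6.59})$). After $\lfloor k/6\rfloor$ steps one reaches frequency $k$ on the interval of length $2^{3k}$, and the accumulated $\eta_0^{\lfloor k/6\rfloor}$ beats $T^{-10}$. The crucial point you are missing is that the initial data term at each step is $\inf_t\|P_{\geq\cdot}u(t)\|_{L^2}$ over the \emph{current} interval, which is controlled by the local time-average of $\|\epsilon\|_{L^2}^2$; these local averages then $\ell^2$--sum to the global average. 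A single pigeonholed base point $t_0$ cannot substitute for this, because the Duhamel remainder on the full interval is not under control.
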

\begin{proof}

Make another induction on frequency argument starting at level $\frac{k}{2}$. Observe that Theorem $\ref{t6.2}$ implies that for any $a \in \mathbb{Z}$, $0 \leq a < \eta_{1}^{-1} T^{1/2}$,
\begin{equation}\label{6.54}
\| P_{\geq \frac{k}{2}} u \|_{U_{\Delta}^{2}([a \eta_{1}^{-1} T^{1/2}, (a + 1) \eta_{1}^{-1} T^{1/2}] \times \mathbb{R})} \lesssim 1.
\end{equation}
Next, following Theorem $\ref{t6.2}$,
\begin{equation}\label{6.55}
\aligned
\| P_{\geq \frac{k}{2} + 3} u \|_{U_{\Delta}^{2}([512 a \eta_{1}^{-1} T^{1/2}, 512 (a + 1) \eta_{1}^{-1} T^{1/2}] \times \mathbb{R})} \lesssim \inf_{t \in [512 a \eta_{1}^{-1} T^{1/2}, 512 (a + 1) \eta_{1}^{-1} T^{1/2}]} \| P_{\geq \frac{k}{2} + 3} u(t) \|_{L^{2}} \\ + \eta_{0} \| P_{\geq \frac{k}{2}} u \|_{U_{\Delta}^{2}([512 a \eta_{1}^{-1} T^{1/2}, 512 (a + 1) \eta_{1}^{-1} T^{1/2}] \times \mathbb{R})}.
\endaligned
\end{equation}

Since $Q$ is a smooth function, if $\gamma(t)$ and $\lambda(t)$ are given by Theorem $\ref{t2.3}$ and $\lambda(t) = \frac{1}{\eta_{1}}$,
\begin{equation}\label{6.58}
\aligned
\| P_{\geq \frac{k}{2} + 3} u(t) \|_{L^{2}} \leq \| e^{i \gamma(t)} \lambda(t)^{1/2} u(t, \lambda(t) x) - Q(x) \|_{L^{2}} + \| P_{\geq \frac{k}{2} + 3} Q(x) \|_{L^{2}}
\lesssim \| \epsilon(t) \|_{L^{2}} + T^{-10}.
\endaligned
\end{equation}
Plugging $(\ref{6.58})$ back into $(\ref{6.55})$,
\begin{equation}\label{6.59}
\aligned
\| P_{\geq \frac{k}{2} + 3} u \|_{U_{\Delta}^{2}([512 a \eta_{1}^{-1} T^{1/2}, 512 (a + 1) \eta_{1}^{-1} T^{1/2}] \times \mathbb{R})} \lesssim (\frac{\eta_{1}}{512 T^{1/2}} \int_{512a \eta_{1}^{-1} T^{1/2}}^{512 (a + 1) \eta_{1}^{-1} T^{1/2}}  \| \epsilon (t, x) \|_{L^{2}}^{2} dt)^{1/2} \\ +  T^{-10} + \eta_{0} (\sum_{j = 1}^{512} \| P_{\geq \frac{k}{2}} u \|_{U_{\Delta}^{2}([(512 a + (j - 1)) \eta_{1}^{-1} T^{1/2} , (512 a + j) \eta_{1}^{-1} T^{1/2}] \times \mathbb{R})}^{2})^{1/2}. 
\endaligned
\end{equation}
Arguing by induction in $k$, taking $\lfloor \frac{k}{6} \rfloor$ steps in all, for $\eta_{0}$ sufficiently small,
\begin{equation}\label{6.60}
\aligned
\| P_{\geq k} u \|_{U_{\Delta}^{2}([0, \eta_{1}^{-2} T] \times \mathbb{R})} \lesssim T^{-10} + 2^{k/2} \eta_{0}^{-\frac{k}{6}} + (\frac{\eta_{1}^{2}}{T} \int_{0}^{\eta_{1}^{-2} T}  \| \epsilon(t,x) \|_{L^{2}}^{2} dt)^{1/2} \\
\lesssim T^{-10} + (\frac{\eta_{1}^{2}}{T} \int_{0}^{\eta_{1}^{-2} T}  \| \epsilon(t,x) \|_{L^{2}}^{2} dt)^{1/2}.
\endaligned
\end{equation}
This proves Theorem $\ref{t6.3}$.
\begin{remark}
If $C$ is the implicit constant in $(\ref{6.59})$, then for $\eta_{0} \ll 1$ sufficiently small,
\begin{equation}\label{6.60.1}
(C \eta_{0})^{\lfloor \frac{k}{6} \rfloor} \leq T^{-10}.
\end{equation}
\end{remark}
\end{proof}
The same argument can also be made when $\lambda(t) \geq \frac{1}{\eta_{1}}$ for all $t \in J$.
\begin{theorem}\label{t6.4}
When $\lambda(t) \geq \frac{1}{\eta_{1}}$ on $J = [a, b]$,
\begin{equation}
\int_{J} \lambda(t)^{-2} dt = T,
\end{equation}
and $\eta_{1}^{-2} T = 2^{3k}$, then
\begin{equation}
\| P_{\geq k} u \|_{U_{\Delta}^{2}([a, b] \times \mathbb{R})} \lesssim T^{-10} + (\frac{1}{T} \int_{a}^{b} \| \epsilon(t) \|_{L^{2}}^{2} \lambda(t)^{-2} dt)^{1/2}.
\end{equation}
\end{theorem}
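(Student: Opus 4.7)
The proof parallels that of Theorems \ref{t6.2} and \ref{t6.3}, with the key structural change being that the natural time variable for the iteration is the rescaled time $s(t) = \int_a^t \lambda(\tau)^{-2} d\tau$, under which $[a, b]$ corresponds to the $s$-interval $[0, T]$ with $\eta_1^{-2} T = 2^{3k}$. Consequently the weight $\lambda(t)^{-2} dt = ds$ appears in the conclusion, and $\frac{1}{T} \int_a^b \|\epsilon(t)\|_{L^2}^2 \lambda(t)^{-2} dt$ is precisely the $s$-time average of $\|\epsilon\|_{L^2}^2$.

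First I would verify that the uniform frequency localization (\ref{6.2.1}) persists. Writing $v(y) = e^{i\gamma(t)} \lambda(t)^{1/2} u(t, \lambda(t) y) = Q(y) + \epsilon(t, y)$, the Fourier identity $|\hat{u}(t, \xi)|^2 = \lambda(t) |\hat{v}(\lambda(t)\xi)|^2$ and the change of variables $\eta = \lambda(t)\xi$ yield
\begin{equation*}
\int_{|\xi| \geq \eta_1^{-1/2}} |\hat{u}(t, \xi)|^2 d\xi = \int_{|\eta| \geq \lambda(t) \eta_1^{-1/2}} |\hat{v}(\eta)|^2 d\eta \leq \int_{|\eta| \geq \eta_1^{-3/2}} |\hat{v}(\eta)|^2 d\eta \leq 4\eta_0^2,
\end{equation*}
using $\lambda(t) \geq 1/\eta_1$, (\ref{6.2}), and $\|\epsilon(t)\|_{L^2} \leq \eta_0$. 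The remaining inputs to Theorem \ref{t6.2} --- the mass identity $\|u\|_{L^2} = \|Q\|_{L^2}$, local Strichartz theory on unit physical-time intervals, and the bilinear interaction Morawetz computation (\ref{6.33})--(\ref{6.50}) --- are insensitive to $\lambda(t)$ being non-constant, so one obtains the base estimate $\|u\|_{X([a,b] \times \mathbb{R})} \lesssim 1$ exactly as in Theorem \ref{t6.2}.

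The secondary induction of Theorem \ref{t6.3} is then carried out from frequency level $k/2$, with $[a, b]$ partitioned into subintervals of $s$-length of order $\eta_1^{-1} T^{1/2}$. The only input requiring adaptation is the pointwise estimate (\ref{6.58}). By the triangle inequality and the Fourier rescaling identity
\begin{equation*}
\|P_{\geq M} u(t)\|_{L^2} = \|P_{\geq M + \log_2 \lambda(t)}(e^{i\gamma(t)} \lambda(t)^{1/2} u(t, \lambda(t) \cdot))\|_{L^2},
\end{equation*}
we have
\begin{equation*}
\|P_{\geq M} u(t)\|_{L^2} \leq \|P_{\geq M + \log_2 \lambda(t)} Q\|_{L^2} + \|\epsilon(t)\|_{L^2}.
\end{equation*}
Since $\lambda(t) \geq 1/\eta_1$, the frequency shift obeys $M + \log_2 \lambda(t) \geq M - \log_2 \eta_1$, matching the shift effectively present in (\ref{6.58}); the Schwartz property of $Q$ then produces $\|P_{\geq M + \log_2 \lambda(t)} Q\|_{L^2} \lesssim T^{-10}$ uniformly in the range $M \geq k/2$ encountered in the induction. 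Iterating (\ref{6.55}) a total of $\lfloor k/6 \rfloor$ times as in (\ref{6.59})--(\ref{6.60}), the $\|\epsilon(t)\|_{L^2}^2$ averages naturally pick up the weight $\lambda(t)^{-2} dt$ from the $s$-time partition, yielding the stated estimate. The only real difficulty is the careful bookkeeping between physical-time $U_\Delta^2$ norms and the $s$-time subinterval partition; no new analytic ingredients beyond those of Theorems \ref{t6.2} and \ref{t6.3} are required.
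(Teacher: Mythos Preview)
Your proposal is correct and follows the same approach as the paper, which does not give a separate proof of Theorem~\ref{t6.4} but simply asserts that ``the same argument can also be made when $\lambda(t) \geq \frac{1}{\eta_{1}}$ for all $t \in J$.'' You have supplied more detail than the paper does, correctly isolating the two points that require checking: the persistence of the high-frequency smallness (\ref{6.2.1}) under the weaker hypothesis $\lambda(t) \geq 1/\eta_1$, and the appearance of the weight $\lambda(t)^{-2}\,dt$ via the $s$-time partition in the induction (\ref{6.59})--(\ref{6.60}).
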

The same argument could also be made for $\lambda(t)$ having a different lower bound, by rescaling $\lambda(t)$ to $\lambda(t) \geq \frac{1}{\eta_{1}}$, computing long time Strichartz estimates, and then rescaling back.

\section{Almost conservation of energy}
Since $(\ref{7.11})$ implies that $\| \epsilon(t) \|_{L^{2}}$ is continuous as a function of time, the mean value theorem implies that under the conditions of Theorem $\ref{t6.4}$, there exists $t_{0} \in [a, b]$ such that
\begin{equation}
\| \epsilon(t_{0}) \|_{L^{2}}^{2} = \frac{1}{T} \int_{a}^{b} \| \epsilon(t) \|_{L^{2}}^{2} \lambda(t)^{-2} dt.
\end{equation}

The next step in proving Theorem $\ref{t3.1}$ is to control
\begin{equation}
\sup_{t \in [a, b]} \| \epsilon(t) \|_{L^{2}},
\end{equation}
as a function of $\| \epsilon(t_{0}) \|_{L^{2}}$. Theorem $\ref{t2.4}$ would be a very useful tool for doing so, except that while $Q$ lies in $H^{s}(\mathbb{R})$ for any $s > 0$, $\epsilon$ need not belong to $H^{s}(\mathbb{R})$ for any $s > 0$. Therefore, Theorem $\ref{t2.4}$ will be used in conjunction with the Fourier truncation method of \cite{bourgain1998refinements}. See also the I-method, for example \cite{colliander2002almost}.

\begin{theorem}\label{t8.3}
Let $J = [a, b]$ be an interval such that
\begin{equation}\label{8.20}
\int_{J} \lambda(t)^{-2} dt = T,
\end{equation}
$\eta_{1}^{-2} T  = 2^{3k}$, and $\lambda(t) \geq \frac{1}{\eta_{1}}$ for all $t \in [a, b]$. Then,
\begin{equation}\label{8.21}
\sup_{t \in J} E(P_{\leq k + 9} u(t)) \lesssim \frac{2^{2k}}{T} \int_{J} \| \epsilon(t) \|_{L^{2}}^{2} \lambda(t)^{-2} dt + 2^{2k} T^{-10}.
\end{equation}
\end{theorem}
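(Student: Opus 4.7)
The plan is to combine a direct energy bound at a well-chosen time $t_0$ with an almost conservation argument in the spirit of the I-method, powered by the long time Strichartz estimate of Theorem~\ref{t6.4}. Since $t\mapsto \|\epsilon(t)\|_{L^2}$ is continuous on $J$ (by the arguments following $(\ref{7.11})$), the mean value theorem furnishes $t_0\in J$ with $\|\epsilon(t_0)\|_{L^2}^2 = \frac{1}{T}\int_J\|\epsilon\|_{L^2}^2\lambda^{-2}\,dt$; call this quantity $\eta^2$, so that Theorem~\ref{t6.4} gives $\|P_{\geq k}u\|_{U_\Delta^2(J\times\mathbb{R})}\lesssim\eta+T^{-10}$.

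At $t_0$ I would bound $E(P_{\leq k+9}u(t_0))$ directly using the decomposition $u(t_0,x) = e^{-i\gamma_0}\lambda_0^{-1/2}(Q+\epsilon(t_0))(x/\lambda_0)$ from Theorem~\ref{t2.3} with $\lambda_0 = \lambda(t_0)\geq 1/\eta_1$. A standard Fourier computation gives $P_{\leq k+9}u(t_0,x) = e^{-i\gamma_0}\lambda_0^{-1/2}(Q+\tilde\epsilon_0)(x/\lambda_0)$, where $\tilde N_0 := 2^{k+9}\lambda_0$ and $\tilde\epsilon_0 := P_{\leq\tilde N_0}\epsilon(t_0) - P_{>\tilde N_0}Q$; the Schwartz decay of $Q$ gives $\|P_{>\tilde N_0}Q\|_{H^1}\lesssim T^{-20}$. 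Expanding $E(Q+\tilde\epsilon_0)$ as in the proof of Theorem~\ref{t2.4} and using $E(Q)=0$ together with $Q_{xx}+Q^5=Q$ yields
\begin{equation*}
E(Q+\tilde\epsilon_0) = -\operatorname{Re}(Q,\tilde\epsilon_0)_{L^2} + \tfrac12\|(\tilde\epsilon_0)_x\|_{L^2}^2 + O(\|\tilde\epsilon_0\|_{L^2}^2) + O(\|\tilde\epsilon_0\|_{L^6}^6 + \|Q\|_{L^\infty}^3\|\tilde\epsilon_0\|_{L^3}^3).
\end{equation*}
Mass conservation rewrites the linear term as $-\operatorname{Re}(Q,\tilde\epsilon_0) = \tfrac12\|\tilde\epsilon_0\|^2 + \tfrac12\|P_{>k+9}u(t_0)\|^2 = O(\|\epsilon(t_0)\|^2 + T^{-20})$, Bernstein gives $\|(\tilde\epsilon_0)_x\|^2\lesssim\tilde N_0^2(\|\epsilon(t_0)\|^2+T^{-20})$, and the higher-order terms are absorbed as in $(\ref{2.42})$--$(\ref{2.43})$. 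Since $\lambda_0^{-2}\tilde N_0^2 = 2^{2(k+9)}$, the scaling identity $E(P_{\leq k+9}u(t_0)) = \lambda_0^{-2}E(Q+\tilde\epsilon_0)$ produces
\begin{equation*}
E(P_{\leq k+9}u(t_0)) \lesssim 2^{2k}\|\epsilon(t_0)\|_{L^2}^2 + 2^{2k}T^{-20}.
\end{equation*}

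For general $t\in J$, I would propagate this bound by almost conservation. Setting $v := P_{\leq k+9}u$, $F := |u|^4u$, $\tilde F := |v|^4v$ and differentiating $E(v)$ along the truncated equation $iv_t + v_{xx} + P_{\leq k+9}F = 0$ produces an expression linear in the mismatch $P_{\leq k+9}F - \tilde F$ and paired with $\overline{v_{xx}}$ and $\overline{P_{\leq k+9}F}$. Writing $u=v+w$ with $w=P_{>k+9}u$, every monomial in $P_{\leq k+9}F-\tilde F = P_{\leq k+9}(F-\tilde F) - P_{>k+9}\tilde F$ contains at least one factor of $w$. Integrating over $J$ and applying H\"older's inequality, each resulting multilinear integral is estimated using: (i) the long time Strichartz bound $\|P_{\geq k}u\|_{U_\Delta^2}\lesssim \eta+T^{-10}$; (ii) the bilinear refinement $2^{k/2}\|(P_{\geq k}u)(P_{\leq k-3}u)\|_{L^2_{t,x}(J\times\mathbb{R})}\lesssim\eta+T^{-10}$, obtained by rerunning the interaction Morawetz computation leading to $(\ref{6.28})$ with the improved $U_\Delta^2$ input from Theorem~\ref{t6.4}; (iii) the $X$-norm bound $\|u\|_{X}\lesssim 1$ from Theorem~\ref{t6.2}; and (iv) Bernstein to collect the two spatial derivatives as factors of $2^{k+9}$. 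Extracting two high-frequency factors through (ii) produces the square $\eta^2$, and the two derivatives contribute $2^{2k}$, yielding $\int_J|\tfrac{d}{dt}E(v)|\,dt\lesssim 2^{2k}(\eta^2+T^{-20})$, which combines with the $t_0$ bound to give the theorem.

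The main obstacle is this multilinear bookkeeping in the almost conservation step. Since the target has $\eta^2$, one must recover \emph{two} factors of the high-frequency piece $w=P_{>k+9}u$ from each quintic monomial and pair them through the Planchon--Vega bilinear estimate; a single such factor would yield only a linear $O(\eta)$ loss, insufficient to close at the target level $2^{2k}\eta^2$. The $L^2_{t,x}$ bilinear structure encoded in Definition~\ref{d6.1} is precisely what converts $U_\Delta^2$-smallness of $P_{\geq k}u$ into $L^2_{t,x}$-smallness of the product $(P_{\geq k}u)(P_{\leq k-3}u)$ with the required $2^{-k/2}$ factor, and the quintic expansion must be reorganized so that every term admits such a pairing.
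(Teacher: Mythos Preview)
Your approach is correct and matches the paper's proof essentially step for step: mean value theorem to choose $t_0$, direct energy expansion at $t_0$ using the $Q+\epsilon$ decomposition and Bernstein, then almost conservation of $E(P_{\leq k+9}u)$ controlled through the long time Strichartz estimate and the bilinear refinement.

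One clarification on the obstacle you flag. You phrase it as needing two factors of $w=P_{>k+9}u$ from the quintic nonlinearity, but in the worst case the commutator $P_{\leq k+9}F(u)-F(P_{\leq k+9}u)$ only contains \emph{one} high-frequency factor (e.g.\ $k_1\geq k+6$, $k_2,\dots,k_5\leq k$). The paper extracts the second high-frequency factor not from the nonlinearity but from the test function $\Delta P_{\leq k+9}u$: by Fourier support, the inner product $\big(i\Delta P_{k_6}u,\,P_{\leq k+9}((P_{k_1}u)\cdots(P_{k_5}u))\big)$ vanishes unless $k_6\geq k+3$. This forces the pairing into the form $2^{2k}\|(P_{\geq k+3}u)(P_{\leq k}u)\|_{L^2_{t,x}}\|(P_{\geq k+6}u)(P_{\leq k}u)\|_{L^2_{t,x}}\|P_{\leq k}u\|_{L^\infty_{t,x}}^2$, and now both bilinear factors carry a $P_{\geq k}u$ piece bounded in $U_\Delta^2$ by $\eta+T^{-10}$ via Theorem~\ref{t6.4}. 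The remaining cases (two or more high factors already in the nonlinearity) are easier. The analogous Fourier support argument handles the nonlinear term $(iP_{\leq k+9}F(u),F(P_{\leq k+9}u))$. So your instinct that the bilinear structure is the key is right; the point is that one of the two high-frequency inputs comes from the linear side of the pairing rather than from the commutator itself.
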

\begin{proof}
By the mean value theorem, there exists $t_{0} \in J$ such that
\begin{equation}\label{8.22}
\| \epsilon(t_{0}) \|_{L^{2}}^{2} \lesssim \frac{1}{T} \int_{J} \| \epsilon(t) \|_{L^{2}}^{2} \lambda(t)^{-2} dt.
\end{equation}

Next, decompose the energy. Let $\tilde{Q}$ refer to a rescaled version of $Q$, that is, $\tilde{Q} = \lambda(t_{0})^{-1/2} Q(\lambda(t_{0})^{-1} x)$, and let $\tilde{\epsilon}$ denote the rescaled $\epsilon$, $\tilde{\epsilon} = \lambda(t_{0})^{-1/2} \tilde{\epsilon}(t_{0}, \lambda(t_{0})^{-1} x)$. It is also convenient to split $\tilde{\epsilon}$ into real and imaginary parts,
\begin{equation}
\tilde{\epsilon} = \epsilon_{1} + i \epsilon_{2}.
\end{equation}
As in Theorem $\ref{t2.4}$, by $(\ref{2.14})$,
\begin{equation}\label{8.24}
\aligned
E(P_{\leq k + 9} u) = E(P_{\leq k + 9} \tilde{Q} + P_{\leq k + 9} \tilde{\epsilon}) = E(P_{\leq k + 9} \tilde{Q}) + Re \int P_{\leq k + 9}  \tilde{Q}_{x} \overline{P_{\leq k + 9} \tilde{\epsilon}_{x}} dx \\ - Re \int (P_{\leq k + 9} \tilde{Q})^{5} \overline{(P_{\leq k + 9} \tilde{\epsilon})} dx
+ \frac{1}{2} \| P_{\leq k + 9} \tilde{\epsilon} \|_{\dot{H}^{1}}^{2} - \frac{5}{2} \int (P_{\leq k + 9} \tilde{Q})^{4} (P_{\leq k + 9} \tilde{\epsilon}_{1})^{2} dx - \frac{1}{2} \int (P_{\leq k + 9} \tilde{Q})^{4} (P_{\leq k + 9} \epsilon_{2})^{2} \\
- \int O(|P_{\leq k + 9} \tilde{Q}|^{3} |P_{\leq k + 9} \tilde{\epsilon}|^{3}) + O(|P_{\leq k + 9} \tilde{\epsilon}|^{6}) dx.
\endaligned
\end{equation}
Since $\tilde{Q}$ is smooth and rapidly decreasing, $E(\tilde{Q}) = 0$, and $\lambda(t_{0}) \geq \frac{1}{\eta_{1}}$, Bernstein's inequality implies that
\begin{equation}\label{8.25}
E(P_{\leq k + 9} \tilde{Q}) = \frac{1}{2} \int (P_{\leq k + 9} \tilde{Q}_{x})^{2} - \frac{1}{6} \int (P_{\leq k + 9} \tilde{Q})^{6} \lesssim 2^{-30k}.
\end{equation}
Next, integrating by parts and using $(\ref{2.36})$, the smoothness of $Q$, and Bernstein's inequality,
\begin{equation}\label{8.26}
\aligned
Re \int P_{\leq k + 9} \tilde{Q}_{x} \overline{P_{\leq k + 9} \tilde{\epsilon}_{x}} dx - Re \int (P_{\leq k + 9} \tilde{Q})^{5} \overline{P_{\leq k + 9} \tilde{\epsilon}} dx  \\ = -Re \int (\overline{P_{\leq k + 9} \epsilon}) (P_{\leq k + 9} \tilde{Q}_{xx} + (P_{\leq k + 9} \tilde{Q})^{5}) dx = \frac{1}{2 \lambda(t_{0})^{2}} \| \epsilon \|_{L^{2}}^{2} + O(2^{-30 k}).
\endaligned
\end{equation}
Next, by H{\"o}lder's inequality, since $\lambda(t_{0}) \geq \frac{1}{\eta_{1}}$,
\begin{equation}\label{8.27}
\aligned
\frac{1}{2} \| P_{\leq k + 9} \tilde{\epsilon} \|_{\dot{H}^{1}}^{2} - \frac{5}{2} \int (P_{\leq k + 9} \tilde{Q})^{4} (P_{\leq k + 9} \tilde{\epsilon}_{1})^{2} dx - \frac{1}{2} \int (P_{\leq k + 9} \tilde{Q})^{4} (P_{\leq k + 9} \tilde{\epsilon}_{2})^{2} dx \\ \lesssim \| P_{\leq k + 9} \tilde{\epsilon} \|_{\dot{H}^{1}}^{2} + \frac{1}{\lambda(t_{0})^{2}} \| P_{\leq k + 9} \tilde{\epsilon} \|_{L^{2}}^{2} \lesssim 2^{2k} \| \epsilon(t_{0}) \|_{L^{2}}^{2}.
\endaligned
\end{equation}
By the Sobolev embedding theorem,
\begin{equation}\label{8.28}
\aligned
\int |P_{\leq k + 9} \tilde{\epsilon}|^{3} |P_{\leq k + 9} \tilde{Q}|^{3} + |P_{\leq k + 9} \tilde{\epsilon}|^{6} dx \lesssim \frac{1}{\lambda(t_{0})^{3/2}} \| P_{\leq k + 9} \tilde{\epsilon}(t_{0}) \|_{L^{2}}^{5/2} \| P_{\leq k + 9} \tilde{\epsilon}(t_{0}) \|_{\dot{H}^{1}}^{1/2} \\ + \| P_{\leq k + 9} \tilde{\epsilon}(t_{0}) \|_{L^{2}}^{4} \| P_{\leq k + 9} \tilde{\epsilon}(t_{0}) \|_{\dot{H}^{1}}^{2} \lesssim \frac{1}{\lambda(t_{0})^{2}} \| P_{\leq k + 9} \tilde{\epsilon}(t_{0}) \|_{L^{2}}^{2} + \| P_{\leq k + 9} \tilde{\epsilon}(t_{0}) \|_{L^{2}}^{4} \| P_{\leq k + 9} \tilde{\epsilon}(t_{0}) \|_{\dot{H}^{1}}^{2}.
\endaligned
\end{equation}
Therefore, since $\lambda(t_{0}) \geq \frac{1}{\eta_{1}}$,
\begin{equation}\label{8.29}
E(P_{\leq k + 9} \tilde{u}(t_{0})) \lesssim 2^{2k} \| \epsilon(t_{0}) \|_{L^{2}}^{2} + 2^{-30k}.
\end{equation}

Next compute the change of energy.
\begin{equation}\label{8.30}
\aligned
\frac{d}{dt} E(P_{\leq k + 9} u) = -( P_{\leq k + 9} u_{t}, \Delta P_{\leq k + 9} u )_{L^{2}} - ( P_{\leq k + 9} u_{t}, |P_{\leq k + 9} u|^{4} P_{\leq k + 9} u )_{L^{2}} \\ = -( P_{\leq k + 9} u_{t}, P_{\leq k + 9} F(u) - F(P_{\leq k + 9} u) )_{L^{2}} = ( i \Delta P_{\leq k + 9} u + i P_{\leq k + 9} (|u|^{4} u), P_{\leq k + 9} F(u) - F(P_{\leq k + 9} u) )_{L^{2}}.
\endaligned
\end{equation}

First compute
\begin{equation}\label{8.31}
\int_{t_{0}}^{t'} ( i \Delta P_{\leq k + 9} u, P_{\leq k + 9} F(u) - F(P_{\leq k + 9} u) )_{L^{2}} dt,
\end{equation}
for some $t' \in J$. Making a Littlewood--Paley decomposition,
\begin{equation}
\aligned
\int_{t_{0}}^{t'} ( i \Delta P_{\leq k + 9} u, P_{\leq k + 9} F(u) - F(P_{\leq k + 9} u) )_{L^{2}} dt \\ \sim \sum_{0 \leq k_{5} \leq k_{4} \leq k_{3} \leq k_{2} \leq k_{1}} \sum_{0 \leq k_{6} \leq k + 9} \int_{t_{0}}^{t'} ( i \Delta P_{k_{6}} u, P_{\leq k + 9} (P_{k_{1}} u \cdots P_{k_{5}} u) - (P_{\leq k + 9} P_{k_{1}} u) \cdots (P_{\leq k + 9} P_{k_{5}} u) )_{L^{2}} dt.
\endaligned
\end{equation}
\begin{remark}
For these computations, it is not so important to distinguish between $u$ and $\bar{u}$.
\end{remark}

\noindent \textbf{Case $1$, $k_{1} \leq k + 6$:} In this case $P_{\leq k + 9} P_{k_{1}} = P_{k_{1}}$ and $P_{\leq k + 9}(P_{k_{1}} u \cdots P_{k_{5}} u) = P_{k_{1}} u \cdots P_{k_{5}} u$, so the contribution of these terms is zero. That is, for $k_{1}, ..., k_{5} \leq k + 6$,
\begin{equation}
\aligned
 \int_{t_{0}}^{t'} ( i \Delta P_{k_{6}} u, P_{\leq k + 9} (P_{k_{1}} u \cdots P_{k_{5}} u) - (P_{\leq k + 9} P_{k_{1}} u) \cdots (P_{\leq k + 9} P_{k_{5}} u) )_{L^{2}} dt = 0.
\endaligned
\end{equation}

\noindent \textbf{Case $2$, $k_{1} \geq k + 6$ and $k_{2} \leq k$:} In this case, Fourier support properties imply that $k_{6} \geq k + 3$. Then by Theorem $\ref{t6.4}$, Theorem $\ref{t6.2}$, and $(\ref{6.28})$,
\begin{equation}\label{8.32}
 \int_{t_{0}}^{t'} ( i \Delta P_{k + 3 \leq \cdot \leq k + 9} u, P_{\leq k + 9} ((P_{\leq k} u)^{4} (P_{\geq k + 6} u)) - (P_{\leq k} u)^{4} (P_{k + 6 \leq \cdot \leq k + 9} u) )_{L^{2}} dt
\end{equation}
\begin{equation}\label{8.33}
\aligned
\lesssim 2^{2k} \| (P_{\geq k + 3} u)(P_{\leq k} u) \|_{L_{t,x}^{2}} \| (P_{\geq k + 6} u)(P_{\leq k} u) \|_{L_{t,x}^{2}} \| P_{\leq k} u \|_{L_{t,x}^{\infty}}^{2}
\lesssim \frac{2^{2k}}{T} \int_{J} \| \epsilon(t) \|_{L^{2}}^{2} \lambda(t)^{-2} dt + 2^{2k} T^{-10}.
\endaligned
\end{equation}

\noindent \textbf{Case 3, $k_{1} \geq k + 6$, $k_{2} \geq k$ $k_{3} \leq k$:} If $k_{6} \leq k$, then by Fourier support properties, $k_{2} \geq k + 3$. In that case,
\begin{equation}\label{8.34}
\aligned
\int_{t_{0}}^{t'} ( i \Delta P_{\leq k} u, P_{\leq k + 9}((P_{\geq k + 6} u)(P_{\geq k + 3} u) (P_{\leq k} u)^{3}) - (P_{k + 6 \leq \cdot \leq k + 9} u)(P_{k + 3 \leq \cdot \leq k + 9} u)(P_{\leq k} u)^{3})_{L^{2}} dt \\ \lesssim 2^{2k} \| (P_{\geq k + 6} u)(P_{\leq k} u) \|_{L_{t,x}^{2}} \| (P_{\geq k + 3} u)(P_{\leq k} u) \|_{L_{t,x}^{2}} \| P_{\leq k} u \|_{L_{t,x}^{\infty}}^{2} \lesssim \frac{2^{2k}}{T} \int_{J} \| \epsilon(t) \|_{L^{2}}^{2} \lambda(t)^{-2} dt + 2^{2k} T^{-10}.
\endaligned
\end{equation}
In the case when $k_{6} \geq k$,
\begin{equation}\label{8.34}
\aligned
\int_{t_{0}}^{t'} ( i \Delta P_{k \leq \cdot \leq k + 9} u, P_{\leq k + 9}((P_{\geq k + 6} u)(P_{\geq k} u) (P_{\leq k} u)^{3}) - (P_{k + 6 \leq \cdot \leq k + 9} u)(P_{k \leq \cdot \leq k + 9} u)(P_{\leq k} u)^{3})_{L^{2}} dt \\ \lesssim 2^{2k} \| (P_{\geq k + 6} u)(P_{\leq k} u) \|_{L_{t,x}^{2}} \| P_{\geq k} u \|_{L_{t}^{4} L_{x}^{\infty}}^{2} \| P_{\leq k} u \|_{L_{t,x}^{\infty}} \| u \|_{L_{t}^{\infty} L_{x}^{2}} \lesssim \frac{2^{2k}}{T} \int_{J} \| \epsilon(t) \|_{L^{2}}^{2} \lambda(t)^{-2} dt + 2^{2k} T^{-10}.
\endaligned
\end{equation}

\noindent \textbf{Case 4, $k_{1} \geq 2^{k + 6}$ and $k_{2}, k_{3} \geq k$:} In this case,
\begin{equation}\label{8.35}
\aligned
\int_{t_{0}}^{t'} ( i \Delta P_{\leq k + 9} u, P_{\leq k + 9}((P_{\geq k + 6} u)(P_{\geq k} u)^{2} u^{2}) - (P_{k + 6 \leq \cdot \leq k + 9} u)(P_{k \leq \cdot \leq k + 9} u)^{2}(P_{\leq k + 9} u)^{2} )_{L^{2}} dt \\ \lesssim 2^{2k} \| (P_{\geq k + 6} u)(P_{\leq k} u) \|_{L_{t,x}^{2}} \| P_{\geq k} u \|_{L_{t}^{4} L_{x}^{\infty}}^{2} \| P_{\leq k} u \|_{L_{t,x}^{\infty}} \| u \|_{L_{t}^{\infty} L_{x}^{2}} \\
+ 2^{2k} \| P_{\geq k} u \|_{L_{t}^{4} L_{x}^{\infty}}^{4} \| u \|_{L_{t}^{\infty} L_{x}^{2}}^{2} \lesssim \frac{2^{2k}}{T} \int_{J} \| \epsilon(t) \|_{L^{2}}^{2} \lambda(t)^{-2} dt + 2^{2k} T^{-10}.
\endaligned
\end{equation}

The contribution of the nonlinear terms is similar, using the fact that
\begin{equation}\label{8.37}
( i P_{\leq k + 9} F(u), P_{\leq k + 9} F(u) - F(P_{\leq k + 9} u) )_{L^{2}} = ( i P_{\leq k + 9} F(u), F(P_{\leq k + 9} u) )_{L^{2}}.
\end{equation}
Then make a Littlewood--Paley decomposition,
\begin{equation}\label{8.37.1}
\aligned
(i P_{\leq k + 9} F(u), F(P_{\leq k + 9} u))_{L^{2}} \\ = \sum_{0 \leq k_{5} \leq k_{4} \leq k_{3} \leq k_{2} \leq k_{1}} \sum_{0 \leq k_{5}' \leq k_{4}' \leq k_{3}' \leq k_{2}' \leq k_{1}'} (i P_{\leq k + 9} (u_{k_{1}} \cdots u_{k_{5}}), (P_{\leq k + 9} P_{k_{1}} u) \cdots (P_{\leq k + 9} P_{k_{5}} u))_{L^{2}}.
\endaligned
\end{equation}
\noindent \textbf{Case 1: $k_{1}, k_{1}' \leq k + 6$:} Once again, if $k_{1}, k_{1}' \leq k + 6$, then the right hand side of $(\ref{8.37.1})$ is zero.\medskip

\noindent \textbf{Case 2: $k_{1}$ or $k_{1}' \geq k + 6$, eight terms are $\leq k$:} In the case that $k_{1}$ or $k_{1}' \geq k + 6$, and eight of the terms in $(\ref{8.37.1})$ are at frequency $\leq k$, then by Fourier support properties the final term should be at frequency $\geq k + 3$. The contribution in this case is bounded by
\begin{equation}\label{8.37.2}
\| (P_{\geq k + 6} u)(P_{\leq k} u) \|_{L_{t,x}^{2}} \| (P_{\geq k + 3} u)(P_{\leq k} u) \|_{L_{t,x}^{2}} \| P_{\leq k} u \|_{L_{t,x}^{\infty}}^{6} \lesssim 2^{2k} \frac{1}{T} \int_{J} \| \epsilon(t) \|_{L^{2}}^{2} dt + 2^{2k} T^{-10}.
\end{equation}

\noindent \textbf{Case 3: $k_{1}$ or $k_{1}' \geq k + 6$, two terms are $\geq k$:} The contribution of the case that $k_{1}$ or $k_{1}' \geq k + 6$, two additional terms in $(\ref{8.37.1})$ are at frequency $\geq k$, and the other seven terms are at frequency $\leq k$ is bounded by
\begin{equation}\label{8.37.3}
\| (P_{\geq k + 6} u)(P_{\leq k} u) \|_{L_{t,x}^{2}} \| P_{\geq k} u \|_{L_{t}^{4} L_{x}^{\infty}}^{2} \| P_{\leq k} u \|_{L_{t,x}^{\infty}}^{5} \| u \|_{L_{t}^{\infty} L_{x}^{2}}  \lesssim 2^{2k} \frac{1}{T} \int_{J} \| \epsilon(t) \|_{L^{2}}^{2} dt + 2^{2k} T^{-10}.
\end{equation}

\noindent \textbf{Case 4: $k_{1}$ or $k_{1}' \geq k + 6$ and at least three additional terms in $(\ref{8.37.1})$ are at frequencies $\geq k$.} 

This case may be reduced to a case where at least four terms in $(\ref{8.37.1})$ are at frequency $\geq k$, and at least four terms are at frequency $\leq k + 9$. To see why, notice that all five terms in $F(P_{\leq k + 9} u)$ are at frequency $\leq k + 9$, so if four or five of the terms in $P_{\leq k + 9} F(u)$ are at frequency $\geq k$, then we are fine. 

If exactly, three terms in $P_{\leq k + 9} F(u)$ are at frequency $\geq k$, then take the two terms in $P_{\leq k + 9} F(u)$ that are at frequency $\leq k$ to be terms at frequency $\leq k + 9$. Meanwhile, since at least four terms are at frequency $\geq k$,
\begin{equation}\label{8.37.4}
F(P_{\leq k + 9} u) \sim (P_{k \leq \cdot \leq k + 9} u)(P_{\leq k + 9} u)^{4},
\end{equation}
so in $(\ref{8.37.4})$ there is one term at frequency $\geq k$ and two more terms at frequency $\leq k + 9$.

If exactly two terms in $P_{\leq k + 9} F(u)$ are at frequency $\geq k$, then there are three terms that are at frequency $\leq k$. In that case,
 \begin{equation}\label{8.37.5}
F(P_{\leq k + 9} u) \sim (P_{k \leq \cdot \leq k + 9} u)^{2}(P_{\leq k + 9} u)^{3},
\end{equation}
so in $(\ref{8.37.5})$ there are two terms at frequency $\geq k$ and one term at frequency $\leq k + 9$.

If one term in $P_{\leq k + 9} F(u)$ is at frequency $\geq k$, then there are four terms in $P_{\leq k + 9} F(u)$ at frequency $\leq k$. Then there must be at least three more in $F(P_{\leq k + 9} u)$, so
\begin{equation}\label{8.37.6}
F(P_{\leq k + 9} u) \sim (P_{k \leq \cdot \leq k + 9} u)^{3} u^{2}.
\end{equation}
If no terms in $P_{\leq k + 9} F(u)$ are at frequency $\geq k$, then there must be four in $F(P_{\leq k + 9} u)$, so
\begin{equation}\label{8.37.7}
F(P_{\leq k + 9} u) \sim (P_{k \leq \cdot \leq k + 9} u)^{4} u.
\end{equation}

The contribution of all the different subcases of case four, $(\ref{8.37.4})$--$(\ref{8.37.7})$, may be bounded by
\begin{equation}\label{8.37.8}
\| P_{\geq k} u \|_{L_{t}^{4} L_{x}^{\infty}}^{4} \| u \|_{L_{t}^{\infty} L_{x}^{2}}^{2} \| P_{\leq k + 9} u \|_{L_{t,x}^{\infty}}^{4} \lesssim 2^{2k} \frac{1}{T} \int_{J} \| \epsilon(t) \|_{L^{2}}^{2} dt + 2^{2k} T^{-10}.
\end{equation}
This proves Theorem $\ref{t8.3}$.
\end{proof}

\begin{corollary}\label{c8.4}
If
\begin{equation}
\frac{1}{\eta_{1}} \leq \lambda(t) \leq \frac{1}{\eta_{1}} T^{1/100},
\end{equation}
and
\begin{equation}
\int_{J} \lambda(t)^{-2} dt = T,
\end{equation}
then
\begin{equation}
\sup_{t \in J} \| P_{\leq k + 9} \frac{1}{\lambda(t)^{1/2}} \epsilon(t, \frac{x}{\lambda(t)}) \|_{\dot{H}^{1}}^{2} \lesssim \frac{2^{2k}}{T} \int_{J} \| \epsilon(t) \|_{L^{2}}^{2} \lambda(t)^{-2} dt + 2^{2k} T^{-10},
\end{equation}
and
\begin{equation}
\sup_{t \in J} \| \epsilon(t) \|_{L^{2}}^{2} \lesssim \frac{T^{1/50}}{\eta_{1}^{2}} \frac{2^{2k}}{T} \int_{J} \| \epsilon(t) \|_{L^{2}}^{2} \lambda(t)^{-2} dt + \frac{T^{1/50}}{\eta_{1}^{2}} 2^{2k} T^{-10},
\end{equation}
\end{corollary}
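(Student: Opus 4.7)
The plan is to deduce both estimates from Theorem \ref{t8.3}, using the coercivity of Theorem \ref{t2.4} applied after rescaling to the natural scale of $Q$, and handling high frequencies via Theorem \ref{t6.4}.

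For the first inequality, fix $t\in J$ and define $v(x) := e^{i\gamma(t)}\lambda(t)^{1/2}(P_{\leq k+9}u)(t,\lambda(t)x)$. A direct Fourier calculation identifies $v$ with $P^{\lambda}_{\leq k+9}(Q+\epsilon(t))$, where $P^{\lambda}_{\leq k+9}$ denotes the projection onto frequencies $|\xi|\leq 2^{k+9}\lambda(t)$ in the rescaled variable. Since $Q$ is Schwartz and $2^{k+9}\lambda(t)\geq 2^{k+9}\eta_{1}^{-1}$ is large compared with any fixed power of $T$, both $\|(I-P^{\lambda}_{\leq k+9})Q\|_{H^{1}}$ and $\|(I-P^{\lambda}_{\leq k+9})Q^{3}\|_{L^{2}}$ are $\lesssim T^{-10}$. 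Consequently the orthogonalities $(\epsilon,Q^{3})_{L^{2}}=(\epsilon,iQ^{3})_{L^{2}}=0$ transfer to $(P^{\lambda}_{\leq k+9}\epsilon,Q^{3})_{L^{2}},\,(P^{\lambda}_{\leq k+9}\epsilon,iQ^{3})_{L^{2}}=O(T^{-10})$, and the mass identity $\|Q+\epsilon\|_{L^{2}}=\|Q\|_{L^{2}}$ becomes $\|Q+P^{\lambda}_{\leq k+9}\epsilon\|_{L^{2}}^{2}=\|Q\|_{L^{2}}^{2}-\|P^{\lambda}_{>k+9}\epsilon\|_{L^{2}}^{2}+O(T^{-10})$. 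Reproducing the energy expansion and spectral argument from the proof of Theorem \ref{t2.4} under these approximate hypotheses yields $E(v)\gtrsim \|P^{\lambda}_{\leq k+9}\epsilon\|_{\dot{H}^{1}}^{2}-O(T^{-10})$. The scaling identities $E(v)=\lambda(t)^{2}E(P_{\leq k+9}u(t))$ and $\|P^{\lambda}_{\leq k+9}\epsilon\|_{\dot{H}^{1}}^{2}=\lambda(t)^{2}\|P_{\leq k+9}\tilde\epsilon(t)\|_{\dot{H}^{1}}^{2}$, where $\tilde\epsilon(t,y)=\lambda(t)^{-1/2}\epsilon(t,y/\lambda(t))$, then permit dividing by $\lambda(t)^{2}$ and invoking Theorem \ref{t8.3} to conclude the first bound.

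For the second inequality, split $\|\epsilon(t)\|_{L^{2}}^{2}=\|P^{\lambda}_{\leq k+9}\epsilon(t)\|_{L^{2}}^{2}+\|P^{\lambda}_{>k+9}\epsilon(t)\|_{L^{2}}^{2}$. For the high-frequency piece, $\|P^{\lambda}_{>k+9}\epsilon\|_{L^{2}}\leq \|P_{>k+9}u\|_{L^{2}}+\|P^{\lambda}_{>k+9}Q\|_{L^{2}}$ by the $L^{2}$-isometry of rescaling, and Theorem \ref{t6.4} combined with the embedding $U^{2}_{\Delta}\hookrightarrow L^{\infty}_{t}L^{2}_{x}$ bounds this uniformly by $T^{-10}+\bigl(T^{-1}\int_{J}\|\epsilon\|_{L^{2}}^{2}\lambda^{-2}\,dt\bigr)^{1/2}$. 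For the low-frequency piece, the same coercivity argument as above, now retaining the $L^{2}$ summand from $(\ref{2.40})$ and $(\ref{2.41.1})$, gives $\|P^{\lambda}_{\leq k+9}\epsilon\|_{L^{2}}^{2}\lesssim \lambda(t)^{2}E(P_{\leq k+9}u(t))+O(T^{-10})$. Substituting Theorem \ref{t8.3} and using $\lambda(t)^{2}\leq \eta_{1}^{-2}T^{1/50}$ produces the claimed estimate, the factor $T^{1/50}/\eta_{1}^{2}$ arising precisely as the worst-case size of $\lambda^{2}$ in passing from the rescaled energy back to $\|\epsilon\|_{L^{2}}^{2}$.

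The main technical obstacle is the rigorous transfer of Theorem \ref{t2.4} to the frequency-truncated setting: one must verify that replacing the exact orthogonalities and the mass constraint by their $O(T^{-10})$-approximate versions does not degrade the spectral gaps of $\mathcal L$ and $\mathcal L_{-}$ that drive the coercivity, and that the correction term $-\|P^{\lambda}_{>k+9}\epsilon\|_{L^{2}}^{2}$ entering the modified mass identity appears with a favorable sign so that it either drops out or contributes a nonnegative contribution to the energy lower bound. Once this is checked, the rest is bookkeeping: every error introduced by the Fourier truncation on Schwartz functions is polynomially small in $T$, and therefore absorbed by the $2^{2k}T^{-10}$ term already present on the right-hand side of Theorem \ref{t8.3}.
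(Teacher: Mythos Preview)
Your proposal is correct and follows essentially the same approach as the paper, which simply states that the corollary follows from Theorem~\ref{t8.3}, Theorem~\ref{t2.4}, rescaling, and the rapid decay of $Q$. Your invocation of Theorem~\ref{t6.4} for the high-frequency part of the second inequality is a valid route, though it is in fact redundant: as you yourself note in the final paragraph, the mass identity $(\ref{2.36})$ applied with the \emph{full} $\epsilon$ contributes $\tfrac{1}{2}\|\epsilon\|_{L^{2}}^{2}$ to $\lambda(t)^{2}E(P_{\leq k+9}u)$, and combining this with the truncated quadratic form leaves the high-frequency piece $\tfrac{1}{2}\|P^{\lambda}_{>k+9}\epsilon\|_{L^{2}}^{2}$ with a favorable sign, so the coercivity argument alone already controls all of $\|\epsilon\|_{L^{2}}^{2}$.
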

\begin{proof}
The proof uses Theorem $\ref{t8.3}$, Theorem $\ref{t2.4}$, rescaling, and the fact that $Q$ is smooth and all its derivatives are rapidly decreasing.
\end{proof}

\section{A frequency localized Morawetz estimate}
The next step will be to combine long time Strichartz estimates with almost conservation of energy to prove a frequency localized Morawetz estimate adapted to the case when $\lambda(t)$ does not vary too much.

\begin{theorem}\label{t10.1}
Let $J = [a, b]$ be an interval on which $(\ref{6.1.1})$ holds for all $t \in J$, $\frac{1}{\eta_{1}} \leq \lambda(t) \leq \frac{1}{\eta_{1}} T^{1/100}$ for all $t \in J$, and
\begin{equation}\label{10.1}
\int_{J} \lambda(t)^{-2} dt = T.
\end{equation}
Also suppose $2^{3k} = \eta_{1}^{-2} T$ and that $\epsilon = \epsilon_{1} + i \epsilon_{2}$, where $\epsilon$ is given by Theorem $\ref{t2.3}$. Then for $T$ sufficiently large,
\begin{equation}\label{10.1.1}
\aligned
\int_{a}^{b} \| \epsilon(t) \|_{L^{2}}^{2} \lambda(t)^{-2} dt \leq 3(\epsilon_{2}(a), (\frac{1}{2} Q + x Q_{x}))_{L^{2}} - 3(\epsilon_{2}(b), \frac{1}{2} Q + x Q_{x})_{L^{2}} + O(\frac{1}{T^{9}}).
\endaligned
\end{equation}
\end{theorem}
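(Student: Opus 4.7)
The plan is to derive a virial-type identity for the real-valued functional
\begin{equation*}
I(s) := (\epsilon_2(s), \Lambda Q)_{L^2}, \qquad \Lambda Q := \tfrac{1}{2} Q + x Q_x,
\end{equation*}
in the rescaled time $s$ from $(\ref{7.5})$, prove that $\tfrac{d}{ds} I(s) = -\|\epsilon(s)\|_{L^2}^2 + \mathcal{E}(s)$ with $\mathcal{E}$ a controllable error, and then absorb $\int \mathcal{E}\,ds$ back into the left-hand side of the integrated identity. The coefficient $3$ in $(\ref{10.1.1})$ will appear as $(1 - 2/3)^{-1}$, where the $2/3$ is a bound on the fraction of $\int_a^b \|\epsilon\|_{L^2}^2 \lambda^{-2}\,dt$ that the error terms can consume.

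Three algebraic facts drive the main term. First, differentiating the scaling identity $(\lambda^{1/2} Q(\lambda x))_{xx} + (\lambda^{1/2} Q(\lambda x))^5 = \lambda^2 \cdot \lambda^{1/2} Q(\lambda x)$ in $\lambda$ at $\lambda = 1$ yields $\mathcal L_+(\Lambda Q) = -2 Q$. Second, $(Q, \Lambda Q)_{L^2} = 0$ by one integration by parts. Third, the mass constraint $\|Q + \epsilon\|_{L^2} = \|Q\|_{L^2}$ gives $(\epsilon_1, Q)_{L^2} = -\tfrac{1}{2}\|\epsilon\|_{L^2}^2$. Extracting the imaginary part of $(\ref{7.11})$,
\begin{equation*}
\epsilon_{2,s} = -\mathcal L_+ \epsilon_1 + (\gamma_s+1) Q + (\gamma_s+1)\epsilon_1 + \tfrac{\lambda_s}{\lambda}\bigl(\tfrac{1}{2}\epsilon_2 + x\epsilon_{2,x}\bigr) + O(Q^3 \epsilon^2 + \epsilon^2 u^3),
\end{equation*}
and pairing with $\Lambda Q$, the second fact kills $(\gamma_s+1)(Q, \Lambda Q)$, while the first gives $-(\mathcal L_+ \epsilon_1, \Lambda Q) = 2(\epsilon_1, Q) = -\|\epsilon\|_{L^2}^2$ by the third. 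No $H^1$ regularity of $\epsilon$ is needed, since the $x$-derivative in $\tfrac{\lambda_s}{\lambda}(x\epsilon_{2,x}, \Lambda Q)$ is integrated by parts onto the Schwartz factor $\Lambda Q$. Integrating in $s$ and changing variables $ds = \lambda(t)^{-2}\,dt$ produces
\begin{equation*}
\int_a^b \|\epsilon(t)\|_{L^2}^2 \lambda(t)^{-2}\,dt = (\epsilon_2(a), \Lambda Q)_{L^2} - (\epsilon_2(b), \Lambda Q)_{L^2} - \int_{s(a)}^{s(b)} \mathcal{E}(s)\,ds.
\end{equation*}

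The main obstacle is controlling $\int \mathcal{E}\,ds$. The modulation pieces $(\gamma_s+1)(\epsilon_1, \Lambda Q)$ and $\tfrac{\lambda_s}{\lambda}(\tfrac{1}{2}\epsilon_2 + x\epsilon_{2,x}, \Lambda Q)$ are handled by the averaged bounds $(\ref{7.15.3})$--$(\ref{7.15.4})$, which give $|\tfrac{\lambda_s}{\lambda}|, |\gamma_s+1| \lesssim \|\epsilon\|_{L^2} + \|\epsilon\|_{L^2}^2 \|u\|_{L^\infty}^3$; together with the smallness $\|\epsilon\|_{L^2} \leq \eta_0$ from $(\ref{6.1.1})$, these contribute at most $C\eta_0 \int_a^b \|\epsilon\|^2 \lambda^{-2}\,dt + O(T^{-9})$. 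The pointwise nonlinear term $(Q^3 \epsilon^2, \Lambda Q)$ is dominated by $\|Q^3 \Lambda Q\|_{L^\infty}\|\epsilon\|_{L^2}^2$, which is again of this form. The critical term is $(\epsilon^2 u^3, \Lambda Q)$: since $u$ lies only in $L^2$, I split $u = P_{\leq k+9} u + P_{>k+9} u$, absorb the high-frequency contribution using the sharp long-time Strichartz estimate from Theorem $\ref{t6.4}$ (which delivers a $T^{-10}$ gain), and convert the $\dot H^1$ control on $P_{\leq k+9} u$ from Corollary $\ref{c8.4}$ into an $L^\infty$ bound via Bernstein, balancing the Bernstein loss $2^{(k+9)/2}$ against the $2^k/\sqrt T$ in the almost conservation of energy and the extra $\|\epsilon\|_{L^2}$ already present. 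This finally yields $\bigl|\int_{s(a)}^{s(b)} \mathcal{E}(s)\,ds\bigr| \leq \tfrac{2}{3}\int_a^b \|\epsilon\|^2 \lambda^{-2}\,dt + O(T^{-9})$ for $T$ sufficiently large and $\eta_\ast$ sufficiently small, and moving this to the left-hand side and multiplying by $3$ produces $(\ref{10.1.1})$.
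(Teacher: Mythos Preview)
Your approach---differentiating $I(s)=(\epsilon_2,\Lambda Q)_{L^2}$ directly via the $\epsilon$-equation $(\ref{7.11})$---is genuinely different from the paper's frequency-localized Morawetz estimate on $u$, and your leading-order algebra $\mathcal L(\Lambda Q)=-2Q$, $(Q,\Lambda Q)=0$, $(\epsilon_1,Q)=-\tfrac12\|\epsilon\|_{L^2}^2$ is correct. The gap is in the error control, and it is not repairable within this scheme.

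The claim that the modulation piece $\tfrac{\lambda_s}{\lambda}\bigl(\tfrac12\epsilon_2+x\epsilon_{2,x},\Lambda Q\bigr)$ is bounded by $C\eta_0\|\epsilon\|_{L^2}^2$ is false. From $(\ref{7.15.3})$ one only has $|\tfrac{\lambda_s}{\lambda}|\lesssim\|\epsilon\|_{L^2}$ (the leading term $(\epsilon_2,\mathcal L_-Q^3)$ is genuinely first order in $\epsilon$, not second), so this contribution is $O(\|\epsilon\|_{L^2}^2)$ with a \emph{fixed} constant depending on $\|\Lambda^2 Q\|_{L^2}$ and $\|\mathcal L_-Q^3\|_{L^2}$, not on $\eta_0$. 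The same problem afflicts the quadratic nonlinear remainder: the $O(Q^3\epsilon^2)$ piece of $|Q+\epsilon|^4(Q+\epsilon)$, after taking imaginary part and pairing with the Schwartz function $\Lambda Q$, produces another $O(\|\epsilon\|_{L^2}^2)$ term with a definite constant. Neither of these can be absorbed into the main term $-\|\epsilon\|_{L^2}^2$ without a quantitative spectral or algebraic argument that you have not supplied; the asserted bound ``$\le\tfrac23\int\|\epsilon\|^2\lambda^{-2}$'' has no derivation. (Compare $(\ref{8.41})$: the paper records exactly this virial identity, with $O(\|\epsilon\|_{L^2}^2)$ on the right, and uses it only qualitatively in section~8, never to obtain the sharp inequality $(\ref{10.1.1})$.)

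The paper avoids this obstruction by working with the Morawetz functional $M(t)=\int\phi(x)\,\mathrm{Im}[\overline{P_{\le k+9}u}\,\partial_x P_{\le k+9}u]\,dx$ at the level of $u$. Because $M(t)$ does not involve $\lambda$ or $\gamma$, no modulation errors appear in $\tfrac{d}{dt}M$; the decomposition $u=e^{-i\gamma}\lambda^{-1/2}(Q+\epsilon)(\cdot/\lambda)$ enters only when evaluating the boundary terms $M(a),M(b)$ and when rewriting the bulk $2\int\psi^2|u_x|^2-\tfrac23\int\psi^2|u|^6$. There the $O(\epsilon^2)$ piece is not discarded but shown to be \emph{nonnegative} modulo $O(R^{-1}\|\epsilon\|_{L^2}^2)$ via the spectral inequality $(\mathcal L f,f)\ge\|f\|_{L^2}^2$ on $\{Q^3,Q_x\}^\perp$ (equations $(\ref{10.29})$--$(\ref{10.34})$). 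That positivity, together with the long-time Strichartz and energy bounds (Theorems~\ref{t6.4}, \ref{t8.3}, Corollary~\ref{c8.4}), is what actually produces the absorbable errors and the constant $3$.
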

\begin{remark}
The signs on the right hand side of $(\ref{10.1.1})$ are very important.
\end{remark} 
\begin{proof}
The proof uses a frequency localized Morawetz estimate. The Morawetz potential is the same as the Morawetz potential used in \cite{dodson2015global}. See also \cite{dodson2016global}. 

Let $\psi(x) \in C^{\infty}(\mathbb{R})$ be a smooth, even function, satisfying $\psi(x) = 1$ on $|x| \leq 1$, and supported on $|x| \leq 2$. Then for some large $R$, $R = T^{1/25}$ will do, let
\begin{equation}\label{10.2}
\phi(x) = \int_{0}^{x} \chi(\frac{\eta_{1} y}{R}) dy = \int_{0}^{x} \psi^{2}(\frac{\eta_{1} y}{R}) dy,
\end{equation}
and let
\begin{equation}\label{10.3}
M(t) = \int \phi(x) Im[\overline{P_{\leq k + 9} u} \partial_{x} P_{\leq k + 9} u](t,x) dx.
\end{equation}

Doing some algebra using $(\ref{2.14})$, as in $(\ref{8.24})$,
\begin{equation}\label{10.14}
\aligned
u(t,x) = e^{-i \gamma(t)} \lambda(t)^{-1/2} Q(\frac{x}{\lambda(t)}) + e^{-i \gamma(t)} \lambda(t)^{-1/2} \epsilon(t,\frac{x}{\lambda(t)}) = e^{-i \gamma(t)} \tilde{Q}(x) + e^{-i \gamma(t)} \tilde{\epsilon}(t,x).
\endaligned
\end{equation}
Since $Im[\overline{P_{\leq k + 9} u} \partial_{x}(P_{\leq k + 9} u)]$ is invariant under the multiplication operator $u \mapsto e^{-i \gamma(t)}u$,
\begin{equation}\label{10.16}
\aligned
M(t) = \int \phi(x) Im[\overline{ P_{\leq k + 9} \tilde{Q}(x) + P_{\leq k + 9} \tilde{\epsilon}(t, x)} \partial_{x}( P_{\leq k + 9} \tilde{Q}(x) + P_{\leq k + 9} \tilde{\epsilon}(t, x))] dx.
\endaligned
\end{equation}
Since $Q$ is real valued,
\begin{equation}\label{10.17}
\int \phi(x) Im[\overline{ P_{\leq k + 9} \tilde{Q}(x)} \partial_{x}(P_{\leq k + 9} \tilde{Q}(x))] dx = 0.
\end{equation}
Next, by Corollary $\ref{c8.4}$,
\begin{equation}\label{10.20}
\aligned
\int \phi(x) Im[\overline{P_{\leq k + 9} \tilde{\epsilon}(t, x)} \partial_{x}(P_{\leq k + 9} \tilde{\epsilon}(t, x))] dx
\lesssim \frac{R}{\eta_{1}^{2}}  \frac{2^{2k}}{T^{99/100}} \int_{J} \| \epsilon(t) \|_{L^{2}}^{2} \lambda(t)^{-2} dt + \frac{R}{\eta_{1}^{2}} 2^{2k} T^{-9.99}.
\endaligned
\end{equation}
Next, since $\lambda(t) \geq \frac{1}{\eta_{1}}$, $Q$ and $\partial_{x} Q$ are rapidly decreasing, $\phi(x) = x$ for $|x| \leq \frac{R}{\eta_{1}}$, and $|\phi_{x}(x)| \leq 1$,
\begin{equation}\label{10.18}
\aligned
\int \phi(x) Im[\overline{P_{\leq k + 9} \tilde{\epsilon}(t, x)} \partial_{x}(P_{\leq k + 9} \tilde{Q}(x))] dx
= -(\epsilon_{2}, x Q_{x})_{L^{2}} + O(T^{-10}).
\endaligned
\end{equation}
Indeed, since $Q$ is real, by rescaling,
\begin{equation}\label{10.18.0}
\int x Im[\overline{\tilde{\epsilon}(t, x)} \partial_{x}(\tilde{Q}(x))] dx = -(\epsilon_{2}(t), x Q_{x})_{L^{2}}.
\end{equation}
Next, since $\lambda(t) \leq \frac{1}{\eta_{1}} T^{1/100}$ and $R = T^{1/25}$,
\begin{equation}\label{10.18.1}
\aligned
\int x Im[\overline{\tilde{\epsilon}(t, x)} \partial_{x}(\tilde{Q}(x))] dx
- \int \phi(x) Im[\overline{\tilde{\epsilon}(t, x)} \partial_{x}(\tilde{Q}(x)] dx \\
\leq \int_{|x| \geq \frac{R}{\eta_{1}}} \frac{x}{\lambda(t)^{3/2}} |Q_{x}(\frac{x}{\lambda(t)})| |\frac{1}{\lambda(t)^{1/2}} \epsilon(t,\frac{x}{\lambda(t)})| dx \lesssim T^{-10}.
\endaligned
\end{equation}
Also, since $Q$ and all its derivatives are rapidly decreasing, $\lambda(t) \geq \frac{1}{\eta_{1}}$, $R = T^{1/25}$, and $2^{3k} = \eta_{1}^{-2} T$,
\begin{equation}\label{10.18.2}
\aligned
\int \phi(x) Im[\overline{\tilde{\epsilon}(t, x)} \partial_{x}(\tilde{Q}(x))] dx
- \int \phi(x) Im[\overline{\tilde{\epsilon}(t, x)} \partial_{x}(P_{\leq k + 9} \tilde{Q}(x))] dx \lesssim R \| \epsilon \|_{L^{2}} \| P_{\geq k + 9} \tilde{Q}_{x}(x) \|_{L^{2}} \lesssim T^{-10}.
\endaligned
\end{equation}
Next, $(\ref{10.2})$ implies that that $|\phi^{(j)}(x)| \lesssim 1$ for any $j \geq 1$, and since $Q$ is smooth and all its derivatives are rapidly decreasing, so integrating by parts, for $j$ sufficiently large,
\begin{equation}\label{10.18.3}
\aligned
\int \phi(x) Im[\overline{P_{\geq k + 9} \tilde{\epsilon}(t, x)} \partial_{x}(P_{\leq k + 9} \tilde{Q}(x))] dx
= \int \phi(x) Im[\overline{\frac{\Delta^{j}}{\Delta^{j}} P_{\geq k + 9} \tilde{\epsilon}(t, x)} \partial_{x}(P_{\leq k + 9} \tilde{Q}(x))] dx \lesssim T^{-10},
\endaligned
\end{equation}
so $(\ref{10.18.0})$--$(\ref{10.18.3})$ imply $(\ref{10.18})$. Finally,
\begin{equation}\label{10.19}
\aligned
\int \phi(x) Im[\overline{P_{\leq k + 9} \tilde{Q}(x)} \partial_{x}(P_{\leq k + 9} \tilde{\epsilon}(t, x))] dx = (\ref{10.18}) - \int \chi(\frac{\eta_{1} x}{R}) Im[\overline{P_{\leq k + 9} \tilde{Q}(x)} \cdot P_{\leq k + 9} \tilde{\epsilon}(t, x)] dx.
\endaligned
\end{equation}
Making an argument similar to $(\ref{10.18.0})$--$(\ref{10.18.3})$,
\begin{equation}\label{10.19.1}
- \int \chi(\frac{\eta_{1} x}{R}) Im[\overline{P_{\leq k + 9} \tilde{Q}(x)} \cdot P_{\leq k + 9} \tilde{\epsilon}(t, x)] dx = -(\epsilon_{2}, Q)_{L^{2}} + O(T^{-10}).
\end{equation}
Therefore,
\begin{equation}\label{10.21}
\aligned
M(b) - M(a) = 2 (\epsilon_{2}(a), \frac{Q}{2} + x Q_{x})_{L^{2}} - 2 (\epsilon_{2}(b), \frac{Q}{2} + x Q_{x})_{L^{2}} + O(T^{-10}) \\ +O( \frac{R}{\eta_{1}^{2}}  \frac{2^{2k}}{T^{99/100}} \int_{J} \| \epsilon(t) \|_{L^{2}}^{2} \lambda(t)^{-2} dt) + O(\frac{R}{\eta_{1}^{2}} 2^{2k} T^{-9.99}).
\endaligned
\end{equation}

Following $(\ref{6.34})$,
\begin{equation}\label{10.4}
i \partial_{t} P_{\leq k + 9} u + \Delta P_{\leq k + 9} u + F(P_{\leq k + 9} u) = F(P_{\leq k + 9} u) - P_{\leq k + 9} F(u) = -\mathcal N.
\end{equation}

Plugging in $(\ref{10.4})$ and integrating by parts,
\begin{equation}\label{10.5}
\aligned
\frac{d}{dt} M(t) = \int \phi(x) Re[-\overline{P_{\leq k + 9} u}_{xx} P_{\leq k + 9} u_{x} + \overline{P_{\leq k + 9}u} P_{\leq k + 9} u_{xxx}] \\ + \int \phi(x) Re[-|P_{\leq k + 9} u|^{4} \overline{P_{\leq k + 9} u} (P_{\leq k + 9} u_{x}) + \overline{P_{\leq k + 9} u} \partial_{x}(|P_{\leq k + 9} u|^{4} P_{\leq k + 9} u)] \\ + \int \phi(x) Re[\overline{P_{\leq k + 9} u} \partial_{x} \mathcal N](t,x) - \int \phi(x) Re[\bar{\mathcal N} \partial_{x} P_{\leq k + 9} u](t,x) \\
= 2 \int \psi^{2}(\frac{\eta_{1} x}{R}) |\partial_{x} P_{\leq k +9} u|^{2} dx - \frac{\eta_{1}^{2}}{2R^{2}} \int \chi''(\frac{\eta_{1} x}{R}) |P_{\leq k + 9} u|^{2} dx - \frac{2}{3} \int \psi^{2}(\frac{\eta_{1} x}{R}) |P_{\leq k + 9} u|^{6} dx \\ + \int \phi(x) Re[\overline{P_{\leq k + 9} u} \partial_{x} \mathcal N](t,x) - \int \phi(x) Re[\bar{\mathcal N} \partial_{x} P_{\leq k + 9} u](t,x).
\endaligned
\end{equation}

Next, following $(\ref{6.41})$,
\begin{equation}\label{10.6}
\aligned
\mathcal N = P_{\leq k + 9} (3 |u_{\leq k}|^{4} u_{\geq k + 6} + 2|u_{\leq k}|^{2} (u_{\leq k})^{2} \bar{u}_{\geq k + 6}) \\ - (3 |u_{\leq k}|^{4}  u_{\geq k + 6} + 2|u_{\leq k}|^{2} (u_{\leq k})^{2} \overline{P_{k + 6 \leq \cdot \leq k + 9} u}) \\
+ P_{\leq k + 9} O((u_{\geq k})(u_{\geq k + 6}) u^{3}) + O((P_{k + 6 \leq \cdot \leq k + 9} u)(P_{k \leq \cdot \leq k + 9} u) u^{3}) = \mathcal N^{(1)} + \mathcal N^{(2)}.
\endaligned
\end{equation}
As in $(\ref{6.42})$, since $|\phi(x)| \lesssim \eta_{1}^{-1} R$, by Theorems $\ref{t6.2}$ and $\ref{t6.3}$,
\begin{equation}\label{10.7}
\aligned
\int_{a}^{b} \int \phi(x) Re[\overline{P_{\leq k + 9} u} \partial_{x} \mathcal N^{(2)}] dx dt - \int_{a}^{b} \int \phi(x) Re[\bar{\mathcal N^{(2)}} \partial_{x} P_{\leq k + 9} u] dx dt \\
\lesssim \frac{2^{k} R \eta_{1}^{-1}}{T} \int \| \epsilon(t) \|_{L^{2}}^{2} \lambda(t)^{-2} dt + \frac{2^{k} R \eta_{1}^{-1}}{T^{10}}.
\endaligned
\end{equation}
Making calculations identical to the estimate $(\ref{10.7})$,
\begin{equation}\label{10.9}
\aligned
 \int_{a}^{b} \int \phi(x) Re[\overline{P_{k + 3 \leq \cdot \leq k + 9} u} \partial_{x} \mathcal N^{(1)}] dx dt - \int_{a}^{b} \int \phi(x) Re[\overline{\mathcal N^{(1)}} \partial_{x}  P_{k + 3 \leq \cdot \leq k + 9} u] dx dt \\
\lesssim 2^{k} R \| (u_{\geq k + 6})(u_{\leq k}) \|_{L_{t,x}^{2}} \| (u_{\geq k + 3})(u_{\leq k}) \|_{L_{t,x}^{2}} \| u_{\leq k} \|_{L_{t,x}^{\infty}}^{2}
\lesssim \frac{2^{k} R \eta_{1}^{-1}}{T} \int_{a}^{b} \| \epsilon(t) \|_{L^{2}}^{2} \lambda(t)^{-2} dt +  \frac{2^{k} R \eta_{1}^{-1}}{T^{10}}.
\endaligned
\end{equation}
Finally, using Bernstein's inequality and the integration by parts argument in $(\ref{6.47})$,
\begin{equation}\label{10.10}
\aligned
 \int_{0}^{T} \int \phi(x) Re[\overline{P_{\leq k + 3} u} \partial_{x} \mathcal N^{(1)}] dx dt - \int_{0}^{T} \int \phi(x) Re[\overline{\mathcal N^{(1)}} \partial_{x} P_{\leq k + 3} u] dx dt \\
 \lesssim \| P_{\geq k + 6} \phi(x) \|_{L^{\infty}} \| (P_{\geq k + 6} u)(P_{\leq k } u) \|_{L_{t,x}^{2}} \| u_{\leq k } \|_{L_{t,x}^{8}}^{4} \lesssim \frac{1}{T^{10}}.
 \endaligned
\end{equation}
\begin{remark}
The last estimate follows from the fact that $\phi$ is smooth and $\int_{J} \lambda(t)^{-2} dt = T$, which by a local well-posedness argument implies
\begin{equation}
\| u \|_{L_{t,x}^{6}(J \times \mathbb{R})} \lesssim T^{1/6}.
\end{equation}
\end{remark}

Plugging this estimate of the error term back into $(\ref{10.5})$,
\begin{equation}\label{10.11}
\aligned
2 \int_{a}^{b} \int \psi^{2}(\frac{\eta_{1} x}{R}) |\partial_{x} P_{\leq k + 9} u|^{2} dx dt - \frac{\eta_{1}^{2}}{2R^{2}} \int_{a}^{b} \int \chi''(\frac{\eta_{1} x}{R}) |P_{\leq k + 9} u|^{2} dx dt \\ - \frac{2}{3} \int_{a}^{b} \int \psi^{2}(\frac{\eta_{1} x}{R}) |P_{\leq k + 9} u|^{6} dx dt  = 2 (\epsilon_{2}(a), \frac{Q}{2} + x Q_{x})_{L^{2}} - 2 (\epsilon_{2}(b), \frac{Q}{2} + x Q_{x})_{L^{2}} \\ + O(\frac{2^{2k} T^{1/20}}{T} \int_{a}^{b} \| \epsilon(t) \|_{L^{2}}^{2} \lambda(t)^{-2} dt) + O(\frac{1}{T^{9}}).
\endaligned
\end{equation}

Since $Q$ is a real-valued function,
\begin{equation}\label{10.15}
\aligned
|P_{\leq k + 9} u|^{2} = (P_{\leq k + 9} \tilde{Q}(x))^{2} + 2 P_{\leq k + 9} \tilde{Q}(x) \cdot P_{\leq k + 9} \tilde{\epsilon}_{1}(t, x)  + |P_{\leq k + 9} \tilde{\epsilon}(t, x)|^{2}.
\endaligned
\end{equation}
The support of $\psi''(x)$, the fact that $\lambda(t) \leq \frac{1}{\eta_{1}} T^{1/100}$, and $(\ref{1.12})$ imply that
\begin{equation}\label{10.15.1}
\frac{\eta_{1}^{2}}{R^{2}} \int \chi''(\frac{\eta_{1} x}{R}) \tilde{Q}(x)^{2} dx \lesssim \frac{\eta_{1}^{2}}{R^{2}} \frac{1}{T^{11}} \lesssim \frac{1}{\lambda(t)^{2}} \frac{1}{T^{11}}.
\end{equation}
Also, since $Q$ and all its derivatives are rapidly decreasing, and $\lambda(t) \geq \frac{1}{\eta_{1}}$,
\begin{equation}\label{10.15.2}
\| P_{\geq k + 9} \tilde{Q}(x) \|_{L^{2}}^{2} \lesssim 2^{-30k}.
\end{equation}
Therefore, since $R = T^{1/25}$ and $\lambda(t) \leq \frac{1}{\eta_{1}} T^{1/100}$, $(\ref{10.15.1})$, $(\ref{10.15.2})$, and the Cauchy--Schwarz inequality imply
\begin{equation}\label{10.16}
\frac{\eta_{1}^{2}}{R^{2}} \int \chi''(\frac{\eta_{1} x}{R}) |P_{\leq k + 9} u(t,x)|^{2} dx \lesssim \frac{1}{\lambda(t)^{2}} \frac{1}{T^{11}} + \frac{1}{\lambda(t)^{2}} \frac{1}{R} \| \epsilon \|_{L^{2}}^{2}.
\end{equation}

Next, letting $\epsilon_{1x} + i \epsilon_{2x} = \partial_{x} \epsilon$, decompose
\begin{equation}\label{10.17}
\aligned
2 \int \psi^{2}(\frac{\eta_{1} x}{R}) |P_{\leq k + 9} u_{x}|^{2} dx - \frac{2}{3} \int \psi^{2}(\frac{\eta_{1} x}{R}) |P_{\leq k + 9} u|^{6} dx \\ = \frac{4}{\lambda(t)^{3}} \int \psi^{2}(\frac{\eta_{1} x}{R}) (\frac{1}{2} P_{\leq k + 9} Q_{x}(\frac{x}{\lambda(t)})^{2} - \frac{1}{6} P_{\leq k + 9} Q(\frac{x}{\lambda(t)})^{6}) dx \\
+ \frac{4}{\lambda(t)^{3}} \int \psi^{2}(\frac{\eta_{1} x}{R}) (P_{\leq k + 9} Q_{x}(\frac{x}{\lambda(t)}) P_{\leq k + 9} \epsilon_{1x}(t, \frac{x}{\lambda(t)}) - P_{\leq k + 9} Q(\frac{x}{\lambda(t)})^{5} P_{\leq k + 9} \epsilon_{1}(t,\frac{x}{\lambda(t)})) dx \\
+ \frac{4}{\lambda(t)^{3}} \int \psi^{2}(\frac{\eta_{1} x}{R}) (\frac{1}{2} (P_{\leq k + 9} \epsilon_{1x}(t,\frac{x}{\lambda(t)}))^{2} - \frac{5}{2} P_{\leq k + 9} Q(\frac{x}{\lambda(t)})^{4} ((P_{\leq k + 9} \epsilon_{1}(t,\frac{x}{\lambda(t)})))^{2}) dx \\ + \frac{4}{\lambda(t)^{3}} \int \psi^{2}(\frac{\eta_{1} x}{R})(\frac{1}{2} ( (P_{\leq k + 9} \epsilon_{2x}(t,\frac{x}{\lambda(t)})))^{2} - \frac{1}{2} P_{\leq k +9} Q(\frac{x}{\lambda(t)})^{4} ((P_{\leq k + 9} \epsilon_{2}(t,\frac{x}{\lambda(t)})))^{2} dx \\
- \frac{4}{\lambda(t)^{3}} \int \psi^{2}(\frac{\eta_{1} x}{R}) (\frac{10}{3} (P_{\leq k + 9} Q(\frac{x}{\lambda(t)}))^{3} (P_{\leq k + 9} \epsilon(t,\frac{x}{\lambda(t)}))^{3} + \frac{5}{2} (P_{\leq k + 9} Q(\frac{x}{\lambda(t)}))^{2} (P_{\leq k + 9} \epsilon(t, \frac{x}{\lambda(t)}))^{4} \\ + (P_{\leq k + 9} Q(\frac{x}{\lambda(t)})) (P_{\leq k + 9}\epsilon(t, \frac{x}{\lambda(t)}))^{5} + \frac{1}{6} (P_{\leq k + 9} \epsilon(t, \frac{x}{\lambda(t)}))^{6}) dx.
\endaligned
\end{equation}
\begin{remark}
Due to the presence of derivatives in
\begin{equation}
2 \int \psi^{2}(\frac{\eta_{1} x}{R}) |P_{\leq k + 9} u_{x}|^{2} dx - \frac{2}{3} \int \psi^{2}(\frac{\eta_{1} x}{R}) |P_{\leq k + 9} u|^{6} dx,
\end{equation}
it is convenient to dispense with the $\tilde{Q}(x)$ and $\tilde{\epsilon}(t,x)$ notation and return to the $Q$ and $\epsilon$ notation. We understand that $P_{\leq k + 9} Q(\frac{x}{\lambda(t)})$ denotes the frequency projection after rescaling, not a rescaled projection. A rescaled projection appears in $(\ref{10.29})$.
\end{remark}

For terms of order $\epsilon^{3}$ and higher, it is not too important to pay attention to complex conjugates, since these terms will be estimated using H{\"o}lder's inequality.

First, using the fact that
\begin{equation}
\frac{1}{2} Q_{x}^{2} - \frac{1}{6} Q^{6} = \frac{1}{2} Q^{2} - \frac{1}{3} Q^{6},
\end{equation}
combined with the fact that $\frac{1}{\eta_{1}} \leq \lambda \leq \frac{1}{\eta_{1}} T^{1/100}$, $R = T^{1/25}$, and $Q$ is smooth and rapidly decreasing,
\begin{equation}\label{10.19}
\frac{4}{\lambda(t)^{3}} \int \psi^{2}(\frac{\eta_{1} x}{R}) (\frac{1}{2} Q_{x}(\frac{x}{\lambda(t)})^{2} - \frac{1}{6} Q(\frac{x}{\lambda(t)})^{6}) dx = \frac{4}{\lambda(t)^{3}} \int \psi^{2}(\frac{\eta_{1} x}{R}) (\frac{1}{2} Q(\frac{x}{\lambda(t)})^{2} - \frac{1}{3} Q(\frac{x}{\lambda(t)})^{6}) \lesssim \frac{1}{\lambda(t)^{2}} \frac{1}{T^{11}}.
\end{equation}
Also, since $\eta_{1}^{-2} T = 2^{3k}$, and $Q$ and its derivatives are smooth and rapidly decreasing, $\lambda(t) \geq \frac{1}{\eta_{1}}$ and Bernstein's inequality implies that
\begin{equation}\label{10.20}
\frac{2}{\lambda(t)^{3}} \int \psi^{2}(\frac{\eta_{1} x}{R}) Q_{x}(\frac{x}{\lambda(t)})^{2} dx - \frac{2}{\lambda(t)^{3}} \int \psi^{2}(\frac{\eta_{1} x}{R}) (P_{\leq k + 9} Q_{x})(\frac{x}{\lambda(t)})^{2} dx \lesssim \frac{1}{\lambda(t)^{2}} \frac{1}{T^{11}},
\end{equation}
and
\begin{equation}\label{10.21}
\frac{2}{3 \lambda(t)^{3}} \int \psi^{2}(\frac{\eta_{1} x}{R}) Q(\frac{x}{\lambda(t)})^{6} dx - \frac{2}{3 \lambda(t)^{3}} \int \psi^{2}(\frac{\eta_{1} x}{R}) (P_{\leq k + 9} Q)(\frac{x}{\lambda(t)})^{6} dx \lesssim \frac{1}{\lambda(t)^{2}} \frac{1}{T^{11}}.
\end{equation}
Therefore,
\begin{equation}\label{10.22}
\aligned
2 \int \psi^{2}(\frac{\eta_{1} x}{R}) |P_{\leq k + 9} u_{x}|^{2} dx - \frac{2}{3} \int \psi^{2}(\frac{\eta_{1} x}{R}) |P_{\leq k + 9} u|^{6} dx \\ = \frac{4}{\lambda(t)^{3}} \int \psi^{2}(\frac{\eta_{1} x}{R}) (P_{\leq k + 9} Q_{x}(\frac{x}{\lambda(t)}) P_{\leq k + 9} \epsilon_{1x}(t, \frac{x}{\lambda(t)}) - P_{\leq k + 9} Q(\frac{x}{\lambda(t)})^{5} P_{\leq k + 9} \epsilon_{1}(t,\frac{x}{\lambda(t)})) dx \\
+ \frac{4}{\lambda(t)^{3}} \int \psi^{2}(\frac{\eta_{1} x}{R}) (\frac{1}{2} (P_{\leq k + 9} \epsilon_{1x}(t,\frac{x}{\lambda(t)}))^{2} - \frac{5}{2} P_{\leq k + 9} Q(\frac{x}{\lambda(t)})^{4} ((P_{\leq k + 9} \epsilon_{1}(t,\frac{x}{\lambda(t)})))^{2}) dx \\ 
+ \frac{4}{\lambda(t)^{3}} \int \psi^{2}(\frac{\eta_{1} x}{R})(\frac{1}{2}  (P_{\leq k + 9} \epsilon_{2x}(t,\frac{x}{\lambda(t)}))^{2} - \frac{1}{2} P_{\leq k +9} Q(\frac{x}{\lambda(t)})^{4} ((P_{\leq k + 9} \epsilon_{2}(t,\frac{x}{\lambda(t)})))^{2} dx \\
- \frac{4}{\lambda(t)^{3}} \int \psi^{2}(\frac{\eta_{1} x}{R}) (\frac{10}{3} (P_{\leq k + 9} Q(\frac{x}{\lambda(t)}))^{3} (P_{\leq k + 9} \epsilon(t,\frac{x}{\lambda(t)}))^{3} + \frac{5}{2} (P_{\leq k + 9} Q(\frac{x}{\lambda(t)}))^{2} (P_{\leq k + 9} \epsilon(t, \frac{x}{\lambda(t)}))^{4} \\ + (P_{\leq k + 9} Q(\frac{x}{\lambda(t)})) (P_{\leq k + 9}\epsilon(t, \frac{x}{\lambda(t)}))^{5} + \frac{1}{6} (P_{\leq k + 9} \epsilon(t, \frac{x}{\lambda(t)}))^{6}) dx + O(\frac{1}{\lambda(t)^{2}} \frac{1}{T^{11}}).
\endaligned
\end{equation}

Integrating by parts,
\begin{equation}\label{10.23}
\aligned
\frac{4}{\lambda(t)^{3}} \int \psi^{2}(\frac{\eta_{1} x}{R}) (P_{\leq k + 5} Q_{x}(\frac{x}{\lambda(t)}) P_{\leq k + 5} \epsilon_{1x}(t, \frac{x}{\lambda(t)}) - P_{\leq k + 5} Q(\frac{x}{\lambda(t)})^{5} P_{\leq k + 5} \epsilon(t,\frac{x}{\lambda(t)})) dx \\ 
= -\frac{4}{\lambda(t)^{3}} \int \psi^{2}(\frac{\eta_{1} x}{R}) (P_{\leq k + 5} Q_{xx} + P_{\leq k + 5} Q^{5})(\frac{x}{\lambda(t)}) \cdot P_{\leq k + 5} \epsilon_{1}(t, \frac{x}{\lambda(t)}) dx \\ 
- \frac{8 \eta_{1}}{R \lambda(t)^{2}} \int \psi(\frac{\eta_{1} x}{R}) \psi'(\frac{\eta_{1} x}{R}) P_{\leq k + 5} Q_{x}(\frac{x}{\lambda(t)}) P_{\leq k + 5} \epsilon_{1}(t, \frac{x}{\lambda(t)}) dx \\ 
+ \frac{4}{\lambda(t)^{3}} \int \psi(\frac{\eta_{1} x}{R})^{2} P_{\leq k + 5} \epsilon_{1}(t, \frac{x}{\lambda(t)}) [(P_{\leq k + 5} Q)^{5} - P_{\leq k + 5} Q^{5}](\frac{x}{\lambda(t)}) dx.
\endaligned
\end{equation}
Again using $(\ref{1.12})$, $\frac{1}{\eta_{1}} \leq \lambda(t) \leq \frac{1}{\eta_{1}} T^{1/100}$, and the support of $\psi'(x)$,
\begin{equation}\label{10.24}
\frac{8 \eta_{1}}{R \lambda(t)^{2}} \int \psi(\frac{\eta_{1} x}{R}) \psi'(\frac{\eta_{1} x}{R}) Q_{x}(\frac{x}{\lambda(t)}) P_{\leq k + 5} \epsilon_{1}(t, \frac{x}{\lambda(t)}) dx \lesssim \frac{1}{T^{6}} \frac{1}{\lambda(t)^{2}} \| \epsilon \|_{L^{2}}
\end{equation}
Also, since $Q$ and all its derivatives are rapidly decreasing, by Bernstein's inequality,
\begin{equation}\label{10.25}
\aligned
\frac{8 \eta_{1}}{R \lambda(t)^{2}} \int \psi(\frac{\eta_{1} x}{R}) \psi'(\frac{\eta_{1} x}{R}) P_{\geq k + 5} Q_{x}(\frac{x}{\lambda(t)}) P_{\leq k + 5} \epsilon_{1}(t, \frac{x}{\lambda(t)}) dx \lesssim \frac{1}{T^{6} \lambda(t)^{2}} \| \epsilon \|_{L^{2}},
\endaligned
\end{equation}
and
\begin{equation}
\frac{4}{\lambda(t)^{3}} \int \psi(\frac{\eta_{1} x}{R})^{2} P_{\leq k + 5} \epsilon_{1}(t, \frac{x}{\lambda(t)}) [(P_{\leq k + 5} Q)^{5} - P_{\leq k + 5} Q^{5}](\frac{x}{\lambda(t)}) dx \lesssim \frac{1}{T^{6} \lambda(t)^{2}} \| \epsilon \|_{L^{2}}.
\end{equation}

Meanwhile, by conservation of mass, $(\ref{1.12})$, $(\ref{1.14})$, the upper and lower bounds of $\lambda(t)$, and the fact that $Q$ and all its derivatives are rapidly decreasing,
\begin{equation}\label{10.26}
\aligned
-\frac{4}{\lambda(t)^{3}} \int \psi^{2}(\frac{\eta_{1} x}{R}) P_{\leq k + 5} (Q_{xx} + Q^{5})(\frac{x}{\lambda(t)}) \cdot P_{\leq k + 5} \epsilon_{1}(t, \frac{x}{\lambda(t)}) dx \\ 
= -\frac{4}{\lambda(t)^{3}} \int \psi^{2}(\frac{\eta_{1} x}{R}) (Q_{xx} + Q^{5})(\frac{x}{\lambda(t)}) \cdot P_{\leq k + 5} \epsilon_{1}(t, \frac{x}{\lambda(t)}) dx + O(\frac{1}{\lambda(t)^{2} T^{6}} \| \epsilon \|_{L^{2}}) \\
= -\frac{4}{\lambda(t)^{3}} \int \psi^{2}(\frac{\eta_{1} x}{R}) Q(\frac{x}{\lambda(t)}) \cdot P_{\leq k + 5} \epsilon_{1}(t, \frac{x}{\lambda(t)}) dx + O(\frac{1}{\lambda(t)^{2} T^{6}} \| \epsilon \|_{L^{2}}) \\
= -\frac{4}{\lambda(t)^{3}} \int Q(\frac{x}{\lambda(t)}) \cdot P_{\leq k + 5} \epsilon_{1}(t, \frac{x}{\lambda(t)}) dx + O(\frac{1}{T^{6} \lambda(t)^{2}} \| \epsilon \|_{L^{2}}) \\
= -\frac{4}{\lambda(t)^{3}} \int Q(\frac{x}{\lambda(t)}) \cdot \epsilon_{1}(t, \frac{x}{\lambda(t)}) dx + O(\frac{1}{T^{6} \lambda(t)^{2}} \| \epsilon \|_{L^{2}}) = \frac{2}{\lambda(t)^{2}} \| \epsilon \|_{L^{2}}^{2} + O(\frac{1}{T^{6} \lambda(t)^{2}} \| \epsilon \|_{L^{2}}).
\endaligned
\end{equation}
Therefore,
\begin{equation}\label{10.27}
\aligned
2 \int \psi^{2}(\frac{\eta_{1} x}{R}) |P_{\leq k + 9} u_{x}|^{2} dx - \frac{2}{3} \int \psi^{2}(\frac{\eta_{1} x}{R}) |P_{\leq k + 9} u|^{6} dx = \frac{2}{\lambda(t)^{2}} \| \epsilon \|_{L^{2}}^{2} \\ 
+ \frac{4}{\lambda(t)^{3}} \int \psi^{2}(\frac{\eta_{1} x}{R}) (\frac{1}{2} (P_{\leq k + 9} \epsilon_{1x}(t,\frac{x}{\lambda(t)}))^{2} - \frac{5}{2} P_{\leq k + 9} Q(\frac{x}{\lambda(t)})^{4} ((P_{\leq k + 9} \epsilon_{1}(t,\frac{x}{\lambda(t)})))^{2}) dx \\ 
+ \frac{4}{\lambda(t)^{3}} \int \psi^{2}(\frac{\eta_{1} x}{R})(\frac{1}{2}  (P_{\leq k + 9} \epsilon_{2x}(t,\frac{x}{\lambda(t)}))^{2} - \frac{1}{2} P_{\leq k +9} Q(\frac{x}{\lambda(t)})^{4} ((P_{\leq k + 9} \epsilon_{2}(t,\frac{x}{\lambda(t)})))^{2} dx \\
- \frac{4}{\lambda(t)^{3}} \int \psi^{2}(\frac{\eta_{1} x}{R}) (\frac{10}{3} (P_{\leq k + 9} Q(\frac{x}{\lambda(t)}))^{3} (P_{\leq k + 9} \epsilon(t,\frac{x}{\lambda(t)}))^{3} + \frac{5}{2} (P_{\leq k + 9} Q(\frac{x}{\lambda(t)}))^{2} (P_{\leq k + 9} \epsilon(t, \frac{x}{\lambda(t)}))^{4} \\ + (P_{\leq k + 9} Q(\frac{x}{\lambda(t)})) (P_{\leq k + 9}\epsilon(t, \frac{x}{\lambda(t)}))^{5} + \frac{1}{6} (P_{\leq k + 9} \epsilon(t, \frac{x}{\lambda(t)}))^{6}) dx + O(\frac{1}{\lambda(t)^{2}} \frac{1}{T^{11}}) + O(\frac{1}{R \lambda(t)^{2}} \| \epsilon \|_{L^{2}}^{2}).
\endaligned
\end{equation}

Next, by Bernstein's inequality, since $\frac{1}{\eta_{1}} \leq \lambda(t) \leq \frac{1}{\eta_{1}} T^{1/100}$,
\begin{equation}\label{10.28}
\aligned
\frac{4}{\lambda(t)^{3}} \int \psi^{2}(\frac{\eta_{1} x}{R}) (\frac{1}{2} (P_{\leq k + 9} \epsilon_{1x}(t, \frac{x}{\lambda(t)}))^{2} - \frac{5}{2} P_{\leq k + 9} Q(\frac{x}{\lambda(t)})^{4} (P_{\leq k + 9} \epsilon_{1}(t, \frac{x}{\lambda(t)}))^{2}) dx \\
= \frac{4}{\lambda(t)^{3}} \int \psi^{2}(\frac{\eta_{1} x}{R}) (\frac{1}{2} (P_{\leq k + 9} \epsilon_{1x}(t, \frac{x}{\lambda(t)}))^{2} - \frac{5}{2}  Q(\frac{x}{\lambda(t)})^{4} (P_{\leq k + 9} \epsilon_{1}(t, \frac{x}{\lambda(t)}))^{2}) dx + O(\frac{1}{\lambda(t)^{2} R} \| \epsilon \|_{L^{2}}^{2}).
\endaligned
\end{equation}
Taking $k(t) \in \mathbb{R}$ that satisfies $2^{k(t)} = \lambda(t)$ and rescaling,
\begin{equation}
\aligned
\frac{4}{\lambda(t)^{3}} \int \psi^{2}(\frac{\eta_{1} x}{R}) (\frac{1}{2} (P_{\leq k + 9} \epsilon_{1x}(t, \frac{x}{\lambda(t)}))^{2} - \frac{5}{2}  Q(\frac{x}{\lambda(t)})^{4} (P_{\leq k + 9} \epsilon_{1}(t, \frac{x}{\lambda(t)}))^{2}) dx \\
= \frac{4}{\lambda(t)^{2}} \int \psi^{2}(\frac{\eta_{1} \lambda(t) x}{R}) (\frac{1}{2} (P_{\leq k + 9 + k(t)} \epsilon_{1x}(t,x))^{2} - \frac{5}{2}  Q(x)^{4} (P_{\leq k + 9 + k(t)} \epsilon_{1}(t, x))^{2}) dx.
\endaligned
\end{equation}
Integrating by parts,
\begin{equation}\label{10.29}
\aligned 
\frac{4}{\lambda(t)^{2}} \int \psi^{2}(\frac{\eta_{1} \lambda(t) x}{R}) (\frac{1}{2} (P_{\leq k + 9 + k(t)} \epsilon_{1}(t,x))_{x}^{2} - \frac{5}{2}  Q(x)^{4} (P_{\leq k + 9 + k(t)} \epsilon_{1}(t, x))^{2} dx \\
= \frac{2}{\lambda(t)^{2}} \| \psi(\frac{\eta_{1} \lambda(t) x}{R}) (P_{\leq k + 9 + k(t)} \epsilon_{1}(t, x)) \|_{\dot{H}^{1}}^{2} - \frac{10}{\lambda(t)^{2}} \int \psi(\frac{\eta_{1} \lambda(t) x}{R})^{2} Q(\frac{x}{\lambda(t)})^{4} (P_{\leq k + 9 + k(t)} \epsilon_{1}(t, x))^{2} dx \\ + O(\frac{\eta_{1}^{2}}{R^{2}} \| \epsilon \|_{L^{2}}^{2})
= \frac{2}{\lambda(t)^{2}} (\mathcal L \tilde{\epsilon}, \tilde{\epsilon}) - \frac{2}{\lambda(t)^{2}} \| \tilde{\epsilon} \|_{L^{2}}^{2} + O(\frac{1}{R \lambda(t)^{2}} \| \epsilon \|_{L^{2}}^{2}),
\endaligned
\end{equation}
where $\mathcal L$ is given in $(\ref{2.39})$, and
\begin{equation}\label{10.30}
\tilde{\epsilon} = \psi(\frac{\eta_{1} \lambda(t) x}{R}) (P_{\leq k + 9 + k(t)} \epsilon_{1}(t,x)).
\end{equation}
\begin{remark}
This $\tilde{\epsilon}$ is not the same as the $\tilde{\epsilon}$ in $(\ref{10.14})$.
\end{remark}
For a function $u \perp Q^{3}$ and $u \perp Q_{x}$, by the spectral properties of $\mathcal L$,
\begin{equation}
 (\mathcal L u, u)_{L^{2}} - (u, u)_{L^{2}} \geq 0.
\end{equation}
For a general $u \in L^{2}$, $u = a_{1} Q^{3} + a_{2} Q_{x} + u^{\perp}$, where $u^{\perp} \perp Q^{3}$ and $u^{\perp} \perp Q_{x}$,
\begin{equation}
 (\mathcal L u, u)_{L^{2}} - (u, u)_{L^{2}} \geq -O(a_{1}^{2}) - O(a_{2}^{2}).
\end{equation}
Since $\epsilon_{1} \perp Q^{3}$ and $\epsilon_{1} \perp Q_{x}$, by Bernstein's inequality, and the fact that $\frac{1}{\eta_{1}} \leq \lambda(t) \leq \frac{1}{\eta_{1}} T^{1/100}$,
\begin{equation}
(\tilde{\epsilon}, Q^{3})_{L^{2}} = (\epsilon_{1}, Q^{3})_{L^{2}} - ((1 - \psi(\frac{x \eta_{1} \lambda(t)}{R})) \epsilon_{1}, Q^{3})_{L^{2}} - (\psi(\frac{\eta_{1} \lambda(t) x}{R}) P_{\geq k + 9 + k(t)} \epsilon_{1}, Q^{3})_{L^{2}} \lesssim \frac{1}{R} \| \epsilon \|_{L^{2}},
\end{equation}
and
\begin{equation}
(\tilde{\epsilon}, Q_{x})_{L^{2}} = (\epsilon_{1}, Q_{x})_{L^{2}} - ((1 - \psi(\frac{x \eta_{1} \lambda(t)}{R})) \epsilon_{1}, Q_{x})_{L^{2}} - (\psi(\frac{\eta_{1} \lambda(t) x}{R}) P_{\geq k + 9 + k(t)} \epsilon_{1}, Q_{x})_{L^{2}} \lesssim \frac{1}{R} \| \epsilon \|_{L^{2}}.
\end{equation}

\noindent Therefore, for some $0 \ll \delta < 1$, $\delta = \frac{1}{100}$ will do, since $|Q(x)|^{3} \leq 3$,
\begin{equation}\label{10.33}
\aligned
\frac{2}{\lambda(t)^{2}} (\mathcal L \tilde{\epsilon}, \tilde{\epsilon}) - \frac{2}{\lambda(t)^{2}} \| \tilde{\epsilon} \|_{L^{2}}^{2} + O(\frac{1}{R \lambda(t)^{2}} \| \epsilon \|_{L^{2}}^{2}) \geq \frac{\delta}{\lambda(t)^{3}} \| \psi(\frac{\eta_{1} x}{R}) (P_{\leq k + 9} \epsilon_{1x}(t, \frac{x}{\lambda(t)})) \|_{L^{2}}^{2} \\ -O(\frac{1}{R} \frac{1}{\lambda(t)^{2}} \| \epsilon \|_{L^{2}}^{2}) - \frac{15 \delta}{\lambda(t)^{2}} \| \epsilon_{1} \|_{L^{2}}^{2}.
\endaligned
\end{equation}
Likewise, since $\epsilon \perp i Q^{3}$ and $\epsilon \perp i Q_{x}$,
\begin{equation}\label{10.34}
\aligned
\frac{4}{\lambda(t)^{3}} \int \psi^{2}(\frac{\eta_{1} x}{R})(\frac{1}{2}  (P_{\leq k + 9} \epsilon_{2x}(t,\frac{x}{\lambda(t)}))^{2} - \frac{1}{2} P_{\leq k +9} Q(\frac{x}{\lambda(t)})^{4} (P_{\leq k + 9} \epsilon_{2}(t,\frac{x}{\lambda(t)}))^{2}) dx \\
\geq \frac{\delta}{\lambda(t)^{3}} \| \psi(\frac{\eta_{1} x}{R}) (P_{\leq k + 9} \epsilon_{2x}(t, \frac{x}{\lambda(t)})) \|_{L^{2}}^{2} -O(\frac{1}{R} \frac{1}{\lambda(t)^{2}} \| \epsilon \|_{L^{2}}^{2}) - \frac{15 \delta}{\lambda(t)^{2}} \| \epsilon_{2} \|_{L^{2}}^{2}.
\endaligned
\end{equation}
Therefore,
\begin{equation}\label{10.35}
\aligned
2 \int \psi^{2}(\frac{\eta_{1} x}{R}) |P_{\leq k + 9} u_{x}|^{2} dx - \frac{2}{3} \int \psi^{2}(\frac{\eta_{1} x}{R}) |P_{\leq k + 9} u|^{6} dx 
\geq \frac{3}{2 \lambda(t)^{2}} \| \epsilon \|_{L^{2}}^{2} + \frac{\delta}{\lambda(t)^{3}} \| \psi(\frac{\eta_{1} x}{R}) P_{\leq k + 9} \epsilon_{x}(t, \frac{x}{\lambda(t)}) \|_{L^{2}}^{2} \\
- \frac{4}{\lambda(t)^{3}} \int \psi^{2}(\frac{\eta_{1} x}{R}) (\frac{10}{3} P_{\leq k + 9} Q(\frac{x}{\lambda(t)})^{3} P_{\leq k + 9} \epsilon(t,\frac{x}{\lambda(t)})^{3} + \frac{5}{2} P_{\leq k + 9} Q(\frac{x}{\lambda(t)})^{2} P_{\leq k + 9} \epsilon(t, \frac{x}{\lambda(t)})^{4} dx \\ + \frac{1}{\lambda(t)^{3}} \int P_{\leq k + 9} Q(\frac{x}{\lambda(t)}) P_{\leq k + 9} \epsilon(t, \frac{x}{\lambda(t)})^{5} + \frac{1}{6} P_{\leq k + 9} \epsilon(t, \frac{x}{\lambda(t)})^{6}) dx - O(\frac{1}{\lambda(t)^{2} T^{11}}) - O(\frac{1}{R \lambda(t)^{2}} \| \epsilon \|_{L^{2}}^{2}).
\endaligned
\end{equation}

Now, by the fundamental theorem of calculus and the product rule, for any $x \in \mathbb{R}$,
\begin{equation}\label{10.36}
\aligned
\frac{1}{\lambda(t)} \psi(\frac{\eta_{1} x}{R}) |P_{\leq k + 9} \epsilon(t, \frac{x}{\lambda(t)})|^{2} \lesssim \frac{1}{\lambda(t)} \int |\partial_{x} (\psi(\frac{\eta_{1} x}{R}) |P_{\leq k + 9} \epsilon(t, \frac{x}{\lambda(t)})|^{2})| dx \\
 \lesssim \frac{1}{\lambda(t)^{3/2}} \| \epsilon \|_{L^{2}(\mathbb{R})} \| \psi(\frac{\eta_{1} x}{R}) P_{\leq k + 9} \epsilon_{x}(t, \frac{x}{\lambda(t)}) \|_{L^{2}} + \frac{\eta_{1}}{R} \| \epsilon \|_{L^{2}}^{2}.
\endaligned
\end{equation}
Therefore, by H{\"o}lder's inequality, the fact that $\| \epsilon \|_{L^{2}} \leq \eta_{\ast}$, and the fact that $\frac{1}{\eta_{1}} \leq \lambda(t) \leq \frac{1}{\eta_{1}} T^{1/100}$, and $R = T^{1/25}$,
\begin{equation}\label{10.37}
\aligned
\frac{1}{\lambda(t)^{3}} \int \psi(\frac{\eta_{1} x}{R})^{2} |P_{\leq k + 9} \epsilon(t,\frac{x}{\lambda(t)})|^{6} dx \lesssim \frac{1}{\lambda(t)^{3}} \| \psi(\frac{\eta_{1} x}{R}) P_{\leq k + 9} \epsilon_{x}(t, \frac{x}{\lambda(t)}) \|_{L^{2}}^{2} \| \epsilon \|_{L^{2}}^{4} + \frac{\eta_{1}^{2}}{R^{2}} \| \epsilon \|_{L^{2}}^{6} \\
\lesssim \frac{\eta_{\ast}^{4}}{\lambda(t)^{3}} \| \psi(\frac{\eta_{1} x}{R}) P_{\leq k + 9} \epsilon_{x}(t, \frac{x}{\lambda(t)}) \|_{L^{2}}^{2} + \frac{\eta_{\ast}^{4}}{R \lambda(t)^{2}} \| \epsilon \|_{L^{2}}^{2}.
\endaligned
\end{equation}

Next, by H{\"o}lder's inequality and the Cauchy--Schwarz inequality, for $j = 3, 4, 5$,
\begin{equation}\label{10.38}
\aligned
\frac{1}{\lambda(t)^{3}} \int \psi(\frac{\eta_{1} x}{R})^{2} |P_{\leq k + 9} \epsilon(t,\frac{x}{\lambda(t)})|^{j} Q(\frac{x}{\lambda(t)})^{6 - j} dx \\ \lesssim \frac{1}{\lambda(t)^{\frac{6 - j}{2}}} (\frac{1}{\lambda(t)^{3}} \int \psi(\frac{\eta_{1} x}{R})^{2} |P_{\leq k + 9} \epsilon(t,\frac{x}{\lambda(t)})|^{6} dx)^{\frac{j - 2}{4}} \| \epsilon \|_{L^{2}}^{\frac{6 - j}{2}} \\
\lesssim \frac{\eta_{\ast}}{\lambda(t)^{2}} \| \epsilon \|_{L^{2}}^{2} + \frac{\eta_{\ast}}{\lambda(t)^{3}} \| \psi(\frac{\eta_{1} x}{R}) P_{\leq k + 9} \epsilon_{x}(t, \frac{x}{\lambda(t)}) \|_{L^{2}}^{2}.
\endaligned
\end{equation}
Therefore, for $\eta_{\ast} \ll \delta$ sufficiently small and $T$ sufficiently large,
\begin{equation}\label{10.39}
\aligned
2 \int \psi^{2}(\frac{\eta_{1} x}{R}) |P_{\leq k + 9} u_{x}|^{2} dx - \frac{2}{3} \int \psi^{2}(\frac{\eta_{1} x}{R}) |P_{\leq k + 9} u|^{6} dx 
\geq \frac{1}{ \lambda(t)^{2}} \| \epsilon \|_{L^{2}}^{2}  - O(\frac{1}{\lambda(t)^{2} T^{11}}).
\endaligned
\end{equation}
Plugging $(\ref{10.39})$ into $(\ref{10.11})$, integrating in time, and using the fact that $2^{3k} = \eta_{1}^{-2} T$, for $T(\eta_{1})$ sufficiently large, the term
\begin{equation}\label{10.40}
O(\frac{2^{2k} T^{1/20}}{T} \int_{a}^{b} \| \epsilon(t) \|_{L^{2}}^{2} \lambda(t)^{-2} dt)
\end{equation}
can be absorbed into the integral of the first term on the right hand side of $(\ref{10.39})$. Since
\begin{equation}\label{10.41}
\int_{J} \lambda(t)^{-2} dt = T,
\end{equation}
the proof of Theorem $\ref{t10.1}$ is complete.
\end{proof}

Since both the left and right hand sides of $(\ref{10.1.1})$ are scale invariant, the same argument also holds for an interval $J$ where
\begin{equation}\label{10.41.1}
A \leq \lambda(t) \leq A T^{1/100},
\end{equation}
for any $A > 0$.

\begin{corollary}\label{c10.2}
Let $J = [a, b]$ be an interval where $(\ref{10.1})$ holds for some $T$ sufficiently large, and $(\ref{10.41.1})$ also holds. Then $(\ref{10.1.1})$ holds.
\end{corollary}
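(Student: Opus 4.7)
The plan is to deduce Corollary \ref{c10.2} from Theorem \ref{t10.1} by exploiting the scaling symmetry \eqref{1.2}, observing that every quantity appearing in \eqref{10.1.1} is invariant under the rescaling $u(t,x) \mapsto \mu^{1/2} u(\mu^{2} t, \mu x)$ of solutions to \eqref{1.1}. Given an interval $J = [a,b]$ on which $A \leq \lambda(t) \leq A T^{1/100}$, I would choose the dilation parameter $\mu = A \eta_{1}$ and set
\begin{equation*}
v(t,x) = \mu^{1/2} u(\mu^{2} t, \mu x), \qquad \tilde{J} = [\mu^{-2} a, \mu^{-2} b].
\end{equation*}
A direct change of variables shows that $v$ solves \eqref{1.1}, that $\| v_{0} \|_{L^{2}} = \| Q \|_{L^{2}}$, and that the decomposition parameters from Theorem \ref{t2.3} transform as $\lambda_{v}(t) = \mu^{-1} \lambda_{u}(\mu^{2} t)$ and $\gamma_{v}(t) = \gamma_{u}(\mu^{2} t)$, with error $\epsilon_{v}(t,x) = \epsilon_{u}(\mu^{2} t, x)$ unchanged at the level of the $L^{2}$ profile centered at $Q$.

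Next I would check the three invariances needed to fall into Theorem \ref{t10.1}. First, the bound $A \leq \lambda_{u}(t) \leq A T^{1/100}$ becomes $\frac{1}{\eta_{1}} \leq \lambda_{v}(t) \leq \frac{1}{\eta_{1}} T^{1/100}$ on $\tilde{J}$, matching the hypothesis of Theorem \ref{t10.1}. Second, since $d(\mu^{2} t)/\lambda_{u}(\mu^{2} t)^{2} = \mu^{2} dt / (\mu \lambda_{v}(t))^{2} = \lambda_{v}(t)^{-2} dt$, the quantity $T$ is preserved:
\begin{equation*}
\int_{\tilde{J}} \lambda_{v}(t)^{-2} dt = \int_{J} \lambda_{u}(s)^{-2} ds = T.
\end{equation*}
Third, the same substitution gives $\int_{\tilde{J}} \| \epsilon_{v}(t) \|_{L^{2}}^{2} \lambda_{v}(t)^{-2} dt = \int_{J} \| \epsilon_{u}(s) \|_{L^{2}}^{2} \lambda_{u}(s)^{-2} ds$, so the left side of \eqref{10.1.1} is scale invariant. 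Finally, since $\epsilon_{v}(\mu^{-2} a, x) = \epsilon_{u}(a, x)$ and $Q$, $\frac{Q}{2} + x Q_{x}$ are fixed reference functions independent of the scaling, the boundary terms $(\epsilon_{2,v}(\mu^{-2} a), \frac{1}{2} Q + x Q_{x})_{L^{2}}$ equal $(\epsilon_{2,u}(a), \frac{1}{2} Q + x Q_{x})_{L^{2}}$, and likewise at the right endpoint.

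With these identifications in place, the hypotheses of Theorem \ref{t10.1} are satisfied by $v$ on $\tilde{J}$, and its conclusion
\begin{equation*}
\int_{\tilde{J}} \| \epsilon_{v}(t) \|_{L^{2}}^{2} \lambda_{v}(t)^{-2} dt \leq 3 (\epsilon_{2,v}(\mu^{-2} a), \tfrac{1}{2} Q + x Q_{x})_{L^{2}} - 3 (\epsilon_{2,v}(\mu^{-2} b), \tfrac{1}{2} Q + x Q_{x})_{L^{2}} + O(T^{-9})
\end{equation*}
translates back verbatim to the desired estimate \eqref{10.1.1} for $u$ on $J$. Since there is no genuine analytic difficulty beyond the bookkeeping, the only thing to be careful about is that the threshold ``$T$ sufficiently large'' in Theorem \ref{t10.1} has been chosen intrinsically (depending only on $\eta_{\ast}$, $\eta_{1}$, and universal constants), and so it applies uniformly to the rescaled solution regardless of the value of $A$; this is the one place where one must confirm that no constant implicit in the proof of Theorem \ref{t10.1} secretly depends on the lower bound $\frac{1}{\eta_{1}}$ rather than on $\eta_{1}$ through the $L^{2}$ concentration assumption \eqref{6.2}.
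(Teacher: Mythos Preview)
Your proposal is correct and follows exactly the approach the paper indicates: the paper simply notes that both sides of \eqref{10.1.1} are scale invariant, so the argument of Theorem~\ref{t10.1} carries over to any interval satisfying \eqref{10.41.1}. You have merely written out the bookkeeping of that rescaling in more detail than the paper does.
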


\section{An $L_{s}^{p}$ bound on $\| \epsilon(s) \|_{L^{2}}$ when $p > 1$}
Transitioning to $s$ variables, under the change of variables $(\ref{7.5})$, Theorem $\ref{t10.1}$ and Corollary $\ref{c10.2}$ imply that if $[a, a + T] \subset [0, \infty)$ is an interval on which
\begin{equation}\label{11.1}
\frac{\sup_{s \in [a, a + T]} \lambda(s)}{\inf_{s \in [a, a + T]} \lambda(s)} \leq T^{1/100},
\end{equation}
then
\begin{equation}\label{11.2}
\int_{a}^{a + T} \| \epsilon(s) \|_{L^{2}}^{2} ds \leq 3(\epsilon(a), \frac{1}{2} Q + x Q_{x})_{L^{2}} - 3 (\epsilon(a + T), \frac{1}{2} Q + x Q_{x})_{L^{2}} + O(\frac{1}{T^{9}}).
\end{equation}

Theorem $\ref{t10.1}$ implies good $L_{s}^{p}$ integrability bounds on $\| \epsilon(s) \|_{L^{2}}$ under $(\ref{3.13})$, which is equivalent to 
\begin{equation}\label{11.7}
\sup_{s \in [0, \infty)} \| \epsilon(s) \|_{L^{2}} \leq \eta_{\ast}.
\end{equation}
\begin{theorem}\label{t7.1}
Let $u$ be a symmetric solution to $(\ref{1.1})$ that satisfies $\| u \|_{L^{2}} = \| Q \|_{L^{2}}$, and suppose
\begin{equation}\label{7.1}
\sup_{s \in [0, \infty)} \| \epsilon(s) \|_{L^{2}} \leq \eta_{\ast},
\end{equation}
and $\| \epsilon(0) \|_{L^{2}} = \eta_{\ast}$. Then
\begin{equation}\label{7.2}
\int_{0}^{\infty} \| \epsilon(s) \|_{L^{2}}^{2} ds \lesssim \eta_{\ast},
\end{equation}
with implicit constant independent of $\eta_{\ast}$ when $\eta_{\ast} \ll 1$ is sufficiently small.

Furthermore, for any $j \in \mathbb{Z}_{\geq 0}$, let
\begin{equation}\label{7.3}
s_{j} = \inf \{ s \in [0, \infty) : \| \epsilon(s) \|_{L^{2}} = 2^{-j} \eta_{\ast} \}.
\end{equation}
By definition, $s_{0} = 0$, and the continuity of $\| \epsilon(s) \|_{L^{2}}$ combined with Theorem $\ref{t2.1}$ implies that such an $s_{j}$ exists for any $j > 0$. Then,
\begin{equation}\label{7.4}
\int_{s_{j}}^{\infty} \| \epsilon(s) \|_{L^{2}}^{2} ds \lesssim 2^{-j} \eta_{\ast},
\end{equation}
for each $j$, with implicit constant independent of $\eta_{\ast}$ and $j \geq 0$.
\end{theorem}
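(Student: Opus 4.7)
The plan is to iterate the frequency-localized Morawetz estimate from Corollary~\ref{c10.2}, which after the change of variables $(\ref{7.5})$ reads as $(\ref{11.2})$, over a telescoping partition of $[s_j, \infty)$. Introduce the scalar functional $\Phi(s) := (\epsilon_2(s), \tfrac{1}{2} Q + x Q_x)_{L^2}$; by Cauchy--Schwarz and the Schwartz regularity of $Q$, one has $|\Phi(s)| \lesssim \|\epsilon(s)\|_{L^2}$. The Morawetz bound then takes the schematic form
\begin{equation*}
\int_a^{a + T} \|\epsilon(s)\|_{L^2}^2 \, ds \leq 3 \Phi(a) - 3 \Phi(a + T) + O(T^{-9}),
\end{equation*}
whenever $(\ref{11.1})$ holds on $[a, a + T]$ and $T$ is sufficiently large.

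The strategy is to partition $[s_j, \infty)$ into consecutive intervals $I_k = [a_k, a_{k+1}]$ with $a_0 = s_j$, on each of which $(\ref{11.1})$ holds, with lengths $T_k := a_{k+1} - a_k$ chosen to grow geometrically, say $T_k \sim 2^k T_0^{(j)}$, where $T_0^{(j)}$ is large enough that $\sum_k T_k^{-9} \ll 2^{-j} \eta_\ast$; this only requires $T_0^{(j)} \gtrsim (2^{-j} \eta_\ast)^{-1/9}$. Such a partition will be constructed greedily: given $a_k$, take $a_{k+1} = a_k + 2^k T_0^{(j)}$ whenever $(\ref{11.1})$ holds on the resulting interval, and otherwise shrink it to the largest value for which the condition holds. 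The slow variation of $\lambda(s)$ encoded by Theorem~\ref{t7.2} (specifically $(\ref{7.15.3})$), together with the geometric growth of the allowed slack $T_k^{1/100}$ and a bootstrap on the integrated mass of $\epsilon$, is what should make the greedy construction succeed.

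Applying $(\ref{11.2})$ on each $I_k$ and summing telescopes the boundary contributions, yielding
\begin{equation*}
\int_{s_j}^{a_N} \|\epsilon(s)\|_{L^2}^2 \, ds \leq 3 \Phi(s_j) - 3 \Phi(a_N) + O\!\Big(\sum_{k \geq 0} T_k^{-9}\Big) \lesssim 2^{-j} \eta_\ast + \|\epsilon(a_N)\|_{L^2}.
\end{equation*}
By Theorem~\ref{t2.1}, there is a sequence of physical times $t_n \to T_+(u)$ along which $\|\epsilon(t_n)\|_{L^2} \to 0$; the corresponding $s$-values tend to $\infty$, and after a routine adjustment of the partition endpoints one may assume $a_N \to \infty$ along a subsequence with $\|\epsilon(a_N)\|_{L^2} \to 0$. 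Passing to the limit proves $(\ref{7.4})$; specializing to $j = 0$ yields $(\ref{7.2})$.

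The principal technical difficulty lies in establishing the partition with simultaneous control on $(\ref{11.1})$ and the summability of the error terms. When $\lambda(s)$ varies rapidly, the slack $T_k^{1/100}$ must dominate the factor $\exp(C \int_{I_k} \|\epsilon(s)\|_{L^2} \, ds)$ by which $\lambda$ can change according to $(\ref{7.15.3})$. Since the required estimate on $\int \|\epsilon\|_{L^2} \, ds$ is not available a priori, a self-improving scheme will be needed: one first establishes $(\ref{7.2})$ from only the crude bound $\|\epsilon\|_{L^2} \leq \eta_\ast$ on moderately sized intervals and a finite chain of Morawetz applications, and then feeds this improved integrability back into the partition construction to extract the sharper estimate $(\ref{7.4})$ for each $j \geq 1$.
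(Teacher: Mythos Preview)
Your telescoping scheme has a genuine gap at exactly the point you flag as ``the principal technical difficulty,'' and the proposed workaround does not close it. With only the crude input $\|\epsilon(s)\|_{L^2}\le\eta_\ast$, the estimate $(\ref{7.15.3})$ gives $|\ln\lambda(a_{k+1})-\ln\lambda(a_k)|\lesssim\eta_\ast T_k$, so condition $(\ref{11.1})$ on $I_k$ forces $\eta_\ast T_k\lesssim\tfrac{1}{100}\ln T_k$; this caps $T_k$ at roughly $(-\ln\eta_\ast)/\eta_\ast$ and is incompatible with geometric growth $T_k\sim 2^kT_0^{(j)}$. If instead you let the greedy construction shrink the intervals to this maximal admissible length, the error sum $\sum_k T_k^{-9}$ contains infinitely many comparable terms and diverges as the partition reaches to infinity. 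Either way the telescoped bound is vacuous, and the ``self-improving scheme'' you sketch in the last paragraph never gets off the ground because the very first pass already requires what it is meant to produce.

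The paper's argument avoids this circularity by \emph{not} telescoping. It fixes a single length $JT_\ast$ with $J\sim(-\ln\eta_\ast)^{1/2}$ and $T_\ast=\eta_\ast^{-1}$, small enough that the crude bound alone secures $(\ref{11.1})$ on every interval $[s'+aJT_\ast,s'+(a+1)JT_\ast]$ (this is $(\ref{7.16})$--$(\ref{7.18})$). The key observation is that one may slide the Morawetz endpoints into the neighboring intervals, so for $a\ge1$ the boundary terms in $(\ref{11.2})$ are controlled by $\inf_{I_{a\pm1}}\|\epsilon\|_{L^2}\le (JT_\ast)^{-1/2}\bigl(\int_{I_{a\pm1}}\|\epsilon\|_{L^2}^2\bigr)^{1/2}$. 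Setting $X=\sup_{a\ge0}\int_{I_a}\|\epsilon\|_{L^2}^2$ yields the closed inequality $X\lesssim (JT_\ast)^{-1/2}X^{1/2}+(JT_\ast)^{-9}$, hence $X\lesssim (JT_\ast)^{-1}$ (equations $(\ref{7.20})$--$(\ref{7.23})$). Only \emph{after} this step does the paper lengthen the intervals, by an induction on scale $J^nT_\ast$ in which the improved $L^1$ bound $\int_{I}\|\epsilon\|_{L^2}\,ds\lesssim1$ from the previous step controls the $\lambda$-oscillation on the next (see $(\ref{7.25})$--$(\ref{7.32})$). The mechanism you are missing is this replacement of telescoped boundary terms by infima over adjacent blocks, which converts the Morawetz inequality into a quadratic inequality in $X$ that closes without any a priori integrability of $\|\epsilon\|_{L^2}$.
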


\begin{proof}
Set $T_{\ast} = \frac{1}{\eta_{\ast}}$ and suppose that $T_{\ast}$ is sufficiently large such that Theorem $\ref{t10.1}$ holds. Then by $(\ref{7.14})$ and $(\ref{7.1})$, for any $s' \geq 0$,
\begin{equation}\label{7.16}
|\sup_{s \in [s', s' + T_{\ast}]} \ln(\lambda(s)) - \inf_{s \in [s', s' + T_{\ast}]} \ln(\lambda(s))| \lesssim 1,
\end{equation}
with implicit constant independent of $s' \geq 0$. Let $J$ be the largest dyadic integer that satisfies
\begin{equation}
J = 2^{j_{\ast}} \leq -\ln(\eta_{\ast})^{1/2}.
\end{equation}
By $(\ref{7.16})$ and the triangle inequality,
\begin{equation}\label{7.17}
\aligned
|\sup_{s \in [s', s' + J T_{\ast}]} \ln(\lambda(s)) - \inf_{s \in [s', s' + J T_{\ast}]} \ln(\lambda(s))| \lesssim J,
\endaligned
\end{equation}
and therefore,
\begin{equation}\label{7.18}
\frac{\sup_{s \in [s', s' + 3 J T^{\ast}]} \lambda(s)}{\inf_{s \in [s', s' + 3JT^{\ast}]} \lambda(s)} \lesssim T_{\ast}^{1/100}.
\end{equation}
Therefore, Theorem $\ref{t10.1}$ may be utilized on $[s', s' + J T_{\ast}]$. In particular, for any $s' \geq 0$,
\begin{equation}\label{7.19}
\int_{s'}^{s' + J T_{\ast}} \| \epsilon(s) \|_{L^{2}}^{2} ds \lesssim \| \epsilon(s') \|_{L^{2}} + \| \epsilon(s' + J T_{\ast}) \|_{L^{2}} + O(\frac{1}{J^{9} T_{\ast}^{9}}).
\end{equation}
In fact, if $s' > J T_{\ast}$, then by $(\ref{7.18})$,
\begin{equation}\label{7.20}
\int_{s'}^{s' + J T_{\ast}} \| \epsilon(s) \|_{L^{2}}^{2} ds \lesssim \inf_{s \in [s' - J T_{\ast}, s']} \| \epsilon(s) \|_{L^{2}} + \inf_{s \in [s' + J T_{\ast}, s' + 2J T_{\ast}]} \| \epsilon(s) \|_{L^{2}} + O(\frac{1}{J^{9} T_{\ast}^{9}}).
\end{equation}
In particular, for a fixed $s' \geq 0$,
\begin{equation}\label{7.21}
\sup_{a > 0} \int_{s' + a J T_{\ast}}^{s' + (a + 1) J T_{\ast}} \| \epsilon(s) \|_{L^{2}}^{2} \lesssim \frac{1}{J^{1/2} T_{\ast}^{1/2}} (\sup_{a \geq 0} \int_{s' + a J T_{\ast}}^{s' + (a + 1) J T_{\ast}} \| \epsilon(s) \|_{L^{2}}^{2} ds)^{1/2} + O(\frac{1}{J^{9} T_{\ast}^{9}}).
\end{equation}
Meanwhile, when $a = 0$,
\begin{equation}\label{7.22}
 \int_{s'}^{s' + J T_{\ast}} \| \epsilon(s) \|_{L^{2}}^{2} \lesssim \| \epsilon(s') \|_{L^{2}} + \frac{1}{J^{1/2} T_{\ast}^{1/2}} (\sup_{a \geq 0} \int_{s' + a J T_{\ast}}^{s' + (a + 1) J T_{\ast}} \| \epsilon(s) \|_{L^{2}}^{2} ds)^{1/2} + O(\frac{1}{J^{9} T_{\ast}^{9}}).
\end{equation}
Therefore, taking $s' = s_{j_{\ast}}$,
\begin{equation}\label{7.23}
\sup_{a \geq 0} \int_{s_{j_{\ast}} + a J T_{\ast}}^{s_{j_{\ast}} + (a + 1) J T_{\ast}} \| \epsilon(s) \|_{L^{2}}^{2} ds \lesssim 2^{-j_{\ast}} \eta_{\ast} + O(2^{-9 j_{\ast}} \eta_{\ast}^{9}).
\end{equation}
Then by the triangle inequality,
\begin{equation}\label{7.24}
\sup_{s' \geq s_{j_{\ast}}} \int_{s'}^{s' + J T_{\ast}} \| \epsilon(s) \|_{L^{2}}^{2} ds \lesssim 2^{-j_{\ast}} \eta_{\ast},
\end{equation}
and by H{\"o}lder's inequality,
\begin{equation}\label{7.24.1}
\sup_{s' \geq s_{j_{\ast}}} \int_{s'}^{s' + J T_{\ast}} \| \epsilon(s) \|_{L^{2}} ds \lesssim 1.
\end{equation}

In fact, arguing by induction, there exists a constant $C < \infty$ such that,
\begin{equation}\label{7.25}
\sup_{s' \geq s_{nj_{\ast}}} \int_{s'}^{s' + J^{n} T_{\ast}} \| \epsilon(s) \|_{L^{2}} ds \leq C,
\end{equation}
for some $n > 0$ implies that
\begin{equation}\label{7.26}
\sup_{s' \geq s_{(n + 1) j_{\ast}}} \int_{s'}^{s' + J^{n + 1} T_{\ast}} \| \epsilon(s) \|_{L^{2}}^{2} ds \leq C J^{-(n + 1)} T_{\ast}^{-1},
\end{equation}
and by H{\"o}lder's inequality,
\begin{equation}\label{7.26.1}
\sup_{s' \geq s_{(n + 1) j_{\ast}}} \int_{s'}^{s' + J^{n + 1} T_{\ast}} \| \epsilon(s) \|_{L^{2}} ds \leq C^{1/2}.
\end{equation}
Therefore, $(\ref{7.25})$ holds for any integer $n > 0$.

Now take any $j \in \mathbb{Z}$ and suppose $n j_{\ast} < j \leq (n + 1) j_{\ast}$. Then by $(\ref{7.25})$,
\begin{equation}\label{7.27}
\sup_{a \geq 0} \int_{s_{j} + a J^{n + 1} T_{\ast}}^{s_{j} + (a + 1) J^{n + 1} T_{\ast}} \| \epsilon(s) \|_{L^{2}} ds \lesssim J.
\end{equation}
Therefore, as in $(\ref{7.23})$,
\begin{equation}\label{7.28}
\sup_{a \geq 0} \int_{s_{j} + a J^{n + 1} T_{\ast}}^{s_{j} + (a + 1) J^{n + 1} T_{\ast}} \| \epsilon(s) \|_{L^{2}}^{2} ds \lesssim 2^{-j} \eta_{\ast},
\end{equation}
and therefore by H{\"o}lder's inequality, for any $s' \geq s_{j}$,
\begin{equation}\label{7.29}
\sup_{s' \geq s_{j}} \int_{s'}^{s' + 2^{j} T_{\ast}} \| \epsilon(s) \|_{L^{2}} ds \lesssim 1,
\end{equation}
with bound independent of $j$. Then by the triangle inequality, $(\ref{7.18})$ holds for the interval $[s', s' + 3 \cdot 2^{j} J T_{\ast}]$, and by $(\ref{7.19})$--$(\ref{7.22})$,
\begin{equation}\label{7.24.1}
\int_{s_{j}}^{s_{j} + 2^{j} J T_{\ast}} \| \epsilon(s) \|_{L^{2}}^{2} \lesssim 2^{-j} \eta_{\ast},
\end{equation}
and therefore, by the mean value theorem,
\begin{equation}\label{7.30}
\inf_{s \in [s_{j}, s_{j} + 2^{j} J T_{\ast}]} \| \epsilon(s) \|_{L^{2}} \lesssim 2^{-j} \eta_{\ast} J^{-1/2},
\end{equation}
which implies
\begin{equation}\label{7.31}
s_{j + 1} \in [s_{j}, s_{j} + 2^{j} J T_{\ast}].
\end{equation}
Therefore, by $(\ref{7.24.1})$ and H{\"o}lder's inequality,
\begin{equation}\label{7.32}
\int_{s_{j}}^{s_{j + 1}} \| \epsilon(s) \|_{L^{2}}^{2} ds \lesssim 2^{-j} \eta_{\ast}, \qquad \text{and} \qquad \int_{s_{j}}^{s_{j + 1}} \| \epsilon(s) \|_{L^{2}} ds \lesssim 1,
\end{equation}
with constant independent of $j$. Summing in $j$ gives $(\ref{7.2})$ and $(\ref{7.4})$.
\end{proof}

Now, by $(\ref{3.31})$,
\begin{equation}\label{7.6}
\| \epsilon(s') \|_{L^{2}} \sim \| \epsilon(s) \|_{L^{2}},
\end{equation}
for any $s' \in [s, s + 1]$, so $(\ref{7.2})$ implies
\begin{equation}\label{7.7}
\lim_{s \rightarrow \infty} \| \epsilon(s) \|_{L^{2}} = 0.
\end{equation}
Next, by definition of $s_{j}$, $(\ref{7.32})$ implies
\begin{equation}\label{7.8}
\int_{s_{j}}^{s_{j + 1}} \| \epsilon(s) \|_{L^{2}} ds \lesssim 1,
\end{equation}
and for any $1 < p < \infty$,
\begin{equation}\label{7.8.1}
(\int_{s_{j}}^{s_{j + 1}} \| \epsilon(s) \|_{L^{2}}^{p} ds) \lesssim \eta_{\ast}^{p - 1} 2^{-j(p - 1)}, 
\end{equation}
which implies that $\| \epsilon(s) \|_{L^{2}}$ belongs to $L_{s}^{p}$ for any $p > 1$, but not $L_{s}^{1}$.

Comparing $(\ref{7.8.1})$ to the pseudoconformal transformation of the soliton, $(\ref{1.15})$, for $0 < t < 1$,
\begin{equation}\label{7.8.2}
\lambda(t) \sim t, \qquad \text{and} \qquad \| \epsilon(t) \|_{L^{2}} \sim t,
\end{equation}
so
\begin{equation}\label{7.8.3}
\int_{0}^{1} \| \epsilon(t) \|_{L^{2}} \lambda(t)^{-2} dt = \infty,
\end{equation}
but for any $p > 1$,
\begin{equation}\label{7.8.4}
\int_{0}^{1} \| \epsilon(t) \|_{L^{2}}^{p} \lambda(t)^{-2} dt < \infty.
\end{equation}
For the soliton, $\epsilon(s) \equiv 0$ for any $s \in \mathbb{R}$, so obviously, $\epsilon \in L_{s}^{p}$ for $1 \leq p \leq \infty$.

\section{Monotonicity of $\lambda$}
Next, using a virial identity from \cite{merle2005blow}, it is possible to show that $\lambda(s)$ is an approximately monotone decreasing function.
\begin{theorem}\label{t8.2}
For any $s \geq 0$, let
\begin{equation}\label{14.0}
\tilde{\lambda}(s) = \inf_{\tau \in [0, s]} \lambda(\tau).
\end{equation}
Then for any $s \geq 0$,
\begin{equation}\label{8.0}
1 \leq \frac{\lambda(s)}{\tilde{\lambda}(s)} \leq 3.
\end{equation}

\end{theorem}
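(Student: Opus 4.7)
The lower bound $\lambda(s)/\tilde\lambda(s)\geq 1$ is immediate from the definition of $\tilde\lambda$ as an infimum. For the upper bound, I argue by contradiction. By continuity of $\lambda$ (Theorem $\ref{t7.2}$), if the upper bound ever failed there would exist a minimal $s_*>0$ with $\lambda(s_*)=3\tilde\lambda(s_*)$. Picking $\sigma\in[0,s_*]$ with $\lambda(\sigma)=\tilde\lambda(s_*)$ gives $\ln 3=\int_\sigma^{s_*}\tfrac{\lambda_s}{\lambda}\,ds$, together with the crucial two-sided bound $\lambda(\sigma)\leq\lambda(s)\leq 3\lambda(\sigma)$ on $[\sigma,s_*]$, which in particular places us within the regime of Corollary $\ref{c10.2}$ whenever $s_*-\sigma\geq 3^{100}$. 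The strategy is to bound $\int_\sigma^{s_*}\tfrac{\lambda_s}{\lambda}\,ds$ by a quantity $\ll 1$, producing the contradiction.

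From the modulation identity $(\ref{7.15.3})$, $\tfrac{\|Q\|_{L^4}^4}{4}\tfrac{\lambda_s}{\lambda}=(\mathrm{Im}\,\epsilon,\mathcal L_-Q^3)_{L^2}+O(\|\epsilon\|_{L^2}^2)+O(\|\epsilon\|_{L^2}^2\|u\|_{L^\infty_x}^3)$; the two error terms integrate to $O(\eta_\ast)$ over $[\sigma,s_*]$ by combining Theorem $\ref{t7.1}$, which gives $\|\epsilon\|_{L^2}\in L_s^p$ for every $p>1$ with suitable norm, with the $L_s^4 L_x^\infty$ bound implicit in the long-time Strichartz estimate of Theorem $\ref{t6.2}$. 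It therefore suffices to bound $\int_\sigma^{s_*}(\mathrm{Im}\,\epsilon,\mathcal L_-Q^3)_{L^2}\,ds$, and this cannot be done by Cauchy--Schwarz alone, which only gives $\sqrt{s_*-\sigma}\cdot\sqrt{\eta_\ast}$ and thus grows in $s_*-\sigma$.

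Closure is provided by a Lyapunov/virial functional in the spirit of \cite{merle2005blow}. Schematically, I would construct $\mathcal J(s)=(\mathrm{Re}\,\epsilon(s),\rho)_{L^2}+(\text{quadratic correction})$ with a fixed Schwartz test function $\rho$, enjoying (i) $|\mathcal J(s)|\lesssim\|\epsilon(s)\|_{L^2}\leq\eta_\ast$, and (ii) $\mathcal J_s=c_0(\mathrm{Im}\,\epsilon,\mathcal L_-Q^3)_{L^2}+R(s)$ with $c_0\neq 0$ and $\|R\|_{L^1_s([\sigma,s_*])}\lesssim\eta_\ast$. The test function $\rho$ is selected via the spectral theory of $\mathcal L_-$: since $\mathrm{Ker}\,\mathcal L_-=\mathrm{span}(Q)$, the equation $\mathcal L_-\rho=Q^3$ fails, but $\mathcal L_-\rho=Q^3-\alpha Q$ is solvable for $\alpha=\|Q\|_{L^4}^4/\|Q\|_{L^2}^2$. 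The resulting residual coupling to $(\mathrm{Im}\,\epsilon,Q)_{L^2}$ is absorbed by the quadratic correction, using the mass identity $(\mathrm{Re}\,\epsilon,Q)_{L^2}=-\tfrac{1}{2}\|\epsilon\|_{L^2}^2$ from $(\ref{2.36})$ together with the second modulation identity $(\ref{7.15.4})$, which gives $\gamma_s+1=O(\|\epsilon\|_{L^2}^2)$. Integrating $\mathcal J_s$ from $\sigma$ to $s_*$ yields $|\int_\sigma^{s_*}(\mathrm{Im}\,\epsilon,\mathcal L_-Q^3)_{L^2}\,ds|\lesssim\eta_\ast$, and combined with the modulation identity this gives $\ln 3\lesssim\eta_\ast$, the desired contradiction for $\eta_\ast$ fixed sufficiently small.

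The main technical obstacle lies in making this virial argument compatible with the non-$H^1$ regularity of $\epsilon$: the formal identity $(\mathcal L_-\epsilon_2,\rho)_{L^2}=(\epsilon_2,\mathcal L_-\rho)_{L^2}$ used in differentiating $\mathcal J$ requires a Fourier truncation $P_{\leq k+9}$ as in Corollary $\ref{c8.4}$, with the resulting high-frequency error controlled by the refined long-time Strichartz estimate of Theorem $\ref{t6.3}$ at the cost of an additional $O(T^{-9})$ contribution. A secondary but delicate subtlety is the algebraic construction of the quadratic correction that cancels the $(\mathrm{Im}\,\epsilon,Q)_{L^2}$ term, as this residual is not directly controlled by either orthogonality condition in $(\ref{2.13})$ and must be handled by carefully tracking the coupling to $\gamma_s+1$ through both modulation equations $(\ref{7.15.3})$--$(\ref{7.15.4})$.
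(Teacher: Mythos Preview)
Your setup by contradiction is fine, but the core Lyapunov construction does not work as written, and the circularity cannot be repaired by the ``quadratic correction'' you sketch. Differentiating $(\epsilon_1,\rho)$ along the flow $(\ref{7.11})$ produces $(\epsilon_2,\mathcal L_-\rho)+\tfrac{\lambda_s}{\lambda}(\tfrac{Q}{2}+xQ_x,\rho)+O(\|\epsilon\|^2)$, so if you want the main term to be $(\epsilon_2,\mathcal L_-Q^3)$ you need $\mathcal L_-\rho=\mathcal L_-Q^3$, i.e.\ $\rho=Q^3+cQ$, \emph{not} $\mathcal L_-\rho=Q^3-\alpha Q$. With $\rho=Q^3$ the left side is identically zero by orthogonality and you recover exactly $(\ref{7.15.3})$; with $\rho=Q$ both main terms vanish since $\mathcal L_-Q=0$ and $(\tfrac{Q}{2}+xQ_x,Q)=0$. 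Either way you extract no new information beyond the modulation equation you started from, so the argument is circular. Your proposed $\rho$ solving $\mathcal L_-\rho=Q^3-\alpha Q$ would instead yield $(\epsilon_2,Q^3-\alpha Q)=-\alpha(\epsilon_2,Q)$, which is not the quantity you claimed and is not controlled by any of the orthogonality conditions.

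The paper takes a genuinely different route. It uses the explicit virial weight $\rho=y^2Q$, whose key algebraic property is $\mathcal L_-(y^2Q)=-4(\tfrac{Q}{2}+yQ_y)$, so that the identity $(\ref{8.41})$ links $\tfrac{\lambda_s}{\lambda}$ to $(\epsilon_2,\tfrac{Q}{2}+xQ_x)$ rather than to $(\epsilon_2,\mathcal L_-Q^3)$. This is decisive because $(\epsilon_2,\tfrac{Q}{2}+xQ_x)$ is \emph{exactly} the boundary term in the Morawetz inequality $(\ref{10.1.1})$. The paper does not try to show $\ln 3\lesssim\eta_\ast$ directly. Instead, integrating $(\ref{8.41})$ over an interval where $\ln\lambda$ increases by $1$ and using $\int\|\epsilon\|^2\,ds\lesssim\eta_\ast$ from Theorem~$\ref{t7.1}$, one finds a point $s'$ at which $(\epsilon_2(s'),\tfrac{Q}{2}+xQ_x)<0$. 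Feeding this negative left boundary value into $(\ref{10.1.1})$, with right endpoint at successive $s_{j}$ where $\|\epsilon(s_j)\|\to 0$, and iterating, forces $\int_{s'}^\infty\|\epsilon\|^2\,ds=0$. Hence $u$ is a soliton, for which $\lambda$ is constant---contradicting $(\ref{8.19})$. The crucial ingredient your proposal lacks is this interplay between the virial weight and the \emph{signed} Morawetz boundary term; no purely Lyapunov argument of the type you outline can close without it.
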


\begin{proof}
Suppose there exist $0 \leq s_{-} \leq s_{+} < \infty$ satisfying
\begin{equation}\label{8.19}
\frac{\lambda(s_{+})}{\lambda(s_{-})} = e.
\end{equation}
Then we can show that $u$ is a soliton solution to $(\ref{1.1})$, which is a contradiction, since $\lambda(s)$ is constant in that case.

The proof that $(\ref{8.19})$ implies that $u$ is a soliton uses a virial identity from \cite{merle2005blow}. Using $(\ref{7.11})$, compute
\begin{equation}\label{8.41}
\frac{d}{ds} (\epsilon, y^{2} Q) + \frac{\lambda_{s}}{\lambda} \| y Q \|_{L^{2}}^{2} + 4 (\frac{Q}{2} + y Q_{y}, \epsilon_{2})_{L^{2}} = O(|\gamma_{s} + 1| \| \epsilon \|_{L^{2}}) + O(|\frac{\lambda_{s}}{\lambda}| \| \epsilon \|_{L^{2}}) + O(\| \epsilon \|_{L^{2}}^{2}) + O(\| \epsilon \|_{L^{2}} \| \epsilon \|_{L^{8}}^{4}).
\end{equation}
Indeed, by direct computation,
\begin{equation}
\partial_{xx}(x^{2} Q) + Q^{4} (x^{2} Q) - x^{2} Q = 4(\frac{Q}{2} + x Q_{x}).
\end{equation}
Then by $(\ref{7.14})$, $(\ref{7.15})$, $(\ref{7.2})$, and the fundamental theorem of calculus,
\begin{equation}\label{8.42}
\| y Q \|_{L^{2}}^{2} + 4 \int_{s_{-}}^{s_{+}} (\epsilon_{2}, \frac{Q}{2} + x Q_{x})_{L^{2}} = O(\eta_{\ast}).
\end{equation}
Therefore, there exists $s' \in [s_{-}, s_{+}]$ such that
\begin{equation}\label{8.43}
(\epsilon_{2}, \frac{Q}{2} + x Q_{x})_{L^{2}} < 0.
\end{equation}
Since $s' \geq 0$, there exists some $j \geq 0$ such that $s_{j} \leq s' + T_{\ast} < s_{j + 1}$. Using the proof of Theorem $\ref{t7.1}$, in particular $(\ref{7.32})$,
\begin{equation}\label{8.44}
\int_{s'}^{s_{j + 1 + J}} |\frac{\lambda_{s}}{\lambda}| ds \lesssim J.
\end{equation}
Then by Theorem $\ref{t10.1}$, $(\ref{8.43})$ implies
\begin{equation}\label{8.45}
\int_{s'}^{s_{j + 1 + J}} \| \epsilon(s) \|_{L^{2}}^{2} ds \lesssim 2^{-(j + 1 + J)} \eta_{\ast},
\end{equation}
and therefore by definition of $s_{j + 1 + J}$,
\begin{equation}\label{8.46}
\int_{s'}^{s_{j + 1 + J}} \| \epsilon(s) \|_{L^{2}} ds \lesssim 1.
\end{equation}
Then, $(\ref{8.46})$ implies that $(\ref{8.44})$ holds on the interval $[s', s_{j + 1 + 2J}]$, and arguing by induction, for any $k \geq 1$,
\begin{equation}\label{8.47}
\int_{s'}^{s_{j + k}} \| \epsilon(s) \|_{L^{2}}^{2} ds \lesssim 2^{-j - k} \eta_{\ast},
\end{equation}
and
\begin{equation}\label{8.48}
\int_{s'}^{s_{j + k}} \| \epsilon(s) \|_{L^{2}} ds \lesssim 1,
\end{equation}
with implicit constant independent of $k$. Taking $k \rightarrow \infty$,
\begin{equation}\label{8.49}
\int_{s'}^{\infty} \| \epsilon(s) \|_{L^{2}}^{2} ds = 0,
\end{equation}
which implies that $\epsilon(s) = 0$ for all $s \geq s'$. Therefore,
\begin{equation}\label{8.50}
u_{0} = \lambda^{1/2} Q(\lambda x) e^{i \gamma}
\end{equation}
for some $\gamma \in \mathbb{R}$ and $\lambda > 0$, which proves that $u$ is a soliton solution.
\end{proof}

\section{Almost monotone $\lambda(t)$}
The almost monotonicity of $\lambda$ implies that when $\sup(I) = \infty$, $u$ is equal to a soliton solution, and when $\sup(I) < \infty$, $u$ is the pseudoconformal transformation of the soliton solution.

\begin{theorem}\label{t14.0}
If $u$ satisfies the conditions of Theorem $\ref{t3.1}$, blows up forward in time, and
\begin{equation}\label{14.1}
\sup(I) = \infty,
\end{equation}
then $u$ is equal to a soliton solution.
\end{theorem}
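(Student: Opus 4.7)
The plan is to imitate the endgame of the proof of Theorem \ref{t8.2} in the forward time direction that becomes available once $\sup(I)=\infty$. Since Theorem \ref{t8.2} gives $\lambda(t)\le 3\lambda(0)$, the time change satisfies $s(t)\ge t/(9\lambda(0)^{2})\to\infty$, so in $s$-coordinates the solution is defined on all of $[0,\infty)$; Theorem \ref{t7.1} then supplies $\|\epsilon(s)\|_{L^{2}}\to 0$ and $\int_{0}^{\infty}\|\epsilon(s)\|_{L^{2}}^{2}\,ds\lesssim\eta_{\ast}$. The whole argument reduces to exhibiting a single $s'\ge 0$ at which $(\epsilon_{2}(s'),\tfrac{Q}{2}+xQ_{x})_{L^{2}}<0$: once such $s'$ is in hand, Corollary \ref{c10.2} applied on $[s',s^{\ast}]$ gives
\[
\int_{s'}^{s^{\ast}}\|\epsilon\|_{L^{2}}^{2}\,ds\le 3(\epsilon_{2}(s'),\tfrac{Q}{2}+xQ_{x})_{L^{2}}-3(\epsilon_{2}(s^{\ast}),\tfrac{Q}{2}+xQ_{x})_{L^{2}}+O((s^{\ast}-s')^{-9}),
\]
whose right-hand side tends to the strictly negative constant $3(\epsilon_{2}(s'),\tfrac{Q}{2}+xQ_{x})_{L^{2}}$ as $s^{\ast}\to\infty$ (because $|(\epsilon_{2}(s^{\ast}),\tfrac{Q}{2}+xQ_{x})|\le C\|\epsilon(s^{\ast})\|_{L^{2}}\to 0$), contradicting nonnegativity of the left-hand side unless $\int_{s'}^{\infty}\|\epsilon\|_{L^{2}}^{2}\,ds=0$. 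Then $\epsilon\equiv 0$ on $[s',\infty)$; the uniqueness of the decomposition in Theorem \ref{t2.3} pins $u(t)$ to a rescaled soliton on that tail, and backward uniqueness of the NLS flow then identifies $u$ with a soliton of the form (\ref{1.16}) on the entire lifespan $[0,\infty)$.

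To produce such an $s'$, I would run the virial computation already used in Theorem \ref{t8.2}. Integrating (\ref{8.41}) over a tail interval $[s_{-},s_{+}]$ and using the modulation bounds (\ref{7.15.3})--(\ref{7.15.4}) together with Theorem \ref{t7.1} to absorb the $|\gamma_{s}+1|\|\epsilon\|$, $|\lambda_{s}/\lambda|\|\epsilon\|$, $\|\epsilon\|^{2}$, and $\|\epsilon\|\|\epsilon\|_{L^{8}}^{4}$ remainders into $O(\int\|\epsilon\|_{L^{2}}^{2}\,ds)=o(1)$, and observing that the boundary moments $(\epsilon(s_{\pm}),y^{2}Q)$ are $o(1)$ (because $y^{2}Q\in L^{2}$ and $\|\epsilon(s_{\pm})\|_{L^{2}}\to 0$), I arrive at
\[
\ln\!\bigl(\lambda(s_{+})/\lambda(s_{-})\bigr)\|yQ\|_{L^{2}}^{2}+4\!\int_{s_{-}}^{s_{+}}(\epsilon_{2},\tfrac{Q}{2}+yQ_{y})_{L^{2}}\,ds=o(1).
\]
If $u$ is not already a soliton, then by (\ref{7.15.3}) $\lambda(s)$ cannot be identically constant. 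When $\lambda(s)$ admits any upward increment — which is possible whenever $\lambda$ oscillates within its allowed band $[\tilde\lambda(s),3\tilde\lambda(s)]$ — pick $s_{-}<s_{+}$ in the tail with $\lambda(s_{+})>\lambda(s_{-})$; the log-ratio is then a strictly positive constant, so the integral of $(\epsilon_{2},\tfrac{Q}{2}+yQ_{y})_{L^{2}}$ is strictly negative, and the mean value theorem delivers the desired $s'\in(s_{-},s_{+})$.

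The main obstacle is the case in which $\lambda(s)$ is monotone non-increasing, so no upward increment pair $(s_{-},s_{+})$ is available. The same virial identity, applied with $\lambda(s_{+})<\lambda(s_{-})$, yields a strictly \emph{positive} tail value of $(\epsilon_{2}(s'),\tfrac{Q}{2}+yQ_{y})_{L^{2}}$ at some $s'$, and the one-sided Morawetz bound of Corollary \ref{c10.2} does not immediately close in this direction. One way forward is to apply Corollary \ref{c10.2} on $[s_{0},s']$ with $s_{0}<s'$ chosen so that $(\epsilon_{2}(s_{0}),\tfrac{Q}{2}+xQ_{x})_{L^{2}}$ is small relative to $(\epsilon_{2}(s'),\tfrac{Q}{2}+xQ_{x})_{L^{2}}$; the existence of such $s_{0}$ is forced, in the monotone case, by the fact that $\int_{0}^{\infty}\|\epsilon\|_{L^{2}}^{2}\,ds<\infty$ together with Cauchy--Schwarz applied to the virial expression for the partial integrals of $(\epsilon_{2},\tfrac{Q}{2}+xQ_{x})_{L^{2}}$, which forces this quantity to traverse small values arbitrarily late. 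In the degenerate sub-case $\lambda\equiv$ const, the modulation equation (\ref{7.15.3}) forces $(\mathrm{Im}\,\epsilon,\mathcal{L}_{-}Q^{3})_{L^{2}}\equiv 0$, which combined with (\ref{7.15.4}) and the spectral coercivity of $\mathcal{L},\mathcal{L}_{-}$ used in Theorem \ref{t2.4} gives $\epsilon\equiv 0$, so that $u$ is again the soliton.
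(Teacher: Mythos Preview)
Your reduction to ``find a single $s'$ with $(\epsilon_{2}(s'),\tfrac{Q}{2}+xQ_{x})<0$ and then run the endgame of Theorem~\ref{t8.2}'' is sound in principle, but the production of that $s'$ fails exactly where it matters. By Theorem~\ref{t8.2} the scale $\lambda(s)$ is already almost monotone non-increasing, so your first sub-case (an upward increment in the tail) is not the generic situation, and even when small upward oscillations exist within the band $[\tilde\lambda,3\tilde\lambda]$ the virial identity only yields a negative integrand if the log-ratio $\ln(\lambda(s_{+})/\lambda(s_{-}))$ beats the $o(1)$ error terms; nothing in your setup guarantees a definite lower bound on that ratio. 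The real content therefore lies in the monotone non-increasing case, which you flag as ``the main obstacle'' but do not resolve: your claim that one can find $s_{0}<s'$ with $(\epsilon_{2}(s_{0}),\tfrac{Q}{2}+xQ_{x})$ small relative to $(\epsilon_{2}(s'),\tfrac{Q}{2}+xQ_{x})$ is not justified (the quantity tends to zero only as $s\to\infty$, i.e.\ to the \emph{right} of $s'$), and the degenerate sub-case $\lambda\equiv\text{const}$ is handled incorrectly --- the single relation $(\mathrm{Im}\,\epsilon,\mathcal L_{-}Q^{3})=O(\|\epsilon\|^{2})$ coming from $\lambda_{s}=0$ in (\ref{7.15.3}) together with the coercivity of Theorem~\ref{t2.4} gives only $E(Q+\epsilon)\gtrsim\|\epsilon\|_{H^{1}}^{2}$, not $\epsilon\equiv 0$.

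The paper takes a genuinely different route that avoids ever having to exhibit a point where $(\epsilon_{2},\tfrac{Q}{2}+xQ_{x})$ is negative. It partitions $[0,\infty)$ into level sets $I(k)$ of $\tilde\lambda$, uses $\sup(I)=\infty$ to extract a sequence $k_{n}\nearrow\infty$ with $|I(k_{n})|\,2^{-2k_{n}}\ge k_{n}^{-2}$ and $|I(k_{n})|$ maximal among $k\le k_{n}$, and then --- this is Lemma~\ref{l14.1} --- combines the virial identity with Theorem~\ref{t10.1} on $I(k_{n})$ (where $\lambda$ varies by at most a bounded factor) to locate $s_{n}\in I(k_{n})$ with $\|\epsilon(s_{n})\|_{L^{2}}\lesssim |I(k_{n})|^{-1}\lesssim k_{n}^{2}2^{-2k_{n}}$. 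This quantitative smallness is then fed into the long-time Strichartz machinery (Theorems~\ref{t6.2}--\ref{t6.4}) and the almost-conservation-of-energy argument of Theorem~\ref{t8.3}, propagated back to $t=0$, to conclude $E(P_{\le r_{n}+\frac{k_{n}}{4}+\frac{m}{4}}u_{0})\to 0$, hence $E(u_{0})=0$ and $u_{0}$ is a soliton by sharp Gagliardo--Nirenberg. The key conceptual difference is that the paper never tries to force a sign on the Morawetz boundary term; it instead exploits the \emph{length} of the level sets $I(k_{n})$ to make both boundary terms small simultaneously, which is what makes the argument go through regardless of whether $\lambda$ has upward increments.
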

\begin{proof}
For any integer $k \geq 0$, let
\begin{equation}\label{14.4}
I(k) = \{ s \geq 0 : 2^{-k + 2} \leq \tilde{\lambda}(s) \leq 2^{-k + 3} \}.
\end{equation}
Then by $(\ref{8.0})$,
\begin{equation}\label{14.5}
2^{-k} \leq \lambda(s) \leq 2^{-k + 3},
\end{equation}
for all $s \in I(k)$. By $(\ref{7.5})$, the fact that $\sup(I) = \infty$ implies that
\begin{equation}\label{14.7}
\sum 2^{-2k} |I(k)| = \infty.
\end{equation}
Therefore, there exists a sequence $k_{n} \nearrow \infty$ such that
\begin{equation}\label{14.8}
|I(k_{n})| 2^{-2k_{n}} \geq \frac{1}{k_{n}^{2}},
\end{equation}
and that $|I(k_{n})| \geq |I(k)|$ for all $k \leq k_{n}$.

\begin{lemma}\label{l14.1}
For $n$ sufficiently large, there exists $s_{n} \in I(k_{n})$ such that
\begin{equation}\label{14.10}
\| \epsilon(s_{n}) \|_{L^{2}} \lesssim k_{n}^{2} 2^{-2k_{n}}.
\end{equation}
\end{lemma}
\begin{proof}
Let $I(k_{n}) = [a_{n}, b_{n}]$. By Theorem $\ref{t10.1}$, for $n$ sufficiently large,
\begin{equation}\label{14.2}
\int_{I(k_{n})} \| \epsilon(s) \|_{L^{2}}^{2} ds \lesssim \eta_{\ast} + 2^{-18 k_{n}} k_{n}^{18} \lesssim \eta_{\ast}.
\end{equation}
Then, using the virial identity in $(\ref{8.41})$,
\begin{equation}\label{14.3}
\int_{a_{n}}^{\frac{3 a_{n} + b_{n}}{4}} (\epsilon_{2}, \frac{Q}{2} + x Q_{x})_{L^{2}} ds = O(\eta_{\ast}) + O(1).
\end{equation}
Therefore, by the mean value theorem, there exists $s_{n}^{-} \in [a_{n}, \frac{3 a_{n} + b_{n}}{4}]$ such that
\begin{equation}\label{14.8}
|(\epsilon_{2}(s_{n}^{-}), \frac{Q}{2} + x Q_{x})_{L^{2}}| \lesssim \frac{1}{|I(k_{n})|}.
\end{equation}
By a similar calculation, there exists $s_{n}^{+} \in [\frac{a_{n} + 3 b_{n}}{4}, b_{n}]$ such that
\begin{equation}\label{14.8.1}
|(\epsilon_{2}(s_{n}^{+}), \frac{Q}{2} + x Q_{x})_{L^{2}}| \lesssim \frac{1}{|I(k_{n})|}.
\end{equation}
Plugging $(\ref{14.8})$ and $(\ref{14.8.1})$ into Theorem $\ref{t10.1}$,
\begin{equation}\label{14.8.2}
\int_{s_{n}^{-}}^{s_{n}^{+}} \| \epsilon(s) \|_{L^{2}}^{2} ds \lesssim \frac{1}{|I(k_{n})|}.
\end{equation}
Then by the mean value theorem there exists $s_{n} \in [s_{n}^{-}, s_{n}^{+}]$ such that
\begin{equation}\label{14.8.3}
\| \epsilon(s_{n}) \|_{L^{2}}^{2} \lesssim \frac{1}{|I(k_{n})|^{2}}.
\end{equation}
Since $|I(k_{n})| \geq 2^{2k_{n}} k_{n}^{-2}$, the proof of Lemma $\ref{l14.1}$ is complete.
\end{proof}

Returning to the proof of Theorem $\ref{t14.0}$, let $m$ be the smallest integer such that
\begin{equation}\label{14.11}
\frac{2^{2k_{n}}}{k_{n}^{2}} 2^{m} \geq |I(k_{n})|.
\end{equation}
Since $|I(k)| \leq |I(k_{n})|$ for all $0 \leq k \leq k_{n}$, $(\ref{14.11})$ implies that
\begin{equation}\label{14.12}
|s_{n}| \leq 2^{2k_{n} + m + 1}.
\end{equation}
Let $r_{n}$ be the smallest integer that satisfies
\begin{equation}\label{14.13}
2^{\frac{2k_{n} + m + 1}{3}} 2^{k_{n}} \frac{1}{\eta_{1}} \leq 2^{r_{n}}.
\end{equation}
Since $\lambda(s) \geq 2^{-k_{n}}$ for all $s \in [0, s_{n}]$, setting $t_{n} = s^{-1}(s_{n})$, rescaling so that $\lambda(t) \geq \frac{1}{\eta_{1}}$ on $[0, 2^{2k_{n}} \eta_{1}^{-2} t_{n}]$, applying Theorem $\ref{t10.1}$, then rescaling back,
\begin{equation}\label{14.14}
\| P_{\geq r_{n}} u \|_{U_{\Delta}^{2}([0, t_{n}] \times \mathbb{R})} \lesssim \eta_{\ast}.
\end{equation}
Arguing by induction on frequency, and using $(\ref{6.60.1})$ and the preceding computations,
\begin{equation}\label{14.15}
\| P_{\geq r_{n} + \frac{k_{n}}{4} + \frac{m}{4}} u \|_{U_{\Delta}^{2}([0, t_{n}] \times \mathbb{R})} \lesssim k_{n}^{2} 2^{-2k_{n}} 2^{-m}.
\end{equation}
Then using the computations in $(\ref{8.24})$--$(\ref{8.29})$,
\begin{equation}\label{14.16}
E(P_{\leq r_{n} + \frac{k_{n}}{4} + \frac{m}{4}} u(t_{n})) \lesssim (k_{n}^{2} 2^{-2k_{n}} 2^{-m} 2^{r_{n} + \frac{k_{n}}{4} + \frac{m}{4}})^{2} \sim (k_{n}^{2} 2^{-\frac{k_{n}}{12} - \frac{5m}{12}} \eta_{1}^{-1})^{2}.
\end{equation}
Next, following the computations in the proof of Theorem $\ref{t8.3}$, and using $(\ref{14.15})$,
\begin{equation}\label{14.17}
\sup_{t \in [0, t_{n}]} E(P_{\leq r_{n} + \frac{k_{n}}{4} + \frac{m}{4}} u(t)) \lesssim (k_{n}^{2} 2^{-\frac{k_{n}}{12} - \frac{5m}{12}} \eta_{1}^{-1})^{2}.
\end{equation}
Since $m \geq 0$ for any $n$, taking $n \rightarrow \infty$ implies that $E(u_{0}) = 0$. Then by the Gagliardo--Nirenberg inequality, $u_{0}$ is a soliton.
\end{proof}

It only remains to show that in the case that $\sup(I) < \infty$, $u$ is a pseudoconformal transformation of the soliton. If one could show that the energy of $u_{0}$ is finite, then this fact would follow directly from the result of \cite{merle1993determination}. Similarly, if one could generalize the result of \cite{merle1993determination} to data that need not have finite energy, then the proof would also be complete.

We do not quite prove this fact. Instead, suppose without loss of generality that $\sup(I) = 0$, and
\begin{equation}\label{14.18}
\sup_{-1 < t < 0} \| \epsilon(t) \|_{L^{2}} \leq \eta_{\ast}.
\end{equation}
Then decompose
\begin{equation}\label{14.19}
u(t,x) = \frac{e^{-i \gamma(t)}}{\lambda(t)^{1/2}} Q(\frac{x}{\lambda(t)}) + \frac{e^{-i \gamma(t)}}{\lambda(t)^{1/2}} \epsilon(t,\frac{x}{\lambda(t)}).
\end{equation}
Then apply the pseudoconformal transformation to $u(t,x)$. For $-\infty < t < -1$, let
\begin{equation}\label{14.20}
v(t,x) = \frac{1}{t^{1/2}} \overline{u(\frac{1}{t}, \frac{x}{t})} e^{i x^{2}/4t} = \frac{1}{t^{1/2}} \frac{e^{i \gamma(1/t)}}{\lambda(1/t)^{1/2}} Q(\frac{x}{t \lambda(1/t)}) e^{i x^{2}/4t} + \frac{1}{t^{1/2}} \frac{e^{i \gamma(1/t)}}{\lambda(1/t)^{1/2}} \overline{\epsilon(\frac{1}{t}, \frac{x}{t \lambda(1/t)})} e^{i x^{2}/4t}.
\end{equation}
Since the $L^{2}$ norm is preserved by the pseudoconformal transformation,
\begin{equation}\label{14.21}
\aligned
\lim_{t \searrow -\infty} \| \frac{1}{t^{1/2}} \frac{e^{i \gamma(1/t)}}{\lambda(1/t)^{1/2}} \overline{\epsilon(\frac{1}{t}, \frac{x}{t \lambda(1/t)})} e^{i x^{2}/4t} \|_{L^{2}} = 0, \qquad \text{and} \\ 
\qquad \sup_{-\infty < t < -1}  \| \frac{1}{t^{1/2}} \frac{e^{i \gamma(1/t)}}{\lambda(1/t)^{1/2}} \overline{\epsilon(\frac{1}{t}, \frac{x}{t \lambda(1/t)})} e^{i x^{2}/4t} \|_{L^{2}} \leq \eta_{\ast}.
\endaligned
\end{equation}

Since
\begin{equation}\label{14.22}
 \frac{1}{t^{1/2}} \frac{e^{i \gamma(1/t)}}{\lambda(1/t)^{1/2}} Q(\frac{x}{t \lambda(1/t)})
\end{equation}
is in the form of $\frac{e^{i \tilde{\gamma}(t)}}{\tilde{\lambda}(t)^{1/2}} Q(\frac{x}{\tilde{\lambda}(t)})$, it only remains to estimate
\begin{equation}\label{14.23}
 \| \frac{1}{t^{1/2}} \frac{e^{i \gamma(1/t)}}{\lambda(1/t)^{1/2}} Q(\frac{x}{t \lambda(1/t)}) (e^{i x^{2}/4t} - 1) \|_{L^{2}}.
\end{equation}
Once again take $(\ref{14.4})$. As in $(\ref{14.5})$, for any $k \geq 0$, $\lambda(s) \sim 2^{-k}$ for all $s \in I(k)$. Furthermore, by  $(\ref{7.14})$, $\| \epsilon(t) \|_{L^{2}} \rightarrow 0$ as $t \nearrow 0$ implies that there exists a sequence $c_{k} \nearrow \infty$ such that
\begin{equation}\label{14.24}
|I(k)| \geq c_{k}, \qquad \text{for all} \qquad k \geq 0.
\end{equation}
Then by $(\ref{7.5})$, there exists $r(t) \searrow 0$ as $t \nearrow 0$ such that
\begin{equation}\label{14.25}
\lambda(t) \leq t^{1/2} r(t), \qquad \text{so} \qquad \lambda(1/t) \leq t^{-1/2} r(1/t).
\end{equation}
Therefore, since $Q$ is rapidly decreasing,
\begin{equation}\label{14.26}
\lim_{t \searrow -\infty} \| \frac{1}{t^{1/2} \lambda(1/t)^{1/2}} Q(\frac{x}{t \lambda(1/t)}) \frac{x^{2}}{4t} \|_{L^{2}} = 0,
\end{equation}
as well as
\begin{equation}\label{14.27}
\lim_{t \searrow -\infty} \| \frac{1}{t^{1/2} \lambda(1/t)^{1/2}} Q(\frac{x}{t \lambda(1/t)}) (e^{i x^{2}/4t} - 1)\|_{L^{2}} = 0,
\end{equation}
Therefore, by time reversal symmetry, $v$ satisfies the conditions of Theorem $\ref{t3.1}$, and $v$ is a solution that blows up backward in time at $\inf(I) = -\infty$, so therefore, by Theorem $\ref{t14.0}$, $v$ must be a soliton. In particular,
\begin{equation}\label{14.28}
v(t,x) = e^{i \lambda^{2} t} e^{i \theta} \lambda^{1/2} Q(\lambda x) = \frac{1}{t^{1/2}} \overline{u(\frac{1}{t}, \frac{x}{t})} e^{i x^{2}/4t}.
\end{equation}
Doing some algebra,
\begin{equation}\label{14.29}
\overline{u(\frac{1}{t}, \frac{x}{t})} = e^{i \lambda^{2} t} e^{i \theta} e^{-i x^{2}/4t} t^{1/2} \lambda^{1/2} Q(\lambda x),
\end{equation}
so
\begin{equation}\label{14.30}
u(t,x) = e^{-i \lambda^{2}/t} e^{-i \theta} e^{i x^{2}/4t} \frac{1}{t^{1/2}} \lambda^{1/2} Q(\frac{\lambda x}{t}).
\end{equation}
This is clearly the pseudoconformal transformation of a soliton. This finally completes the proof of Theorem $\ref{t3.1}$.

\section{A non-symmetric solution}
When there is no symmetry assumption on $u$, there is no preferred origin, either in space or in frequency. As a result, two additional group actions on a solution $u$ must be accounted for, translation in space,
\begin{equation}\label{15.1}
u(t,x) \mapsto u(t, x - x_{0}), \qquad x_{0} \in \mathbb{R},
\end{equation}
and the Galilean symmetry,
\begin{equation}\label{15.2}
e^{-it \xi_{0}^{2}} e^{ix \xi_{0}} u(t, x - 2 t \xi_{0}), \qquad \xi_{0} \in \mathbb{R}.
\end{equation}
This gives a four parameter family of soliton solutions to $(\ref{1.1})$, given by $(\ref{1.16.1})$. Making the pseudoconformal transformation of $(\ref{1.16.1})$ gives a solution in the form of $(\ref{1.17.1})$.

In this section we prove Theorem $\ref{t1.2}$, that the only non-symmetric blowup solutions to $(\ref{1.1})$ with mass $\| u_{0} \|_{L^{2}}^{2} = \| Q \|_{L^{2}}^{2}$ belong to the family of solitons and pseudoconformal transformation of a soliton. To prove this, we will go through the proof of Theorem $\ref{t1.1}$ in sections two through nine, section by section, generalizing each step to the non-symmetric case. There are several steps for which the argument in the symmetric case has an easy generalization to the non-symmetric case, after accounting for the additional group actions $(\ref{15.1})$ and $(\ref{15.2})$. There are other steps for which the non-symmetric case will require substantially more work.

\subsection{Reductions of a non-symmetric blowup solution}
Using the same arguments that show that Theorem $\ref{t1.1}$ may be reduced to Theorem $\ref{t3.1}$, Theorem $\ref{t1.2}$ may be reduced to
\begin{theorem}\label{t15.1}
Let $0 < \eta_{\ast} \ll 1$ be a small fixed constant to be defined later. If $u$ is a solution to $(\ref{1.1})$ on the maximal interval of existence $I \subset \mathbb{R}$, $\| u_{0} \|_{L^{2}} = \| Q \|_{L^{2}}$, $u$ blows up forward in time, and
\begin{equation}\label{15.3}
\sup_{t \in [0, \sup(I))} \inf_{\lambda, \gamma, \xi_{0}, x_{0}} \| e^{i \gamma} e^{ix \xi_{0}} \lambda^{1/2} u(t, \lambda x + x_{0}) - Q(x) \|_{L^{2}} \leq \eta_{\ast},
\end{equation}
then $u$ is a soliton solution of the form $(\ref{1.16.1})$ or the pseudoconformal transformation of a soliton of the form $(\ref{1.17.1})$.
\end{theorem}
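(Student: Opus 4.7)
The strategy is to parallel sections 2 through 9, replacing the two-parameter family of modulations $(\lambda,\gamma)$ by the four-parameter family $(\lambda,\gamma,x_0,\xi_0)$ generated by $(\ref{1.2})$, phase rotation, translation $(\ref{15.1})$, and the Galilean boost $(\ref{15.2})$. I would first extend Theorem $\ref{t2.3}$ to the full four-parameter group: for $\alpha$ sufficiently small, if
\[
\| e^{i\gamma_0}e^{ix\xi_0^{(0)}}\lambda_0^{1/2}u(\lambda_0 x+x_0^{(0)})-Q\|_{L^2}\le\alpha,
\]
then there exist unique $(\lambda,\gamma,x_0,\xi_0)$ near $(\lambda_0,\gamma_0,x_0^{(0)},\xi_0^{(0)})$ so that
\[
\epsilon(x)=e^{i\gamma}e^{ix\xi_0}\lambda^{1/2}u(\lambda x+x_0)-Q(x)
\]
is orthogonal to each of $Q^3$, $iQ^3$, $Q_x$, and $ixQ$. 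These four directions span a transverse slice to the orbit of $Q$: $ixQ$ is the infinitesimal Galilean generator and $Q_x$ is the infinitesimal translation generator, and nondegeneracy of the Jacobian at $Q$ follows from $(Q_x,Q_x)_{L^2}\neq 0$ and $(ixQ,ixQ)_{L^2}\neq 0$ together with the vanishing of the remaining cross inner products by parity of $Q$. The modulation argument of Theorem $\ref{t7.2}$ then produces continuous, a.e.\ differentiable parameters $\lambda(t),\gamma(t),x_0(t),\xi_0(t)$ whose derivatives in the $s$-variable $|\lambda_s/\lambda|,|\gamma_s+1|,|(x_0)_s|,|(\xi_0)_s|$ are each $O(\|\epsilon\|_{L^2})$, after noting that the conserved momentum $P(u)=\mathrm{Im}\int\bar u\,u_x\,dx$ can be normalized to zero by an initial Galilean boost.

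The positivity estimate Theorem $\ref{t2.4}$ extends along the same lines, using the full kernel and negative-direction structure of $\mathcal L$ and $\mathcal L_-$: we have $\mathcal LQ^3=-8Q^3$, $\mathcal LQ_x=0$, $\mathcal L_-Q=0$, and $\mathcal L_-(xQ)=-2Q_x$. With the four orthogonality conditions, together with $\|Q+\epsilon\|_{L^2}=\|Q\|_{L^2}$ and $P(Q+\epsilon)=0$, one obtains $E(Q+\epsilon)\gtrsim\|\epsilon\|_{H^1}^2$; the momentum normalization plays the role that the symmetry assumption played in the symmetric argument, excluding the Galilean neutral direction.

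The analytic backbone of sections 4 through 6 -- long-time Strichartz, almost-conservation of energy, and the frequency-localized Morawetz estimate -- generalizes provided all frequency cutoffs are centered at $\xi_0(t)$ and all physical-space cutoffs at $x_0(t)$. Concretely, $P_k$ is replaced by the modulated projection $e^{ix\xi_0(t)}P_k(e^{-ix\xi_0(t)}\,\cdot\,)$, and the Morawetz potential becomes
\[
M(t)=\int\phi(x-x_0(t))\,\mathrm{Im}[\overline{P_{\le k+9}u}\,\partial_x P_{\le k+9}u]\,dx-\xi_0(t)\!\int\phi(x-x_0(t))|P_{\le k+9}u|^2\,dx.
\]
Computing $\tfrac{d}{dt}M(t)$ now generates additional drift terms proportional to $(x_0)_s$ and $(\xi_0)_s$, and I expect these to be the principal technical obstacle: they must be shown to be absorbable as an $O(\|\epsilon\|_{L^2}^2)$ correction using the modulation equation bounds together with the long-time Strichartz estimate. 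Centering the weight at $x_0(t)$ is essential to prevent the Morawetz inequality from drifting off the support of the moving soliton, and the Galilean subtraction in the second term cancels the dominant Doppler contribution to $\tfrac{d}{dt}M(t)$.

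With the Morawetz estimate in place, sections 7 through 9 transfer with only cosmetic changes. Theorem $\ref{t7.1}$ gives $\|\epsilon(s)\|_{L^2}\in L^p_s$ for every $p>1$, and the virial argument of Theorem $\ref{t8.2}$ yields the almost-monotonicity of $\lambda$. In the case $\sup(I)=\infty$, the induction on frequency used in Theorem $\ref{t14.0}$ drives $E(P_{\le N}u(t))\to 0$ along a subsequence; combined with $P(u)\equiv 0$ and $\|u_0\|_{L^2}=\|Q\|_{L^2}$, the Galilean-invariant sharp Gagliardo--Nirenberg inequality and its equality case force $u_0$ to lie on the four-parameter soliton family $(\ref{1.16.1})$. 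In the case $\sup(I)<\infty$, applying the pseudoconformal transformation to $u$ and verifying, exactly as in the argument after Theorem $\ref{t14.0}$, that the transformed solution again satisfies the hypotheses of Theorem $\ref{t15.1}$ with $\inf(\tilde I)=-\infty$ reduces to the previous case and produces a solution of the form $(\ref{1.17.1})$, completing the proof.
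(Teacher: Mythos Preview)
Your overall strategy is correct and matches the paper's in spirit, but you diverge from the paper's implementation at several technical junctures, and one of those choices creates a real difficulty.

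\textbf{Orthogonality conditions and coercivity.} The paper imposes $(\epsilon,Q^3)=(\epsilon,iQ^3)=(\epsilon,Q_x)=(\epsilon,iQ_x)=0$, whereas you impose orthogonality to $ixQ$ in place of $iQ_x$. Both make the Jacobian in the implicit function theorem nondegenerate, but the coercivity estimate behind Theorem~\ref{t2.4} (and its use inside the Morawetz argument, equations (\ref{10.33})--(\ref{10.34})) relies on $\epsilon_2\perp Q_x$ to handle the zero mode of $\mathcal L$ acting on the imaginary part. Your condition gives $\epsilon_2\perp xQ$ instead, so you would need the momentum constraint $P(Q+\epsilon)=0$ to recover $(\epsilon_2,Q_x)=O(\|\epsilon\|^2)$ and then argue coercivity up to a quadratic correction. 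This is workable but is an extra step you have not written out; the paper's choice avoids it entirely.

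\textbf{Handling the Galilean and translation drift.} You globally normalize $P(u)=0$ and then run the Morawetz estimate in a moving frame, with weight $\phi(x-x_0(t))$ and a Galilean subtraction, and with frequency projections conjugated by $e^{ix\xi_0(t)}$. The paper does neither: it keeps the weight and the Littlewood--Paley projections fixed, and instead, on each interval $[a,b]$ where the estimate is applied, performs a Galilean transformation so that $\xi(a)=0$ and a spatial translation so that $x(b)=0$. It then uses the modulation bound $|\xi_s-\tfrac{\lambda_s}{\lambda}\xi|\lesssim\|\epsilon\|_{L^2}^2$ and $|\tfrac{x_s}{\lambda}+2\xi|\lesssim\|\epsilon\|_{L^2}$ to show $|\xi(t)|/\lambda(t)\le\eta_0$ and $|x(t)|\le R=T^{1/25}$ throughout the interval (see Theorem~\ref{t15.7} and equations (\ref{15.74}), (\ref{15.81}), (\ref{15.82})). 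The new error this produces in the Morawetz inequality is simply $\sup_t \xi(t)^2/\lambda(t)^2$, which is then fed back through the $L^p_s$ bootstrap. This buys a much lighter modification of the symmetric proof: no time-dependent projections, no drift terms from $\dot x_0$ hitting the Morawetz weight, and the long-time Strichartz and energy estimates go through with only the hypothesis $|\xi(t)|/\lambda(t)\le\eta_0$ added. Your moving-frame approach is in principle viable, but the drift terms you flag as ``the principal technical obstacle'' are genuinely delicate, and the paper sidesteps them.
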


Reducing Theorem $\ref{t1.2}$ to Theorem $\ref{t15.1}$ requires the following generalization of Theorem $\ref{t2.1}$, which was proved in Theorem $2$ of \cite{dodson20202}.

\begin{theorem}\label{t15.2}
Assume that $u$ is a solution to $(\ref{1.1})$ with $\| u_{0} \|_{L^{2}} = \| Q \|_{L^{2}}$ that does not scatter forward in time. Let $(T^{-}(u), T^{+}(u))$ be its lifespan, $T^{-}(u)$ could be $-\infty$ and $T^{+}(u)$ could be $+\infty$. Then there exists a sequence $t_{n} \nearrow T^{+}(u)$ and a family of parameters $\lambda_{n} > 0$, $\xi_{n} \in \mathbb{R}$, $x_{n} \in \mathbb{R}$, and $\gamma_{n} \in \mathbb{R}$ such that
\begin{equation}\label{15.4}
\lambda_{n}^{1/2} e^{ix \xi_{n}} e^{i \gamma_{n}} u(t_{n}, \lambda_{n} x + x_{n}) \rightarrow Q, \qquad \text{in} \qquad L^{2}.
\end{equation}
\end{theorem}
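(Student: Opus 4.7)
The plan is to extend the proof of Theorem \ref{t2.1} (the symmetric case) to account for the two extra symmetries acting on a non-symmetric solution, namely spatial translation (\ref{15.1}) and Galilean boost (\ref{15.2}). The starting input is the minimal-mass concentration compactness result of \cite{keraani2006blow,tao2008minimal}: since $u$ does not scatter forward in time and $\| u_{0} \|_{L^{2}} = \| Q \|_{L^{2}}$ is the minimal mass for a non-scattering solution by \cite{dodson2015global,dodson2016global}, the orbit $\{u(t)\}_{t \in I}$ is precompact in $L^{2}$ modulo the full four-parameter symmetry group of (\ref{1.1}). Applied to any sequence $t_{n} \nearrow T^{+}(u)$, this yields, after extracting a subsequence, parameters $\lambda_{n} > 0$ and $\gamma_{n}, x_{n}, \xi_{n} \in \mathbb{R}$, together with a limit $\tilde{u}_{0} \in L^{2}$ satisfying $\| \tilde{u}_{0} \|_{L^{2}} = \| Q \|_{L^{2}}$, such that
\begin{equation*}
\lambda_{n}^{1/2} e^{i \gamma_{n}} e^{i x \xi_{n}} u(t_{n}, \lambda_{n} x + x_{n}) \to \tilde{u}_{0} \quad \text{in } L^{2}.
\end{equation*}

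The remaining task is to identify $\tilde{u}_{0}$ with $Q$. Following \cite{fan20182} in the symmetric case and \cite{dodson20202} in the non-symmetric case, the plan is to show that $E(P_{n} u(t_{n})) \to 0$ along the subsequence, where $P_{n}$ is a Galilean-shifted Fourier truncation onto frequencies $|\xi - \xi_{n}| \lesssim N_{n}$, with $N_{n} \lambda_{n} \to \infty$ chosen so that $P_{n} \to I$ in the strong $L^{2}$-operator topology while $P_{n} u(t_{n}) \in H^{1}$. Given this energy decay, the convergence $\| P_{n} u(t_{n}) \|_{L^{2}} \to \| Q \|_{L^{2}}$ combined with the sharp Gagliardo--Nirenberg inequality (\ref{1.11}) forces equality in the limit; the Weinstein \cite{weinstein1983nonlinear,cazenave1982orbital} characterization of the equality case identifies $\tilde{u}_{0}$ with $Q$ up to the four symmetries, which is (\ref{15.4}).

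The main obstacle is producing the energy decay $E(P_{n} u(t_{n})) \to 0$ in the presence of a non-trivial translation parameter $x(t)$ and Galilean frequency parameter $\xi(t)$, neither of which need be bounded. The strategy is to adapt the long-time Strichartz and frequency-localized Morawetz machinery of sections four through six to a Galilean-adjusted frame: all Littlewood--Paley projections are centered at the moving frequency $\xi(t)$, spatial cutoffs at $x(t)$, and the Morawetz potential carries the corresponding Galilean phase. The bilinear Strichartz estimates of \cite{planchon2009bilinear} used in the proof of Theorem \ref{t6.2} are already Galilean-invariant, and this invariance is precisely what makes the adjustment viable; the same invariance is what enables the two-dimensional argument of \cite{dodson2016global2}. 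Once these tools are in place, the analogue of the energy-increment argument in Theorem \ref{t8.3} yields $E(P_{n} u(t_{n})) \to 0$, completing the proof of Theorem \ref{t15.2}; this is the argument carried out in \cite{dodson20202}.
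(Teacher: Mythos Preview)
The paper does not prove Theorem~\ref{t15.2}; it simply cites it as Theorem~2 of \cite{dodson20202}, just as Theorem~\ref{t2.1} is quoted from \cite{fan20182}. Your sketch is consistent with the strategy the paper attributes to those references in its introductory discussion (concentration compactness from \cite{keraani2006blow,tao2008minimal} giving a limit profile, followed by a frequency-localized Morawetz argument to force $E(P_{n}u(t_{n}))\to 0$, and then the sharp Gagliardo--Nirenberg characterization to identify the limit with $Q$), so as an outline of the cited proof it is on target.
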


Lemma $\ref{l3.3}$ can easily be generalized to the non-symmetric case, proving that $\| e^{i \gamma} e^{i x \xi_{0}} \lambda^{1/2} u_{0}(\lambda x + x_{0}) - Q \|_{L^{2}}$ attains its infimum on $\gamma \in \mathbb{R}$, $\xi_{0} \in \mathbb{R}$, $x_{0} \in \mathbb{R}$, $\lambda > 0$. Theorem $\ref{t3.2}$ is also easily generalized to the non-symmetric case, showing that the left hand side of $(\ref{15.3})$ is upper semicontinuous in time and continuous in time when small. Therefore, Theorem $\ref{t1.2}$ is easily reduced to Theorem $\ref{t15.1}$ using the same argument that reduced Theorem $\ref{t1.1}$ to Theorem $\ref{t3.1}$.

\subsection{Decomposition of a non-symmetric solution near $Q$}
When a non-symmetric $u$ is close to a soliton, it is possible to make a decomposition of $u$, generalizing Theorem $\ref{t2.3}$ to account 
for the additional group actions in $(\ref{15.1})$ and $(\ref{15.2})$.
\begin{theorem}\label{t15.3}
Take $u \in L^{2}$. There exists $\alpha > 0$ sufficiently small such that if there exist $\lambda_{0} > 0$, $\gamma_{0} \in \mathbb{R}$, $x_{0} \in \mathbb{R}$, $\xi_{0} \in \mathbb{R}$ that satisfy
\begin{equation}\label{15.5}
\| e^{i \gamma_{0}} e^{ix \xi_{0}} \lambda_{0}^{1/2} u(\lambda_{0} x + x_{0}) - Q(x) \|_{L^{2}} \leq \alpha,
\end{equation}
then there exist unique $\lambda > 0$, $\gamma \in \mathbb{R}$, $\tilde{x} \in \mathbb{R}$, $\xi \in \mathbb{R}$ that satisfy
\begin{equation}\label{15.6}
(\epsilon, Q^{3})_{L^{2}} = (\epsilon, i Q^{3})_{L^{2}} = (\epsilon, Q_{x})_{L^{2}} = (\epsilon, i Q_{x})_{L^{2}} = 0,
\end{equation}
where
\begin{equation}\label{15.7}
\epsilon(x) = e^{i \gamma} e^{ix \xi} \lambda^{1/2} u(\lambda x + \tilde{x}) - Q.
\end{equation}
Furthermore,
\begin{equation}\label{15.8}
\| \epsilon \|_{L^{2}} + |\frac{\lambda}{\lambda_{0}} - 1| + |\gamma - \gamma_{0} - \xi_{0}(\tilde{x} - x_{0})| + |\xi - \frac{\lambda}{\lambda_{0}} \xi_{0}| + |\frac{\tilde{x} - x_{0}}{\lambda_{0}}| \lesssim \| e^{i \gamma_{0}} e^{ix \xi_{0}} \lambda_{0}^{1/2} u(\lambda_{0} x + x_{0}) - Q \|_{L^{2}}.
\end{equation}
\end{theorem}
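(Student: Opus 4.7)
\textbf{Proof plan for Theorem \ref{t15.3}.}
The strategy parallels Theorem \ref{t2.3} but with a four-parameter implicit function theorem argument reflecting the four-parameter symmetry group generated by phase, scale, translation, and Galilean boost. First, I would reduce to the normalized case $\lambda_0 = 1$, $\gamma_0 = 0$, $x_0 = 0$, $\xi_0 = 0$ by composing with the inverse symmetries: setting $\tilde u(y) = e^{i\gamma_0}e^{iy\xi_0}\lambda_0^{1/2}u(\lambda_0 y + x_0)$, hypothesis (\ref{15.5}) reads $\|\tilde u - Q\|_{L^2} \le \alpha$. I would then apply the implicit function theorem to the map
\[
F : (\lambda, \gamma, \tilde x, \xi) \mapsto \bigl((\epsilon, Q^3)_{L^2},\ (\epsilon, iQ^3)_{L^2},\ (\epsilon, Q_x)_{L^2},\ (\epsilon, iQ_x)_{L^2}\bigr),
\]
where $\epsilon = e^{i\gamma}e^{ix\xi}\lambda^{1/2}\tilde u(\lambda x + \tilde x) - Q$. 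Since $Q$ is Schwartz, the four inner products are $C^1$ in the parameters, with uniform bounds on first and second partials whenever $\|\tilde u\|_{L^2} \le \|Q\|_{L^2} + \alpha$, exactly as in (\ref{2.20})--(\ref{2.23}).

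The key computation is the Jacobian $DF$ at the base point $(\lambda, \gamma, \tilde x, \xi) = (1,0,0,0)$ with $\tilde u = Q$. The partials of $\epsilon$ are $\partial_\gamma\epsilon = iQ$, $\partial_\xi\epsilon = ixQ$, $\partial_\lambda\epsilon = \tfrac12 Q + xQ_x$, $\partial_{\tilde x}\epsilon = Q_x$. Pairing each against $Q^3, iQ^3, Q_x, iQ_x$, and using that $Q$ is real and even (so $Q_x$ is odd) and the Pohozaev-type identity $(\tfrac12 Q + xQ_x, Q^3)_{L^2} = \tfrac14\|Q\|_{L^4}^4$, one obtains the matrix
\[
DF|_{(1,0,0,0)} \;=\; \begin{pmatrix}
0 & 0 & \tfrac14\|Q\|_{L^4}^4 & 0 \\[2pt]
\|Q\|_{L^4}^4 & 0 & 0 & 0 \\[2pt]
0 & 0 & 0 & \|Q_x\|_{L^2}^2 \\[2pt]
0 & -\tfrac12\|Q\|_{L^2}^2 & 0 & 0
\end{pmatrix},
\]
a signed permutation of a diagonal matrix with nonzero diagonal, hence invertible. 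The inverse function theorem then yields unique $(\lambda', \gamma', \tilde x', \xi')$ in a fixed neighborhood of $(1,0,0,0)$, satisfying $|\lambda'-1| + |\gamma'| + |\tilde x'| + |\xi'| \lesssim \|\tilde u - Q\|_{L^2}$.

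For uniqueness outside this fixed neighborhood, I would argue as in (\ref{2.29.1})--(\ref{2.29.2}): for parameters $(\lambda, \gamma, \tilde x, \xi)$ outside the neighborhood (with $\gamma$ considered mod $2\pi$), the inner product $(e^{i\gamma}e^{ix\xi}\lambda^{1/2}Q(\lambda x + \tilde x), Q)_{L^2}$ is bounded away from $\|Q\|_{L^2}^2$, since (i) by dominated convergence the inner product vanishes as $\lambda \searrow 0$ or $\lambda \nearrow \infty$, (ii) by dominated convergence it vanishes as $|\tilde x| \to \infty$ because $Q$ is integrable, and (iii) by the Riemann--Lebesgue lemma applied to the Schwartz function $\lambda^{1/2}Q(\lambda x + \tilde x)\overline{Q(x)}$ it vanishes as $|\xi| \to \infty$. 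A compactness argument on the complement of a sufficiently large neighborhood gives a strict lower bound, which, combined with the perturbation estimate $\|\cdot\|_{L^2} \le \|\cdot\|_{L^2}\bigr|_{\tilde u = Q} + O(\alpha)$, rules out additional minimizers once $\alpha > 0$ is small enough.

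Finally, for general $\lambda_0, \gamma_0, x_0, \xi_0$, the reduction composes the normalized parameters $(\lambda', \gamma', \tilde x', \xi')$ obtained above with the symmetry used to form $\tilde u$; a direct expansion of $e^{i\gamma'}e^{ix\xi'}\lambda'^{1/2}\tilde u(\lambda' x + \tilde x')$ identifies
\[
\lambda = \lambda_0 \lambda',\qquad \tilde x - x_0 = \lambda_0 \tilde x',\qquad \xi = \xi' + \lambda'\xi_0,\qquad \gamma - \gamma_0 = \gamma' + \xi_0\tilde x',
\]
from which the bound (\ref{15.8}) follows by the triangle inequality and the scale invariance of the $L^2$ norm. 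The main obstacle is bookkeeping this composition correctly: the cross-term $\xi_0\tilde x'$ is exactly what produces the nontrivial phase correction in the statement, reflecting the fact that translation and Galilean boost do not commute. The rest is routine: non-degeneracy of the Jacobian and the $C^1$ regularity give the entire estimate (\ref{15.8}) from the implicit function theorem bound.
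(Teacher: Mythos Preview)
Your proposal is correct and follows essentially the same route as the paper: reduce to the normalized case, verify $C^1$ regularity of the four inner products, compute the same $4\times 4$ Jacobian at $(1,0,0,0)$ (your matrix matches the paper's entries in (\ref{15.16})--(\ref{15.19})), invoke the inverse function theorem, and then undo the symmetry to recover (\ref{15.8}). Your treatment is in fact slightly more explicit than the paper's in two places---the uniqueness argument outside the small neighborhood (where you spell out the Riemann--Lebesgue and dominated convergence reasons) and the composition formulas for the parameter shifts---but the substance is identical.
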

\begin{remark}
Once again, since $e^{i \gamma}$ is $2\pi$-periodic, the $\gamma$ in $(\ref{15.7})$ is unique up to translations by $2 \pi k$ for some integer $k$.
\end{remark}

\begin{proof}
By H{\"o}lder's inequality, if $\epsilon = e^{i \gamma_{0}} e^{ix \xi_{0}} \lambda_{0}^{1/2} u(\lambda_{0} x + x_{0}) - Q(x)$, then
\begin{equation}\label{15.9}
|(\epsilon, Q^{3})_{L^{2}}| + |(\epsilon, Q_{x})_{L^{2}}| + |(\epsilon, i Q^{3})_{L^{2}}| + |(\epsilon, i Q_{x})_{L^{2}}| \lesssim \| e^{i \gamma_{0}} e^{ix \xi_{0}} \lambda_{0}^{1/2} u(\lambda_{0} x + x_{0}) - Q(x) \|_{L^{2}}.
\end{equation}
As in the proof of Theorem $\ref{t2.3}$,
\begin{equation}\label{15.10}
(e^{i \gamma} e^{ix \xi} \lambda^{1/2} u(\lambda x + \tilde{x}) - Q(x), f)_{L^{2}}
\end{equation}
is $C^{1}$ as a function of $\gamma$, $\lambda$, $\tilde{x}$, and $\xi$, when
\begin{equation}\label{15.11}
f \in \{ Q^{3}, i Q^{3}, Q_{x}, i Q_{x} \}.
\end{equation}
Indeed, by H{\"o}lder's inequality and the $L^{2}$-invariance of the scaling symmetry,
\begin{equation}\label{15.12}
\frac{\partial}{\partial \gamma} ( e^{i \gamma} e^{ix \xi} \lambda^{1/2} u(\lambda x + \tilde{x}) - Q(x), f )_{L^{2}} = ( i e^{i \gamma} e^{ix \xi} \lambda^{1/2} u(\lambda x + \tilde{x}), f )_{L^{2}} \lesssim \| u \|_{L^{2}} \| f \|_{L^{2}}.
\end{equation}
Next,
\begin{equation}\label{15.13}
\frac{\partial}{\partial \xi} ( e^{i \gamma} e^{ix \xi} \lambda^{1/2} u(\lambda x + \tilde{x}) - Q(x), f )_{L^{2}} = ( ix e^{i \gamma} e^{ix \xi} \lambda^{1/2} u(\lambda x + \tilde{x}), f )_{L^{2}} \lesssim \| u \|_{L^{2}} \| xf \|_{L^{2}}.
\end{equation}
Since $Q$ and all its derivatives are rapidly decreasing, $xf \in L^{2}$, and $(\ref{15.13})$ is well-defined.

Next, integrating by parts,
\begin{equation}\label{15.14}
\aligned
\frac{\partial}{\partial \lambda} ( e^{i \gamma} e^{ix \xi} \lambda^{1/2} u(\lambda x + \tilde{x}) - Q(x), f )_{L^{2}} = ( \frac{1}{2 \lambda} e^{i \gamma} e^{ix \xi} \lambda^{1/2} u(\lambda x + \tilde{x}) + x e^{i \gamma} e^{ix \xi} \lambda^{1/2} u_{x}(\lambda x + \tilde{x}), f )_{L^{2}} \\
= \frac{1}{2 \lambda} ( e^{i \gamma} e^{ix \xi} \lambda^{1/2} u(\lambda x + \tilde{x}), f)_{L^{2}} - \frac{1}{\lambda} (e^{i \gamma} e^{ix \xi} \lambda^{1/2} u(\lambda x + \tilde{x}), f )_{L^{2}} - \frac{\xi}{\lambda} (i e^{i \gamma} e^{ix \xi} \lambda^{1/2} u(\lambda x + \tilde{x}), x f )_{L^{2}} \\ 
- \frac{1}{\lambda} (e^{i \gamma} e^{ix \xi} \lambda^{1/2} u(\lambda x + \tilde{x}), x f_{x} )_{L^{2}}  \lesssim \frac{1}{\lambda} \| u \|_{L^{2}} \| f \|_{L^{2}} + \frac{1}{\lambda} \| u \|_{L^{2}} \| x f_{x} \|_{L^{2}} + \frac{|\xi|}{\lambda} \| u \|_{L^{2}} \| x f \|_{L^{2}}.
\endaligned
\end{equation}
Similarly,
\begin{equation}\label{15.15}
\aligned
\frac{\partial}{\partial \tilde{x}} ( e^{i \gamma} e^{ix \xi} \lambda^{1/2} u(\lambda x + \tilde{x}) - Q(x), f )_{L^{2}} = ( e^{i \gamma} e^{ix \xi} \lambda^{1/2} u_{x}(\lambda x + \tilde{x}), f )_{L^{2}}
=  -\frac{1}{\lambda} (i \xi e^{i \gamma} e^{ix \xi} \lambda^{1/2} u(\lambda x + \tilde{x}), f)_{L^{2}} \\ - \frac{1}{\lambda} (e^{i \gamma} e^{ix \xi} \lambda^{1/2} u(\lambda x + \tilde{x}), f_{x} )_{L^{2}}  \lesssim \frac{1}{\lambda} \| u \|_{L^{2}} \| f \|_{L^{2}} + \frac{|\xi|}{\lambda} \| u \|_{L^{2}} \| f_{x} \|_{L^{2}}.
\endaligned
\end{equation}
Similar calculations also prove uniform bounds on the Hessians of $(\ref{15.10})$.

Suppose $\lambda_{0} = 1$, $\gamma_{0} = 0$, $x_{0} = 0$, and $\xi_{0} = 0$. Compute
\begin{equation}\label{15.17}
\aligned
\frac{\partial}{\partial \lambda} ( e^{i \gamma} e^{ix \xi} \lambda^{1/2} u(\lambda x + \tilde{x}) - Q(x), Q^{3} )_{L^{2}}|_{\lambda = 1, \gamma = 0, \tilde{x} = 0, \xi = 0, u = Q} = ( \frac{Q}{2} + x Q_{x}, Q^{3} )_{L^{2}} = \frac{1}{4} \| Q \|_{L^{4}}^{4}, \\
\frac{\partial}{\partial \lambda} ( e^{i \gamma} e^{ix \xi} \lambda^{1/2} u(\lambda x + \tilde{x}) - Q(x), i Q^{3} )_{L^{2}}|_{\lambda = 1, \gamma = 0, \tilde{x} = 0, \xi = 0, u = Q} = ( \frac{Q}{2} + x Q_{x}, i Q^{3} )_{L^{2}} = 0, \\
\frac{\partial}{\partial \lambda} ( e^{i \gamma} e^{ix \xi} \lambda^{1/2} u(\lambda x + \tilde{x}) - Q(x), Q^{3} )_{L^{2}}|_{\lambda = 1, \gamma = 0, \tilde{x} = 0, \xi = 0, u = Q} = ( \frac{Q}{2} + x Q_{x}, Q_{x} )_{L^{2}} = 0, \\
\frac{\partial}{\partial \lambda} ( e^{i \gamma} e^{ix \xi} \lambda^{1/2} u(\lambda x + \tilde{x}) - Q(x), Q^{3} )_{L^{2}}|_{\lambda = 1, \gamma = 0, \tilde{x} = 0, \xi = 0, u = Q} = ( \frac{Q}{2} + x Q_{x}, i Q_{x} )_{L^{2}} = 0.
\endaligned
\end{equation}

\begin{equation}\label{15.16}
\aligned
\frac{\partial}{\partial \gamma} (  e^{i \gamma} e^{ix \xi} \lambda^{1/2} u(\lambda x + \tilde{x}) - Q(x), Q^{3} )_{L^{2}}|_{\lambda = 1, \gamma = 0, \tilde{x} = 0, \xi = 0, u = Q} = ( i Q, Q^{3} )_{L^{2}} = 0, \\
\frac{\partial}{\partial \gamma} (  e^{i \gamma} e^{ix \xi} \lambda^{1/2} u(\lambda x + \tilde{x}) - Q(x), i Q^{3} )_{L^{2}}|_{\lambda = 1, \gamma = 0, \tilde{x} = 0, \xi = 0, u = Q} = ( i Q, i Q^{3} )_{L^{2}} = \| Q \|_{L^{4}}^{4}, \\
\frac{\partial}{\partial \gamma} (  e^{i \gamma} e^{ix \xi} \lambda^{1/2} u(\lambda x + \tilde{x}) - Q(x), Q^{3} )_{L^{2}}|_{\lambda = 1, \gamma = 0, \tilde{x} = 0, \xi = 0, u = Q} = ( i Q, Q_{x} )_{L^{2}} = 0, \\
\frac{\partial}{\partial \gamma} (  e^{i \gamma} e^{ix \xi} \lambda^{1/2} u(\lambda x + \tilde{x}) - Q(x), Q^{3} )_{L^{2}}|_{\lambda = 1, \gamma = 0, \tilde{x} = 0, \xi = 0, u = Q} = ( i Q, i Q_{x} )_{L^{2}} = 0.
\endaligned
\end{equation}

\begin{equation}\label{15.18}
\aligned
\frac{\partial}{\partial \tilde{x}} ( e^{i \gamma} e^{ix \xi} \lambda^{1/2} u(\lambda x + \tilde{x}) - Q(x), Q^{3} )_{L^{2}}|_{\lambda = 1, \gamma = 0, \tilde{x} = 0, \xi = 0, u = Q} = ( Q_{x}, Q^{3} )_{L^{2}} = 0, \\
\frac{\partial}{\partial \tilde{x}} ( e^{i \gamma} e^{ix \xi} \lambda^{1/2} u(\lambda x + \tilde{x}) - Q(x), i Q^{3} )_{L^{2}}|_{\lambda = 1, \gamma = 0, \tilde{x} = 0, \xi = 0, u = Q} = ( Q_{x}, i Q^{3} )_{L^{2}} = 0, \\
\frac{\partial}{\partial \tilde{x}} ( e^{i \gamma} e^{ix \xi} \lambda^{1/2} u(\lambda x + \tilde{x}) - Q(x), Q^{3} )_{L^{2}}|_{\lambda = 1, \gamma = 0, \tilde{x} = 0, \xi = 0, u = Q} = ( Q_{x}, Q_{x} )_{L^{2}} = \| Q_{x} \|_{L^{2}}^{2}, \\
\frac{\partial}{\partial \tilde{x}} ( e^{i \gamma} e^{ix \xi} \lambda^{1/2} u(\lambda x + \tilde{x}) - Q(x), Q^{3} )_{L^{2}}|_{\lambda = 1, \gamma = 0, \tilde{x} = 0, \xi = 0, u = Q} = ( Q_{x}, i Q_{x} )_{L^{2}} = 0.
\endaligned
\end{equation}

\begin{equation}\label{15.19}
\aligned
\frac{\partial}{\partial \xi} (  e^{i \gamma} e^{ix \xi} \lambda^{1/2} u(\lambda x + \tilde{x}) - Q(x), Q^{3} )_{L^{2}}|_{\lambda = 1, \gamma = 0, \tilde{x} = 0, \xi = 0, u = Q} = ( ix Q, Q^{3} )_{L^{2}} = 0, \\
\frac{\partial}{\partial \xi} (  e^{i \gamma} e^{ix \xi} \lambda^{1/2} u(\lambda x + \tilde{x}) - Q(x), i Q^{3} )_{L^{2}}|_{\lambda = 1, \gamma = 0, \tilde{x} = 0, \xi = 0, u = Q} = ( i x Q, i Q^{3} )_{L^{2}} = 0, \\
\frac{\partial}{\partial \xi} (  e^{i \gamma} e^{ix \xi} \lambda^{1/2} u(\lambda x + \tilde{x}) - Q(x), Q_{x} )_{L^{2}}|_{\lambda = 1, \gamma = 0, \tilde{x} = 0, \xi = 0, u = Q} = ( i x Q, Q_{x} )_{L^{2}} = 0, \\
\frac{\partial}{\partial \xi} (  e^{i \gamma} e^{ix \xi} \lambda^{1/2} u(\lambda x + \tilde{x}) - Q(x), Q^{3} )_{L^{2}}|_{\lambda = 1, \gamma = 0, \tilde{x} = 0, \xi = 0, u = Q} = ( i x Q, i Q_{x} )_{L^{2}} = -\frac{1}{2} \| Q \|_{L^{2}}^{2}.
\endaligned
\end{equation}

Therefore, by the inverse function theorem, if $\lambda_{0} = 1$, $\gamma_{0} = 0$, $\xi_{0} = 0$, $x_{0} = 0$, there exists $\lambda > 0$, $\gamma \in \mathbb{R}$, $\xi \in \mathbb{R}$, $\tilde{x} \in \mathbb{R}$, satisfying
\begin{equation}\label{15.20}
\| \epsilon \|_{L^{2}} + |\lambda - 1| + |\gamma| + |\xi| + |\tilde{x}| \lesssim \| e^{i \gamma_{0}} e^{ix \xi_{0}} \lambda_{0}^{1/2} u(\lambda_{0} x + x_{0}) - Q \|_{L^{2}}.
\end{equation}
As in $(\ref{2.29.1})$ and $(\ref{2.29.2})$, $\lambda > 0$, $\gamma \in \mathbb{R}$, and $\tilde{x} \in \mathbb{R}$ are unique, and $\gamma \in \mathbb{R}$ is unique in $\mathbb{R} / 2 \pi n$.

For general $\lambda_{0} > 0$, $x_{0} \in \mathbb{R}$, $\xi_{0} \in \mathbb{R}$, and $\gamma_{0} \in \mathbb{R}$, combining $(\ref{15.20})$ with symmetries of $(\ref{1.1})$,
\begin{equation}\label{15.21}
\| \epsilon \|_{L^{2}} + |\frac{\lambda}{\lambda_{0}} - 1| + |\gamma - \gamma_{0} - \xi_{0} (\tilde{x} - x_{0})| + |\xi - \frac{\lambda}{\lambda_{0}} \xi_{0}| + |\frac{\tilde{x} - x_{0}}{\lambda_{0}}| \lesssim \| e^{i \gamma_{0}} e^{ix \xi_{0}} \lambda_{0}^{1/2} u(\lambda_{0} x + x_{0}) - Q \|_{L^{2}}.
\end{equation}
\end{proof}

As in Theorem $\ref{t7.2}$, it is possible to show that $\lambda(t)$, $\gamma(t)$, $x(t)$, and $\xi(t)$ are continuous functions on $[0, \sup(I))$, and are differentiable almost everywhere on $[0, \sup(I))$. Let $s(t)$ be as in $(\ref{7.5})$. Since $s : [0, \sup(I)) \rightarrow [0, \infty)$ is monotone, the function is invertible, $t(s) : [0, \infty) \rightarrow [0, \sup(I))$. Letting
\begin{equation}\label{15.22}
\gamma(s) = \gamma(t(s)), \qquad \lambda(s) = \lambda(t(s)), \qquad x(s) = x(t(s)), \qquad \xi(s) = \xi(t(s)),
\end{equation}
and letting
\begin{equation}\label{15.23}
\epsilon(s, x) = e^{i \gamma(s)} e^{ix \xi(s)} \lambda(s)^{1/2} u(t(s), \lambda(x) x + x(s)) - Q(x),
\end{equation}
we can compute
\begin{equation}\label{15.24}
\aligned
\epsilon_{s} = i \gamma_{s} (Q + \epsilon) + i \xi_{s} x (Q + \epsilon) + \frac{\lambda_{s}}{\lambda} (\frac{1}{2} (Q + \epsilon) + x (Q + \epsilon)_{x}) - i \frac{\lambda_{s}}{\lambda} \xi(s) x (Q + \epsilon) \\
+ \frac{x_{s}}{\lambda} (Q + \epsilon)_{x} - i \frac{x_{s}}{\lambda} \xi(s) (Q + \epsilon) + i (Q + \epsilon)_{xx} + 2 \xi(s) (Q + \epsilon)_{x} - i \xi(s)^{2} (Q + \epsilon) + i |Q + \epsilon|^{4} (Q + \epsilon).
\endaligned
\end{equation}
Taking $f \in \{ Q^{3}, i Q^{3}, Q_{x}, i Q_{x} \}$,
\begin{equation}\label{15.25}
\frac{d}{ds} (\epsilon, f)_{L^{2}} = (\epsilon_{s}, f)_{L^{2}} = 0.
\end{equation}
Using the fact that $f$ belongs to the span of $\{ Q^{3}, i Q^{3}, Q_{x}, i Q_{x} \}$ if and only if $if$ belongs to the span of $\{ Q^{3}, i Q^{3}, Q_{x}, i Q_{x} \}$ as a real vector space, compute
\begin{equation}\label{15.26}
(i \gamma_{s} (Q + \epsilon), f)_{L^{2}} = (i \gamma_{s} Q, f)_{L^{2}} = 0, \qquad \text{if} \qquad f = Q^{3}, Q_{x}, i Q_{x}, \qquad \text{and} \qquad (i \gamma_{s} (Q + \epsilon), i Q^{3}) = \gamma_{s} \| Q \|_{L^{4}}^{4}.
\end{equation}
\begin{equation}\label{15.27}
\aligned
(\xi_{s} - \frac{\lambda_{s}}{\lambda} \xi(s)) (i x (Q + \epsilon), f)_{L^{2}} = -\frac{1}{2} (\xi_{s}(s) - \frac{\lambda_{s}}{\lambda} \xi(s)) \| Q \|_{L^{2}}^{2} + O(|\xi_{s}(s) - \frac{\lambda_{s}}{\lambda} \xi(s)| \| \epsilon \|_{L^{2}}), \qquad \text{if} \qquad f = i Q_{x}, \\ (\xi_{s} - \frac{\lambda_{s}}{\lambda} \xi(s)) (i x (Q + \epsilon), f)_{L^{2}} = O(|\xi_{s}(s) - \frac{\lambda_{s}}{\lambda} \xi(s)| \| \epsilon \|_{L^{2}}), \qquad \text{if} \qquad f \in \{ Q^{3}, Q_{x}, i Q^{3} \}.
\endaligned
\end{equation}

\begin{equation}\label{15.29}
\aligned
(- i \frac{x_{s}}{\lambda} \xi(s) (Q + \epsilon), f)_{L^{2}} = (-i \frac{x_{s}}{\lambda} \xi(s) Q, f)_{L^{2}} = 0, \qquad \text{if} \qquad f = Q^{3}, Q_{x}, i Q_{x}, \\ (-i \frac{x_{s}}{\lambda} \xi(s), Q, i Q^{3})_{L^{2}} = -\frac{x_{s}}{\lambda} \xi(s) \| Q \|_{L^{4}}^{4}.
\endaligned
\end{equation}

\begin{equation}\label{15.28}
(i \xi(s)^{2} (Q + \epsilon), f)_{L^{2}} = (i \xi(s)^{2} Q, f)_{L^{2}} = 0, \qquad \text{if} \qquad f = Q^{3}, Q_{x}, i Q_{x}, \qquad (i \xi(s)^{2} Q, i Q^{3}) = \xi(s)^{2} \| Q \|_{L^{4}}^{4}.
\end{equation}

\begin{equation}\label{15.30}
\aligned
\frac{\lambda_{s}}{\lambda} (\frac{1}{2} (Q + \epsilon) + x (Q + \epsilon)_{x}, Q^{3})_{L^{2}} = \frac{\lambda_{s}}{4 \lambda} \| Q \|_{L^{4}}^{4} + O(|\frac{\lambda_{s}}{\lambda}| \| \epsilon \|_{L^{2}}) \\  \frac{\lambda_{s}}{\lambda} (\frac{1}{2} (Q + \epsilon) + x (Q + \epsilon)_{x}, f)_{L^{2}} = O(|\frac{\lambda_{s}}{\lambda}| \| \epsilon \|_{L^{2}}), \qquad \text{if} \qquad f = Q_{x}, i Q^{3}, i Q_{x}.
\endaligned
\end{equation}

\begin{equation}\label{15.31}
\aligned
(\frac{x_{s}}{\lambda} + 2 \xi(s)) ((Q + \epsilon)_{x}, Q_{x})_{L^{2}} = (\frac{x_{s}}{\lambda} + 2 \xi(s)) \| Q_{x} \|_{L^{2}}^{2} + O(|\frac{x_{s}}{\lambda} + 2 \xi(s)| \| \epsilon \|_{L^{2}}), \\
(\frac{x_{s}}{\lambda} + 2 \xi(s)) ((Q + \epsilon)_{x}, f)_{L^{2}} = O(|\frac{x_{s}}{\lambda} + 2 \xi(s)| \| \epsilon \|_{L^{2}}), \qquad \text{if} \qquad f = Q^{3}, i Q^{3}, i Q_{x}.
\endaligned
\end{equation}
Finally, taking $\epsilon = \epsilon_{1} + i \epsilon_{2}$,
\begin{equation}\label{15.32}
\aligned
( i (Q + \epsilon)_{xx}   + i |Q + \epsilon|^{4} (Q + \epsilon), f)_{L^{2}} = (i Q, f)_{L^{2}} + (i \mathcal L \epsilon_{1} - \mathcal L_{-} \epsilon_{2}, f)_{L^{2}} + O((|\epsilon|^{2}(|\epsilon|^{3} + |Q|^{3}), f)_{L^{2}},
\endaligned
\end{equation}
where $\mathcal L$ and $\mathcal L_{-}$ are given by $(\ref{2.39})$. Since $\mathcal L$ and $\mathcal L_{-}$ are self-adjoint operators, $(\epsilon_{1}, Q^{3})_{L^{2}} = (\epsilon_{2}, Q^{3})_{L^{2}} = 0$, and $\mathcal L Q_{x} = 0$,
\begin{equation}\label{15.33}
\aligned
( i (Q + \epsilon)_{xx}   + i |Q + \epsilon|^{4} (Q + \epsilon), f)_{L^{2}} = \| Q \|_{L^{4}}^{4} + O((|\epsilon|^{2}(|\epsilon|^{3} + |Q|^{3}), f)_{L^{2}}, \qquad \text{if} \qquad f = i Q^{3}, \\ 
( i (Q + \epsilon)_{xx}   + i |Q + \epsilon|^{4} (Q + \epsilon), f)_{L^{2}} = O((|\epsilon|^{2}(|\epsilon|^{3} + |Q|^{3}), f)_{L^{2}}, \qquad \text{if} \qquad f = i Q_{x}, \\ 
( i (Q + \epsilon)_{xx}   + i |Q + \epsilon|^{4} (Q + \epsilon), f)_{L^{2}} = -(\epsilon_{2}, \mathcal L_{-} Q^{3})_{L^{2}} + O((|\epsilon|^{2}(|\epsilon|^{3} + |Q|^{3}), f)_{L^{2}}, \qquad \text{if} \qquad f = Q^{3}, \\ 
( i (Q + \epsilon)_{xx}   + i |Q + \epsilon|^{4} (Q + \epsilon), f)_{L^{2}} = -(\epsilon_{2}, \mathcal L_{-} Q_{x})_{L^{2}} + O((|\epsilon|^{2}(|\epsilon|^{3} + |Q|^{3}), f)_{L^{2}}, \qquad \text{if} \qquad f = Q_{x}.
\endaligned
\end{equation}

Combining $(\ref{15.26})$--$(\ref{15.33})$, we have proved
\begin{equation}\label{15.34}
\aligned
(\gamma_{s} + 1 - \frac{x_{s}}{\lambda} \xi(s) - \xi(s)^{2}) \| Q \|_{L^{4}}^{4} + O(|\xi_{s} - \frac{\lambda_{s}}{\lambda} \xi(s)| \| \epsilon \|_{L^{2}}) + O(|\frac{\lambda_{s}}{\lambda}| \| \epsilon \|_{L^{2}}) \\ + O(|\frac{x_{s}}{\lambda} + 2 \xi(s)| \| \epsilon \|_{L^{2}}) + O(\| \epsilon \|_{L^{2}}^{2} (\| Q \|_{L^{\infty}}^{3} + \| \epsilon \|_{L^{\infty}}^{3})) = 0,
\endaligned
\end{equation}
\begin{equation}\label{15.35}
\aligned
-\frac{1}{2}(\xi_{s} - \frac{\lambda_{s}}{\lambda} \xi(s)) \| Q \|_{L^{2}}^{2} + O(|\xi_{s} - \frac{\lambda_{s}}{\lambda} \xi(s)| \| \epsilon \|_{L^{2}}) + O(|\frac{\lambda_{s}}{\lambda}| \| \epsilon \|_{L^{2}}) \\ + O(|\frac{x_{s}}{\lambda} + 2 \xi(s)| \| \epsilon \|_{L^{2}}) + O(\| \epsilon \|_{L^{2}}^{2} (\| Q \|_{L^{\infty}}^{3} + \| \epsilon \|_{L^{\infty}}^{3})) = 0,
\endaligned
\end{equation}
\begin{equation}\label{15.36}
\aligned
\frac{\lambda_{s}}{4 \lambda} \| Q \|_{L^{4}}^{4} - (\epsilon_{2}, \mathcal L_{-} Q^{3})_{L^{2}} + O(|\xi_{s} - \frac{\lambda_{s}}{\lambda} \xi(s)| \| \epsilon \|_{L^{2}}) + O(|\frac{\lambda_{s}}{\lambda}| \| \epsilon \|_{L^{2}}) \\ + O(|\frac{x_{s}}{\lambda} + 2 \xi(s) | \| \epsilon \|_{L^{2}}) + O(\| \epsilon \|_{L^{2}}^{2} (\| Q \|_{L^{\infty}}^{3} + \| \epsilon \|_{L^{\infty}}^{3})) = 0,
\endaligned
\end{equation}
and
\begin{equation}\label{15.37}
\aligned
(\frac{x_{s}}{\lambda} + 2 \xi) \| Q_{x} \|_{L^{2}}^{2} - (\epsilon_{2}, \mathcal L_{-} Q_{x})_{L^{2}} + O(|\xi_{s} - \frac{\lambda_{s}}{\lambda} \xi(s)| \| \epsilon \|_{L^{2}}) + O(|\frac{\lambda_{s}}{\lambda}| \| \epsilon \|_{L^{2}}) \\ + O(|\frac{x_{s}}{\lambda} + 2 \xi(s) | \| \epsilon \|_{L^{2}}) + O(\| \epsilon \|_{L^{2}}^{2} (\| Q \|_{L^{\infty}}^{3} + \| \epsilon \|_{L^{\infty}}^{3})) = 0.
\endaligned
\end{equation}
Using the same analysis as in $(\ref{7.14})$--$(\ref{7.15.2})$, for any $a \in \mathbb{Z}_{\geq 0}$,
\begin{equation}\label{15.38}
\int_{a}^{a + 1} |\gamma_{s} + 1 - \frac{x_{s}}{\lambda} \xi(s) - \xi(s)^{2} | ds \lesssim \int_{a}^{a + 1} \| \epsilon(s) \|_{L^{2}}^{2} ds,
\end{equation}
\begin{equation}\label{15.39}
\int_{a}^{a + 1} |\xi_{s} - \frac{\lambda_{s}}{\lambda} \xi(s)| ds \lesssim \int_{a}^{a + 1} \| \epsilon(s) \|_{L^{2}}^{2} ds,
\end{equation}
\begin{equation}\label{15.40}
\int_{a}^{a + 1} |\frac{\lambda_{s}}{\lambda}| ds \lesssim \int_{a}^{a + 1} \| \epsilon(s) \|_{L^{2}} ds,
\end{equation}
and
\begin{equation}\label{15.41}
\int_{a}^{a + 1} |\frac{x_{s}}{\lambda} + 2 \xi| ds \lesssim \int_{a}^{a + 1} \| \epsilon(s) \|_{L^{2}} ds.
\end{equation}

\subsection{A long time Strichartz estimate in the nonsymmetric case}
The symmetry $(\ref{15.1})$ does not impact the long time Strichartz estimates in Theorems $\ref{t6.2}$--$\ref{t6.4}$ at all. However, the Galilean symmetry $(\ref{15.2})$ does, since it involves a translation in frequency, and therefore will impact estimates of $u$ under frequency cutoffs. Nevertheless, it is possible to prove a modification of Theorem $\ref{t6.4}$ using virtually the same arguments.

\begin{theorem}\label{t15.4}
Suppose $\lambda(t)$, $x(t)$, $\xi(t)$, and $\gamma(t)$ are as in $(\ref{15.7})$. Also suppose that on the interval $J = [a, b]$,
\begin{equation}\label{15.42}
\lambda(t) \geq \frac{1}{\eta_{1}}, \qquad \int_{J} \lambda(t)^{-2} dt = T, \qquad \text{and} \qquad \eta_{1}^{-2} T = 2^{3k}.
\end{equation}
Furthermore, suppose that
\begin{equation}\label{15.43}
\frac{|\xi(t)|}{\lambda(t)} \leq \eta_{0}, \qquad \text{for all} \qquad t \in [a, b].
\end{equation}
Then
\begin{equation}\label{15.44}
\| P_{\geq k} u \|_{U_{\Delta}^{2}([a, b] \times \mathbb{R})} \lesssim T^{-10} + (\frac{1}{T} \int_{a}^{b} \| \epsilon(t) \|_{L^{2}}^{2} \lambda(t)^{-2} dt)^{1/2}.
\end{equation}
\end{theorem}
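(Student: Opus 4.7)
The plan is to replicate the proof of Theorem \ref{t6.4} (and through it, Theorems \ref{t6.2} and \ref{t6.3}) essentially verbatim, using the $X$ and $X_{j}$ norms of Definition \ref{d6.1} together with the induction-on-frequency argument. First I would establish the base case $\| u \|_{X_{0}} \lesssim 1$ from local well-posedness, noting that on unit-length subintervals the Galilean shift is irrelevant. Then I would argue the inductive step via Duhamel's formula $(\ref{6.10})$, H\"older and the bilinear estimate $(\ref{6.28})$, exactly as in the symmetric case. The only places the Galilean parameter $\xi(t)$ can enter are in the smallness bounds $\| P_{\geq i} u(t) \|_{L^{2}} \lesssim \eta_{0}$ and $2^{-i/2} \| P_{\leq i-6} u \|_{L_{t,x}^{\infty}} \lesssim \eta_{0}$; cf. $(\ref{6.11})$ and $(\ref{6.20})$.

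The key observation is that from $(\ref{15.7})$ one computes
\begin{equation*}
\hat{u}(t,\eta) = e^{-i \gamma(t)} e^{-i \tilde{x}(t) \eta} \lambda(t)^{1/2} \bigl[ \hat{Q}(\xi(t) + \lambda(t) \eta) + \hat{\epsilon}(t, \xi(t) + \lambda(t) \eta) \bigr],
\end{equation*}
so after the standard rescaling $\lambda(t) \mapsto 1/\eta_{1}$, the Fourier mass of $u(t,\cdot)$ is concentrated on $|\eta + \xi(t)/\lambda(t)| \lesssim 1$. Since $|\xi(t)/\lambda(t)| \leq \eta_{0}$ by hypothesis $(\ref{15.43})$ and $\eta_{0} \ll \eta_{1}^{-1/2}$ by $(\ref{6.1})$, the analogue of $(\ref{6.2.1})$, namely $\int_{|\eta| \geq \eta_{1}^{-1/2}} |\hat{u}(t,\eta)|^{2} d\eta \lesssim \eta_{0}^{2}$, still follows from rapid decay of $\hat{Q}$ together with $\| \epsilon \|_{L^{2}} \leq \eta_{0}$. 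The low-frequency Bernstein-type estimate generalizes in the same way: for $i$ satisfying $2^{i-6} \gtrsim 2 \eta_{0}$ the projection $P_{\leq i-6}$ already captures the modulated soliton profile, so $(\ref{6.16.1})$--$(\ref{6.20})$ go through unchanged.

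For the bilinear estimate $(\ref{6.28})$, the interaction Morawetz argument with potential $(\ref{6.33})$ tests $u_{\leq i-3}$ against $v = e^{it\Delta} v_{0}$ where $\hat{v}_{0}$ is supported at frequency scale $2^{j} \geq 2^{i}$. The cancellations in $(\ref{6.35})$--$(\ref{6.47})$ exploit only the frequency separation between $P_{\geq i-3} u$ and $P_{\leq i-3} u$, not the absolute location of the mass in Fourier space; since the Galilean shift by $\xi/\lambda \leq \eta_{0} \ll 2^{i-3}$ is much smaller than the Littlewood-Paley spacing, the frequency support of $\mathcal{N}_{i-3}^{(1)}$ remains where the proof needs it and the $2^{-i}$ gain from integration by parts in $(\ref{6.47})$ persists. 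Finally, the second induction upgrading Theorem \ref{t6.2} to Theorem \ref{t6.3} uses the bound $\| P_{\geq k/2 + 3} u(t) \|_{L^{2}} \lesssim \| \epsilon(t) \|_{L^{2}} + T^{-10}$; in the nonsymmetric case this is replaced by
\begin{equation*}
\| P_{\geq k/2 + 3} u(t) \|_{L^{2}} \leq \| \epsilon(t) \|_{L^{2}} + \bigl\| P_{\geq k/2 + 3} \bigl( e^{-ix \xi(t)/\lambda(t)} \lambda(t)^{-1/2} Q(\cdot/\lambda(t)) \bigr) \bigr\|_{L^{2}} + O(T^{-10}),
\end{equation*}
and because the modulation is by a frequency of size at most $\eta_{0}$ while the projection threshold is $2^{k/2 + 3} \gg 1$, the middle term is still $O(T^{-10})$ by rapid decay of $\hat{Q}$. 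Iterating $\lfloor k/6 \rfloor$ times as in Theorem \ref{t6.3} and then rescaling as in Theorem \ref{t6.4} yields $(\ref{15.44})$.

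The main obstacle is bookkeeping: one must verify that at every single Littlewood-Paley cutoff that appears in the symmetric proof, the Galilean shift $\xi(t)/\lambda(t)$ lies well below the cutoff scale, so that no frequency-localized quantity is corrupted by the shift. This is exactly what hypothesis $(\ref{15.43})$ ensures. Were $|\xi(t)/\lambda(t)|$ allowed to grow to $\eta_{1}^{-1/2}$ or larger, the soliton's Fourier support would overlap with the high-frequency region and the basic smallness $\| P_{\geq \eta_{1}^{-1/2}} u(t) \|_{L^{2}} \lesssim \eta_{0}$ would fail, destroying every step of the induction. Under $(\ref{15.43})$, however, the entire symmetric argument transfers with only cosmetic changes.
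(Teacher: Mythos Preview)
Your proposal is correct and follows essentially the same approach as the paper. The paper's own proof is extremely terse: it writes out the decomposition $(\ref{15.45})$, observes that the key smallness bound $\| P_{>0} u \|_{L_{t}^{\infty} L_{x}^{2}}^{2} \leq 4\eta_{0}^{2}$ (the analogue of $(\ref{6.2.1})$) survives because the Galilean shift $|\xi(t)/\lambda(t)| \leq \eta_{0}$ is far below the relevant frequency thresholds, and then simply asserts that the induction-on-frequency arguments of Theorems \ref{t6.2}--\ref{t6.4} go through unchanged. Your write-up is in fact more careful than the paper's, spelling out explicitly which estimates---$(\ref{6.11})$, $(\ref{6.20})$, the bilinear bound $(\ref{6.28})$, and the high-frequency tail bound $(\ref{6.58})$---are the ones sensitive to the Galilean modulation and why each survives under $(\ref{15.43})$.
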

\begin{proof}
Observe that by $(\ref{15.7})$,
\begin{equation}\label{15.45}
u(t,x) = e^{-i \gamma(t)} e^{-ix \frac{\xi(t)}{\lambda(t)}} \lambda(t)^{-1/2} Q(\frac{x - x(t)}{\lambda(t)}) + e^{-i \gamma(t)} e^{-i x \frac{\xi(t)}{\lambda(t)}} \lambda(t)^{-1/2} \epsilon(t, \frac{x - x(t)}{\lambda(t)}).
\end{equation}
Then by $(\ref{6.2})$, $(\ref{6.3})$, and $(\ref{15.45})$,
\begin{equation}\label{15.46}
\| P_{> 0} u \|_{L_{t}^{\infty} L_{x}^{2}([a, b] \times \mathbb{R})}^{2} \leq 4 \eta_{0}^{2}.
\end{equation}
Applying the induction on frequency arguments in Theorems $\ref{t6.2}$--$\ref{t6.4}$ gives the same results.
\end{proof}

\subsection{Almost conservation of energy for a non-symmetric solution}
It is possible to use the long time Strichartz estimates in Theorem $\ref{t15.4}$ to prove an almost conservation of energy for a non-symmetric solution.
\begin{theorem}\label{t15.5}
Let $J = [a, b]$ be an interval such that
\begin{equation}\label{15.48}
\lambda(t) \geq \frac{1}{\eta_{1}}, \qquad \frac{|\xi(t)|}{\lambda(t)} \leq \eta_{0}, \qquad \text{for all} \qquad t \in J, \qquad \int_{J} \lambda(t)^{-2} dt = T, \qquad \eta_{1}^{-2} T = 2^{3k}.
\end{equation}
Then,
\begin{equation}\label{15.49}
\sup_{t \in J} E(P_{\leq k + 9} u(t)) \lesssim \frac{2^{2k}}{T} \int_{J} \| \epsilon(t) \|_{L^{2}}^{2} \lambda(t)^{-2} dt + (\sup_{t \in J} \frac{\xi(t)}{\lambda(t)})^{2} + 2^{2k} T^{-10}.
\end{equation}
\end{theorem}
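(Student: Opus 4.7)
The proof closely parallels that of Theorem $\ref{t8.3}$, with the main new ingredient being a treatment of the Galilean phase $e^{-ix\xi(t)/\lambda(t)}$ appearing in the non-symmetric decomposition $(\ref{15.45})$. First, the mean value theorem applied to $\|\epsilon\|_{L^2}^2$ against the measure $\lambda(t)^{-2} dt$ produces some $t_0 \in J$ with
$$\|\epsilon(t_0)\|_{L^2}^2 \lesssim \frac{1}{T} \int_J \|\epsilon(t)\|_{L^2}^2 \lambda(t)^{-2} dt.$$

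Next, bound $E(P_{\leq k+9} u(t_0))$ by decomposing as in $(\ref{8.24})$--$(\ref{8.29})$, but with the soliton piece now given by $e^{-i\gamma(t_0)} e^{-ix\xi(t_0)/\lambda(t_0)} \tilde{Q}(x)$ where $\tilde{Q}(x) = \lambda(t_0)^{-1/2} Q((x - x(t_0))/\lambda(t_0))$. The energy is invariant under the phase $e^{-i\gamma}$; for the Galilean phase, a direct calculation using $\partial_x(e^{-ix\xi/\lambda} \tilde{Q}) = -i(\xi/\lambda) e^{-ix\xi/\lambda} \tilde{Q} + e^{-ix\xi/\lambda} \tilde{Q}_x$ and the reality of $\tilde{Q}$ gives
$$E\bigl(e^{-ix\xi(t_0)/\lambda(t_0)} \tilde{Q}\bigr) = \tfrac{1}{2} \bigl(\xi(t_0)/\lambda(t_0)\bigr)^2 \|Q\|_{L^2}^2 + E(\tilde{Q}) = \tfrac{1}{2}\bigl(\xi(t_0)/\lambda(t_0)\bigr)^2 \|Q\|_{L^2}^2,$$
since $E(Q) = 0$ by $(\ref{1.16})$. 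Because $|\xi(t_0)/\lambda(t_0)| \leq \eta_0 \ll 2^{k+9}$ and $\hat{\tilde{Q}}$ is Schwartz, the projection $P_{\leq k+9}$ commutes with multiplication by $e^{-ix\xi(t_0)/\lambda(t_0)}$ on $\tilde{Q}$ up to $O(2^{-30k})$ errors. The cross terms between $\tilde{Q}$ and $\tilde{\epsilon}$ and the quadratic-in-$\epsilon$ terms are controlled exactly as in $(\ref{8.26})$--$(\ref{8.29})$, now using the four orthogonality conditions $(\ref{15.6})$ in place of $(\ref{2.13})$, together with the lower bound $\lambda(t_0) \geq \eta_1^{-1}$, Bernstein's inequality, and the Sobolev embedding. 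This yields
$$E(P_{\leq k+9} u(t_0)) \lesssim 2^{2k} \|\epsilon(t_0)\|_{L^2}^2 + \bigl(\xi(t_0)/\lambda(t_0)\bigr)^2 + 2^{2k} T^{-10}.$$

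For the propagation step, the identity $(\ref{8.30})$ for $\frac{d}{dt} E(P_{\leq k+9} u)$ is purely functional and does not see the symmetry orbit of $u$. The Littlewood--Paley decomposition of $P_{\leq k+9} F(u) - F(P_{\leq k+9} u)$ and the four-case analysis (including the nonlinear-nonlinear pairings $(\ref{8.37.2})$--$(\ref{8.37.8})$) transfer verbatim, since the bilinear input $\|(P_{\geq i} u)(P_{\leq i-3} u)\|_{L^2_{t,x}}$ is now supplied by Theorem $\ref{t15.4}$. That theorem's hypothesis $|\xi(t)|/\lambda(t) \leq \eta_0$ is precisely what guarantees $\|P_{>0} u\|_{L^\infty_t L^2_x} \lesssim \eta_0$, as in $(\ref{15.46})$, which is the only place where the symmetry of $u$ was used in Theorems $\ref{t6.2}$--$\ref{t6.4}$. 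Integrating from $t_0$ to an arbitrary $t \in J$ then produces
$$\bigl|E(P_{\leq k+9} u(t)) - E(P_{\leq k+9} u(t_0))\bigr| \lesssim \frac{2^{2k}}{T} \int_J \|\epsilon(\tau)\|_{L^2}^2 \lambda(\tau)^{-2} d\tau + 2^{2k} T^{-10},$$
and combining this with the initial bound together with $|\xi(t_0)/\lambda(t_0)|^2 \leq (\sup_{t \in J} \xi(t)/\lambda(t))^2$ yields the estimate.

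The main conceptual obstacle is ensuring that the Galilean frequency shift of size $|\xi/\lambda| \leq \eta_0$ is benign for all of the Fourier-support arguments; this is guaranteed because the shift is small relative to both the Littlewood--Paley cutoff scale $2^{k+9}$ (for the initial energy computation and the Bernstein-type estimates on the soliton piece) and the mass-concentration scale $1$ (for the long-time Strichartz input, via $(\ref{15.46})$). Everything else is direct bookkeeping that duplicates the argument of Theorem $\ref{t8.3}$.
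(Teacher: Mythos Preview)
Your proposal is correct and follows essentially the same approach as the paper: decompose the energy at a good time $t_0$ chosen by the mean value theorem, account for the Galilean phase (which contributes the $(\xi/\lambda)^2\|Q\|_{L^2}^2$ term), and then propagate via the argument of Theorem~\ref{t8.3} using the long-time Strichartz estimate of Theorem~\ref{t15.4}. The paper's proof differs only in that it writes out the full energy expansion $(\ref{15.50})$ explicitly, including the mixed terms $-\tfrac{\xi}{\lambda^2}(\nabla\epsilon_1,\epsilon_2)+\tfrac{\xi}{\lambda^2}(\nabla\epsilon_2,\epsilon_1)$ that you absorb implicitly into ``quadratic-in-$\epsilon$ terms''; these are harmless since $|\xi|/\lambda\leq\eta_0$, so your treatment is equivalent.
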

\begin{proof}
Decompose the energy as in Theorem $\ref{t2.4}$. Since $E(Q) = 0$ and $(\epsilon_{2}, Q_{x}) = 0$,
\begin{equation}\label{15.50}
\aligned
E(u) = E(e^{-i \gamma(t)} e^{-i x \frac{\xi(t)}{\lambda(t)}} \lambda(t)^{-1/2} Q(\frac{x - x(t)}{\lambda(t)}) + e^{-i \gamma(t)} e^{-i x \frac{\xi(t)}{\lambda(t)}} \lambda(t)^{-1/2} \epsilon(t, \frac{x - x(t)}{\lambda(t)})) \\
= \frac{1}{2 \lambda(t)^{2}} \| Q_{x} \|_{L^{2}}^{2} + \frac{\xi(t)^{2}}{2 \lambda(t)^{2}} \| Q \|_{L^{2}}^{2} - \frac{1}{6 \lambda(t)^{2}} \| Q \|_{L^{6}}^{6} + \frac{1}{2 \lambda(t)^{2}} \| \epsilon \|_{L^{2}}^{2} - \frac{2 \xi(t)}{\lambda(t)^{2}} (Q_{x}, \epsilon_{2})_{L^{2}} - \frac{\xi(t)^{2}}{2 \lambda(t)^{2}} \| \epsilon \|_{L^{2}}^{2} \\ 
+ \frac{1}{2 \lambda(t)^{2}} \| \nabla \epsilon \|_{L^{2}}^{2} - \frac{\xi(t)}{\lambda(t)^{2}} (\nabla \epsilon_{1}, \epsilon_{2})_{L^{2}} + \frac{\xi(t)}{\lambda(t)^{2}} (\nabla \epsilon_{2}, \epsilon_{1})_{L^{2}} + \frac{\xi(t)^{2}}{2 \lambda(t)^{2}} \| \epsilon \|_{L^{2}}^{2} \\
- \frac{5}{2 \lambda(t)^{2}} \int Q(x)^{4} \epsilon_{1}(t,x)^{2} dx - \frac{1}{2 \lambda(t)^{2}} \int Q(x)^{4} \epsilon_{2}(t,x)^{2} dx + O(\frac{1}{\lambda(t)^{2}} \| \epsilon \|_{L^{3}}^{3} + \frac{1}{\lambda(t)^{2}} \| \epsilon \|_{L^{6}}^{6}) \\
= \frac{\xi(t)^{2}}{2 \lambda(t)^{2}} \| Q \|_{L^{2}}^{2}  + \frac{1}{2 \lambda(t)^{2}} \| \epsilon \|_{L^{2}}^{2}  - \frac{\xi(t)}{\lambda(t)^{2}} (\nabla \epsilon_{1}, \epsilon_{2})_{L^{2}} + \frac{\xi(t)}{\lambda(t)^{2}} (\nabla \epsilon_{2}, \epsilon_{1})_{L^{2}}
\\ + \frac{1}{2 \lambda(t)^{2}} \| \nabla \epsilon \|_{L^{2}}^{2} - \frac{5}{2 \lambda(t)^{2}} \int Q(x)^{4} \epsilon_{1}(t,x)^{2} dx - \frac{1}{2 \lambda(t)^{2}} \int Q(x)^{4} \epsilon_{2}(t,x)^{2} dx + O(\frac{1}{\lambda(t)^{2}} \| \epsilon \|_{L^{3}}^{3} + \frac{1}{\lambda(t)^{2}} \| \epsilon \|_{L^{6}}^{6}).
\endaligned
\end{equation}
Using the bounds on $\frac{|\xi(t)|}{\lambda(t)}$, the fact that $Q$ and all its derivatives are rapidly decreasing, Fourier truncation, and the mean value theorem implies that $(\ref{15.49})$ holds for some $t_{0} \in J$. Then, using the long time Strichartz estimates in Theorem $\ref{t15.4}$ and following the proof of Theorem $\ref{t8.3}$ gives Theorem $\ref{t15.5}$.
\end{proof}

It is also possible to generalize Corollary $\ref{c8.4}$ to the non-symmetric case.
\begin{corollary}\label{c15.6}
If
\begin{equation}
\frac{1}{\eta_{1}} \leq \lambda(t) \leq \frac{1}{\eta_{1}} T^{1/100}, \qquad \text{and} \qquad  \frac{|\xi(t)|}{\lambda(t)} \leq \eta_{0}, \qquad \text{for all} \qquad t \in J,
\end{equation}
and
\begin{equation}
\int_{J} \lambda(t)^{-2} dt = T, \qquad \text{and} \qquad \eta_{1}^{-2} T = 2^{3k},
\end{equation}
then
\begin{equation}\label{15.51}
\sup_{t \in J} \| P_{\leq k + 9} (\frac{e^{-i \gamma(t)} e^{-ix \frac{\xi(t)}{\lambda(t)}}}{\lambda(t)^{1/2}} \epsilon(t, \frac{x - x(t)}{\lambda(t)})) \|_{\dot{H}^{1}}^{2} \lesssim \frac{2^{2k}}{T} \int_{J} \| \epsilon(t) \|_{L^{2}}^{2} \lambda(t)^{-2} dt + (\sup_{t \in J} \frac{\xi(t)^{2}}{\lambda(t)^{2}}) + 2^{2k} T^{-10},
\end{equation}
and
\begin{equation}\label{15.52}
\sup_{t \in J} \| \epsilon(t) \|_{L^{2}}^{2} \lesssim \frac{2^{2k} T^{1/50}}{\eta_{1}^{2} T} \int_{J} \| \epsilon(t) \|_{L^{2}}^{2} \lambda(t)^{-2} dt + \frac{T^{1/50}}{\eta_{1}^{2}} (\sup_{t \in J} \frac{\xi(t)^{2}}{\lambda(t)^{2}}) + 2^{2k} \frac{T^{1/50}}{\eta_{1}^{2}} T^{-10}.
\end{equation}
\end{corollary}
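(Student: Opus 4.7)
The plan is to mimic the short proof of Corollary \ref{c8.4} in the symmetric setting, combining Theorem \ref{t15.5} with a non-symmetric analogue of Theorem \ref{t2.4}, after accounting for the Galilean shift encoded in $\xi(t)/\lambda(t)$. The starting point is Theorem \ref{t15.5}, which yields
\begin{equation*}
\sup_{t \in J} E(P_{\leq k + 9} u(t)) \lesssim \frac{2^{2k}}{T} \int_{J} \| \epsilon(t) \|_{L^{2}}^{2} \lambda(t)^{-2} dt + \sup_{t \in J}\frac{\xi(t)^2}{\lambda(t)^2} + 2^{2k} T^{-10}.
\end{equation*}
To convert this energy bound into the desired $\dot H^1$ bound \eqref{15.51}, I will decompose the energy using the representation \eqref{15.45} and then invoke coercivity at $Q$.

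The second step is to establish the non-symmetric analogue of Theorem \ref{t2.4}: if $\epsilon$ satisfies the four orthogonality conditions \eqref{15.6} from Theorem \ref{t15.3}, $\|\epsilon\|_{L^2}\ll 1$, and $\|Q+\epsilon\|_{L^2}=\|Q\|_{L^2}$, then
\begin{equation*}
E(Q+\epsilon) \gtrsim \|\epsilon_x\|_{L^2}^2 + \|\epsilon\|_{L^2}^2.
\end{equation*}
The argument is the one already used in Theorem \ref{t2.4}, with the only change being that the orthogonality $\epsilon \perp Q_x$ (which in the symmetric case was a consequence of $\epsilon$ being symmetric) is now imposed directly on $\epsilon_1$ and $\epsilon_2$ via \eqref{15.6}. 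These orthogonalities knock out the kernel and the unique negative eigendirection of both $\mathcal L$ and $\mathcal L_-$, so the spectral argument following \eqref{2.40}--\eqref{2.41.1} applies verbatim.

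The third step is the decomposition of the energy. Writing $\tilde Q_\lambda(x) = \lambda(t)^{-1/2} Q((x-x(t))/\lambda(t))$ and $\tilde\epsilon_\lambda$ similarly, a direct computation using the Galilean modulation $e^{-ix\xi(t)/\lambda(t)}$ shows
\begin{equation*}
E(u(t)) \;=\; \frac{\xi(t)^2}{2\lambda(t)^2}\|Q\|_{L^2}^2 \;+\; \frac{1}{\lambda(t)^2} E\!\left(Q + \lambda(t)^{1/2}\tilde\epsilon_\lambda(\lambda(t)\,\cdot)\right) \;+\; O\!\left(\tfrac{|\xi(t)|}{\lambda(t)^2}\|\epsilon\|_{L^2}\|\epsilon\|_{\dot H^1}\right),
\end{equation*}
where the Galilean cross term $\xi(t)/\lambda(t)^2 \cdot (Q_x,\epsilon_2)_{L^2}$ vanishes thanks to the orthogonality $(\epsilon,Q_x)_{L^2}=0$. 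Applying the same decomposition to $P_{\leq k+9} u$ instead of $u$, using Bernstein together with the rapid decay of the derivatives of $Q$ to absorb the projection error into $2^{-30k}$ terms as in \eqref{8.25}--\eqref{8.29}, and then subtracting the Galilean kinetic piece, the coercivity estimate from the second step yields
\begin{equation*}
\|P_{\leq k+9} \tilde\epsilon_\lambda \|_{\dot H^1}^2 \;\lesssim\; \lambda(t)^2 E(P_{\leq k+9} u(t)) \;-\; \xi(t)^2\|Q\|_{L^2}^2 \;+\; O(\ldots),
\end{equation*}
which together with Theorem \ref{t15.5} (whose right-hand side already contains $\sup \xi^2/\lambda^2$) gives \eqref{15.51}.

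Finally, \eqref{15.52} follows from \eqref{15.51} by Bernstein and scaling: since $P_{\leq k+9}\tilde\epsilon_\lambda$ is localized at frequencies $|\xi|\lesssim 2^{k+9}$ and $\lambda(t) \leq T^{1/100}/\eta_1$, we have
\begin{equation*}
\|\epsilon(t)\|_{L^2}^2 \;\lesssim\; 2^{-2k}\|P_{\leq k+9}\tilde\epsilon_\lambda\|_{\dot H^1}^2 \;+\; \|P_{\geq k+9}\epsilon\|_{L^2}^2,
\end{equation*}
with the tail controlled by $T^{-10}$ via the rapid decay of $\hat Q$ and the long time Strichartz estimate of Theorem \ref{t15.4}; the factor $T^{1/50}/\eta_1^2$ in \eqref{15.52} comes from the rescaling $\lambda(t)\leq T^{1/100}/\eta_1$ applied to the inequality valid at the reference scale $\lambda(t)\sim 1/\eta_1$. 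The main obstacle in this proposal is bookkeeping: one must verify that the frequency projection $P_{\leq k+9}$ commutes well enough with the Galilean phase $e^{-ix\xi(t)/\lambda(t)}$ so that the coercivity can be applied to $P_{\leq k+9}\tilde\epsilon_\lambda$ rather than to $\tilde\epsilon_\lambda$ itself, and this is exactly where the hypothesis $|\xi(t)|/\lambda(t)\leq \eta_0$ is used: the induced frequency shift is $O(\eta_0\lambda(t))$, which is much smaller than $2^{k+9}\gtrsim \eta_1^{-2/3}T^{1/3}\lambda(t)$, so the commutator can be absorbed into the error terms along with the orthogonality defects $(P_{\leq k+9}\tilde\epsilon_\lambda,Q_x)$, $(P_{\leq k+9}\tilde\epsilon_\lambda,iQ_x)$, etc.
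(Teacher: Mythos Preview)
Your approach is essentially the one in the paper: start from the energy decomposition already computed as \eqref{15.50} in the proof of Theorem \ref{t15.5}, invoke the coercivity of Theorem \ref{t2.4} extended to the non-symmetric orthogonality conditions \eqref{15.6}, and absorb the Galilean cross terms $-\tfrac{\xi}{\lambda^2}(\nabla\epsilon_1,\epsilon_2)+\tfrac{\xi}{\lambda^2}(\nabla\epsilon_2,\epsilon_1)$ into the $\|\nabla\epsilon\|_{L^2}^2$ piece of the coercivity via Cauchy--Schwarz. The paper carries out this last absorption with the explicit weight $\delta=\|\epsilon\|_{L^2}/\|Q\|_{L^2}$, which makes the leading $\tfrac{\xi^2}{\lambda^2}\|Q\|_{L^2}^2$ term swallow the $\tfrac{1}{\delta}\tfrac{\xi^2}{\lambda^2}\|\epsilon\|_{L^2}^2$ piece; your $O\bigl(\tfrac{|\xi|}{\lambda^2}\|\epsilon\|_{L^2}\|\epsilon\|_{\dot H^1}\bigr)$ bookkeeping amounts to the same thing.

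There is, however, a genuine error in your derivation of \eqref{15.52}. The inequality you write,
\[
\|\epsilon(t)\|_{L^2}^2 \;\lesssim\; 2^{-2k}\|P_{\leq k+9}\tilde\epsilon_\lambda\|_{\dot H^1}^2 \;+\; \|P_{\geq k+9}\epsilon\|_{L^2}^2,
\]
is false: Bernstein's inequality gives $\|P_{\leq k+9}f\|_{\dot H^1}\lesssim 2^{k}\|P_{\leq k+9}f\|_{L^2}$, not the reverse. Since $\lambda(t)\geq 1/\eta_1$, the bulk of $\tilde\epsilon_\lambda$ sits at frequencies $\lesssim \eta_1\ll 2^{k}$, and the $L^2$-mass there is in no way controlled by $2^{-2k}$ times the $\dot H^1$-norm. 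The fix is not to pass through \eqref{15.51} at all: the coercivity you established in your second step already gives $E(Q+\epsilon)\gtrsim \|\epsilon\|_{L^2}^2$ in addition to $\|\epsilon_x\|_{L^2}^2$, so after frequency truncation and rescaling one gets directly
\[
\|\epsilon(t)\|_{L^2}^2 \;\lesssim\; \lambda(t)^2\, E(P_{\leq k+9}u(t)) \;+\; \text{errors},
\]
and the factor $T^{1/50}/\eta_1^2$ in \eqref{15.52} is exactly $\sup_{t\in J}\lambda(t)^2$. This is how the paper proceeds, and how Corollary \ref{c8.4} was proved in the symmetric case.
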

\begin{proof}
As in the proof of Theorem $\ref{t2.4}$, since $\epsilon \perp \{ Q^{3}, Q_{x}, i Q^{3}, i Q_{x} \}$, there exists some $c > 0$ such that
\begin{equation}\label{15.53}
 \frac{1}{2 \lambda(t)^{2}} \| \epsilon \|_{L^{2}}^{2} + \frac{1}{2 \lambda(t)^{2}} \| \nabla \epsilon \|_{L^{2}}^{2} - \frac{5}{2 \lambda(t)^{2}} \int Q(x)^{4} \epsilon_{1}(t,x)^{2} dx - \frac{1}{2 \lambda(t)^{2}} \int Q(x)^{4} \epsilon_{2}(t,x)^{2} dx \geq \frac{1}{2 \lambda(t)^{2}} \| \epsilon \|_{L^{2}}^{2} + \frac{c}{\lambda(t)^{2}} \| \nabla \epsilon \|_{L^{2}}^{2}.
\end{equation}
Next, for $\| \epsilon \|_{L^{2}} \leq \eta_{0}$ sufficiently small, by the Cauchy--Schwarz inequality, taking $\delta = \frac{\| \epsilon \|_{L^{2}}}{\| Q \|_{L^{2}}}$ in the last step,
\begin{equation}\label{15.54}
\aligned
\frac{\xi(t)^{2}}{\lambda(t)^{2}} \| Q \|_{L^{2}}^{2} - \frac{\xi(t)}{\lambda(t)^{2}} (\nabla \epsilon_{1}, \epsilon_{2})_{L^{2}} + \frac{\xi(t)}{\lambda(t)^{2}} (\nabla \epsilon_{2}, \epsilon_{1})_{L^{2}} \\ \geq \frac{\xi(t)^{2}}{\lambda(t)^{2}} \| Q \|_{L^{2}}^{2} - \frac{1}{\delta}\frac{\xi(t)^{2}}{\lambda(t)^{2}} \| \epsilon \|_{L^{2}}^{2} -  \frac{\delta}{\lambda(t)^{2}} \| \nabla \epsilon \|_{L^{2}}^{2} \geq -O(\frac{\eta_{0}}{\lambda(t)^{2}}) \| \nabla \epsilon \|_{L^{2}}^{2}.
\endaligned
\end{equation}
Finally, by H{\"o}lder's inequality and the Sobolev embedding theorem, since $\| \epsilon \|_{L^{2}} \ll 1$,
\begin{equation}\label{15.55}
\aligned
O(\frac{1}{\lambda(t)^{2}} \| \epsilon \|_{L^{3}}^{3} + \frac{1}{\lambda(t)^{2}} \| \epsilon \|_{L^{6}}^{6}) \ll \frac{1}{\lambda(t)^{2}} \| \epsilon \|_{L^{2}}^{2} + \frac{1}{\lambda(t)^{2}} \| \nabla \epsilon \|_{L^{2}}^{2}.
\endaligned
\end{equation}
Plugging $(\ref{15.53})$--$(\ref{15.55})$ into $(\ref{15.50})$ proves the corollary.
\end{proof}

\subsection{A frequency localized Morawetz estimate for nonsymmetric $u$}
As in section six, the long time Strichartz estimates of Theorem $\ref{t15.4}$ and the energy estimates of Theorem $\ref{t15.5}$ and Corollary $\ref{c15.6}$ give a theorem analogous to Theorem $\ref{t10.1}$ in the nonsymmetric case.

\begin{theorem}\label{t15.7}
Let $J = [a, b]$ be an interval on which
\begin{equation}\label{15.56}
\frac{|\xi(t)|}{\lambda(t)} \leq \eta_{0}, \qquad \frac{1}{\eta_{1}} \leq \lambda(t) \leq \frac{1}{\eta_{1}} T^{1/100}, \qquad \text{for all} \qquad t \in J, \qquad \int_{J} \lambda(t)^{-2} dt = T, \qquad \eta_{1}^{-2} T = 2^{3k}.
\end{equation}
Also suppose $\epsilon = \epsilon_{1} + i \epsilon_{2}$, where $\epsilon$ is given by Theorem $\ref{t2.3}$. Finally, suppose there exists a uniform bound on $x(t)$,
\begin{equation}\label{15.57}
\sup_{t \in J} |x(t)| \leq R = T^{1/25}.
\end{equation}
Finally, suppose that $\xi(a) = 0$ and $x(b) = 0$. Then for $T$ sufficiently large,
\begin{equation}\label{15.58}
\aligned
\int_{a}^{b} \| \epsilon(t) \|_{L^{2}}^{2} \lambda(t)^{-2} dt \leq 3(\epsilon_{2}(a), (\frac{1}{2} Q + x Q_{x}))_{L^{2}} - 3(\epsilon_{2}(b), \frac{1}{2} Q + x Q_{x})_{L^{2}} + \frac{T^{1/50}}{\eta_{1}^{2}} \sup_{t \in J} \frac{\xi(t)^{2}}{\lambda(t)^{2}} + O(\frac{1}{T^{9}}).
\endaligned
\end{equation}
\end{theorem}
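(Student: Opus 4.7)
The plan is to adapt the construction of Theorem~$\ref{t10.1}$ to account for the additional translation parameter $x(t)$ and Galilean parameter $\xi(t)$ in the decomposition of Theorem~$\ref{t15.3}$. I will use the same Morawetz potential
\[ M(t) = \int \phi(x)\, \mathrm{Im}[\overline{P_{\leq k+9} u}\, \partial_{x} P_{\leq k+9} u](t,x)\, dx, \]
with $\phi(x) = \int_{0}^{x} \psi^{2}(\eta_{1} y/R)\, dy$ and $R = T^{1/25}$, exactly as in the symmetric case. The key geometric fact that makes this work is that the cutoff scale $R/\eta_{1}$ dominates both the spatial extent of the soliton $\lambda(t) \leq T^{1/100}/\eta_{1}$ and the centre $|x(t)| \leq R$ by polynomial factors, so the soliton trajectory sits comfortably inside the linear region where $\phi(x) = x$ throughout $J$, and all corrections from $\phi'$, $\phi''$ are $O(R^{-1})$.

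For the boundary evaluation of $M(b) - M(a)$, I will substitute the decomposition
\[ u(t,x) = e^{-i \gamma(t)} e^{-ix \xi(t)/\lambda(t)} \lambda(t)^{-1/2}[Q + \epsilon]((x - x(t))/\lambda(t)) \]
and change variables $y = (x - x(t))/\lambda(t)$. The Galilean factor produces a momentum-density contribution $-(\xi(t)/\lambda(t)) \int \phi(\lambda y + x(t)) |Q+\epsilon|^{2}\, dy$ in addition to the standard piece $\lambda(t)^{-1} \int \phi(\lambda y + x(t))\, \mathrm{Im}[\overline{Q+\epsilon}\,(Q+\epsilon)_{y}]\, dy$. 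In the linear region of $\phi$, the pure $Q$-$Q$ part of the momentum density reduces to $-(\xi(t)/\lambda(t)) \, x(t) \, \|Q\|_{L^{2}}^{2}$, which is precisely the term that vanishes at $t = a$ by the normalization $\xi(a) = 0$ and at $t = b$ by $x(b) = 0$: this is the entire reason for those specific choices. The $Q$-$\epsilon$ cross terms collapse, after integration by parts and use of the new orthogonality $(\epsilon_{2}, Q_{x})_{L^{2}} = 0$ from Theorem~$\ref{t15.3}$, to the symmetric-case expression $-2(\epsilon_{2}(t), \tfrac{Q}{2} + y Q_{y})_{L^{2}}$. The remaining Galilean cross term $-2(\xi(t)/\lambda(t)) \int \phi(\lambda y + x(t))\, Q \epsilon_{1}\, dy$ is handled via $(Q,\epsilon_{1})_{L^{2}} = -\tfrac{1}{2}\|\epsilon\|_{L^{2}}^{2}$ (from conservation of mass) and absorbs into the $\xi^{2}/\lambda^{2}$ error.

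For the time derivative $dM/dt$, the computation proceeds exactly as in Theorem~$\ref{t10.1}$: it yields a kinetic term $2\int \psi^{2} |\partial_{x} P_{\leq k+9} u|^{2}$, a potential term $-\tfrac{2}{3}\int \psi^{2}|P_{\leq k+9} u|^{6}$, a small $\chi''$ correction, and nonlinear errors from the frequency projection that are controlled by the non-symmetric long time Strichartz estimate of Theorem~$\ref{t15.4}$ and the bilinear Strichartz bounds. When the kinetic-minus-potential combination is rewritten in terms of $\tilde{\epsilon}$ after rescaling and cutoff localization, the four orthogonality conditions from Theorem~$\ref{t15.3}$ supply exactly the spectral positivity that is needed: $(\epsilon_{1}, Q^{3}) = (\epsilon_{1}, Q_{x}) = 0$ eliminates the two obstructing directions of $\mathcal{L}$, while $(\epsilon_{2}, Q^{3}) = (\epsilon_{2}, Q_{x}) = 0$ (from the $iQ^{3}$ and $iQ_{x}$ orthogonalities) eliminates the direction $Q$ of $\mathcal{L}_{-}$. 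Combining this with the boundary terms, the left-hand side of $(\ref{15.58})$ emerges.

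The main new obstacle is the Galilean kinetic contribution. When the ansatz is plugged into $|\partial_{x} P_{\leq k+9} u|^{2}$, differentiating the factor $e^{-ix\xi(t)/\lambda(t)}$ produces a cross term $(\xi(t)/\lambda(t))^{2} |Q|^{2}/\lambda(t)$; integrated against $\psi^{2}$ this contributes $(\xi(t)/\lambda(t))^{2} \|Q\|_{L^{2}}^{2}/\lambda(t)^{2}$ to the kinetic energy, and integrating over $J$ yields $\sup_{t \in J}(\xi/\lambda)^{2} \int_{J} \lambda^{-2}\, dt = T \sup_{t \in J}(\xi/\lambda)^{2}$. After incorporating the $T^{1/100}$ losses in the $\lambda$ bound and the factors from the almost-conservation of energy in Corollary~$\ref{c15.6}$ (which itself propagates a $(\xi/\lambda)^{2}$ term through the induction on frequency), this becomes the claimed error $\frac{T^{1/50}}{\eta_{1}^{2}} \sup_{t \in J} \xi^{2}/\lambda^{2}$. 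The delicate bookkeeping is in verifying that this momentum loss multiplies correctly through the long time Strichartz / energy / Morawetz chain, and in checking that the translation $x(t)$ contributes only $O(R^{-1})$ cutoff-derivative errors that remain negligible after the time integration and the division by $T$.
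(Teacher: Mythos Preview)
Your approach is essentially the paper's: same Morawetz potential, same reduction of the boundary piece $M(b)-M(a)$ via the decomposition of Theorem~\ref{t15.3}, same recognition that the $Q$--$Q$ momentum contribution $\tfrac{\xi(t)}{\lambda(t)}x(t)\|Q\|_{L^2}^2$ is killed at the endpoints precisely by $\xi(a)=0$ and $x(b)=0$, and same use of the four orthogonalities $(\epsilon,Q^3)=(\epsilon,iQ^3)=(\epsilon,Q_x)=(\epsilon,iQ_x)=0$ to recover spectral positivity in the $dM/dt$ integrand. The paper widens the weight to $\phi(x)=\int_0^x\psi^2(\eta_1 y/2R)\,dy$ to accommodate the drift $|x(t)|\le R$, but your choice with $R$ in place of $2R$ still works since $R+\lambda(t)\ll R/\eta_1$.

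The one point to correct is your last paragraph. The term $(\xi/\lambda)^2\|Q\|_{L^2}^2/\lambda^2$ in the kinetic density is \emph{positive}; it is not the obstacle. What could hurt the lower bound on $dM/dt$ are the cross terms $-\tfrac{\xi}{\lambda^2}(\nabla\epsilon_1,\epsilon_2)+\tfrac{\xi}{\lambda^2}(\nabla\epsilon_2,\epsilon_1)$, and the paper absorbs these together with the positive $(\xi/\lambda)^2\|Q\|^2$ piece via the Cauchy--Schwarz inequality $(\ref{15.54})$, yielding only an $O(\eta_0\lambda^{-2})\|\nabla\epsilon\|^2$ loss that the spectral gap swallows. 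Thus no $\xi^2/\lambda^2$ error is generated by the time-integrated $dM/dt$. The $\tfrac{T^{1/50}}{\eta_1^2}\sup_J\xi^2/\lambda^2$ in $(\ref{15.58})$ comes entirely from the \emph{boundary} $\epsilon$--$\epsilon$ term, bounded via Corollary~\ref{c15.6} (equation $(\ref{15.52})$), exactly as in $(\ref{15.62})$. Your estimate $T\sup(\xi/\lambda)^2$ from integrating the kinetic contribution over $J$ would in fact be far too large to match the statement; fortunately it is not needed.
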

\begin{proof}
This time let
\begin{equation}\label{15.59}
\phi(x) = \int_{0}^{x} \chi(\frac{\eta_{1} y}{2R}) dy = \int_{0}^{x} \psi^{2}(\frac{\eta_{1} y}{2R}) dy,
\end{equation}
and let
\begin{equation}\label{15.60}
M(t) = \int \phi(x) Im[\overline{P_{\leq k + 9} u} \partial_{x} P_{\leq k + 9} u](t,x) dx.
\end{equation}

Since $|\phi(x)| \lesssim \eta_{1}^{-1} R$ and $\frac{|\xi(t)|}{\lambda(t)} \leq \eta_{0}$, Theorem $\ref{t15.4}$ implies that the error terms arising from frequency truncation may be handled in exactly the same manner as in Theorem $\ref{t10.1}$.

Next, observe that by $(\ref{15.54})$ and $(\ref{15.55})$, the additional terms in the left hand side of $(\ref{10.17})$ that arise from the fact that $\xi(t)$ need not be zero may be handled in exactly the same manner as the terms involving $\epsilon^{3}$ and higher powers of $\epsilon$.

Now decompose $M(b) - M(a)$. Since $Q$ is real valued, symmetric, and rapidly decreasing, $(\ref{15.59})$, the bounds on $\lambda(t)$, and $(\ref{15.57})$ imply
\begin{equation}\label{15.61}
\aligned
\int \phi(x) Im[\overline{e^{-i \gamma(t)} e^{-ix \frac{\xi(t)}{\lambda(t)}} \lambda(t)^{-1/2} P_{\leq k + 9} Q(\frac{x - x(t)}{\lambda(t)})} \partial_{x}(e^{-i \gamma(t)} e^{-ix \frac{\xi(t)}{\lambda(t)}} \lambda(t)^{-1/2} P_{\leq k + 9} Q(\frac{x - x(t)}{\lambda(t)}))] dx \\
= \frac{\xi(t)}{\lambda(t)^{2}} \int \phi(x) Q(\frac{x - x(t)}{\lambda(t)})^{2} dx + O(T^{-10}) = \frac{\xi(t)}{\lambda(t)} x(t) \| Q \|_{L^{2}}^{2} + O(T^{-10}).
\endaligned
\end{equation}
Since $\xi(a) = 0$ and $x(b) = 0$, $\frac{\xi(t)}{\lambda(t)} x(t) \| Q \|_{L^{2}}^{2}|_{a}^{b} = 0$.

Next, by Corollary $\ref{c15.6}$,
\begin{equation}\label{15.62}
\aligned
\int \phi(x) Im[\overline{P_{\leq k + 9} e^{-i \gamma(t)} e^{-i x \frac{\xi(t)}{\lambda(t)}} \lambda(t)^{-1/2} \epsilon(t, \frac{x - x(t)}{\lambda(t)})} \partial_{x}(P_{\leq k + 9} e^{-i \gamma(t)} e^{-ix \frac{\xi(t)}{\lambda(t)}} \lambda(t)^{-1/2} \epsilon(t, \frac{x - x(t)}{\lambda(t)}))] dx \\
\lesssim \frac{R}{\eta_{1}^{2}}  \frac{2^{2k}}{T^{99/100}} \int_{J} \| \epsilon(t) \|_{L^{2}}^{2} \lambda(t)^{-2} dt + \frac{R}{\eta_{1}^{2}} 2^{2k} T^{-9.99} + \frac{T^{1/50}}{\eta_{1}^{2}} \sup_{t \in J} \frac{\xi(t)^{2}}{\lambda(t)^{2}}.
\endaligned
\end{equation}
Next, using the computations proving $(\ref{10.18})$ combined with the fact that $(\epsilon_{2}, Q_{x}) = 0$,
\begin{equation}\label{15.63}
\aligned
\int \phi(x) Im[\overline{P_{\leq k + 9} e^{-i \gamma(t)} e^{-ix \frac{\xi(t)}{\lambda(t)}} \lambda(t)^{-1/2} \epsilon(t, \frac{x - x(t)}{\lambda(t)})} \partial_{x}(P_{\leq k + 9} e^{-i \gamma(t)} e^{-ix \frac{\xi(t)}{\lambda(t)}} \lambda(t)^{-1/2} Q(\frac{x - x(t)}{\lambda(t)}))] dx \\
= -(\epsilon_{2}, x Q_{x})_{L^{2}} + \frac{x(t)}{\lambda(t)} (\epsilon_{2}, Q_{x})_{L^{2}} - \frac{\xi(t)}{\lambda(t)} (\epsilon_{1}, Q)_{L^{2}} + O(T^{-10}) = -(\epsilon_{2}, x Q_{x})_{L^{2}} - \frac{\xi(t)}{2 \lambda(t)} \| \epsilon \|_{L^{2}}^{2} + O(T^{-10}).
\endaligned
\end{equation}
Finally, integrating by parts,
\begin{equation}\label{15.64}
\aligned
\int \phi(x) Im[\overline{P_{\leq k + 9} e^{-i \gamma(t)} e^{-ix \frac{\xi(t)}{\lambda(t)}} \lambda(t)^{-1/2} Q(\frac{x - x(t)}{\lambda(t)})} \partial_{x}(P_{\leq k + 9} \lambda(t)^{-1/2} e^{-i \gamma(t)} e^{-i x \frac{\xi(t)}{\lambda(t)}} \epsilon(t, \frac{x - x(t)}{\lambda(t)}))] dx \\ = (\ref{15.63}) - \int \chi(\frac{\eta_{1} x}{2 R}) Im[\overline{P_{\leq k + 9} e^{-i \gamma(t)} e^{-ix \frac{\xi(t)}{\lambda(t)}} \lambda(t)^{-1/2} Q(\frac{x - x(t)}{\lambda(t)})} \\ \cdot P_{\leq k + 9} e^{-i \gamma(t)} e^{-ix \frac{\xi(t)}{\lambda(t)}} \lambda(t)^{-1/2} \epsilon(t, \frac{x - x(t)}{\lambda(t)})] dx.
\endaligned
\end{equation}
As in $(\ref{10.19.1})$,
\begin{equation}\label{15.65}
\aligned
- \int \chi(\frac{\eta_{1} x}{2 R}) Im[\overline{P_{\leq k + 9} e^{-i \gamma(t)} e^{-ix \frac{\xi(t)}{\lambda(t)}} \lambda(t)^{-1/2} Q(\frac{x - x(t)}{\lambda(t)})}  \cdot P_{\leq k + 9} e^{-i \gamma(t)} e^{-ix \frac{\xi(t)}{\lambda(t)}} \lambda(t)^{-1/2} \epsilon(t, \frac{x - x(t)}{\lambda(t)})] dx \\ = -(\epsilon_{2}, Q)_{L^{2}} + O(T^{-10}).
\endaligned
\end{equation}
Summing up $(\ref{15.61})$--$(\ref{15.65})$ and using the fundamental theorem of calculus and the Morawetz estimate completes the proof of Theorem $\ref{t15.7}$.
\end{proof}

\subsection{An $L_{s}^{p}$ bound on $\| \epsilon(s) \|_{L^{2}}$ when $p > 1$ for nonsymmetric $u$}
Combining Theorem $\ref{t15.7}$ with $(\ref{15.34})$--$(\ref{15.37})$, it is possible to prove Theorem $\ref{t7.1}$ for nonsymmetric $u$.
\begin{theorem}\label{t15.8}
Let $u$ be a nonsymmetric solution to $(\ref{1.1})$ that satisfies $\| u \|_{L^{2}} = \| Q \|_{L^{2}}$, and suppose
\begin{equation}\label{15.66}
\sup_{s \in [0, \infty)} \| \epsilon(s) \|_{L^{2}} \leq \eta_{\ast},
\end{equation}
and $\| \epsilon(0) \|_{L^{2}} = \eta_{\ast}$. Then
\begin{equation}\label{15.67}
\int_{0}^{\infty} \| \epsilon(s) \|_{L^{2}}^{2} ds \lesssim \eta_{\ast},
\end{equation}
with implicit constant independent of $\eta_{\ast}$ when $\eta_{\ast} \ll 1$ is sufficiently small.

Furthermore, for any $j \in \mathbb{Z}_{\geq 0}$, let
\begin{equation}\label{15.68}
s_{j} = \inf \{ s \in [0, \infty) : \| \epsilon(s) \|_{L^{2}} = 2^{-j} \eta_{\ast} \}.
\end{equation}
By definition, $s_{0} = 0$, and as in Theorem $\ref{t7.1}$, such an $s_{j}$ exists for any $j > 0$. Then,
\begin{equation}\label{15.69}
\int_{s_{j}}^{\infty} \| \epsilon(s) \|_{L^{2}}^{2} ds \lesssim 2^{-j} \eta_{\ast},
\end{equation}
for each $j$, with implicit constant independent of $\eta_{\ast}$ and $j \geq 0$.
\end{theorem}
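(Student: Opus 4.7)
The plan is to follow the proof of Theorem \ref{t7.1} closely, substituting the nonsymmetric frequency-localized Morawetz estimate of Theorem \ref{t15.7} for its symmetric counterpart. Three new ingredients are needed: (i) use of the Galilean and translation symmetries of \eqref{1.1} to normalize the boundary values $\xi(a)=0$ and $x(b)=0$ at the endpoints of each Morawetz interval, (ii) control of the drifts of $\xi/\lambda$ and $x$ via the modulation equations \eqref{15.38}--\eqref{15.41}, and (iii) a bootstrap to verify the hypothesis $|\xi(t)/\lambda(t)|\le \eta_{0}$ throughout.

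First I would apply a single Galilean transform and a single spatial translation at $s=0$ to obtain a normalized solution with $\xi(0)=0$ and $x(0)=0$; these symmetries preserve $\|u_0\|_{L^2}$, $\lambda(s)$, and $\|\epsilon(s)\|_{L^2}$ at every $s$, so they do not affect the conclusion. Next I would extract from \eqref{15.35} the crucial fact that $|\xi_s - (\lambda_s/\lambda)\xi| \lesssim \|\epsilon(s)\|_{L^2}^{2}$, which is quadratic in $\epsilon$; combined with $\lambda\ge \eta_1^{-1}$ this yields $|\tfrac{d}{ds}(\xi/\lambda)|\lesssim \eta_1\|\epsilon\|_{L^2}^2$. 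Under the bootstrap $\int_0^s\|\epsilon\|_{L^2}^2\,d\tau\le M$, integrating gives $|\xi(s)/\lambda(s)|\lesssim \eta_1 M \ll \eta_0$, verifying that hypothesis of Theorem \ref{t15.7}.

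I would then run the same dyadic scheme as in the proof of Theorem \ref{t7.1}: define $s_j$ as in \eqref{7.3}, apply the Morawetz estimate on intervals of $s$-length $JT_\ast$, and iterate through scales $J^n T_\ast$. On each such interval $[a,b]$, I would insert an additional Galilean transform with parameter $\xi_\ast=-\xi(a)/\lambda(a)$ and a spatial translation with $x_\ast$ chosen so that $x(b)=0$ in the new normalization, invoke Theorem \ref{t15.7}, and then undo the transformation. The extra nonsymmetric term $\tfrac{T^{1/50}}{\eta_1^{2}}\sup_t(\xi/\lambda)^2$ appearing in \eqref{15.58} is bounded by $T^{1/50}M^2$, which for $M$ small is absorbed into the main integral; the induction \eqref{7.25}--\eqref{7.26} then carries through essentially verbatim and yields \eqref{15.67}--\eqref{15.69}.

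The main obstacle will be verifying the pointwise hypothesis $\sup_{t\in J}|x(t)|\le R = T^{1/25}$ of Theorem \ref{t15.7} uniformly through the iteration. After the translation normalizing $x(b)=0$, the drift of $x$ across $J$ in the $t$-variable satisfies $|dx/dt|\lesssim |\xi|/\lambda + \|\epsilon\|_{L^2}/\lambda$; the first summand is at most $\eta_1 M$, and since the bounds $\eta_1^{-1}\le \lambda\le \eta_1^{-1}T^{1/100}$ force the $t$-length of $J$ to lie in $[T/\eta_1^{2},\,\eta_1^{-2}T^{1+1/50}]$, one obtains $\sup_t|x(t)|\lesssim MT^{1+1/50}/\eta_1$ plus a Cauchy--Schwarz contribution from the $\|\epsilon\|_{L^2}/\lambda$ term. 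This forces a delicate choice of parameters: one must take $\eta_\ast$ small enough relative to $\eta_1$ so that $MT^{1+1/50}/\eta_1 \ll T^{1/25}$, which is compatible with the existing hierarchy $\eta_\ast\ll\eta_1\ll\eta_0\ll 1$ but represents the most technical point to verify.
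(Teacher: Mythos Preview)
Your overall strategy matches the paper's: run the dyadic iteration of Theorem~\ref{t7.1}, applying Theorem~\ref{t15.7} on each block after a fresh Galilean transform setting $\xi(a)=0$ and a spatial translation setting $x(b)=0$, and control the drifts via \eqref{15.38}--\eqref{15.41}. You are also right that verifying $\sup_{[a,b]}|x|\le T^{1/25}$ is the main new point.

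However, your resolution of that point has a gap. After normalizing $\xi(a)=0$ you bound $|\xi/\lambda|\le\eta_1 M$ with the \emph{global} bootstrap quantity $\int_0^\infty\|\epsilon\|^2\le M$, and from this deduce $\sup_{[a,b]}|x|\lesssim MT^{1+1/50}/\eta_1$. You then claim this can be made $\ll T^{1/25}$ by taking $\eta_*$ small relative to $\eta_1$; but the iteration runs over blocks of $s$-length $T=J^nT_*$ for all $n\ge 0$, so $T$ is unbounded while $M\sim\eta_*$ and $\eta_1$ are fixed, and the required inequality $M/\eta_1\ll T^{1/25-1-1/50}$ fails for large $n$. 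The same obstruction prevents absorbing the term $T^{1/50}M^2$ in your third paragraph once $T^{1/50}M\gtrsim 1$. (Incidentally, the paper's hierarchy is $\eta_1\ll\eta_0$ and $\eta_*\le\eta_0$, with no ordering between $\eta_*$ and $\eta_1$.)

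What the paper does instead is rescale on each block so that $\inf_{[a,b]}\lambda=1/\eta_1$, and then use the \emph{local} bound from the inductive hypothesis \eqref{15.83}: at stage $n$ one has $\int_a^b\|\epsilon\|^2\lesssim C^2J^{1-n}\eta_*$, which decays like $T^{-1}$. Hence after the Galilean normalization \eqref{15.39} gives $\sup_{[a,b]}|\xi/\lambda|\lesssim \eta_1 C^2J^{1-n}\eta_*$ (this is \eqref{15.81}). Feeding this together with $\int_a^b\|\epsilon\|\le CJ$ into \eqref{15.41} yields the uniform-in-$n$ bound $\sup_{[a,b]}|x|\lesssim C^2J^2T_*^{1/100}+CJ\eta_1^{-1}T_*^{1/100}\ll T_*^{1/25}\le T^{1/25}$ (this is \eqref{15.82}); likewise the extra term in \eqref{15.58} becomes $\lesssim T_*^{1/25}C^4J^{2-2n}\eta_*^2\ll J^{-(n+1)}\eta_*$ and is absorbed (cf.\ \eqref{15.84}). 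With this local-for-global substitution your scheme closes, and no special relation between $\eta_*$ and $\eta_1$ is needed.
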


\begin{proof}
Set $T_{\ast} = \frac{1}{\eta_{\ast}}$ and suppose that $T_{\ast}$ is sufficiently large such that Theorem $\ref{t15.7}$ holds. Then by $(\ref{15.66})$ and $(\ref{15.38})$,
\begin{equation}\label{15.70}
|\sup_{s \in [s', s' + T_{\ast}]} \ln(\lambda(s)) - \inf_{s \in [s', s' + T_{\ast}]} \ln(\lambda(s))| \lesssim 1.
\end{equation}
Let $J$ be the largest dyadic integer that satisfies
\begin{equation}\label{15.71}
J = 2^{j_{\ast}} \leq -\ln(\eta_{\ast})^{1/4}.
\end{equation}
By $(\ref{15.40})$ and the triangle inequality,
\begin{equation}\label{15.72}
\aligned
|\sup_{s \in [s', s' + J T_{\ast}]} \ln(\lambda(s)) - \inf_{s \in [s', s' + J T_{\ast}]} \ln(\lambda(s))| \lesssim J,
\endaligned
\end{equation}
and therefore,
\begin{equation}\label{15.73}
\frac{\sup_{s \in [s', s' + 3 J T_{\ast}]} \lambda(s)}{\inf_{s \in [s', s' + 3JT^{\ast}]} \lambda(s)} \lesssim T_{\ast}^{1/100}.
\end{equation}
Rescale so that $\inf_{s \in [s', s' + 3J T_{\ast}]} \lambda(s) = \frac{1}{\eta_{1}}$. Then make a Galilean transformation so that $\xi(s') = 0$ and a translation in space so that $x(s'') = 0$ when $s'' \in [s', s' + 3J T_{\ast}]$ is the other endpoint of the interval of integration. Then by $(\ref{15.39})$ and $(\ref{15.41})$,
\begin{equation}\label{15.74}
\sup_{s \in [s', s' + 3 J T_{\ast}]} \frac{|\xi(s)|}{\lambda(s)} \lesssim \eta_{\ast} J \eta_{1} \ll \eta_{0}, \qquad \text{and} \qquad \sup_{s \in [s', s' + 3 J T_{\ast}]} |x(s)| \lesssim J^{2} T_{\ast}^{1/100} + \frac{1}{\eta_{1}} T_{\ast}^{1/100} J \ll T_{\ast}^{1/25}.
\end{equation}

Therefore, by Theorem $\ref{t15.7}$,
\begin{equation}\label{15.75}
\sup_{a > 0} \int_{s' + a J T_{\ast}}^{s' + (a + 1) J T_{\ast}} \| \epsilon(s) \|_{L^{2}}^{2} \lesssim \frac{1}{J^{1/2} T_{\ast}^{1/2}} (\sup_{a \geq 0} \int_{s' + a J T_{\ast}}^{s' + (a + 1) J T_{\ast}} \| \epsilon(s) \|_{L^{2}}^{2} ds)^{1/2} + T_{\ast}^{1/50} \eta_{\ast}^{2} + O(\frac{1}{J^{9} T_{\ast}^{9}}),
\end{equation}
and when $a = 0$,
\begin{equation}\label{15.76}
 \int_{s'}^{s' + J T_{\ast}} \| \epsilon(s) \|_{L^{2}}^{2} \lesssim \| \epsilon(s') \|_{L^{2}} + \frac{1}{J^{1/2} T_{\ast}^{1/2}} (\sup_{a \geq 0} \int_{s' + a J T_{\ast}}^{s' + (a + 1) J T_{\ast}} \| \epsilon(s) \|_{L^{2}}^{2} ds)^{1/2} + T_{\ast}^{1/50} \eta_{\ast}^{2} + O(\frac{1}{J^{9} T_{\ast}^{9}}).
\end{equation}
Therefore, taking $s' = s_{j_{\ast}}$,
\begin{equation}\label{15.77}
\sup_{a \geq 0} \int_{s_{j_{\ast}} + a J T_{\ast}}^{s_{j_{\ast}} + (a + 1) J T_{\ast}} \| \epsilon(s) \|_{L^{2}}^{2} ds \lesssim 2^{-j_{\ast}} \eta_{\ast} + O(2^{-9 j_{\ast}} \eta_{\ast}^{9}).
\end{equation}

By the triangle inequality,
\begin{equation}\label{15.78}
\sup_{s' \geq s_{j_{\ast}}} \int_{s'}^{s' + J T_{\ast}} \| \epsilon(s) \|_{L^{2}}^{2} ds \lesssim 2^{-j_{\ast}} \eta_{\ast},
\end{equation}
and by H{\"o}lder's inequality,
\begin{equation}\label{15.79}
\sup_{s' \geq s_{j_{\ast}}} \int_{s'}^{s' + J T_{\ast}} \| \epsilon(s) \|_{L^{2}} ds \lesssim 1.
\end{equation}

It is therefore possible to prove Theorem $\ref{t15.8}$ by induction. Indeed, suppose that for some $n > 0$,
\begin{equation}\label{15.83}
\sup_{s' \geq s_{nj_{\ast}}} \int_{s'}^{s' + J^{n} T_{\ast}} \| \epsilon(s) \|_{L^{2}} ds \leq C, \qquad \text{and} \qquad \sup_{s' \geq s_{n j_{\ast}}} \int_{s'}^{s' + J^{n} T_{\ast}} \| \epsilon(s) \|_{L^{2}}^{2} ds \leq C^{2} J^{-n} \eta_{\ast}.
\end{equation}
Then by $(\ref{15.40})$,
\begin{equation}\label{15.80}
\sup_{s' \geq s_{j_{\ast}}} |\sup_{s \in [s', s' + J^{n + 1} T_{\ast}]} \ln(\lambda(s)) - \inf_{s \in [s', s' + J T_{\ast}]} \ln \lambda(s)| \lesssim CJ.
\end{equation}
Next, rescaling so that $\inf_{s \in [s', s' + J^{n + 1} T_{\ast}]} \lambda(s) = \frac{1}{\eta_{1}}$, setting $\xi(s') = 0$, $(\ref{15.39})$ implies
\begin{equation}\label{15.81}
\sup_{s \in [s', s' + J^{n + 1} T_{\ast}]} \frac{|\xi(s)|}{\lambda(s)} \lesssim 2^{-j_{\ast} n} \eta_{\ast} \eta_{1} C^{2} J \ll \eta_{0},
\end{equation}
and by $(\ref{15.41})$, if $x(s'') = 0$, where $s''$ is the other endpoint of the interval of integration,
\begin{equation}\label{15.82}
\sup_{s \in [s' + J^{n + 1} T_{\ast}]} |x(s)| \lesssim C^{2} J^{2} T_{\ast}^{1/100} + C \frac{1}{\eta_{1}} T_{\ast}^{1/100} J \ll T_{\ast}^{1/25}.
\end{equation}
Then by Theorem $\ref{t15.7}$, as in $(\ref{7.26})$,
\begin{equation}\label{15.84}
\sup_{s' \geq s_{(n + 1) j_{\ast}}} \int_{s'}^{s' + J^{n + 1} T_{\ast}} \| \epsilon(s) \|_{L^{2}}^{2} ds \lesssim J^{-(n + 1)} T_{\ast}^{-1} + T_{\ast}^{1/25} 2^{-2j_{\ast} n} \eta_{\ast}^{2} C^{4} J^{2} \lesssim J^{-(n + 1)} T_{\ast}^{-1}.
\end{equation}
and by H{\"o}lder's inequality,
\begin{equation}\label{15.85}
\sup_{s' \geq s_{(n + 1) j_{\ast}}} \int_{s'}^{s' + J^{n + 1} T_{\ast}} \| \epsilon(s) \|_{L^{2}} ds \lesssim 1.
\end{equation}
It is important to observe that the implicit constants in $(\ref{15.84})$ and $(\ref{15.85})$ are independent of $C$ so long as the final inequalities in $(\ref{15.81})$ and $(\ref{15.82})$ hold and $C \ll T_{\ast}^{1/2}$.

Now take any $j \in \mathbb{Z}$ and suppose $n j_{\ast} < j \leq (n + 1) j_{\ast}$. Then by $(\ref{15.84})$ and $(\ref{15.85})$,
\begin{equation}\label{15.86}
\sup_{a \geq 0} \int_{s_{j} + a J^{n + 1} T_{\ast}}^{s_{j} + (a + 1) J^{n + 1} T_{\ast}} \| \epsilon(s) \|_{L^{2}} ds \lesssim J,
\end{equation}
and
\begin{equation}\label{15.87}
\sup_{a \geq 0} \int_{s_{j} + a J^{n + 1} T_{\ast}}^{s_{j} + (a + 1) J^{n + 1} T_{\ast}} \| \epsilon(s) \|_{L^{2}}^{2} ds \lesssim 2^{-j} \eta_{\ast},
\end{equation}
and therefore, after appropriate rescaling and Galilean and spatial translation, $(\ref{15.80})$--$(\ref{15.82})$ hold. Therefore, by Theorem $\ref{t15.7}$,
\begin{equation}\label{15.88}
\sup_{s' \geq s_{j}} \int_{s'}^{s' + 2^{j} T_{\ast}} \| \epsilon(s) \|_{L^{2}} ds \lesssim 1, \qquad \text{and} \qquad \int_{s'}^{s' + 2^{j} T_{\ast}} \| \epsilon(s) \|_{L^{2}}^{2} ds \lesssim 2^{-j} \eta_{\ast},
\end{equation}
with implicit constant independent of $j$. Furthermore, as in $(\ref{7.24.1})$,
\begin{equation}
\int_{s'}^{s' + 2^{j} J T_{\ast}} \| \epsilon(s) \|_{L^{2}}^{2} ds \lesssim 2^{-j} \eta_{\ast},
\end{equation}
so by the mean value theorem,
\begin{equation}\label{15.90}
\inf_{s \in [s_{j}, s_{j} + 2^{j} J T_{\ast}]} \| \epsilon(s) \|_{L^{2}} \lesssim 2^{-j} \eta_{\ast} J^{-1/2},
\end{equation}
which implies
\begin{equation}\label{15.91}
s_{j + 1} \in [s_{j}, s_{j} + 2^{j} J T_{\ast}].
\end{equation}
Therefore,
\begin{equation}\label{15.92}
\int_{s_{j}}^{s_{j + 1}} \| \epsilon(s) \|_{L^{2}}^{2} ds \lesssim 2^{-j} \eta_{\ast}, \qquad \text{and} \qquad \int_{s_{j}}^{s_{j + 1}} \| \epsilon(s) \|_{L^{2}} ds \lesssim 1,
\end{equation}
with constant independent of $j$. Summing in $j$ gives $(\ref{15.67})$ and $(\ref{15.69})$.
\end{proof}

Now then, as in section seven,
\begin{equation}\label{15.93}
\lim_{s \rightarrow \infty} \| \epsilon(s) \|_{L^{2}} = 0,
\end{equation}
\begin{equation}\label{15.94}
\int_{s_{j}}^{s_{j + 1}} \| \epsilon(s) \|_{L^{2}} ds \lesssim 1,
\end{equation}
and for any $1 < p < \infty$,
\begin{equation}\label{15.95}
(\int_{s_{j}}^{s_{j + 1}} \| \epsilon(s) \|_{L^{2}}^{p} ds) \lesssim \eta_{\ast}^{p - 1} 2^{-j(p - 1)}, 
\end{equation}
which implies that $\| \epsilon(s) \|_{L^{2}}$ belongs to $L_{s}^{p}$ for any $p > 1$, but not $L_{s}^{1}$.

\subsection{Monotonicity of $\lambda$ in the non-symmetric case}
It is possible to use the virial identity from \cite{merle2005blow} to show monotonicity in the non-symmetric case as well.
\begin{theorem}\label{t15.9}
For any $s \geq 0$, let
\begin{equation}\label{15.96}
\tilde{\lambda}(s) = \inf_{\tau \in [0, s]} \lambda(\tau).
\end{equation}
Then for any $s \geq 0$,
\begin{equation}\label{15.97}
1 \leq \frac{\lambda(s)}{\tilde{\lambda}(s)} \leq 3.
\end{equation}

\end{theorem}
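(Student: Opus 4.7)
The plan is to mirror the proof of Theorem \ref{t8.2} in the non-symmetric setting, substituting Theorem \ref{t15.7} for Theorem \ref{t10.1} and the $L^{p}_{s}$ bounds of Theorem \ref{t15.8} for those of Theorem \ref{t7.1}, and arguing by contradiction: suppose there exist $0 \leq s_{-} \leq s_{+}$ with $\lambda(s_{+})/\lambda(s_{-}) = e$. The target is to conclude $u$ must then be a soliton, forcing $\lambda$ to be constant, which is absurd.

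First I would run the Merle--Rapha\"el virial identity with the non-symmetric modulation (\ref{15.24}). Pairing $\epsilon_{s}$ with $y^{2} Q$ and invoking the algebraic identity $\partial_{xx}(y^{2} Q) + Q^{4}(y^{2} Q) - y^{2} Q = 4(Q/2 + y Q_{y})$, the principal terms remain $-\frac{\lambda_{s}}{\lambda}\|yQ\|_{L^{2}}^{2}$ and $-4(\epsilon_{2}, Q/2 + y Q_{y})_{L^{2}}$ exactly as in (\ref{8.41}). The new ingredients in (\ref{15.24}) -- namely $i\xi_{s} x(Q+\epsilon)$, $-i(\lambda_{s}/\lambda)\xi \, x(Q+\epsilon)$, $(x_{s}/\lambda)(Q+\epsilon)_{x}$, $-i(x_{s}/\lambda)\xi(Q+\epsilon)$, $2\xi(Q+\epsilon)_{x}$, and $-i\xi^{2}(Q+\epsilon)$ -- all have vanishing leading contribution against $y^{2} Q$ because $Q$ is even, $y^{2} Q$ is real, and $\int y Q^{2}\,dy = \int y^{3} Q^{2}\,dy = 0$. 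Their subleading pieces are each bounded by $\bigl(|\xi_{s} - (\lambda_{s}/\lambda)\xi| + |(x_{s}/\lambda) + 2\xi| + \xi^{2} + |\gamma_{s} + 1 - (x_{s}/\lambda)\xi - \xi^{2}|\bigr)\|\epsilon\|_{L^{2}}$, which by the modulation bounds (\ref{15.38})--(\ref{15.41}) and the $L^{2}_{s}$-bound (\ref{15.67}) integrate to $O(\eta_{\ast})$ on any interval.

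Integrating the resulting identity over $[s_{-},s_{+}]$ and using $\int_{s_{-}}^{s_{+}}(\lambda_{s}/\lambda)\,ds = 1$ gives $\|yQ\|_{L^{2}}^{2} + 4\int_{s_{-}}^{s_{+}}(\epsilon_{2}(s), Q/2 + yQ_{y})_{L^{2}}\,ds = O(\eta_{\ast})$, so the mean value theorem produces some $s' \in [s_{-},s_{+}]$ with $(\epsilon_{2}(s'), Q/2 + yQ_{y})_{L^{2}} < -c_{0}$ for an absolute $c_{0} > 0$. I then iterate Theorem \ref{t15.7} from $s'$: on each window, rescale so $\inf \lambda = 1/\eta_{1}$, Galilean-boost so $\xi = 0$ at the left endpoint, and translate in space so $x = 0$ at the right endpoint. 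The integrability bounds (\ref{15.39}), (\ref{15.41}) together with (\ref{15.94}) then guarantee $|\xi|/\lambda \ll \eta_{0}$ and $|x| \ll T^{1/25}$ throughout the window, just as in (\ref{15.73})--(\ref{15.74}), so the Morawetz error $\frac{T^{1/50}}{\eta_{1}^{2}}\sup(\xi/\lambda)^{2} + O(T^{-9})$ remains negligible. The negativity of $(\epsilon_{2}(s'), Q/2 + yQ_{y})_{L^{2}}$ then yields $\int_{s'}^{s''}\|\epsilon\|_{L^{2}}^{2}\,ds \lesssim \|\epsilon(s'')\|_{L^{2}} + o(1)$. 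Letting $s'' = s_{j+k}\to\infty$ along the sequence of (\ref{15.68}), for which $\|\epsilon(s_{j+k})\|_{L^{2}}\to 0$ by (\ref{15.93}), and iterating exactly as in (\ref{8.44})--(\ref{8.49}), forces $\int_{s'}^{\infty}\|\epsilon\|_{L^{2}}^{2}\,ds = 0$. Hence $\epsilon \equiv 0$ on $[s',\infty)$, so $u$ coincides with a soliton on $[s',\infty)$, and by uniqueness of the NLS Cauchy problem it is a soliton for all time; this contradicts $\lambda(s_{+}) \neq \lambda(s_{-})$, completing the proof that the ratio cannot reach $e$, and in particular stays $\leq 3$.

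The main obstacle will be the bookkeeping required to keep the Galilean and spatial normalizations mutually compatible across the iterated application of Theorem \ref{t15.7} -- specifically, verifying that $|\xi|/\lambda$ never exceeds $\eta_{0}$ and $|x|$ never exceeds $T^{1/25}$ on windows whose length grows like $J^{n} T_{\ast}$ with $J \sim |\ln\eta_{\ast}|^{1/4}$. This is precisely the inductive structure that drives the proof of Theorem \ref{t15.8}, and the same renormalization scheme together with (\ref{15.39})--(\ref{15.41}) and (\ref{15.94}) should make the argument close.
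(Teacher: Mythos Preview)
Your proposal is correct and follows essentially the same approach as the paper's own proof: the paper likewise derives the non-symmetric virial identity (\ref{15.99}), integrates over $[s_{-},s_{+}]$ to find $s'$ with $(\epsilon_{2}(s'),Q/2+xQ_{x})_{L^{2}}<0$, normalizes via rescaling, Galilean boost at $s'$, and spatial translation at the far endpoint, and then iterates Theorem \ref{t15.7} along the $s_{j+k}$ sequence exactly as in (\ref{8.44})--(\ref{8.49}) to force $\int_{s'}^{\infty}\|\epsilon\|_{L^{2}}^{2}\,ds=0$. Your identification of the main obstacle---controlling $|\xi|/\lambda$ and $|x|$ across growing windows---is also precisely what the paper addresses in (\ref{15.102})--(\ref{15.106}).
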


\begin{proof}
Suppose there exist $0 \leq s_{-} \leq s_{+} < \infty$ satisfying
\begin{equation}\label{15.98}
\frac{\lambda(s_{+})}{\lambda(s_{-})} = e.
\end{equation}
Then using $(\ref{15.24})$ and the computations in Theorem $\ref{t8.2}$,

\begin{equation}\label{15.99}
\aligned
\frac{d}{ds} (\epsilon, y^{2} Q)_{L^{2}} + \frac{\lambda_{s}}{\lambda} \| y Q \|_{L^{2}}^{2} + 4 (\epsilon_{2}, \frac{Q}{2} + y Q_{y})_{L^{2}} = O(|\gamma_{s} + 1 - \frac{x_{s}}{\lambda} \xi(s) - \xi(s)^{2}| \| \epsilon \|_{L^{2}}) + O(|\frac{\lambda_{s}}{\lambda}| \| \epsilon \|_{L^{2}}) \\ + O(|\frac{x_{s}}{\lambda} + 2 \xi(s)| \| \epsilon \|_{L^{2}}) + O(|\xi_{s} - \frac{\lambda_{s}}{\lambda} \xi(s)| \| \epsilon \|_{L^{2}}) + O(\| \epsilon \|_{L^{2}}^{2}) + O(\| \epsilon \|_{L^{2}}^{2} \| \epsilon \|_{L^{\infty}}^{3}).
\endaligned
\end{equation}
Then by Theorem $\ref{t15.7}$ and the fundamental theorem of calculus,
\begin{equation}\label{15.100}
\| y Q \|_{L^{2}}^{2} + 4 \int_{s_{-}}^{s_{+}} (\epsilon_{2}, \frac{Q}{2} + x Q_{x})_{L^{2}} = O(\eta_{\ast}).
\end{equation}
Therefore, there exists $s' \in [s_{-}, s_{+}]$ such that
\begin{equation}\label{15.101}
(\epsilon_{2}, \frac{Q}{2} + x Q_{x})_{L^{2}} < 0.
\end{equation}
Make a Galilean transformation setting $\xi(s') = 0$ and a translation in space such that $x(s'') = 0$, where $s''$ is the other endpoint of the interval of integration. Also rescale so that $\lambda(s') = \frac{1}{\eta_{1}} T_{\ast}^{1/200}$. Since $s' \geq 0$, there exists some $j \geq 0$ such that $s_{j} \leq s' + T_{\ast} < s_{j + 1}$. By Theorem $\ref{t15.8}$ and $(\ref{15.39})$,
\begin{equation}\label{15.102}
\int_{s'}^{s_{j + 1 + J}} |\frac{\lambda_{s}}{\lambda}| ds \lesssim J \Rightarrow \frac{1}{\eta_{1}} \leq \lambda(t) \leq \frac{1}{\eta_{1}} T_{\ast}^{1/100}, \qquad \sup_{s \in [s', s_{j + 1 + J}]} \frac{|\xi(s)|}{\lambda(s)} \ll \eta_{0}, \qquad \sup_{s \in [s', s_{j + 1 + J}]} |x(s)| \ll T_{\ast}^{1/25}.
\end{equation}
Then by Theorem $\ref{t15.7}$ and Theorem $\ref{t15.8}$,
\begin{equation}\label{15.103}
\int_{s'}^{s_{j + 1 + J}} \| \epsilon(s) \|_{L^{2}}^{2} ds \lesssim 2^{-(j + 1 + J)} \eta_{\ast} + T_{\ast}^{1/50} \eta_{\ast} (\int_{s'}^{s_{j + 1 + J}} \| \epsilon(s) \|_{L^{2}}^{2} ds) \lesssim 2^{-(j + 1 + J)} \eta_{\ast},
\end{equation}
and therefore by definition of $s_{j + 1 + J}$,
\begin{equation}\label{15.104}
\int_{s'}^{s_{j + 1 + J}} \| \epsilon(s) \|_{L^{2}} ds \lesssim 1.
\end{equation}
Then, $(\ref{15.102})$ holds on the interval $[s', s_{j + 1 + 2J}]$, and arguing by induction, for any $k \geq 1$,
\begin{equation}\label{15.105}
\int_{s'}^{s_{j + k}} \| \epsilon(s) \|_{L^{2}}^{2} ds \lesssim 2^{-j - k} \eta_{\ast},
\end{equation}
and
\begin{equation}\label{15.106}
\int_{s'}^{s_{j + k}} \| \epsilon(s) \|_{L^{2}} ds \lesssim 1,
\end{equation}
with implicit constant independent of $k$. Taking $k \rightarrow \infty$,
\begin{equation}\label{15.107}
\int_{s'}^{\infty} \| \epsilon(s) \|_{L^{2}}^{2} ds = 0,
\end{equation}
which implies that $\epsilon(s) = 0$ for all $s \geq s'$. Therefore, $u$ is a soliton solution to $(\ref{1.1})$.
\end{proof}

\subsection{Almost monotone $\lambda(t)$}
In the nonsymmetric case, when $\sup(I) = \infty$, $u$ is equal to a soliton solution, and when $\sup(I) < \infty$, $u$ is the pseudoconformal transformation of the soliton solution.

\begin{theorem}\label{t15.10}
If $u$ satisfies the conditions of Theorem $\ref{t15.1}$, blows up forward in time, and
\begin{equation}\label{15.108}
\sup(I) = \infty,
\end{equation}
then $u$ is equal to a soliton solution.
\end{theorem}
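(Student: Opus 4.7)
The plan is to mirror the proof of Theorem \ref{t14.0}, using the non-symmetric analogues (Theorems \ref{t15.4}, \ref{t15.5}, \ref{t15.7}, \ref{t15.8}, \ref{t15.9}) of the symmetric tools, with the additional burden of tracking the Galilean parameter $\xi(t)$ and the translation parameter $x(t)$. Define $I(k) = \{s \geq 0 : 2^{-k+2} \leq \tilde{\lambda}(s) \leq 2^{-k+3}\}$ as in $(\ref{14.4})$. Theorem \ref{t15.9} gives $\lambda(s) \sim 2^{-k}$ on $I(k)$, and $\sup(I) = \infty$ combined with $(\ref{7.5})$ forces $\sum 2^{-2k} |I(k)| = \infty$, so a sequence $k_n \nearrow \infty$ exists with $|I(k_n)| \geq 2^{2k_n}/k_n^2$ and $|I(k_n)| \geq |I(k)|$ for every $k \leq k_n$.

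The first substantive step is to produce, for each such $n$, a time $s_n \in I(k_n)$ with $\|\epsilon(s_n)\|_{L^2} \lesssim k_n^2 2^{-2k_n}$ (the non-symmetric analogue of Lemma \ref{l14.1}). For this I would restrict to the middle half of $I(k_n)$, make a Galilean transformation so that $\xi$ vanishes at the left endpoint of the chosen subinterval and a spatial translation so that $x$ vanishes at the right endpoint; by $(\ref{15.39})$--$(\ref{15.41})$ together with $(\ref{7.2})$ from Theorem \ref{t15.8}, the parameters $|\xi(t)|/\lambda(t)$ and $|x(t)|$ remain within the tolerances required by Theorem \ref{t15.7}. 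The virial identity $(\ref{15.99})$ (integrated over the middle half of $I(k_n)$) then produces, via the mean value theorem, endpoints $s_n^-,s_n^+$ at which $(\epsilon_2, \tfrac{Q}{2}+xQ_x)_{L^2}$ is $O(1/|I(k_n)|)$; Theorem \ref{t15.7} applied on $[s_n^-,s_n^+]$ and another use of the mean value theorem yield the desired $s_n$.

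With $s_n$ in hand, set $t_n = s^{-1}(s_n)$ and choose $m$ as in $(\ref{14.11})$. The next step is to feed the smallness of $\|\epsilon(s_n)\|_{L^2}$ back into the long-time Strichartz hierarchy of Theorem \ref{t15.4}, obtaining, after the frequency-induction refinement analogous to $(\ref{14.14})$--$(\ref{14.15})$, a bound $\|P_{\geq r_n + k_n/4 + m/4} u\|_{U_\Delta^2([0,t_n]\times\mathbb{R})} \lesssim k_n^2 2^{-2k_n} 2^{-m}$ for a suitable $r_n$; here I must first rescale so that $\lambda(t) \geq 1/\eta_1$ on $[0,t_n]$ (using Theorem \ref{t15.9} to control the total variation of $\log\lambda$ by $\lesssim k_n$) and perform a Galilean shift placing $\xi$ near zero on that interval, so that the hypothesis $(\ref{15.43})$ of Theorem \ref{t15.4} holds. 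Plugging this into Theorem \ref{t15.5} and Corollary \ref{c15.6}, and then propagating via the computations in the proof of Theorem \ref{t15.5} as in $(\ref{14.16})$--$(\ref{14.17})$, gives
\[
\sup_{t \in [0,t_n]} E\bigl(P_{\leq r_n + k_n/4 + m/4}\, u(t)\bigr) \lesssim \bigl(k_n^2\, 2^{-k_n/12 - 5m/12}\, \eta_1^{-1}\bigr)^2 + \sup_{t\in[0,t_n]} \frac{\xi(t)^2}{\lambda(t)^2}.
\]
Since the Galilean normalization and $(\ref{15.39})$ make the $\xi/\lambda$ supremum $O(\eta_\ast \eta_1 k_n)$, negligible as $n\to\infty$, sending $n \to \infty$ (and undoing the Galilean and scaling transformations, under which the energy transforms predictably) yields $E(u_0) = 0$ in the Galilean frame where the asymptotic $\xi$ has been removed; equivalently, the original $u_0$ has $E = \tfrac12 \xi_0^2 \|Q\|_{L^2}^2$ for some $\xi_0$. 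The Gagliardo--Nirenberg inequality $(\ref{1.11})$ saturated at mass $\|Q\|_{L^2}$ then forces $u_0$ to be, up to the symmetries $(\ref{15.1})$--$(\ref{15.2})$ and scaling, equal to $Q$, so $u$ is a soliton of the form $(\ref{1.16.1})$.

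The main obstacle will be the last step: controlling the contribution $(\sup \xi/\lambda)^2$ in Theorem \ref{t15.5} so that it does not overwhelm the $2^{-2k_n}$-type gain. The safe route is to keep the time horizon short enough (in the $s$ variable) that $(\ref{15.39})$ together with $(\ref{15.40})$ preserves the Galilean normalization $|\xi|/\lambda \ll \eta_0$, and then to argue that on each such block the high-frequency bound propagates because the long-time Strichartz estimate of Theorem \ref{t15.4} is itself invariant under rescaling combined with a Galilean transformation. A secondary subtlety is verifying that the hypotheses $(\ref{15.56})$ and $(\ref{15.57})$ of Theorem \ref{t15.7} (bounds on $|\xi|/\lambda$ and $|x|$) survive the iteration; this is exactly the same bookkeeping as in $(\ref{15.80})$--$(\ref{15.82})$ of Theorem \ref{t15.8} and should require no new ideas.
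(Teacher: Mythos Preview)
Your proposal follows the paper's strategy closely and the overall architecture is correct, but there is a genuine gap in how you dispose of the Galilean contribution $(\sup_t \xi(t)^2/\lambda(t)^2)$ in the energy bound.

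You assert that after the Galilean normalization and $(\ref{15.39})$ one has $\sup|\xi|/\lambda = O(\eta_\ast\eta_1 k_n)$, ``negligible as $n\to\infty$''. This is wrong on both counts: the ODE control from $(\ref{15.39})$ together with $\int\|\epsilon\|^2\,ds\lesssim\eta_\ast$ gives, after rescaling so that $\lambda\ge 1/\eta_1$, only a fixed bound $|\xi|/\lambda\lesssim\eta_1\eta_\ast$ (no $k_n$ factor), and in any case a bound that does not tend to zero with $n$. Plugging that into your displayed energy inequality leaves a residual $(\eta_1\eta_\ast)^2$ that survives the limit, so you cannot conclude $E(u_0)=0$ this way.

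The paper closes this gap by reversing the direction of the bootstrap. The Galilean shift is taken so that $\xi(s_n)=0$ (at the specific time produced by Lemma~\ref{l15.11}, not at the left endpoint); then the decomposition $(\ref{15.50})$ at $t_n$ has no $\xi$ contribution, and the smallness of $\|\epsilon(s_n)\|$ alone yields the clean bound $(\ref{15.125})$. The almost-conservation computation (the change-of-energy part of Theorem~\ref{t15.5}, which is purely Strichartz-based) propagates this to $(\ref{15.126})$ on all of $[0,t_n]$ without reintroducing any $\xi/\lambda$ term. \emph{Then}, and this is the step you are missing, the energy bound is fed back through $(\ref{15.54})$ to obtain
\[
\sup_{0\le s\le s_n}\frac{|\xi_n(s)|^2}{\lambda(s)^2}\;\lesssim\;\bigl(k_n^2\,2^{-k_n/12-5m/12}\,\eta_1^{-1}\bigr)^2,
\]
a bound that \emph{does} decay with $n$. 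This is what establishes that the Galilean parameter converges to a limit $\xi_\infty$; only after shifting to the frame $\xi_\infty=0$ can one pass to the limit and obtain $E(u_0)=0$. Your closing reference to ``the asymptotic $\xi$'' is correct in spirit, but its existence comes from the energy$\,\to\,\xi/\lambda$ bootstrap via $(\ref{15.54})$, not from the ODE estimate $(\ref{15.39})$.
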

\begin{proof}
As in Theorem $\ref{t14.0}$, for any integer $k \geq 0$, let
\begin{equation}\label{15.109}
I(k) = \{ s \geq 0 : 2^{-k + 2} \leq \tilde{\lambda}(s) \leq 2^{-k + 3} \}.
\end{equation}
As in the proof of Theorem $\ref{t14.0}$, there exists a sequence $k_{n} \nearrow \infty$ such that
\begin{equation}\label{15.110}
|I(k_{n})| 2^{-2k_{n}} \geq \frac{1}{k_{n}^{2}},
\end{equation}
and that $|I(k_{n})| \geq |I(k)|$ for all $k \leq k_{n}$.

\begin{lemma}\label{l15.11}
For $n$ sufficiently large, there exists $s_{n} \in I(k_{n})$ such that
\begin{equation}\label{15.111}
\| \epsilon(s_{n}) \|_{L^{2}} \lesssim k_{n}^{2} 2^{-2k_{n}}.
\end{equation}
\end{lemma}
\begin{proof}
Let $I(k_{n}) = [a_{n}, b_{n}]$. 
By Theorem $\ref{t15.8}$,
\begin{equation}\label{15.112}
\int_{I(k_{n})} \| \epsilon(s) \|_{L^{2}}^{2} ds \lesssim \eta_{\ast},
\end{equation}
Then, using the virial identity in $(\ref{15.99})$,
\begin{equation}\label{15.113}
\int_{a_{n}}^{\frac{3 a_{n} + b_{n}}{4}} (\epsilon_{2}, \frac{Q}{2} + x Q_{x})_{L^{2}} ds = O(\eta_{\ast}) + O(1).
\end{equation}
Therefore, by the mean value theorem, there exists $s_{n}^{-} \in [a_{n}, \frac{3 a_{n} + b_{n}}{4}]$ such that
\begin{equation}\label{15.114}
|(\epsilon_{2}(s_{n}^{-}), \frac{Q}{2} + x Q_{x})_{L^{2}}| \lesssim \frac{1}{|I(k_{n})|}.
\end{equation}
By a similar calculation, there exists $s_{n}^{+} \in [\frac{a_{n} + 3 b_{n}}{4}, b_{n}]$ such that
\begin{equation}\label{15.115}
|(\epsilon_{2}(s_{n}^{+}), \frac{Q}{2} + x Q_{x})_{L^{2}}| \lesssim \frac{1}{|I(k_{n})|}.
\end{equation}
Therefore, by Theorem $\ref{t15.7}$, $(\ref{15.114})$ and $(\ref{15.115})$ imply
\begin{equation}\label{15.116}
\int_{s_{n}^{-}}^{s_{n}^{+}} \| \epsilon(s) \|_{L^{2}}^{2} ds \lesssim \frac{1}{|I(k_{n})|}.
\end{equation}
Indeed, rescale so that $\lambda(s_{n}^{-}) = \frac{1}{\eta_{1}}$. Then by Galilean transformation, suppose $\xi(s_{n}^{-}) = 0$ and by translation in space $x(s_{n}^{+}) = 0$. For all $s \in [s_{n}^{-}, s_{n}^{+}]$, by $(\ref{15.39})$ and Theorem $\ref{t15.8}$,
\begin{equation}\label{15.117}
\frac{|\xi(s)|}{\lambda(s)} \lesssim \eta_{1} \eta_{\ast}, \qquad |x(s)| \ll T_{\ast}^{1/25}.
\end{equation}
Therefore, by Theorem $\ref{t15.7}$ and $(\ref{15.39})$,
\begin{equation}
\int_{s_{n}^{-}}^{s_{n}^{+}} \| \epsilon(s) \|_{L^{2}}^{2} ds \lesssim \frac{1}{|I(k_{n})|} + \eta_{\ast} T_{\ast}^{1/25} (\int_{s_{n}^{-}}^{s_{n}^{+}} \| \epsilon(s) \|_{L^{2}}^{2} ds) \lesssim \frac{1}{|I(k_{n})|}.
\end{equation}
\begin{remark}
To make these computations completely rigorous, partition $[s_{n}^{-}, s_{n}^{+}]$ into a dyadic integer number of subintervals of length $\sim \frac{1}{\eta_{\ast}}$ and then following the arguments proving Theorem $\ref{t15.8}$, it is possible to prove that $(\ref{15.117})$ holds on subintervals of length $\sim \frac{J}{\eta_{\ast}}$, and then by induction, $(\ref{15.117})$ holds on $[s_{n}^{-}, s_{n}^{+}]$, which by Theorem $\ref{t15.7}$ implies that $(\ref{15.116})$ holds.
\end{remark}
Then by the mean value theorem,
\begin{equation}\label{15.118}
\| \epsilon(s_{n}) \|_{L^{2}}^{2} \lesssim \frac{1}{|I(k_{n})|^{2}}.
\end{equation}
Since $|I(k_{n})| \geq 2^{2k_{n}} k_{n}^{-2}$, the proof of Lemma $\ref{l15.11}$ is complete.
\end{proof}

Make a Galilean transformation so that $\xi(s_{n}) = 0$. Then by $(\ref{15.39})$, since $\lambda(s) \gtrsim 2^{-k_{n}}$ for all $s \in [0, s_{n}]$,
\begin{equation}\label{15.119}
\frac{|\xi(s)|}{\lambda(s)} \lesssim 2^{k_{n}} \eta_{\ast}.
\end{equation}
Now let $m$ be the smallest integer such that
\begin{equation}\label{15.120}
\frac{2^{2k_{n}}}{k_{n}^{2}} 2^{m} \geq |I(k_{n})|.
\end{equation}
Since $|I(k)| \leq |I(k_{n})|$ for all $0 \leq k \leq k_{n}$, $(\ref{15.120})$ implies that
\begin{equation}\label{15.121}
|s_{n}| \leq 2^{2k_{n} + m + 1}.
\end{equation}
Let $r_{n}$ be the smallest integer that satisfies
\begin{equation}\label{15.122}
2^{\frac{2k_{n} + m + 1}{3}} 2^{k_{n}} \frac{1}{\eta_{1}} \leq 2^{r_{n}}.
\end{equation}
Then, as in the proof of Theorem $\ref{t15.4}$, setting $t_{n} = s^{-1}(s_{n})$, $(\ref{15.119})$ and induction on frequency implies
\begin{equation}\label{15.123}
\| P_{\geq r_{n}} u \|_{U_{\Delta}^{2}([0, t_{n}] \times \mathbb{R})} \lesssim \eta_{\ast},
\end{equation}
and
\begin{equation}\label{15.124}
\| P_{\geq r_{n} + \frac{k_{n}}{4} + \frac{m}{4}} u \|_{U_{\Delta}^{2}([0, t_{n}] \times \mathbb{R})} \lesssim k_{n}^{2} 2^{-2k_{n}} 2^{-m}.
\end{equation}
Furthermore,
\begin{equation}\label{15.125}
E(P_{\leq r_{n} + \frac{k_{n}}{4} + \frac{m}{4}} u(t_{n})) \lesssim (k_{n}^{2} 2^{-2k_{n}} 2^{-m} 2^{r_{n} + \frac{k_{n}}{4} + \frac{m}{4}})^{2} \sim (k_{n}^{2} 2^{-\frac{k_{n}}{12} - \frac{5m}{12}} \eta_{1}^{-1})^{2}.
\end{equation}
and
\begin{equation}\label{15.126}
\sup_{t \in [0, t_{n}]} E(P_{\leq r_{n} + \frac{k_{n}}{4} + \frac{m}{4}} u(t)) \lesssim (k_{n}^{2} 2^{-\frac{k_{n}}{12} - \frac{5m}{12}} \eta_{1}^{-1})^{2}.
\end{equation}
By $(\ref{15.54})$, if $\xi_{n}(s)$ is the $\xi(s)$ in $(\ref{15.23})$ for which $\xi_{n}(s_{n}) = 0$,
\begin{equation}
\sup_{0 \leq s \leq s_{n}} \frac{|\xi_{n}(s)|^{2}}{\lambda(s)^{2}} \lesssim (k_{n}^{2} 2^{-\frac{k_{n}}{12} - \frac{5m}{12}} \eta_{1}^{-1})^{2},
\end{equation}
which implies that $\xi(s)$ converges to some $\xi_{\infty}$ as $s \rightarrow \infty$. Making a Galilean transformation that maps $\xi_{\infty}$ to the origin and taking $n \rightarrow \infty$, since $m \geq 0$, $(\ref{15.26})$ implies that $E(u_{0}) = 0$. Therefore, by the Gagliardo--Nirenberg inequality, $u_{0}$ is a soliton.
\end{proof}

When $\sup(I) < \infty$, suppose without loss of generality that $\sup(I) = 0$, and
\begin{equation}\label{15.127}
\sup_{-1 < t < 0} \| \epsilon(t) \|_{L^{2}} \leq \eta_{\ast}.
\end{equation}
Then decompose
\begin{equation}\label{15.128}
u(t,x) = \frac{e^{-i \gamma(t)} e^{-ix \frac{\xi(t)}{\lambda(t)}}}{\lambda(t)^{1/2}} Q(\frac{x - x(t)}{\lambda(t)}) + \frac{e^{-i \gamma(t)} e^{-ix \frac{\xi(t)}{\lambda(t)}}}{\lambda(t)^{1/2}} \epsilon(t,\frac{x - x(t)}{\lambda(t)}).
\end{equation}
Then apply the pseudoconformal transformation to $u(t,x)$. For $-\infty < t < -1$,
\begin{equation}\label{15.129}
\aligned
v(t,x) = \frac{1}{t^{1/2}} \overline{u(\frac{1}{t}, \frac{x}{t})} e^{i x^{2}/4t} = \frac{1}{t^{1/2}} \frac{e^{i \gamma(1/t)} e^{ix \frac{\xi(\frac{1}{t})}{\lambda(\frac{1}{t})}}}{\lambda(1/t)^{1/2}} Q(\frac{x - t x(\frac{1}{t})}{t \lambda(1/t)}) e^{i x^{2}/4t} \\ + \frac{1}{t^{1/2}} \frac{e^{i \gamma(1/t)} e^{ix \frac{\xi(\frac{1}{t})}{\lambda(\frac{1}{t})}}}{\lambda(1/t)^{1/2}}\overline{\epsilon(\frac{1}{t}, \frac{x - t x(\frac{1}{t})}{t \lambda(1/t)})} e^{i x^{2}/4t}.
\endaligned
\end{equation}
Since the $L^{2}$ norm is preserved by the pseudoconformal transformation,
\begin{equation}\label{15.130}
\aligned
\lim_{t \searrow -\infty} \|  \frac{1}{t^{1/2}} \frac{e^{i \gamma(1/t)} e^{ix \frac{\xi(\frac{1}{t})}{\lambda(\frac{1}{t})}}}{\lambda(1/t)^{1/2}}\overline{\epsilon(\frac{1}{t}, \frac{x - t x(\frac{1}{t})}{t \lambda(1/t)})} e^{i x^{2}/4t} \|_{L^{2}} = 0.
\endaligned
\end{equation}

Next,
\begin{equation}\label{15.131}
 \frac{1}{t^{1/2}} \frac{e^{i \gamma(1/t)} e^{ix \frac{\xi(\frac{1}{t})}{\lambda(\frac{1}{t})}}}{\lambda(1/t)^{1/2}} Q(\frac{x - t x(\frac{1}{t})}{t \lambda(1/t)}) e^{ix \frac{x(\frac{1}{t})}{2}} e^{-i \frac{t}{4} x(\frac{1}{t})^{2}},
\end{equation}
is of the form
\begin{equation}\label{15.132}
e^{-i \tilde{\gamma}(t)} e^{-ix \frac{\tilde{\xi}(t)}{\tilde{\lambda}(t)}} \tilde{\lambda}(t)^{-1/2} Q(\frac{x - \tilde{x}(t)}{\tilde{\lambda}(t)}),
\end{equation}
where
\begin{equation}\label{15.133}
\tilde{\gamma}(t) = \gamma(\frac{1}{t}) - \frac{x(\frac{1}{t})^{2} t}{4}, \qquad \tilde{\xi}(t) = \xi(\frac{1}{t}) + \frac{x(\frac{1}{t})}{2} t \lambda(\frac{1}{t}), \qquad \tilde{\lambda}(t) = t \lambda(\frac{1}{t}), \qquad \tilde{x}(t) = t x(\frac{1}{t}).
\end{equation}
Also,
\begin{equation}\label{15.134}
\aligned
\|  \frac{1}{t^{1/2}} \frac{e^{i \gamma(1/t)} e^{ix \frac{\xi(\frac{1}{t})}{\lambda(\frac{1}{t})}}}{\lambda(1/t)^{1/2}} Q(\frac{x - t x(\frac{1}{t})}{t \lambda(1/t)}) e^{i\frac{x^{2}}{4t}} - \frac{1}{t^{1/2}} \frac{e^{i \gamma(1/t)} e^{ix \frac{\xi(\frac{1}{t})}{\lambda(\frac{1}{t})}}}{\lambda(1/t)^{1/2}} Q(\frac{x - t x(\frac{1}{t})}{t \lambda(1/t)}) e^{ix \frac{x(\frac{1}{t})}{2}} e^{i t x(\frac{1}{t})^{2}} \|_{L^{2}} \\
= \| \frac{1}{t^{1/2} \lambda(\frac{1}{t})^{1/2}} Q(\frac{x - t x(\frac{1}{t})}{t \lambda(1/t)}) (e^{i\frac{(x - t x(\frac{1}{t}))^{2}}{4t}} - 1) \|_{L^{2}}.
\endaligned
\end{equation}
As in $(\ref{14.25})$ and $(\ref{14.26})$,
\begin{equation}\label{15.135}
\lim_{t \searrow -\infty} \| \frac{1}{t^{1/2} \lambda(\frac{1}{t})^{1/2}} Q(\frac{x - t x(\frac{1}{t})}{t \lambda(1/t)}) (e^{i\frac{(x - t x(\frac{1}{t}))^{2}}{4t}} - 1) \|_{L^{2}} = 0.
\end{equation}
Therefore, by time reversal symmetry, $v$ satisfies the conditions of Theorem $\ref{t15.1}$, and $v$ is a solution that blows up backward in time at $\inf(I) = -\infty$, so therefore, by Theorem $\ref{t15.10}$, $v$ must be a soliton. Therefore, $u$ is the pseudoconformal transformation of a soliton, which proves Theorem $\ref{t1.2}$.

\section*{Acknowledgement}
The author was partially supported by NSF Grant DMS--$1764358$. The author was also greatly helped by many stimulating discussions with Frank Merle at Cergy-Pontoise and University of Chicago, as well as his constant encouragement to pursue this problem. The author would also like to recognize the many helpful discussions that he had with Svetlana Roudenko and Anudeep Kumar Arora, both at George Washington University and Florida International University.

The author was also greatly assisted by discussions of the nonlinear Schr{\"o}dinger equation with his PhD student, Dr. Zehua Zhao. The author also gratefully acknowledges discussions with Chenjie Fan and Jason Murphy at Oberwolfach, and Robin Neumayer at the Institute for Advanced Study.

The author would also like to thank Jonas L{\"u}hrmann and Cristian Gavrus for many helpful discussions on the related mass-critical generalized KdV equation.

\bibliography{biblio}
\bibliographystyle{alpha}

\end{document}